   \definecolor{cites}{rgb}{0.75 , 0.00 , 0.00}  
   \definecolor{urls} {rgb}{0.00 , 0.00 , 1.00}  
   \definecolor{links}{rgb}{0.00 , 0.00 , 0.5}   
  \definecolor{gray}{rgb}{0.5,0.5,.5}
\newcommand{\Cb}{\mathbb{C}}
\newcommand{\Nb}{\mathbb{N}}
\newcommand{\Rb}{\mathbb{R}}
\newcommand{\Zb}{\mathbb{Z}}
\newcommand{\Xb}{\textbf{\upshape X}}
\newcommand{\Yb}{\textbf{\upshape Y}}
\newcommand{\Ac}{\mathcal{A}}
\newcommand{\Cc}{\mathcal{C}}
\newcommand{\Kc}{\mathcal{K}}
\newcommand{\Lc}{\mathcal{L}}
\newcommand{\Pc}{\mathcal{P}}
\newcommand{\Alp}{\mathcal{A}(\Xb)}
\newcommand{\Aln}{\mathcal{A}_0(\Xb)}
\newcommand{\Alpr}{\mathcal{A}_{\$}(\Xb)}
\newcommand{\eps}{\varepsilon}
\newcommand{\ph}{\varphi}
\newcommand{\vertiii}[1]{{\left\vert\kern-0.25ex\left\vert\kern-0.25ex\left\vert #1
    \right\vert\kern-0.25ex\right\vert\kern-0.25ex\right\vert}}
\newcommand\pto{
   \unitlength0.1ex
   \begin{picture}(30,15)
   \put(15,16){\makebox(0,0)[]{\tiny $\Pc$}}
   \put(15,5){\makebox(0,0)[]{$\to$}}
   \end{picture}
}
\DeclareMathOperator{\coker}{coker}
\DeclareMathOperator{\diag}{diag}
\DeclareMathOperator{\diam}{diam}
\DeclareMathOperator{\dist}{dist}
\DeclareMathOperator{\im}{im}
\DeclareMathOperator{\ind}{ind}
\DeclareMathOperator{\ess}{ess}
\DeclareMathOperator{\plim}{\Pc-lim}
\DeclareMathOperator{\rk}{rank}
\DeclareMathOperator{\plimn}{\underset{\textit{n}\to\infty}{\Pc-lim \,}}
\DeclareMathOperator{\spc}{sp}
\DeclareMathOperator{\spess}{\spc_{\ess}}
\DeclareMathOperator{\spe}{sp_\eps}
\DeclareMathOperator{\speess}{\spc_{\eps,\ess}}
\DeclareMathOperator{\spn}{span}
\DeclareMathOperator{\supp}{supp}
\DeclareMathOperator{\nuess}{\nu_{ess}}
\DeclareMathOperator{\nuessc}{\nu_{ess,c}}
\DeclareMathOperator{\opsp}{\sigma_{op}}
\providecommand{\lb}[1]{\Lc(#1)}
\providecommand{\lc}[1]{\Kc(#1)}
\providecommand{\pb}[1]{\Lc(#1,\Pc)}
\providecommand{\pbr}[1]{\Lc_{\$}(#1,\Pc)}
\providecommand{\pc}[1]{\Kc(#1,\Pc)}
\providecommand{\pcn}[1]{\Kc_0(#1,\Pc)}
\newtheorem{thm}{Theorem}[section]
\newtheorem{lem}[thm]{Lemma}
\newtheorem{prop}[thm]{Proposition}
\newtheorem{cor}[thm]{Corollary}
\theoremstyle{definition}
\newtheorem{defn}[thm]{Definition}
\newtheorem{rem}[thm]{Remark}
\newtheorem{ex}[thm]{Example}
\numberwithin{equation}{section}
\begin{document}

\title{\bf Essential pseudospectra and essential norms\\of band-dominated operators}
\author{
Raffael Hagger\footnote{\href{mailto:Raffael.Hagger@tuhh.de}{\tt Raffael.Hagger@tuhh.de}, Hamburg University of Technology (TUHH), Germany},
Marko Lindner\footnote{\href{mailto:Marko.Lindner@tuhh.de}{\tt Marko.Lindner@tuhh.de}, Hamburg University of Technology (TUHH), Germany (corresponding author)}, 
Markus Seidel\footnote{\href{mailto:Markus.Seidel@fh-zwickau.de}{\tt Markus.Seidel@fh-zwickau.de}, University of Applied Sciences Zwickau, Germany}}
\maketitle
\vspace{-0.4cm}
\begin{abstract}
An operator $A$ on an $l^{p}$-space is called band-dominated if it can be approximated,
in the operator norm, by operators with a banded matrix representation. The coset of $A$
in the Calkin algebra determines, for example, the Fredholmness of $A$, the Fredholm index,
the essential spectrum, the essential norm and the so-called essential pseudospectrum of $A$.
This coset can be identified with the collection of all so-called limit operators of $A$.
It is known that this identification preserves invertibility (hence spectra). We now show
that it also preserves norms and in particular resolvent norms (hence pseudospectra).
In fact we work with a generalization of the ideal of compact operators, so-called
$\Pc$-compact operators, allowing for a more flexible framework that naturally
extends to $l^p$-spaces with $p\in\{1,\infty\}$ and/or vector-valued $l^p$-spaces.
\medskip
\textbf{AMS subject classification:}  47A53; 47B07, 46E40, 47B36, 47L80, 65J10.

\medskip
\textbf{Keywords:} Fredholm theory; Essential spectrum; Pseudospectra; Limit operator; Band-dominated operator.
\end{abstract}

\

\section{Introduction}
This first section comes as a rough guide to this paper. Proper definitions and
theorems are given in later sections.
\medskip

We study bounded linear operators on a Banach space $\Xb$. Most of the time,
$\Xb$ is an $l^p$ sequence space with $1\leq p\leq\infty$, index set $\Zb^N$ and values in another
Banach space $X$, so that an operator on $\Xb=l^p(\Zb^N,X)$ can be identified,
in a natural way, with an infinite matrix $(a_{ij})$ with indices $i,j\in\Zb^N$
and all $a_{ij}$ being operators $X\to X$.

For such an operator $A$ on $\Xb$, write $A\in\pcn{\Xb}$ if its matrix $(a_{ij})$
has finite support (i.e. only finitely many nonzero entries), and write
$A\in\Aln$ if its matrix is a band matrix (i.e. it has only finitely many
nonzero diagonals). Clearly, $\Aln$ is an algebra containing $\pcn{\Xb}$ as a (two-sided)
ideal. Denote the closure, in the $\Xb\to\Xb$ operator norm, of $\Aln$ by $\Alp$
and the closure of $\pcn{\Xb}$ by $\pc{\Xb}$. Then $\Alp$ is a Banach algebra
containing $\pc{\Xb}$ as a closed ideal\footnote{We will explain the notation
$\pc{\Xb}$ later and say what $\Pc$ is.}.

Operators in $\Alp$ are called band-dominated operators. The ideal $\pc{\Xb}$
is a generalization
of the set of compact operators: If $\dim X<\infty$ then
$\pc{\Xb}$ coincides with the set $\lc{\Xb}$ of all compact operators on $\Xb$
(except in the somewhat pathological cases $p=1$ and $p=\infty$);
otherwise it does not -- as already $\pcn{\Xb}$ contains non-compact operators.
Recall that $\lc{\Xb}$ is a closed ideal in the algebra $\lb{\Xb}$ of all bounded
linear operators $\Xb\to\Xb$.

For $A\in\Alp$, the coset
\begin{equation} \label{eq:coset}
A+\pc{\Xb} \textrm{ in the quotient algebra } \Alp/\pc{\Xb}
\end{equation}
is of interest. If $\pc{\Xb}=\lc{\Xb}$ then the quotient norm of \eqref{eq:coset}
is the usual essential norm of $A$, the spectrum of \eqref{eq:coset} is the
essential spectrum of $A$, and the invertibility of \eqref{eq:coset} corresponds
to $A$ being a Fredholm operator (i.e. having a finite-dimensional kernel and a
finite-codimensional range). In the general case one gets generalized
versions of these quantities and properties.

In either case, the coset \eqref{eq:coset} is an interesting but complicated
object. Our strategy for its study is a localization technique that replaces
this one complicated object by a family of many simpler objects.
The key observation is that, by the definition of the ideal $\pc{\Xb}$,
the coset \eqref{eq:coset} depends only (and exactly) on the asymptotic
behaviour of the matrix behind $A$. This asymptotic behaviour is extracted as follows:
For every $k\in\Zb^N$, let $V_k:\Xb\to\Xb$ denote the $k$-shift operator
that maps $(x_i)_{i\in\Zb^N}$ to $(y_{i})_{i\in\Zb^N}$ with $y_{i+k}=x_i$,
and then look at the translates $V_{-k}AV_k$ of $A$. The simpler objects
that characterize the coset \eqref{eq:coset} are the partial limits of the family
$(V_{-k}AV_k)_{k\in\Zb^N}$ of all translates of $A$ with respect to the
so-called $\Pc$-topology, to be described below,
that corresponds to entry-wise norm convergence of the matrix.
More precisely, if $h=(h_1,h_2,...)$ is a sequence in $\Zb^N$ with
$|h_n|\to\infty$ and $V_{-h_n}AV_{h_n}$ converges in that topology then we
denote the limit by $A_h$ and call it the limit operator of $A$ with respect
to the sequence $h$. Doing this with all such sequences that produce a limit
operator yields the collection
\begin{equation}\label{eq:opspec}
\opsp(A):=\{A_h\,:\, h=(h_1,h_2,...),\, h_n\in\Zb^N,\, |h_n|\to\infty,\, 
A_h:=\plim V_{-h_n}AV_{h_n} \textrm{ exists}\}
\end{equation}
of all limit operators -- the so-called operator spectrum of $A$. We have used
sequences $h$ to address our partial limits of $(V_{-k}AV_k)_{k\in\Zb^N}$.
The same set \eqref{eq:opspec} can also be constructed as follows (\cite{Roe},\cite{SpakWillett}):
Extend the mapping
$\ph_A:k\in\Zb^N\mapsto V_{-k}AV_k\in\Alp$ $\Pc$-continuously to the  (Stone-\v{C}ech)
boundary $\partial\Zb^N$ of $\Zb^N$. Then \eqref{eq:opspec}  exactly collects
the values of $\ph_A$ on $\partial\Zb^N$.
Enumerating the set \eqref{eq:opspec} via $\partial\Zb^N$ (rather than via the
set of all sequences $h$ in $\Zb^N$ for which $A_h$ exists) has the benefit that the
index set $\partial\Zb^N$ is independent of $A$, so that
two instances of \eqref{eq:opspec} can be added or multiplied elementwise. Under these operations,
the map $A\mapsto\ph_A|_{\partial\Zb^N}=\eqref{eq:opspec}$ turns out to be an
algebra homomorphism. Now the crucial point is that $\pc{\Xb}$ is exactly the
kernel of that homomorphism $A\mapsto \eqref{eq:opspec}$, whence
$\eqref{eq:coset}\mapsto\eqref{eq:opspec}$ is a well-defined algebra
isomorphism\footnote{To oversimplify matters, think of continuous functions $f$
on a compact set $D$. Then the subspace (actually the ideal) $C_0(D)$ of
continuous functions with zero boundary values is the kernel of the algebra
homomorphism $f\mapsto f|_{\partial D}$, whence the coset of $f$ modulo
$C_0(D)$ can be  identified with $f|_{\partial D}$, by the fundamental
homomorphism theorem.}. In short: The set \eqref{eq:opspec} nicely reflects
the coset \eqref{eq:coset}.
Actually, besides $A\in\Alp$, there is one technical condition to make
this identification
between the coset \eqref{eq:coset} and the set \eqref{eq:opspec} work: To make sure that
\eqref{eq:opspec} is large enough,
we have to assume that $\{V_{-k}AV_k:k\in\Zb^N\}$ has a sequential compactness property,
namely that every sequence $h$ in $\Zb^N$ with $|h_n|\to\infty$ has a subsequence $g$
for which the $\Pc$-limit $A_g$ exists, in which case we call $A$ a rich operator (in
the sense that \eqref{eq:opspec} is rich enough to reflect all\footnote{In fact, the map
$A\mapsto \opsp(A)=\eqref{eq:opspec}$ sends some operators $A\in\Alp$ to $\varnothing$.
For some other $A\in\Alp$, limit operators exist in one ``direction'' but not in another.
Some of the latter $A$ are not in $\pc{\Xb}$ but have $\opsp(A)=\{0\}$, such as our first
example in Remark \ref{RCounterEx}.
These problems are eliminated by imposing existence of sufficiently many limit operators,
i.e. richness of $A$.} of \eqref{eq:coset}).
\newpage  

This identification between the objects \eqref{eq:coset} and \eqref{eq:opspec} for a
rich operator $A\in\Alp$ is at the core of the limit operator method.
Here are some of its consequences:
\begin{enumerate}
\item[(i)] The main theorem on limit operators
\cite{RochFredTh,LimOps,Marko,BigQuest} says that \eqref{eq:coset} is invertible
(so that $A$ is a generalized Fredholm operator) iff every element of \eqref{eq:opspec}
is invertible.
\item[(ii)] Expressing this in the language of spectra, we get that
\begin{equation}\label{eq:specess}
\spess(A) = \bigcup_{A_h\in\opsp(A)} \spc(A_h),
\end{equation}
where $\spess(A)$ denotes the spectrum of the coset \eqref{eq:coset}, the so-called
$\Pc$-essential spectrum of $A$, and $\spc(A_h)$ denotes the usual spectrum of $A_h$
as an element of $\lb{\Xb}$.
\item[(iii)] In addition to (i), the inverse\footnote{\label{foot:invclosed}Remarkably, this inverse coset, 
resp. set of inverses, is again a subset of $\Alp$, by \cite[Propositions 2.1.8 and 2.1.9]{LimOps} 
(we recall this in Theorem \ref{TBDOInvCl} below).} of \eqref{eq:coset} corresponds to the elementwise
inverse$^{\ref{foot:invclosed}}$ of \eqref{eq:opspec}, by \cite[Theorem 16]{SeFre}.
\item[(iv)] In the current paper we show that the norm of \eqref{eq:coset}
equals\footnote{\label{foot:C*alg}Note that in the case $\Xb=l^2(\Zb^N,X)$ with a Hilbert space $X$,
$\lb{\Xb}$ and $\Alp$ are $C^{*}$-algebras, so that this equality of norms is a simple
consequence of (i) since $C^{*}$-homomorphisms that preserve invertibility do also
preserve norms. In the general case such elegant arguments are not available anymore.}
the supremum (in fact maximum)
norm of \eqref{eq:opspec}. We refer to the norm of \eqref{eq:coset} as the essential norm of $A$.
\item[(v)] By a combination of (iii) and (iv), one derives
\begin{equation}\label{eq:resolvents}
\|[(A-\lambda I)+\pc{\Xb}]^{-1}\| = \max_{A_h\in\opsp(A)} \|(A_h-\lambda I)^{-1}\|
\end{equation}
for all $\lambda\in\Cb$, which is an equality between corresponding resolvent norms in \eqref{eq:coset}
and \eqref{eq:opspec}. This in turn proves the following pseudospectral version of \eqref{eq:specess}
\begin{equation}\label{eq:specesseps}
\speess (A) = \bigcup_{A_h\in\opsp(A)} \spe (A_h),\qquad \eps>0.
\end{equation}
Here $\speess (A)$ is the set of all $\lambda\in\Cb$ for which the left-hand side of \eqref{eq:resolvents}
is greater than $\frac 1\eps$, and $\spe (A_h)$ is the usual pseudospectrum of $A_h$ that we will discuss
in a minute. We will see that $\speess (A)$ is the pseudospectrum, in the same sense, of the coset \eqref{eq:coset};
it will henceforth be referred to as the essential pseudospectrum of $A$.
\end{enumerate}
Here is an important superset of the spectrum: For an operator $A\in\lb{\Xb}$ or, more generally,
an element $a$ in a Banach algebra $B$ with unit $e$, it is sometimes a more sensible question
to ask whether the inverse of $a-\lambda e$ is large in norm, possibly non-existent, rather than
just to ask for the latter. So one defines the {\sl $\eps$-pseudospectrum} of $a$ by
\[
\spe (a) := \left\{\lambda\in \Cb: \|(a-\lambda e)^{-1}\|>\frac 1\eps\right\}, \qquad\eps>0,
\]
where we say $\|b^{-1}\|:=\infty$ if $b$ is non-invertible, so that $\spc (a)\subset \spe (a)$.
This defines both $\spe (A)$ as the pseudospectrum of an operator $A$ in $B=\lb{\Xb}$ and
$\speess (A)$ as the pseudospectrum of a coset \eqref{eq:coset} in the quotient algebra
$B=\Alp/\pc{\Xb}$. For $A\in\lb{\Xb}$ there is the remarkable coincidence (see e.g.
\cite[Theorem 7.4]{BandToep})
\begin{equation}\label{eq:ps_pert}
\spe (A) = \bigcup_{\|T\|<\eps} \spc (A+T)
\end{equation}
for all $\eps>0$, showing that $\spe (A)$ exactly measures the sensitivity of $\spc (A)$ with respect to
additive perturbations of $A$ by operators $T\in\lb{\Xb}$ of norm less than $\eps$. For normal operators
$A$ on Hilbert space, $\spe (A)$ is exactly the $\eps$-neighbourhood of $\spc (A)$. In general it can be much
larger. Pseudospectra are interesting objects by themselves since they carry more information than spectra
(e.g. about transient instead of just asymptotic behaviour of dynamical systems). Also, they have
better convergence and approximation properties than spectra ($\spe (A)$ depends continuously on
$A$ -- unlike $\spc (A)$). Still, the $\eps$-pseudospectra approximate the spectrum as $\eps\to 0$.
\medskip

On the other hand, there is the ($\Pc$-)essential spectrum $\spess (A)$. This set is robust under
($\Pc\textrm{-}$)compact perturbations, enabling its study by means of limit operators via \eqref{eq:specess}.
\medskip

The essential pseudospectrum, $\speess (A)$, nicely blends these properties of essential and pseudospectra:
We have already mentioned that it inherits an $\eps$-version, \eqref{eq:specesseps}, of \eqref{eq:specess}.
We will also show that there is an essential version of \eqref{eq:ps_pert}, that is
\begin{equation}\label{eq:essps_pert}
\speess (A) = \bigcup_{\|T\|<\eps} \spess (A+T)
\end{equation}
for all $\eps>0$, where the perturbations $T$ come from $\Alp$. So in this new setting, the different
properties \eqref{eq:specess} and \eqref{eq:ps_pert} both generalize and meet in one place.

Besides these aesthetical aspects, our argument
for the study of $\speess (A)$ is as follows:
When $\spess (A)$ is of interest, the problem with formula \eqref{eq:specess} is the computation
of all limit operators $A_h$ and then of their spectra $\spc (A_h)$. It appears more feasible, from a
numerical perspective, to compute the pseudospectra $\spe (A_h)$ for small values of $\eps$, then
derive $\speess (A)$ by \eqref{eq:specesseps} and finally use that the closure of $\speess (A)$
tends to $\spess (A)$ in Hausdorff metric as $\eps\to 0$.

\subparagraph{Previous work}
The story of limit operators probably began in the late 1920's in Favard's paper \cite{Favard}
for studying ODEs with almost-periodic coefficients. It continued in the work of Muhamadiev
\cite{Muh1, Muh2, Muh3, Muh4} and was later followed by Lange and Rabinovich \cite{LangeR},
who were the first to consider Fredholmness for the generic class of band-dominated operators.
In the last 20 years, major work was done by Rabinovich, Roch, Roe and Silbermann
\cite{RochFredTh,LimOps,RochFredInd} with recent contributions by some of the authors
and Chandler-Wilde \cite{Marko,LiChW,SeDis,SeFre,BigQuest}.
A detailed review of this history is, for example, in the introduction of \cite{LiChW}.
A comprehensive presentation of these results, further achievements and applications
e.g. to convolution and pseudo-differential operators, as well as the required tools,
can be found in the 2004 book \cite{LimOps} of Rabinovich, Roch and Silbermann.
This literature shows that the list of parallels between the items \eqref{eq:coset}
and \eqref{eq:opspec} is actually longer than our list (i)--(v).
For example, in \cite{RochFredInd} it is shown for the case $\Xb=l^2(\Zb^1,\Cb)$
that the Fredholm index of $A$ can be recovered from two fairly arbitrary elements
of \eqref{eq:opspec}.

Apart from the theory of limit operators, there is of course a vast amount of literature
on spectral theory. Particularly related is the work of Trefethen, Embree and others
(see \cite{TrefEmbr} and references therein) on pseudospectra. We probably have to mention \cite{AmmarJeribi13,AmmarJeribi14}, where essential pseudospectra have been defined.

\subparagraph{Summary of contents}
In Section \ref{sec:defs} we summarize the main definitions and previously known results
including (i), (ii) and (iii) from above. Section \ref{sec:essnorm} is devoted to the proof of (iv).
It is then straightforward to conclude (v).
Section \ref{sec:essps} introduces and gives basic results about essential pseudospectra.
Section \ref{sec:esslowernorm} turns the attention to the so-called lower norm $\nu(B)$ of an operator $B$,
which is the infimum of $\|Bx\|$ over all $x$ with $\|x\|=1$. While the norm $\|B^{-1}\|$ of the inverse
(if existent), as in the right-hand side of \eqref{eq:resolvents}, can be expressed as $1/\nu(B)$, it is the
subject of Section~\ref{sec:esslowernorm} to characterize (or at least bound) also the essential norm of the resolvent
on the left-hand side of \eqref{eq:resolvents} by means of lower norms, without explicit reference to limit operators -- thereby giving different
approaches to the computation of the essential pseudospectrum (or to upper and lower bounds on
it).
In Section \ref{sec:FSM} we discuss an application of our results in the context of
approximation methods. For reasons of numerical viability, an operator $A\in\Alp$ is usually
approximated by finite-dimensional operators $A_n$, hoping that their inverses $A_n^{-1}$
will exist and approximate $A^{-1}$, provided the latter exists. The key question here is about
the stability of the sequence $(A_n)_{n\in\Nb}$ -- and that can be translated into the language
of $\Pc$-Fredholmness of an associated operator. Here our previous results yield new quantitative insights, in particular (partial) limits of the norms $\|A_n^{-1}\|$ and the condition numbers $\kappa(A_n)$.

\section{Definitions and known results} \label{sec:defs}
In this section we give the relevant definitions and state corresponding theorems
about what has been known, including points (i), (ii) and (iii) from the introduction.

\subparagraph{Banach spaces and projections}
Throughout this paper $\Yb$ denotes a complex Banach space.
On $\Yb$ we look at a sequence $\Pc=(P_n)_{n\in\Nb}$ of projections $P_n$ with the properties
\begin{itemize}
\itemsep-1mm
\item[$(\Pc1)$] $P_n=P_nP_{n+1}=P_{n+1}P_n$ and $\|P_n\|=\|Q_n\|=1$ for all $n\in\Nb$, where $Q_n:=I-P_n$;
\item[$(\Pc2)$] $C_\Pc:=\sup_{U\subset\Nb} \|\sum_{n\in U} (P_{n+1}-P_n)\|<\infty$ with the supremum taken over all finite sets $U\subset\Nb$.
\end{itemize}
Then $\Pc$ is a so-called {\sl uniform approximate projection} in the sense of \cite{LimOps,SeSi3,SeDis}.
If additionally
\begin{itemize}
\item[$(\Pc3)$] $\sup_{n\in\Nb}\|P_nx\|\ge\|x\|$ for all $x\in\Yb$
\end{itemize}
then $\Pc$ is said to be a {\sl uniform approximate identity}.
Note that the dual sequence $\Pc^*=(P_n^*)$ then also has the corresponding properties $(\Pc1)$ and $(\Pc2)$ on the dual space $\Yb^*$ but not necessarily $(\Pc3)$.

However, in large parts we consider the more particular situation of a (generalized)
sequence space $\Xb=l^p(\Zb^N,X)$ with parameters $p\in\{0\}\cup[1,\infty]$,
$N\in\Nb$ and a complex Banach space $X$. These (generalized) sequences are of the form
$x=(x_i)_{i\in\Zb^N}$ with all $x_i\in X$. The spaces are equipped with the usual
$p$-norm. In our notation, $l^0(\Zb^N,X)$ stands for the closure in
$l^\infty(\Zb^N,X)$ of the set of all sequences $(x_i)_{i\in\Zb^N}$ with finite support.
In the context of these sequence spaces $\Xb$, $\Pc=(P_n)$ shall always be the
sequence of the canonical projections\footnote{Here and in what follows we write
$\chi_M:\Zb^N\to\{0,1\}$ for the characteristic function of $M\subset\Zb^N$,
that is $\chi_M(k)=1$ if $k\in M$ and $=0$ otherwise.}
$P_n:=\chi_{\{-n,\ldots,n\}^N}I$
which obviously forms a uniform approximate identity on $\Xb$.

Notice that this variety of spaces $\Xb=l^p(\Zb^N,X)$ in particular covers the
spaces $L^p(\Rb^N)$ by the natural isometric identification of a function in
$L^p(\Rb^N)$ with the sequence of its restrictions to the hypercubes $i+[0,1]^N$
with $i\in\Zb^N$ (saying that $L^p(\Rb^N)\cong l^p(\Zb^N,L^p([0,1]^N))$).

\subparagraph{Operators and convergence}

The following definitions and results are taken from e.g. \cite{LimOps, Marko, SeFre}.
Starting with a Banach space $\Yb$ and a uniform approximate projection $\Pc=(P_n)$,
one says that a bounded linear operator $K$ on $\Yb$ is {\sl $\Pc$-compact} if
\begin{equation}\label{eq:pcomp}
\|(I-P_n)K\|+\|K(I-P_n)\|\to 0\quad\text{as}\quad n\to\infty.
\end{equation}
The set\footnote{This set is the closure (in $\lb{\Yb}$) of the
set $\pcn{\Yb}$ of all $K\in\lb{\Yb}$ for which $\|(I-P_n)K\|+\|K(I-P_n)\|= 0$ for
all sufficiently large $n$. $\pcn{\Yb}$ corresponds to matrices with finite support
if $\Yb=\Xb=l^p(\Zb^N,X)$.} of all $\Pc$-compact operators is denoted by $\pc{\Yb}$.
Unlike the set $\lc{\Yb}$ of all compact operators on $\Yb$, $\pc{\Yb}$ is in general not
an ideal in $\lb{\Yb}$. So we introduce the following subset of $\lb{\Yb}$:
\[
\pb{\Yb}:=\{A\in\lb{\Yb}:AK,KA\in\pc{\Yb}\text{ for all }K\in\pc{\Yb}\}.
\]
Now $\pb{\Yb}$ forms a closed subalgebra of $\lb{\Yb}$ containing $\pc{\Yb}$
as a closed two-sided ideal.

Generalizing usual Fredholmness and the essential spectrum, one now studies invertibility
modulo $\pc{\Yb}$: An operator $A\in\lb{\Yb}$ is said to be {\sl invertible at infinity}
if there is a so-called {\sl $\Pc$-regularizer} $B\in\lb{\Yb}$ such that $AB-I$ and $BA-I$
are $\Pc$-compact. Similarly, $A\in\pb{\Yb}$ is called {\sl $\Pc$-Fredholm} if the coset
$A+\pc{\Yb}$ is invertible in the quotient algebra $\pb{\Yb}/\pc{\Yb}$.
For $A\in\pb{\Yb}$, the {\sl $\Pc$-essential spectrum} $\spess (A)$ is then the set
of all $\lambda\in\Cb$ for which $A-\lambda I$ is not $\Pc$-Fredholm.

\begin{thm}\label{TPFredh}(\cite[Theorem 1.16]{SeSi3})\\
An operator $A\in\pb{\Yb}$ is $\Pc$-Fredholm if and only if it is invertible at
infinity. In this case every $\Pc$-regularizer of $A$ belongs to $\pb{\Yb}$.
Particularly, $\pb{\Yb}$ is inverse closed in $\lb{\Yb}$.
\end{thm}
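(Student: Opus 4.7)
The plan is to handle the three separate claims in turn: the easy $\Pc$-Fredholm $\Rightarrow$ invertible at infinity direction, the harder reverse direction which in fact reduces to upgrading any $\Pc$-regularizer from $\lb{\Yb}$ to $\pb{\Yb}$, and the inverse-closedness corollary that falls out essentially for free.

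For the forward direction I would simply unpack definitions: if $A\in\pb{\Yb}$ is $\Pc$-Fredholm, then $A+\pc{\Yb}$ has an inverse $B+\pc{\Yb}$ in $\pb{\Yb}/\pc{\Yb}$ with $B\in\pb{\Yb}$, and $AB-I,BA-I\in\pc{\Yb}$ is exactly what invertibility at infinity means. The reverse implication together with the second sentence of the theorem then reduces to a single claim: if $A\in\pb{\Yb}$, $B\in\lb{\Yb}$ and $T_1:=AB-I$, $T_2:=BA-I$ lie in $\pc{\Yb}$, then $B\in\pb{\Yb}$. Once this is established, $B$ realises the inverse of $A+\pc{\Yb}$ in the quotient $\pb{\Yb}/\pc{\Yb}$, which gives both the $\Pc$-Fredholmness of $A$ and the ``every regularizer lies in $\pb{\Yb}$'' clause, since $B$ was an arbitrary regularizer.

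To attack this central claim, my first move would be to replace the defining condition $BK,KB\in\pc{\Yb}$ for all $K\in\pc{\Yb}$ by the more concrete criterion that $BP_m$ and $P_mB$ are $\Pc$-compact for every $m$. This reduction is routine: every finite-support operator factors through some $P_N\in\pc{\Yb}$, the finite-support operators are dense in $\pc{\Yb}$ by construction, and $\pc{\Yb}$ is closed under composition because $(I-P_n)K_1K_2\to 0$ whenever $(I-P_n)K_1\to 0$. The easy halves $BP_m(I-P_n)=0$ and $(I-P_n)P_mB=0$ for $n\ge m$ are immediate from $(\Pc1)$, so the whole theorem collapses to proving $(I-P_n)BP_m\to 0$ and $P_mB(I-P_n)\to 0$ as $n\to\infty$. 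This is where I expect the main obstacle. The relations available for free, such as $BAB=(I+T_2)B=B(I+T_1)$ or $ABP_m=P_m+T_1P_m$, quickly boil down to the tautology $T_2B=BT_1$ and do not by themselves produce any norm decay. My plan is to feed in $A\in\pb{\Yb}$ (so that $AP_n,P_nA\in\pc{\Yb}$) and use the uniform bound $(\Pc2)$ to approximate $B$ by operators of the form $P_nBAP_n$ or similar cutoffs, controlling the remainder via the known $\Pc$-compactness of $T_1,T_2$ and of $AP_n,P_nA$; this careful construction is the technical heart of the argument and I would follow the line of \cite[Theorem 1.16]{SeSi3}.

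Finally, inverse closedness falls out immediately: if $A\in\pb{\Yb}$ is invertible in $\lb{\Yb}$ with inverse $A^{-1}$, then $A^{-1}$ is trivially a $\Pc$-regularizer (the remainders $T_1,T_2$ vanish and so lie in $\pc{\Yb}$), and the just-established claim forces $A^{-1}\in\pb{\Yb}$.
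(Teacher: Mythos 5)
First, a point of comparison: the paper does not prove this theorem at all --- it is quoted verbatim from \cite[Theorem 1.16]{SeSi3} --- so there is no in-paper argument to measure your proposal against. Your structural reductions are all correct and are worth having on record: the forward implication is indeed definition-unpacking; the backward implication and the ``every regularizer lies in $\pb{\Yb}$'' clause do collapse into the single claim that $A\in\pb{\Yb}$, $B\in\lb{\Yb}$, $AB-I,\,BA-I\in\pc{\Yb}$ force $B\in\pb{\Yb}$; the reduction of membership in $\pb{\Yb}$ to $BP_m,P_mB\in\pc{\Yb}$ for all $m$ (via $P_nKP_n\to K$ for $K\in\pc{\Yb}$ and closedness of $\pc{\Yb}$) is sound; and inverse closedness then is immediate. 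You also correctly diagnose that the free algebraic identities built from $T_1=AB-I$ and $T_2=BA-I$ only return the tautology $T_2B=BT_1$.

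The genuine gap is that the decay estimates $\|(I-P_n)BP_m\|\to0$ and $\|P_mB(I-P_n)\|\to0$ --- which you rightly identify as the technical heart --- are not actually established, and the one concrete device you name does not work: $P_nBAP_n=P_n+P_nT_2P_n$ is an approximation of $P_n$, not of $B$, so ``cutoffs of the form $P_nBAP_n$'' cannot recover $B$. The condition $(\Pc2)$ alone is likewise not enough; every attempt to push a projection through $B$ using only $T_1,T_2\in\pc{\Yb}$ and $AP_m,P_mA\in\pc{\Yb}$ produces a remainder of the form $(I-P_n)BK'$ with $K'\in\pc{\Yb}$, i.e.\ the same quantity one is trying to control, with no contraction factor. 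The missing ingredient is the almost-commuting sequence $(F_n)$ of Proposition \ref{PFn} (i.e.\ \cite[Theorem 1.15]{SeSi3}), with $\|[A,F_n]\|\to0$ and the interlacing relations $F_nP_m=F_n$, $P_nF_m=P_n$: it is this device that lets one transfer the localization of $A$ to its regularizer $B$, and it is precisely the part of \cite{SeSi3} that the present paper does reproduce (presumably because it is indispensable here and in Lemma \ref{LInverses}). Writing ``I would follow the line of \cite[Theorem 1.16]{SeSi3}'' defers exactly the step that carries all the content, so as a self-contained argument the proposal is incomplete at that point, even though everything surrounding it is correct.
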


Finally, if $\Pc$ is a uniform approximate identity,
say that a sequence $(A_n)\subset\lb{\Yb}$ converges {\sl $\Pc$-strongly} to an
operator $A\in\lb{\Yb}$ if
\begin{equation}\label{eq:pto}
\|K(A_n-A)\|+\|(A_n-A)K\|\to0\quad\text{as}\quad n\to\infty
\end{equation}
for every $K\in\pc{\Yb}$.
We shortly write $A_n\pto A$ or $A=\plim A_n$ in that case.
Note that \eqref{eq:pcomp} and \eqref{eq:pto} immediately imply\footnote{It becomes
clear that $\pc{\Yb}$ and $\pto$ are actually tailor-made by \eqref{eq:pcomp}
and \eqref{eq:pto} for this purpose.} $P_n\pto I$.
By \cite[Theorem 1.65]{Marko}, $A_n\pto A$ is equivalent to
the sequence $(A_n)$ being bounded and
\[
\|P_m(A_n-A)\|+\|(A_n-A)P_m\|\to0\quad\text{as}\quad n\to\infty
\]
for every $m\in\Nb$.  Also note \cite[Prop. 1.1.17]{LimOps} that for the $\Pc$-limit $A$ one has
\begin{equation}\label{eq:liminf}
A\in\pb{\Yb}\qquad\textrm{and}\qquad\|A\|\leq \liminf\|A_n\|
\end{equation}
if all $A_n$ are in $\pb{\Yb}$.

$\Pc$-compactness determines the notions of $\Pc$-convergence and $\Pc$-Fredholmness just like
compactness does with strong convergence and the usual Fredholmness \cite[Section 1.1]{LimOps}.

\begin{rem}\label{RFinDim}
For $\Yb=\Xb=l^p(\Zb^N,X)$, one has $\pc{\Xb}\supset\lc{\Xb}$ if $p\in(1,\infty)$, whereas
$\pc{\Xb}\subset\lc{\Xb}$ if $\dim X<\infty$. So for $p\in(1,\infty)$ and $\dim X<\infty$,
the $\Pc$-notions coincide with the classical ones: $\pc{\Xb}=\lc{\Xb}$, $\pb{\Xb}=\lb{\Xb}$,
an operator is $\Pc$-Fredholm if and only if it is Fredholm, and
a sequence $(A_n)$ converges $\Pc$-strongly to $A$ if and only if $A_n\to A$ and
$A_n^*\to A^*$ strongly.

The reason for the definition of the $\Pc$-notions is to extend the well-known
concepts, tools and connections between them in a way that they
still apply to relevant operators and sequences in the cases $p\in\{1,\infty\}$ and/or
$\dim X=\infty$. For example, although $P_n\not\to I$ if $p=\infty$ and $P_n^*\not\to I^*$
if $p=1$, one still has $P_n\pto I$ in all cases. Also, each $P_n$ is $\Pc$-compact,
although not compact, in case $\dim X=\infty$.
\end{rem}

Anyway, on $\Xb$, the (classical) Fredholm property nicely fits into the generalized
$\Pc$-setting:
\begin{prop}\label{PFredh}
\cite[Corollary 12]{SeFre}\\
Let $A\in\pb{\Xb}$ be Fredholm. Then $A$ is $\Pc$-Fredholm and has a
generalized inverse $B\in\pb{\Xb}$, i.e. $A=ABA$ and $B=BAB$. Moreover, $A$ is
Fredholm of index zero if and only if there exists an invertible operator
$C\in\pb{\Xb}$ and an operator $K\in\pc{\Xb}$ of finite rank such that $A=C+K$.
\end{prop}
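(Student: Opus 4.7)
My plan is to deduce both parts from classical Fredholm theory, with Theorem~\ref{TPFredh} as the main engine that converts ``$\Pc$-compact defects'' into ``$\Pc$-Fredholmness with regularizer automatically in $\pb{\Xb}$.'' I would start by invoking the classical Fredholm property of $A$ to pick a bounded generalized inverse $B\in\lb{\Xb}$ with $ABA=A$ and $BAB=B$; then $P:=I-BA$ is a finite-rank projection onto $\ker A$ along some closed complement $Z$, and $Q:=I-AB$ is a finite-rank projection onto a complement $R$ of $\im A$. The plan then has two technical steps: (a) show that $P$ and $Q$ can be arranged to lie in $\pc{\Xb}$, which together with Theorem~\ref{TPFredh} gives the first assertion; and (b) upgrade (a) to the decomposition $A=C+K$ of the ``moreover.''

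The main obstacle is (a). A finite-rank operator on $\Xb=l^p(\Zb^N,X)$ is not automatically $\Pc$-compact in the endpoint cases $p\in\{0,1,\infty\}$ or with $\dim X=\infty$, so the projections $P,Q$ supplied by plain Fredholm theory cannot be used as-is. Here the hypothesis $A\in\pb{\Xb}$ is essential: since $AP_n,P_nA\in\pc{\Xb}$, the operator $A$ commutes with the canonical projections modulo $\pc{\Xb}$. Combined with the finite-dimensionality of $\ker A$ and of $R$, this will force the existence of a basis $v_1,\dots,v_k$ of $\ker A$ with $\|(I-P_n)v_i\|\to 0$ and, dually via Hahn--Banach, functionals $\phi_1,\dots,\phi_m$ supporting $Q$ with $\|(I-P_n)^*\phi_j\|\to 0$. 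Writing the projections in product form $P=\sum_i v_i\otimes\psi_i$ and $Q=\sum_j r_j\otimes\phi_j$, I would expand the definition of $\Pc$-compactness term by term, bounding $\|(I-P_n)P\|+\|P(I-P_n)\|\le\sum_i(\|(I-P_n)v_i\|\|\psi_i\|+\|v_i\|\|(I-P_n)^*\psi_i\|)$, to conclude $P,Q\in\pc{\Xb}$. Granting (a), the identities $BA-I=-P$ and $AB-I=-Q$ make $B$ a $\Pc$-regularizer of $A$, and Theorem~\ref{TPFredh} then yields that $A$ is $\Pc$-Fredholm and $B\in\pb{\Xb}$.

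For the ``moreover,'' the easy direction is immediate: if $A=C+K$ with $C\in\pb{\Xb}$ invertible and $K\in\pc{\Xb}$ of finite rank, then $K\in\lc{\Xb}$, so $A$ is a compact perturbation of the index-zero Fredholm operator $C$ and is itself Fredholm of index $0$; and $A\in\pb{\Xb}$ since $\pc{\Xb}\subset\pb{\Xb}$. For the converse, assume $A\in\pb{\Xb}$ is Fredholm of index $0$. I would apply the first part to obtain $B\in\pb{\Xb}$ together with $P,Q\in\pc{\Xb}$, then use the index-zero equality $\dim\ker A=\dim R$ to fix a linear isomorphism $\sigma\colon\ker A\to R$ and define the finite-rank operator $F:=\sigma P$. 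This $F$ lies in $\pc{\Xb}$ because $F=QFP$ and $\pc{\Xb}$ is a two-sided ideal of $\pb{\Xb}$ containing $P$ and $Q$. Setting $C:=A+F$, a direct check using the decompositions $\Xb=\ker A\oplus Z$ and $\Xb=R\oplus\im A$ shows that $C$ is a bijection on $\Xb$: if $Cx=0$ and $x=u+w$ with $u\in\ker A$, $w\in Z$, then $Aw+Fu=0$ with $Aw\in\im A$ and $Fu\in R$ disjoint, so $Aw=0=Fu$ and hence $x=0$; surjectivity follows by a dimension count. Thus $C\in\pb{\Xb}$ is invertible, and $A=C+(-F)$ is the required decomposition, with $-F\in\pc{\Xb}$ of finite rank; inverse closedness of $\pb{\Xb}$ (Theorem~\ref{TPFredh}) additionally guarantees $C^{-1}\in\pb{\Xb}$ whenever this is needed in the sequel.
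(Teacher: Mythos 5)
The paper offers no proof of this proposition -- it is quoted verbatim from \cite[Corollary 12]{SeFre} -- so your argument has to be measured against what that cited result actually requires. Your overall architecture is the right one and matches how the paper itself uses this circle of ideas elsewhere (cf.\ the appeal to \cite[Corollary 1.9]{SeSi3} in the proof of Theorem \ref{TNuessOplusY}): take a classical generalized inverse $B$ with finite-rank projections $P=I-BA$ and $Q=I-AB$, reduce everything to the single claim that $P,Q$ can be arranged to lie in $\pc{\Xb}$, and then let Theorem \ref{TPFredh} produce $\Pc$-Fredholmness and $B\in\pb{\Xb}$; your treatment of the ``moreover'' part is also essentially correct modulo that claim (the appeal to the ideal property for $F=QFP$ is cleaner if you just estimate $\|Q_nF\|\le\|Q_nQ\|\,\|F\|$ and $\|FQ_n\|\le\|F\|\,\|PQ_n\|$ directly). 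The problem is that your step (a) is precisely the nontrivial content of the cited corollary, and your justification of it does not close.

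On the kernel side, the assertion $\ker A\subset\Xb_0$ is true but needs a real argument, not just ``$A$ commutes with $P_n$ modulo $\pc{\Xb}$'': one takes the sequence $(F_n)$ of Proposition \ref{PFn} with $\|[A,F_n]\|\to0$, uses that $A$ is bounded below on a closed complement of $\ker A$ to write $(I-F_n)v=u_n+w_n$ with $u_n\in\ker A$ and $w_n\to0$, extracts a convergent subsequence of $(u_n)$ using $\dim\ker A<\infty$, and kills the limit with $(\Pc3)$. That can be supplied. The cokernel side is where the proof genuinely breaks. The functionals $\phi_j$ defining $Q$ must annihilate $\im A$, i.e.\ they live in the fixed finite-dimensional space $(\im A)^\perp=\ker A^*$; there is no Hahn--Banach freedom to arrange $\|\phi_jQ_n\|\to0$, so you must \emph{prove} $\ker A^*\subset(\Xb^*)_0$ (and likewise that a complement $R$ of $\im A$ can be chosen inside $\Xb_0$, which needs $\im A+\Xb_0=\Xb$). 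For $p\in\{0\}\cup[1,\infty)$ this is harmless, but for $p=\infty$ the dual sequence $\Pc^*$ fails $(\Pc3)$ on $(l^\infty)^*$ (the paper points this out explicitly), the $F_n$-argument only shows that a limiting functional annihilates $\Xb_0=l^0$, and a priori $(\im A)^\perp$ could consist of such singular functionals. Ruling this out -- typically by relating $A$ to its restriction $A|_{\Xb_0}$ and its Fredholm data -- is exactly the work done in \cite[Corollary 12]{SeFre} and \cite[Corollary 1.9]{SeSi3}, and it is absent from your proposal.
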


\subparagraph{Equivalent approximate projections}

If we fix an approximate projection $\Pc$ and an operator $A\in\pb{\Yb}$, we can always find an equivalent approximate projection that is tailored for $A$. This provides noticeable simplifications in many arguments.

\begin{prop} \label{PFn}
(extension of \cite[Theorem 1.15]{SeSi3})\\
Let $\Pc$ be a uniform approximate projection on $\Yb$ and $A\in\pb{\Yb}$. \\
Then there exists a sequence $\hat{\Pc} = (F_n)$ of operators that satisfies $(\Pc 1)$ and $(\Pc 2)$ with $C_{\hat{\Pc}} \leq C_{\Pc}$, and for every $n\in\Nb$ there exists $m\in\Nb$ such that $F_nP_m=P_mF_n=F_n$ as well as $P_nF_m=F_mP_n=P_n$, and $\|[A,F_n]\|=\|AF_n-F_nA\|\to 0$ as $n\to\infty$.\\
If $\Pc$ is a uniform approximate identity, then $\lim_n\|F_nx\|=\|x\|$ for every $x\in\Yb$.
\end{prop}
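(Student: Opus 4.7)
My plan is to construct $F_n$ explicitly as Ces\`aro averages
\[
F_n\;:=\;\frac{1}{L_n}\sum_{j=0}^{L_n-1}P_{M_n+j}
\]
for strictly increasing sequences $(M_n),(L_n)\subset\Nb$ satisfying the gap condition $M_{n+1}\ge M_n+L_n$, the precise choice being fixed at the end by a diagonal procedure adapted to $A$. Intuitively, each $F_n$ is a smoothed ``ramp'' that replaces the sharp cut-off $P_{M_n}$ by its average over a whole block, and this smoothing is exactly what lets the commutator with $A$ become small.

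Properties $(\Pc 1)$, $(\Pc 2)$ and the nesting with $\Pc$ follow almost mechanically. The identity $P_kP_{k'}=P_k$ for $k\le k'$ from $(\Pc 1)$ for $\Pc$, together with the gap condition, gives $F_nF_{n+1}=F_{n+1}F_n=F_n$; the bounds $\|F_n\|,\|I-F_n\|\le 1$ from convex combinations are saturated on $\im P_{M_n-1}$ and $\im Q_{M_n+L_n-1}$, where $F_n$ acts as $I$ and $0$ respectively. For $(\Pc 2)$, I expand $F_n=\sum_kc_k^{(n)}(P_{k+1}-P_k)$: the coefficients $c_k^{(n)}$ equal $1$ for $k<M_n$, decrease linearly from $1-1/L_n$ to $1/L_n$ across the block, and vanish for $k\ge M_n+L_n-1$. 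The gap condition then ensures that $\sum_{n\in U}(F_{n+1}-F_n)=\sum_kc_k(P_{k+1}-P_k)$ still has $c_k\in[0,1]$, and a layer-cake decomposition $c_k=\int_0^1\mathbf{1}_{\{c_k>t\}}\,dt$ reduces the norm to $\int_0^1\|\sum_{k:c_k>t}(P_{k+1}-P_k)\|\,dt\le C_\Pc$, so $C_{\hat{\Pc}}\le C_\Pc$. For the nesting, $m:=M_n+L_n-1$ works in $F_nP_m=P_mF_n=F_n$, and any $m$ with $M_m\ge n$ works in $P_nF_m=F_mP_n=P_n$.

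The heart of the proof is $\|[A,F_n]\|\to 0$. Since $A\in\pb{\Yb}$, each of $AP_k$, $P_kA$, and the shell differences $\Delta_kA$ is $\Pc$-compact, so $\|Q_mAP_k\|+\|P_kAQ_m\|+\|\Delta_kAQ_m\|\to 0$ as $m\to\infty$ for each fixed $k$. Splitting by $I=P_{M_n-1}+(P_{M_n+L_n-1}-P_{M_n-1})+Q_{M_n+L_n-1}$ on the right and using $F_nP_{M_n-1}=P_{M_n-1}$ together with $F_nQ_{M_n+L_n-1}=0$, one computes
\[
[A,F_n]P_{M_n-1}\;=\;\frac{1}{L_n}\sum_{j=0}^{L_n-1}Q_{M_n+j}AP_{M_n-1},
\]
whose norm is a Ces\`aro average of the vanishing sequence $\|Q_{M_n+j}AP_{M_n-1}\|$ and can be driven below $1/(3n)$ by taking $L_n$ large relative to $M_n$. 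The remaining two pieces $[A,F_n](P_{M_n+L_n-1}-P_{M_n-1})$ and $[A,F_n]Q_{M_n+L_n-1}=-F_nAQ_{M_n+L_n-1}$ are more delicate: they call for an Abel-type rearrangement that exploits the smoothness of the ramp (adjacent coefficients differ by only $1/L_n$) together with the $\Pc$-compactness of $P_{M_n+L_n-1}A$ and of the shell commutators, turning what looks like an $O(\|A\|)$ bound into one of order $1/L_n$ plus boundary terms controlled by the same compactness moduli. A diagonal choice of $(M_n,L_n)$ dominating all these rates then delivers $\|[A,F_n]\|<1/n$; this simultaneous control of the middle and $Q$-pieces is the main technical obstacle of the proof. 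Finally, under $(\Pc 3)$, $\|P_kx\|\uparrow\|x\|$ (by the sup condition plus monotonicity $\|P_kx\|\le\|P_{k+1}x\|$ from $P_k=P_kP_{k+1}$), and since $P_{M_n-1}F_n=P_{M_n-1}$ one gets $\|F_nx\|\ge\|P_{M_n-1}x\|\to\|x\|$, which with $\|F_n\|\le 1$ completes the final claim.
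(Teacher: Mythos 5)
The parts you actually carry out are correct and match the paper's approach in spirit: the paper's $F_n$ (from \cite[Theorem 1.15]{SeSi3}) are likewise Ces\`aro averages $\frac1n\sum_{k=1}^nP_{r_k^n}$ of projections, the nesting and $(\Pc1)$ follow from the ordering of indices exactly as you say, your layer-cake argument for $C_{\hat\Pc}\le C_\Pc$ is a clean way to get $(\Pc2)$, and your proof of the final claim under $(\Pc3)$ is the same squeeze $\|P_{M_n-1}x\|\le\|F_nx\|\le\|P_mx\|$ that the paper uses. The gap is the one step you defer to an unspecified ``Abel-type rearrangement'': the estimate $\|[A,F_n]\|\to0$. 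For your specific choice of $F_n$ -- an average over \emph{consecutive} projections $P_{M_n},P_{M_n+1},\dots,P_{M_n+L_n-1}$ -- this estimate is simply false for general $A\in\pb{\Yb}$, no matter how $M_n$ and $L_n$ are chosen. Take $\Xb=l^2(\Zb,\Cb)$ with the canonical $\Pc$ and let $A$ be the unitary permutation operator swapping $e_{2^k}\leftrightarrow e_{2^{k+1}-1}$ for all $k\ge1$ and fixing the other basis vectors; then $AP_m$ and $P_mA$ have finite support, so $A\in\pb{\Xb}$. Writing $Fe_i=f(|i|)e_i$ with the ramp $f(t)=\frac1L\#\{0\le j<L:\,t\le M+j\}$, one gets $[A,F]e_i=\bigl(f(|i|)-f(|\sigma(i)|)\bigr)e_{\sigma(i)}$, and since this matrix has at most one nonzero entry per row and column, $\|[A,F]\|=\sup_i|f(|i|)-f(|\sigma(i)|)|$, which equals $\frac1L$ times the largest overlap of a swap interval $[2^k,2^{k+1}-1]$ with the ramp region. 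Because these intervals tile $[2,\infty)$ and grow geometrically, every ramp $[M,M+L]$ with $M\ge4$, $L\ge2$ is overlapped to a fraction bounded below by an absolute constant (a three-case check gives $\ge 3/16$). Hence $\|[A,F_n]\|\ge 3/16$ for every admissible choice of $(M_n,L_n)$.

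The moral is that the indices inside the average must be chosen \emph{adaptively and with large gaps}, which is precisely what \cite[Theorem 1.15]{SeSi3} does: one picks $1<r_1^n<\dots<r_n^n$ recursively so that $\|Q_{r_{k+1}^n}AP_{r_k^n}\|+\|P_{r_k^n}AQ_{r_{k+1}^n}\|$ is as small as desired (possible since $AP_m,P_mA\in\pc{\Yb}$ for $A\in\pb{\Yb}$). Then each commutator $[A,P_{r_k^n}]=Q_{r_k^n}AP_{r_k^n}-P_{r_k^n}AQ_{r_k^n}$ is, up to controllable errors, compressed to $\im(P_{r_{k+1}^n}-P_{r_{k-1}^n})$ on both sides; the resulting almost-block-diagonal sum is bounded via $(\Pc2)$ by a constant times the largest block, and averaging $n$ such blocks yields $\|[A,F_n]\|=O(1/n)$ plus the small errors. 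With equispaced indices the blocks all pile up at the same locations (as the example shows), and no rearrangement recovers the decay. Your first piece $[A,F_n]P_{M_n-1}=\frac1{L_n}\sum_jQ_{M_n+j}AP_{M_n-1}$ is indeed small for $L_n$ large, but the middle and tail pieces are exactly where the construction breaks, and in the example above the defect is carried by basis vectors sitting in those ranges.
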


\begin{proof}
The existence of $(F_n)$ with $F_nP_m=P_mF_n=F_n$ and $P_nF_m=F_mP_n=P_n$ as announced,
and $\|[A,F_n]\|=\|AF_n-F_nA\|\to 0$ as $n\to\infty$ was proved in
\cite[Theorem 1.15]{SeSi3}. Actually, for each $n\in\Nb$, these $F_n$ are of the form
(see \cite[Equation (1.4)]{SeSi3} and the proof there)
\[F_n=\frac1n\sum_{k=1}^nkP_{U_{n-k}^n}=\frac1n\left(\sum_{k=1}^{n-1} k(P_{r_{n-k+1}^n}-P_{r_{n-k}^n}) + nP_{r_1^n}\right)=\frac1n\sum_{k=1}^nP_{r_k^n}\]
with certain integers $1<r_1^n<r_2^n<\ldots<r_n^n$. Thus,
\[1=\|P_1\|=\|P_1F_n\|\leq\|F_n\|\leq \frac1n\sum_{k=1}^n\|P_{r_k^n}\|=\frac nn =1.\]
Similarly, for every $n \in \Nb$ there exists $m \in \Nb$ such that
\[1=\|I-P_m\|=\|(I-P_m)(I-F_n)\|\leq\|I-F_n\|\leq \frac1n\sum_{k=1}^n\|I-P_{r_k^n}\|=\frac{n}{n}=1.\]
For $F_n = F_nF_{n+1} = F_{n+1}F_n$ and $(\Pc 2)$ see again \cite[Theorem 1.15]{SeSi3}.
Finally, since $\|F_nx\|=\|F_nP_mx\|\leq\|P_mx\|$ and $\|P_nx\|=\|P_nF_mx\|\leq \|F_mx\|$ for
$m\gg n$ we have
\[\sup_n\|F_nx\|=\lim_n\|F_nx\|=\lim_n\|P_nx\|=\sup_n\|P_nx\|\]
for each $x\in\Yb$. Hence if additionally $(\Pc3)$ is fulfilled then $\lim_n\|F_nx\|=\|x\|$
for every $x\in\Yb$.
\end{proof}

\subparagraph{Band and band-dominated operators}
Every sequence $a=(a_n)\in l^\infty(\Zb^N,\lb{X})$ gives rise to an operator
$aI\in\lb{\Xb}$, a so-called multiplication operator, via the rule
$(ax)_i=a_ix_i$, $i\in\Zb^N.$
For every $\alpha\in\Zb^N$, we define the shift operator
$V_\alpha:\Xb\to\Xb$, $(x_i)\mapsto(x_{i-\alpha})$.

A band operator is a finite sum of the form $\sum a_\alpha V_\alpha$,
where $a_\alpha I$ are multiplication operators.
In terms of the generalized matrix-vector multiplication
\[
(a_{ij})_{i,j\in\Zb^N}\ (x_j)_{j\in\Zb^N}=(y_i)_{i\in\Zb^N}
\quad\textrm{with}\quad
y_i\ =\ \sum_{j\in\Zb^N}a_{ij}x_j,\ i\in\Zb^N,\quad\textrm{where\ } a_{ij}\in\lb{X},
\]
band operators $A$ act on $\Xb=l^p(\Zb^N,X)$ via multiplication by band matrices $(a_{ij})$,
that means $a_{ij}=0$ if $|i-j|$ exceeds the so-called band-width of $A$.
Typical examples are discretizations of differential operators on $\Rb^N$.

In many physical models, however, interaction $a_{ij}$ between data at locations
$i$ and $j$ decreases in a certain way as $|i-j|\to\infty$ rather than suddenly stop
at a prescribed distance of $i$ and $j$. An operator is called {\sl band-dominated}
if it is contained in the $\lb{\Xb}$-closure, denoted by $\Alp$, of the set $\Aln$
of all band operators .
In contrast to $\Aln$ (which is an algebra but not closed in $\lb{\Xb}$),
the set $\Alp$ is a Banach algebra, for which the inclusions
\[
\pc{\Xb}\subset\Alp\subset\pb{\Xb}\subset\lb{\Xb}
\]
hold. In particular, $\pc{\Xb}$ is a two-sided closed ideal in $\Alp$.

\begin{thm}\label{TBDOInvCl}
\cite[Propositions 2.1.7 et seq.]{LimOps}\\
Let $A\in\Alp$ be $\Pc$-Fredholm. Then every $\Pc$-regularizer of $A$ is band-dominated
as well. In particular, the quotient algebra $\Alp/\pc{\Xb}$ is inverse closed in $\pb{\Xb}/\pc{\Xb}$,
and $\Alp$ is inverse closed in both $\pb{\Xb}$ and $\lb{\Xb}$.
\end{thm}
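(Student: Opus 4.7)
The plan is to first reduce all three assertions of the theorem to a single core claim, then tackle that claim by approximation. The inverse-closedness statements follow quickly from the assertion about $\Pc$-regularizers: if $A\in\Alp$ is invertible in $\lb{\Xb}$, then $A$ is trivially $\Pc$-Fredholm with the ordinary inverse $A^{-1}$ serving as a $\Pc$-regularizer, so the $\Pc$-regularizer claim gives $A^{-1}\in\Alp$; the statement inside $\pb{\Xb}$ is identical in spirit, using Theorem \ref{TPFredh} to locate the ambient inverse in $\pb{\Xb}$; and if $A+\pc{\Xb}$ is invertible in $\pb{\Xb}/\pc{\Xb}$ with inverse $B+\pc{\Xb}$, then $B$ is a $\Pc$-regularizer of $A$, so $B\in\Alp$ places the inverse coset into $\Alp/\pc{\Xb}$. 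Everything thus reduces to the single claim: if $A\in\Alp$ is $\Pc$-Fredholm with $\Pc$-regularizer $B\in\pb{\Xb}$, then $B\in\Alp$.

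To prove this reduced claim I would pick band operators $A_n\in\Aln$ with $\|A_n-A\|\to 0$. The coset $A+\pc{\Xb}$ is invertible in $\pb{\Xb}/\pc{\Xb}$, and since invertibility is open in a Banach algebra, for $n$ large the cosets $A_n+\pc{\Xb}$ are also invertible; their inverses are produced explicitly by a Neumann series applied to $I+B(A_n-A)+\pc{\Xb}$, yielding $\Pc$-regularizers $B_n\in\pb{\Xb}$ of $A_n$ with $B_n+\pc{\Xb}\to B+\pc{\Xb}$ in the quotient norm. If one can show each such $B_n$ belongs to $\Alp$, then since $\Alp/\pc{\Xb}$ is closed as a Banach subalgebra of $\pb{\Xb}/\pc{\Xb}$, the limiting coset $B+\pc{\Xb}$ lies in $\Alp/\pc{\Xb}$, whence $B\in\Alp+\pc{\Xb}=\Alp$ (using $\pc{\Xb}\subset\Alp$).

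The problem is now reduced to the band-operator case: if $A\in\Aln$ is $\Pc$-Fredholm, every $\Pc$-regularizer of $A$ lies in $\Alp$. This is where I expect the main obstacle. My approach would be via a quantitative off-diagonal decay argument for the inverse of a banded matrix, in the spirit of Demko--Jaffard--Gr\"ochenig--Leinert. An alternative, closely related tactic is a patching construction built on the tailored approximate projections $(F_n)$ from Proposition \ref{PFn}: for $n$ large enough that $\|[A,F_n]\|$ is small, one would invert $A$ locally on the support of each $F_n$, then glue the local inverses together using $F_n$-type cut-offs; the gluing produces commutator error terms controlled by $\|[A,F_n]\|$, and these vanish as $n\to\infty$, delivering a band-dominated global regularizer modulo an element of $\pc{\Xb}$.

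The hardest step is obtaining this quantitative decay (or equivalently, carrying out the patching with uniformly controlled constants) in the full generality of $p\in\{1,\infty\}$ and infinite-dimensional coefficient spaces $X$, where classical Hilbert-space techniques are unavailable and one must argue directly inside the $\Pc$-framework. This is essentially the content of the cited \cite[Propositions 2.1.7 et seq.]{LimOps}.
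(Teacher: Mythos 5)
The paper offers no proof of this theorem: it is quoted from \cite[Propositions 2.1.7 et seq.]{LimOps}, so there is no internal argument to compare yours against. Judged on its own terms, your reductions are correct: the three inverse-closedness assertions do all collapse to the single claim that a $\Pc$-regularizer $B$ of a $\Pc$-Fredholm $A\in\Alp$ lies in $\Alp$; any two regularizers of the same operator differ by an element of $\pc{\Xb}\subset\Alp$, and $\Alp/\pc{\Xb}$ is indeed a closed subalgebra of $\pb{\Xb}/\pc{\Xb}$, so your Neumann-series approximation step is sound. Note, however, that passing from $\Alp$ to $\Aln$ buys essentially nothing: the difficulty is identical for band and band-dominated operators, and the cited proof does not make this reduction.

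The genuine gap is the core claim itself, which is the entire content of the theorem, and neither tactic you sketch would close it. The Demko--Jaffard--Gr\"ochenig--Leinert circle of results concerns off-diagonal decay of $A^{-1}$ for genuinely invertible banded $A$, typically on $\ell^2$ with scalar entries, and rests on spectral and symmetry arguments unavailable for $p\in\{1,\infty\}$ or $\dim X=\infty$; moreover $B$ here is only an inverse modulo $\pc{\Xb}$, and elements of $\pc{\Xb}$ carry no pointwise off-diagonal decay, so a decay estimate is not even the right target. The patching sketch never explains how the glued local inverses assemble into an operator that is provably band-dominated. The missing idea --- and the actual mechanism behind \cite[Propositions 2.1.7--2.1.9]{LimOps} --- is the commutator characterization of $\Alp$: an operator $B$ is band-dominated if and only if $\|[B,\varphi(t\cdot)I]\|\to 0$ as $t\to 0$ for suitable slowly oscillating multipliers $\varphi$. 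Writing $K:=BA-I\in\pc{\Xb}$ and $L:=AB-I\in\pc{\Xb}$ and $\varphi_t:=\varphi(t\cdot)$, one has the identity
\[
[B,\varphi_t I] \;=\; -B\,[A,\varphi_t I]\,B \;+\; K\varphi_t B \;-\; B\varphi_t L ,
\]
whose first term tends to zero in norm because $A$ is band-dominated, and whose remaining terms tend to zero because $\Pc$-compact operators asymptotically commute in norm with $\varphi_t I$ and because $KB=BL$. Without this characterization (or an equivalent substitute), your argument remains a deferral to the very result being proved, as you yourself acknowledge in your final sentence.
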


So for $A\in\Alp$, the following are equivalent:
\begin{itemize}
\itemsep-1mm
\item
$A$ is invertible at infinity (i.e. it has a $\Pc$-regularizer in $\lb{\Xb}$),
\item
$A$ is $\Pc$-Fredholm (it has a $\Pc$-regularizer in $\pb{\Xb}$),
\item
the coset \eqref{eq:coset} is invertible ($A$ has a $\Pc$-regularizer in $\Alp$).
\end{itemize}

The first studies of particular subclasses of band-dominated operators and their Fredholm properties were for
the case of constant matrix diagonals, that is when the matrix entries $a_{ij}$ only depend on the difference $i-j$,
so that $A$ is a convolution
operator (a.k.a. Laurent or bi-infinite Toeplitz matrix, the stationary case) \cite{GoKre,Sim,GoFe,HaRoSi,Analysis}. Subsequently, the focus went to more general classes, such as convergent, periodic and almost periodic matrix diagonals, until at the current point arbitrary matrix diagonals can be studied -- as long as they are bounded. This possibility is due to the notion of limit operators that enables evaluation of the asymptotic behavior of an operator $A$ even for merely bounded diagonals in the matrix $(a_{ij})$.

\subparagraph{Limit operators}

Say that a sequence $h=(h_n)\subset \Zb^N$ tends to infinity if $|h_n|\to\infty$ as
$n\to\infty$.
If $h=(h_n)\subset \Zb^N$ tends to infinity and $A\in\pb{\Xb}$ then
\[A_h:=\plimn V_{-h_n}AV_{h_n},\]
if it exists, is called the {\sl limit operator} of $A$ w.r.t.~the sequence $h$.
The set \eqref{eq:opspec}
of all limit operators of $A$ is its {\sl operator spectrum}, $\opsp(A)$.

\begin{prop} \label{prop:limops}\cite[Proposition 1.2.2]{LimOps}
Let $A,B\in\pb{\Xb}$ and $h=(h_n)\subset\Zb^N$ tend to infinity such that $A_h$ and $B_h$ exist. Then:
\begin{itemize}
\itemsep-1mm
\item also $(A+B)_h$ and $(AB)_h$ exist, where $(A+B)_h=A_h+B_h$ and $(AB)_h=A_hB_h$;
\item if $p<\infty$, also $(A^*)_h$ exists and equals $(A_h)^*$;
\item the inequality $\|A_h\|\le\|A\|$ holds.
\end{itemize}
\end{prop}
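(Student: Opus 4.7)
The plan is to verify the three assertions separately by working directly with the definition $A_h=\plim V_{-h_n}AV_{h_n}$ and using the equivalent characterisation of $\Pc$-convergence via the projections $P_m$.

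The norm inequality is the cleanest piece. Since $V_{h_n}$ is an isometric bijection of $\Xb$ with inverse $V_{-h_n}$, the conjugate $V_{-h_n}AV_{h_n}$ has operator norm exactly $\|A\|$. Combined with the lower-semicontinuity statement \eqref{eq:liminf}, this immediately gives $\|A_h\|\le \liminf_n\|V_{-h_n}AV_{h_n}\|=\|A\|$, and also that $A_h\in\pb{\Xb}$. Addition is equally painless: $V_{-h_n}(A+B)V_{h_n}=V_{-h_n}AV_{h_n}+V_{-h_n}BV_{h_n}$, and linearity of the $\Pc$-limit (which follows from the defining equations \eqref{eq:pto}) yields $(A+B)_h=A_h+B_h$.

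The multiplication claim is the real content and where I would spend most effort. Writing $\tilde A_n:=V_{-h_n}AV_{h_n}$, $\tilde B_n:=V_{-h_n}BV_{h_n}$, the identity $V_{h_n}V_{-h_n}=I$ gives $V_{-h_n}(AB)V_{h_n}=\tilde A_n\tilde B_n$, which is bounded uniformly by $\|A\|\|B\|$. To apply the equivalent characterisation of $\Pc$-convergence, I want to show $\|P_m(\tilde A_n\tilde B_n-A_hB_h)\|\to0$ and $\|(\tilde A_n\tilde B_n-A_hB_h)P_m\|\to0$ for every fixed $m$. I would use the three-term split
\[
\tilde A_n\tilde B_n-A_hB_h=(\tilde A_n-A_h)(\tilde B_n-B_h)+A_h(\tilde B_n-B_h)+(\tilde A_n-A_h)B_h.
\]
After multiplying by $P_m$ on the left, the first summand is bounded by $\|P_m(\tilde A_n-A_h)\|\cdot 2\|B\|\to0$ and the third by $\|P_m(\tilde A_n-A_h)\|\cdot\|B_h\|\to0$, both because $\tilde A_n\pto A_h$. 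The middle summand $P_mA_h(\tilde B_n-B_h)$ is the delicate one: here I would exploit that $A_h\in\pb{\Xb}$ (from \eqref{eq:liminf}) and that $P_m\in\pc{\Xb}$, so $K:=P_mA_h$ lies in the ideal $\pc{\Xb}$, and the very definition of $\tilde B_n\pto B_h$ forces $\|K(\tilde B_n-B_h)\|\to0$. The estimate on the right, $\|(\tilde A_n\tilde B_n-A_hB_h)P_m\|\to0$, is obtained by the symmetric split, this time using $B_hP_m\in\pc{\Xb}$ to handle the middle term through $\tilde A_n\pto A_h$.

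For the adjoint statement, valid only when $p<\infty$ so that $\Xb^*\cong l^q(\Zb^N,X^*)$ is again a sequence space on which $\Pc^*$ is an approximate projection, I would observe that $V_\alpha^*=V_{-\alpha}$, hence $(\tilde A_n)^*=V_{-h_n}A^*V_{h_n}$. Taking adjoints is isometric and converts the two-sided norm conditions $\|K(\tilde A_n-A_h)\|+\|(\tilde A_n-A_h)K\|\to0$ into the analogous conditions on $\Xb^*$ with $K^*$ in place of $K$. Since $\pc{\Xb^*}$ contains the adjoints of elements of $\pc{\Xb}$ (one checks that $K\in\pc{\Xb}$ implies $K^*\in\pc{\Xb^*}$ directly from \eqref{eq:pcomp}), it follows that $(\tilde A_n)^*\pto (A_h)^*$ with respect to $\Pc^*$, so $(A^*)_h$ exists and equals $(A_h)^*$. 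The main obstacle, as indicated, is the middle term in the product decomposition, and the trick of inserting $A_h$ (or $B_h$) to create an honest $\Pc$-compact operator is exactly what unlocks it.
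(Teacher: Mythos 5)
This proposition is not proved in the paper at all --- it is quoted verbatim from \cite[Proposition 1.2.2]{LimOps} --- so there is no in-text argument to compare against; your proof stands on its own and is essentially the standard one. The norm bound via isometry of the shifts plus \eqref{eq:liminf}, the linearity of the $\Pc$-limit for sums, and the three-term splitting for products are all correct: the two outer terms are killed by $\|P_m(\tilde A_n-A_h)\|\to 0$ (resp.\ $\|(\tilde B_n-B_h)P_m\|\to 0$), and the cross term is absorbed into the ideal via $P_mA_h\in\pc{\Xb}$ (resp.\ $B_hP_m\in\pc{\Xb}$), which legitimately uses $A_h,B_h\in\pb{\Xb}$ from \eqref{eq:liminf}. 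The one imprecision is in the adjoint step: taking adjoints hands you the conditions \eqref{eq:pto} only for operators of the form $K^*$ with $K\in\pc{\Xb}$, and these form a priori a \emph{proper} subset of $\lc{\Xb^*,\Pc^*}$, so the inclusion you invoke points the wrong way for concluding convergence against \emph{all} of $\lc{\Xb^*,\Pc^*}$. The repair is one line and uses the characterization you already rely on for products: $((\tilde A_n)^*)$ is bounded by $\|A\|$ and converges against each $P_m^*=(P_m)^*$, which by \cite[Theorem 1.65]{Marko}, applied on $\Xb^*$ where $\Pc^*$ is still a uniform approximate identity because $p<\infty$, is equivalent to $(\tilde A_n)^*\pto (A_h)^*$. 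With that sentence added, the proof is complete.
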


\begin{thm}\label{TLimOps}
\cite[Theorem 16]{SeFre}\\
Let $A\in\pb{\Xb}$ be $\Pc$-Fredholm.
Then all limit operators of $A$ are invertible and their inverses are uniformly
bounded. Moreover, the operator spectrum of every $\Pc$-regularizer $B$ of $A$ equals
\begin{equation}\label{ERegOpSpec}
\opsp(B) = \{A_h^{-1}:A_h\in\opsp(A)\}.
\end{equation}
\end{thm}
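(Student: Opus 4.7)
The plan is to lift the Bezout-type identities $AB-I,\,BA-I\in\pc{\Xb}$ that witness the $\Pc$-Fredholmness of $A$ to the shifted operators, and then pass to $\Pc$-limits to produce an inverse of each $A_h$. Concretely I would fix a $\Pc$-regularizer $B\in\pb{\Xb}$, set $K_1:=AB-I$ and $K_2:=BA-I$ (both in $\pc{\Xb}$), and for a sequence $h=(h_n)$ tending to infinity with $A_h=\plim A_n$ existing ($A_n:=V_{-h_n}AV_{h_n}$), write $B_n:=V_{-h_n}BV_{h_n}$ and $K_j^{(n)}:=V_{-h_n}K_jV_{h_n}$, so the conjugated identities
\[
A_nB_n=I+K_1^{(n)},\qquad B_nA_n=I+K_2^{(n)}
\]
hold on the nose.

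The first step is to show $K_j^{(n)}\pto 0$. Using that shifts are isometries, $\|P_mV_{-h_n}K_jV_{h_n}\|=\|(V_{h_n}P_mV_{-h_n})K_j\|$, and for $|h_n|$ large the shifted projection $V_{h_n}P_mV_{-h_n}$ (multiplication by the characteristic function of $h_n+\{-m,\ldots,m\}^N$) has support disjoint from $\{-k,\ldots,k\}^N$, so it factorizes as $(V_{h_n}P_mV_{-h_n})(I-P_k)$ and the norm is bounded by $\|(I-P_k)K_j\|$, which tends to $0$ by \eqref{eq:pcomp}; the symmetric side is analogous. Hence $A_nB_n\pto I$ and $B_nA_n\pto I$.

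The core step, and the one I expect to be the main obstacle, is to extract a $\Pc$-convergent subsequence from the uniformly bounded sequence $(B_n)\subset\pb{\Xb}$ without assuming richness of $A$. My plan is to tailor an equivalent approximate projection $(F_m)$ to $B$ via Proposition \ref{PFn}, so that $[B,F_m]\to 0$ and each $F_m$ is sandwiched between two $P_\ell$'s, and then diagonally extract a subsequence $g=(g_k)$ of $h$ along which $F_mB_{g_k}F_m$ converges in norm for each fixed $m$; the approximate commutation $[B,F_m]\to 0$ combined with the sandwich relations then upgrades this to full $\Pc$-convergence $B_{g_k}\pto B^\star$ for some $B^\star\in\pb{\Xb}$ with $\|B^\star\|\le\|B\|$, by \eqref{eq:liminf}.

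With $B^\star$ in hand, Proposition \ref{prop:limops} applied along $g$ gives $A_hB^\star=(AB)_g=(I+K_1)_g=I$ and likewise $B^\star A_h=I$, since $I_g=I$ trivially and $(K_j)_g=0$ by the first step. So $A_h$ is invertible in $\lb{\Xb}$ with $A_h^{-1}=B^\star$ and $\|A_h^{-1}\|\le\|B\|$, a bound uniform over $\opsp(A)$. Any further subsequential $\Pc$-limit of $(B_n)$ is again a two-sided inverse of $A_h$ and must coincide with $B^\star$; the standard subsequence-of-subsequence argument then forces the full sequence to $\Pc$-converge, yielding $B_h=A_h^{-1}$ and the inclusion $\{A_h^{-1}:A_h\in\opsp(A)\}\subseteq\opsp(B)$. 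Since $A$ is itself a $\Pc$-regularizer of $B$, swapping the roles of $A$ and $B$ and rerunning the argument delivers the reverse inclusion, completing \eqref{ERegOpSpec}.
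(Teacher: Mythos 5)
Your first step is sound: the argument that $V_{-h_n}K_jV_{h_n}\pto 0$ for $K_j\in\pc{\Xb}$ is correct, and the closing symmetry argument (swapping $A$ and $B$) is the right way to get the inclusion $\opsp(B)\subseteq\{A_h^{-1}\}$. (Note the paper does not prove this theorem itself but quotes \cite[Theorem 16]{SeFre}.) The proof breaks down, however, exactly at the step you flag as the main obstacle. Since $F_m=P_\ell F_m=F_mP_\ell$ for a suitable $\ell$, the operators $F_mB_{g_k}F_m$ all live in $\lb{\im P_\ell}$ with $\im P_\ell\cong X^{(2\ell+1)^N}$; when $\dim X=\infty$ this is an infinite-dimensional Banach space, and a bounded sequence there need not have \emph{any} norm-convergent subsequence. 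This is precisely the failure of sequential compactness that forces the richness hypothesis in Theorems \ref{TEssNorm} and \ref{TBdORichHS}; here $A\in\pb{\Xb}$ is arbitrary and neither $A$ nor $B$ is assumed rich, so the diagonal extraction has no justification. The gap is used twice, since the subsequence-of-subsequences step that promotes $B^\star$ to a limit of the full sequence again presupposes that every subsequence of $(B_n)$ has a $\Pc$-convergent further subsequence.

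The standard route avoids compactness by reversing your order of steps: first prove $A_h$ is invertible, then identify $\plim B_n$ algebraically. From $B_nA_n=I+K_2^{(n)}$ one gets, for every $x$ with $x=P_kx$, the estimate $\|x\|\le\|B\|\,\|A_nx\|+\|K_2^{(n)}P_k\|\,\|x\|$; letting $n\to\infty$ and invoking Lemma \ref{LInverses}~a) yields $\nu(A_h)\ge\|B\|^{-1}$, and a dual argument with $A^*,B^*,\Pc^*$ (which needs extra care when $p=\infty$, cf.\ \cite{SeFre}) gives the same lower bound on the adjoint side, so $A_h$ is invertible with $\|A_h^{-1}\|\le\|B\|$ by Lemma \ref{lem:bddbelow} --- this is the uniform bound. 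Once invertibility is known (and $A_h^{-1}\in\pb{\Xb}$ by Theorem \ref{TPFredh}), the identity
\[
B_n-A_h^{-1}\;=\;B_n(A_h-A_n)A_h^{-1}+K_2^{(n)}A_h^{-1},
\]
together with its mirror image $B_n-A_h^{-1}=A_h^{-1}(A_h-A_n)B_n+A_h^{-1}K_1^{(n)}$ and the fact that $A_h^{-1}P_m,\,P_mA_h^{-1}\in\pc{\Xb}$, shows directly that $\|(B_n-A_h^{-1})P_m\|\to0$ and $\|P_m(B_n-A_h^{-1})\|\to0$ for every $m$, i.e.\ the \emph{full} sequence satisfies $B_n\pto A_h^{-1}$; no subsequential limit $B^\star$ is ever needed. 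Your symmetry argument then completes \eqref{ERegOpSpec} as you describe.
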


We say that $A\in\pb{\Xb}$ has a rich operator spectrum (or we simply call $A$
a {\sl rich operator}) if every sequence $h\subset\Zb^N$ tending to infinity has
a subsequence $g\subset h$ such that the limit operator  $A_g$ of $A$ w.r.t.~$g$
exists. The set of all rich operators $A\in\pb{\Xb}$ is denoted by $\pbr{\Xb}$
and the set of all rich band-dominated operators by $\Alpr$. Recall from
\cite[Corollary 2.1.17]{LimOps}  that $\Alpr=\Alp$ whenever $\dim X<\infty$.
For rich operators we know

\begin{thm}\label{TRich}
\cite[Corollary 17]{SeFre}
\begin{itemize}
\item The set $\pbr{\Xb}$ forms a closed subalgebra of $\pb{\Xb}$ and contains
      $\pc{\Xb}$ as a closed two-sided ideal.
\item Every $\Pc$-regularizer of a rich $\Pc$-Fredholm operator is rich. Thus,
      $\pbr{\Xb}/\pc{\Xb}$ is inverse closed in $\pb{\Xb}/\pc{\Xb}$ and
      $\pbr{\Xb}$ is inverse closed in both $\pb{\Xb}$ and $\lb{\Xb}$.
\end{itemize}
\end{thm}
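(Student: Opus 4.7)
The plan is to dispatch the clauses of Theorem \ref{TRich} using Theorem \ref{TLimOps} for the regularizer statement and routine subsequence extractions based on Proposition \ref{prop:limops} for the algebraic part. Most pieces are bookkeeping; the delicate point is norm-closedness of $\pbr{\Xb}$, which requires a Cantor diagonal argument and a careful interchange of limits.

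For the subalgebra structure: given $A, B \in \pbr{\Xb}$ and a sequence $h$ tending to infinity, I perform a double subsequence extraction, where richness of $A$ first gives $g \subseteq h$ along which $A_g$ exists, and richness of $B$ then gives $g' \subseteq g$ along which $B_{g'}$ exists. Both limit operators exist on $g'$, so Proposition \ref{prop:limops} supplies $(A+B)_{g'}$ and $(AB)_{g'}$, proving that $A+B$ and $AB$ are rich. For norm-closedness I take $A^{(k)} \to A$ in norm with $A^{(k)} \in \pbr{\Xb}$, fix a sequence $h$, and inductively extract nested subsequences $h \supseteq h^{(1)} \supseteq h^{(2)} \supseteq \cdots$ such that $(A^{(k)})_{h^{(k)}}$ exists for each $k$. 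The diagonal $g$ has, from index $k$ onward, a tail that is a subsequence of $h^{(k)}$, so $(A^{(k)})_g = (A^{(k)})_{h^{(k)}}$ exists for every $k$. The estimate $\|(A^{(k)})_g - (A^{(l)})_g\| \leq \|A^{(k)} - A^{(l)}\|$ from Proposition \ref{prop:limops} then makes $((A^{(k)})_g)_k$ norm-Cauchy with some limit $C$, and a three-epsilon argument combining $\|V_{-g_n}(A - A^{(k)})V_{g_n}\| = \|A - A^{(k)}\|$ with $V_{-g_n} A^{(k)} V_{g_n} \pto (A^{(k)})_g$ shows $V_{-g_n} A V_{g_n} \pto C$, so $A_g = C$ and $A$ is rich. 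Finally, $\pc{\Xb} \subseteq \pbr{\Xb}$ because for $K \in \pc{\Xb}$ and $h$ tending to infinity the shifted projection $\Pi_n := V_{h_n} P_m V_{-h_n}$ has support moving to infinity, so $\|K \Pi_n\| \leq \|K(I-P_l)\| \to 0$ (and symmetrically $\|\Pi_n K\| \to 0$) by $\Pc$-compactness, which unpacks to $V_{-h_n} K V_{h_n} \pto 0$ and makes every such $K$ trivially rich. That $\pc{\Xb}$ is a closed two-sided ideal in $\pbr{\Xb}$ is then inherited from its being one in $\pb{\Xb}$.

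For the second bullet, let $A \in \pbr{\Xb}$ be $\Pc$-Fredholm with $\Pc$-regularizer $B$ (which lies in $\pb{\Xb}$ by Theorem \ref{TPFredh}). Given $h$ tending to infinity, richness of $A$ yields $g \subseteq h$ with $A_g$ existing, and Theorem \ref{TLimOps} --- in the sequence-wise form established in its proof in \cite{SeFre} --- guarantees $V_{-g_n} B V_{g_n} \pto A_g^{-1}$, so $B_g$ exists and equals $A_g^{-1}$, proving that $B$ is rich. The inverse-closedness corollaries follow immediately: $\pbr{\Xb}/\pc{\Xb}$ is inverse closed in $\pb{\Xb}/\pc{\Xb}$ because any invertible coset $[A]$ with $A \in \pbr{\Xb}$ has a rich $\Pc$-regularizer representing $[A]^{-1}$; $\pbr{\Xb}$ is inverse closed in $\pb{\Xb}$ by taking $B = A^{-1}$ as a $\Pc$-regularizer with zero remainder; and $\pbr{\Xb}$ is inverse closed in $\lb{\Xb}$ by combining this with Theorem \ref{TPFredh}, which places $A^{-1}$ in $\pb{\Xb}$ to begin with. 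The only step I expect to require real attention is the diagonal argument for norm-closedness: the bookkeeping of which subsequence each tail of $g$ belongs to, together with the legitimacy of the interchange of the norm limit in $k$ with the $\Pc$-limit in $n$, rests on the shifts being isometries and on the characterisation of $\pto$-convergence by products with the $P_m$, but after that everything reduces to formal manipulation.
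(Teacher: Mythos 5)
Your argument is essentially correct, but there is nothing in the paper to compare it against: Theorem \ref{TRich} is quoted verbatim from \cite[Corollary 17]{SeFre} and the paper gives no proof, so what you have produced is a self-contained reconstruction of the standard argument. The algebraic part is fine: the double subsequence extraction together with Proposition \ref{prop:limops} gives closure under sums and products; the diagonal argument for norm-closedness works because passing to subsequences and discarding finitely many terms does not affect $\Pc$-limits, the estimate $\|(A^{(k)})_g-(A^{(l)})_g\|\le\|A^{(k)}-A^{(l)}\|$ follows from the first and third bullets of Proposition \ref{prop:limops}, and the three-epsilon interchange is legitimate precisely because $\|V_{-g_n}(A-A^{(k)})V_{g_n}\|=\|A-A^{(k)}\|$ and the sequence $(V_{-g_n}AV_{g_n})_n$ is uniformly bounded, as required by the $P_m$-characterisation of $\Pc$-convergence. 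The computation showing $\opsp(K)=\{0\}$ for $K\in\pc{\Xb}$ is also correct.

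The only step that is not covered by the results as \emph{stated} in the paper is the richness of the regularizer: Theorem \ref{TLimOps} asserts the set equality $\opsp(B)=\{A_h^{-1}:A_h\in\opsp(A)\}$, which by itself does not yield, for a given $g$ along which $A_g$ exists, that $B_g$ (or a limit operator of $B$ along some subsequence of $g$) exists. You correctly identify that the sequence-wise form is what is needed and delegate it to the proof in \cite{SeFre}; for completeness it can be recovered from the quoted results. Put $A_n:=V_{-g_n}AV_{g_n}$, $B_n:=V_{-g_n}BV_{g_n}$, $K:=BA-I$ and $L:=AB-I$ in $\pc{\Xb}$. By Theorems \ref{TLimOps} and \ref{TPFredh}, $A_g$ is invertible with $A_g^{-1}\in\pb{\Xb}$. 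For any $T\in\pc{\Xb}$ one has the identities $(B_n-A_g^{-1})T=B_n(A_g-A_n)(A_g^{-1}T)+(V_{-g_n}KV_{g_n})(A_g^{-1}T)$ and $T(B_n-A_g^{-1})=(TA_g^{-1})(A_g-A_n)B_n+(TA_g^{-1})(V_{-g_n}LV_{g_n})$. Since $A_g^{-1}T$ and $TA_g^{-1}$ lie in $\pc{\Xb}$, since $A_n\pto A_g$ and $V_{-g_n}KV_{g_n}\pto 0$, $V_{-g_n}LV_{g_n}\pto 0$ (your own computation), and since $\|B_n\|=\|B\|$ is bounded, all four terms tend to zero in norm; hence $B_n\pto A_g^{-1}$ and $B_g=A_g^{-1}$ exists with no further subsequence refinement. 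With this supplement your proof is complete, and the three inverse-closedness assertions follow exactly as you describe.
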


In the case of rich band-dominated operators, the picture is most complete:

\begin{thm}\label{TBdORichHS}
(\cite{RochFredTh}, \cite[Theorem 6.28]{LiChW} and \cite{BigQuest})\\
For an operator $A\in\Alpr$, the following are equivalent:
\begin{itemize}
\itemsep-1mm
\item $A$ is $\Pc$-Fredholm,
\item all limit operators of $A$ are invertible and their inverses are uniformly bounded,
\item all limit operators of $A$ are invertible.
\end{itemize}
\end{thm}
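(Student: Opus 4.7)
The implication (1) $\Rightarrow$ (2) is exactly Theorem \ref{TLimOps}, and (2) $\Rightarrow$ (3) is trivial. The content of the statement therefore lies in proving (3) $\Rightarrow$ (2) together with (2) $\Rightarrow$ (1), and I would split the work accordingly.

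For (2) $\Rightarrow$ (1), I would construct a $\Pc$-regularizer of $A$ by local-to-global gluing. Put $M := \sup\{\|A_h^{-1}\| : A_h \in \opsp(A)\}$. Since $A \in \Alpr$, every shift $V_{-k}AV_k$ with $|k|$ large can be approximated along a suitable subsequence by some limit operator, and the band-dominance of $A$ confines this approximation to a fixed radius around $k$. For each large $k \in \Zb^N$ choose $A_{h(k)} \in \opsp(A)$ that approximates $V_{-k}AV_k$ on such a ball and set $B_k := V_k A_{h(k)}^{-1} V_{-k}$, which is uniformly bounded by $M$. Pick a bounded-overlap partition of unity $(\psi_k)$ adapted to the projections $P_n$ (of the kind built in Proposition \ref{PFn}) and define
\[
B \;:=\; \sum_k \psi_k B_k \tilde\psi_k,
\]
where $\tilde\psi_k$ is a slight enlargement of $\supp \psi_k$. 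One then verifies that $AB-I$ and $BA-I$ are $\Pc$-compact, using that band-dominated operators commute with multiplication operators up to tail errors that vanish in $\Pc$-norm.

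For (3) $\Rightarrow$ (2) I would argue by contradiction, following the strategy of \cite{BigQuest}. Assuming all limit operators are invertible but that $\sup_h \|A_h^{-1}\| = \infty$, pick limit operators $A_{h^{(n)}}$ and unit vectors $x_n \in \Xb$ with $\|A_{h^{(n)}} x_n\| \to 0$. Because each $A_{h^{(n)}}$ is band-dominated and its action on $x_n$ is nearly zero, I can replace $x_n$ by $\tilde x_n$ essentially supported in a bounded window around some center $k_n \in \Zb^N$ (otherwise the tails would already produce a non-injective limit operator at a different point). For each $n$ choose a subindex $m(n)$ large enough that $V_{-h^{(n)}_{m(n)}} A V_{h^{(n)}_{m(n)}}$ matches $A_{h^{(n)}}$ on a sufficiently large window, and set $g_n := h^{(n)}_{m(n)} + k_n$. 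Using richness of $A$, extract a subsequence along which $V_{-g_n} A V_{g_n}$ $\Pc$-converges to some $A_g \in \opsp(A)$, while simultaneously the recentered vectors $V_{-k_n} \tilde x_n$ $\Pc$-converge to a unit vector $\hat x$. Passing to the limit in the relation $A_{h^{(n)}} x_n \to 0$ then gives $A_g \hat x = 0$, contradicting the assumed invertibility of $A_g$.

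The main obstacle is (3) $\Rightarrow$ (2). The localization step has to be tight enough that a meaningful center $k_n$ can be read off, and the diagonal index $m(n)$ has to be chosen so that the combined shift $g_n = h^{(n)}_{m(n)} + k_n$ still tends to infinity along a subsequence on which richness delivers a $\Pc$-limit of $V_{-g_n}AV_{g_n}$. Orchestrating this double diagonal extraction, and checking that the limit relation genuinely forces $A_g \hat x = 0$ (rather than only that $A_g \hat x$ is small), is the delicate core of the argument; both the richness hypothesis and the band-dominated structure are used here in an essential way.
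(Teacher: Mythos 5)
First, a point of orientation: the paper does not prove Theorem \ref{TBdORichHS} at all --- it is quoted from \cite{RochFredTh}, \cite{LiChW} and \cite{BigQuest} --- so there is no internal proof to compare against. Your decomposition is nevertheless the historically correct one: the equivalence of the first two items (with the classical regularizer construction by gluing local inverses of limit operators via a partition of unity, as you sketch for (2)$\Rightarrow$(1)) is the older part of the theory, and the entire difficulty sits in (3)$\Rightarrow$(2), which is precisely the contribution of \cite{BigQuest}. Your reduction there is legitimate as far as it goes: since all $A_h$ are assumed invertible, Lemma \ref{lem:bddbelow} gives $\|A_h^{-1}\|^{-1}=\nu(A_h)$, so failure of uniform boundedness produces unit vectors $x_n$ with $\|A_{h^{(n)}}x_n\|\to 0$, and the localization of $x_n$ to a window of uniformly bounded diameter is indeed available --- it is the lower-norm analogue (\cite[Prop.~6]{BigQuest}) of Proposition \ref{PLoc} and Corollary \ref{CLoc}.

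The genuine gap is your final step: extracting a subsequence along which the recentered unit vectors $V_{-k_n}\tilde x_n$ converge to a unit vector $\hat x$ and concluding $A_g\hat x=0$. In the generality of the theorem, $\Xb=l^p(\Zb^N,X)$ with $X$ an arbitrary Banach space, the unit sphere of $\im P_D$ is not compact when $\dim X=\infty$, so no convergent subsequence of the $\tilde x_n$ need exist in any topology strong enough to pass to the limit in $A_{h^{(n)}}x_n\to 0$; your argument also aims at too strong a conclusion, since non-invertibility of $A_g$ only requires $\nu(A_g)=0$, not an actual kernel vector. The repair --- which is exactly what \cite{BigQuest} does, and which the present paper replicates for the operator norm in the proof of Theorem \ref{TEssNorm} --- is to dispense with limit vectors altogether: one performs an iterated recentering (the ``fine tuning'' with window sizes $D_l$ and summable error budget $r_l=\sum_{n\ge l}\gamma_n$) to produce shifted copies $C_n\in\opsp(A)$ whose near-minimizing vectors sit in the nested windows $\im P_{2D_l}$ around the origin for \emph{all} $l<n$ simultaneously, extracts a $\Pc$-strong limit $C\in\opsp(A)$ of a subsequence, and transfers the witnesses to $C$ using only the norm convergence $\|(C_{h_n}-C)P_{2D_l}\|\to 0$ on each fixed window, yielding $\nu(C)=0$ directly and hence a non-invertible limit operator. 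A single recentering followed by a compactness extraction, as you propose, does not suffice; the nested multi-scale recentering is the actual content of the hard direction.
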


This is result (i) from the introduction. Equality \eqref{eq:specess} from point (ii)
follows by replacing $A$ by $A-\lambda I$ in Theorem \ref{TBdORichHS}, noting
that $\opsp(A-\lambda I)=\opsp(A)-\lambda I$. Furthermore, (iii) is an immediate
consequence of Theorem \ref{TLimOps}.

\subparagraph{The lower norm}
For an operator $A$ between two Banach spaces, we call
\[
\nu(A)\ :=\ \inf\{\|Ax\|:\|x\|=1\}
\]
the {\sl lower norm} of $A$.
For operators on Hilbert space, $\nu(A)$ is the smallest singular value of $A$.
We call $A$ {\sl bounded below} if $\nu(A)>0$. The following properties are well known.
A proof can be found e.g. in \cite[Lemmas 2.32, 2.33 and 2.35]{Marko}.
\newpage 

\begin{lem}\label{lem:bddbelow}
Let $\Yb_1$ and $\Yb_2$ be Banach spaces and $A:\Yb_1\to\Yb_2$ be a bounded linear operator between them. Then the following hold:
\begin{itemize}
\itemsep-1mm
\item $A$ is bounded below iff $A$ is injective and its range is closed in $\Yb_2$;
\item $A$ is invertible from the left iff $A$ is injective and its range is complementable in $\Yb_2$, in which case $\nu(A)\geq\|A^{l}\|^{-1}$ for every left inverse $A^l$;
\item $A$ is invertible iff $A$ and $A^*$ are bounded below, in which case $\nu(A)=\|A^{-1}\|^{-1}$.
\end{itemize}
More generally,\footnote{From now on we set $\infty^{-1}:=0$, so that
$\|b^{-1}\|^{-1}=0$ if $b$ is not invertible.} $\|A^{-1}\|^{-1}=\min\{\nu(A),\nu(A^*)\}$, where
$\nu(A)=\nu(A^*)$ if both are positive.
\end{lem}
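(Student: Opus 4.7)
The plan is to prove each clause by unpacking the definition of $\nu(A)$ together with two classical tools: the open mapping theorem and the Hahn--Banach annihilator identity $\ker A^* = (\im A)^\perp$.

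First I would establish the equivalence \textit{bounded below} $\Leftrightarrow$ \textit{injective with closed range}. If $\nu(A)>0$ then $\|Ax\|\ge\nu(A)\|x\|$ makes $A$ injective and carries Cauchy sequences in $\im A$ back to Cauchy sequences in $\Yb_1$, giving closed range by continuity. Conversely, if $A$ is injective with closed range, the open mapping theorem applied to $A:\Yb_1\to\im A$ yields a bounded inverse, whence $\|x\|=\|A^{-1}(Ax)\|\le\|A^{-1}\|\,\|Ax\|$ gives $\nu(A)\ge\|A^{-1}\|^{-1}>0$.

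For the second clause, a bounded left inverse $A^l$ makes $A$ injective with $\nu(A)\ge\|A^l\|^{-1}$, and the idempotent $P:=AA^l$ is a bounded projection with $\im P=\im A$, supplying a complement $\ker P$. Conversely, if $\Yb_2=\im A\oplus M$ with $M$ closed, then $\im A$ is itself closed, so the first clause produces a bounded inverse $A^{-1}:\im A\to\Yb_1$ which, extended by zero on $M$, is a bounded left inverse of $A$. The third clause then follows: if $A$ is invertible, the substitution $y=Ax$ rewrites $\nu(A)$ as $(\sup_{y\ne 0}\|A^{-1}y\|/\|y\|)^{-1}=\|A^{-1}\|^{-1}$, and the same identity for $(A^*)^{-1}=(A^{-1})^*$ gives $\nu(A^*)=\nu(A)$. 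Conversely, if both $A$ and $A^*$ are bounded below, then by the first clause $\im A$ is closed, while $\ker A^*=(\im A)^\perp=\{0\}$ forces $\im A=\Yb_2$; hence $A$ is a bijection and invertible by the open mapping theorem.

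For the concluding formula $\|A^{-1}\|^{-1}=\min\{\nu(A),\nu(A^*)\}$, the invertible case was just handled. Otherwise the left side is $0$ by convention; if $\nu(A)=0$ we are done, and if $\nu(A)>0$ then $\im A$ is a proper closed subspace, so Hahn--Banach furnishes a nonzero $\varphi\in(\im A)^\perp=\ker A^*$, giving $\nu(A^*)=0$. There is no serious obstacle -- everything is a routine consequence of the open mapping theorem and Hahn--Banach; the only point requiring a little care is the duality identity $\ker A^*=(\im A)^\perp$, which holds on any Banach space without reflexivity assumptions.
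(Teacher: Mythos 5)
Your proof is correct and is essentially the standard argument that the paper itself does not reproduce but merely cites (to \cite[Lemmas 2.32, 2.33 and 2.35]{Marko}): the open mapping theorem for the closed-range direction, the bounded projection $AA^l$ for complementability, and the annihilator identity $\ker A^*=(\im A)^\perp$ together with Hahn--Banach both for surjectivity in the third clause and for the degenerate case of the final formula. The only point to phrase more carefully is the converse of the second clause: an algebraic complement $M$ being closed does not by itself force $\im A$ to be closed (a dense hyperplane has a one-dimensional, hence closed, algebraic complement), so ``complementable'' should be read as the existence of a bounded projection $P$ onto $\im A$ (equivalently, a topological direct sum), after which $\im A=\ker(I-P)$ is closed and your construction $A^l:=A^{-1}P$ goes through as written.
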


\section{\texorpdfstring{$\Pc$}{P}-essential norm of band-dominated operators} \label{sec:essnorm}
In this section we prove the first new result, point (iv) from the introduction,
about the norm of the coset \eqref{eq:coset}. As an immediate consequence
of (iii) and (iv) we get the first half of (v). Recall that we abbreviate $I-P_n$ by $Q_n$.

\begin{prop} \label{PEssNorm}
Let $\Pc$ be a uniform approximate projection on $\Yb$ and $A \in \pb{\Yb}$. Then
\[\|A+\pc{\Yb}\| = \|A^*+\lc{\Yb^*,\Pc^*}\|
= \lim_{m\to\infty} \|AQ_m\| = \lim_{m\to\infty} \|Q_m A\|.\]
\end{prop}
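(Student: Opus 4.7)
The plan is to establish all four equalities by first showing that the limits exist (as monotone sequences), then sandwiching the essential norm between an upper bound using $AP_m \in \pc{\Yb}$ and a lower bound obtained by pushing a candidate $K \in \pc{\Yb}$ against $Q_m$. The dual statement then follows by applying the identical argument to $A^*$ with the approximate projection $\Pc^*$.

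First I would observe that $(\Pc 1)$ yields $P_nQ_{n+1}=P_n-P_nP_{n+1}=0$, so $Q_{n+1}=Q_nQ_{n+1}$, and likewise $Q_{n+1}=Q_{n+1}Q_n$. Since $\|Q_n\|=1$, this gives $\|AQ_{n+1}\|\le\|AQ_n\|$ and $\|Q_{n+1}A\|\le\|Q_nA\|$, so both sequences are nonincreasing and their limits exist. The same observation shows that $P_m\in\pc{\Yb}$: for $n\ge m$, $(\Pc 1)$ gives $P_nP_m=P_m$, hence $Q_nP_m=0=P_mQ_n$, which already puts $P_m$ in $\pcn{\Yb}\subset\pc{\Yb}$. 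Since $A\in\pb{\Yb}$, we deduce $AP_m\in\pc{\Yb}$ and $P_mA\in\pc{\Yb}$.

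Next I would prove the upper bound $\|A+\pc{\Yb}\|\le\lim_m\|AQ_m\|$ by taking $K:=-AP_m\in\pc{\Yb}$ as a representative, giving $\|A+\pc{\Yb}\|\le\|A+K\|=\|AQ_m\|$ for every $m$; and analogously $\|A+\pc{\Yb}\|\le\lim_m\|Q_mA\|$ using $K=-P_mA$. For the matching lower bound, given any $K\in\pc{\Yb}$, the computation
\[
\|AQ_m\|=\|(A+K)Q_m-KQ_m\|\le\|A+K\|\,\|Q_m\|+\|KQ_m\|=\|A+K\|+\|KQ_m\|
\]
together with $\|KQ_m\|\to 0$ (from the definition of $\Pc$-compactness) and taking the infimum over $K$ yields $\lim_m\|AQ_m\|\le\|A+\pc{\Yb}\|$. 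The symmetric estimate $\|Q_mA\|\le\|A+K\|+\|Q_mK\|$ handles the other limit. Combined with the upper bounds, we obtain $\|A+\pc{\Yb}\|=\lim_m\|AQ_m\|=\lim_m\|Q_mA\|$.

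Finally, for the identification with the dual essential norm, I would use that $\Pc^*=(P_n^*)$ satisfies $(\Pc 1)$ and $(\Pc 2)$ on $\Yb^*$, and that $A\in\pb{\Yb}$ implies $A^*\in\pb{\Yb^*,\Pc^*}$: taking the adjoint of $\|Q_n(AP_m)\|+\|(AP_m)Q_n\|\to 0$ gives $\|P_m^*A^*Q_n^*\|+\|Q_n^*P_m^*A^*\|\to 0$, so $A^*P_m^*\in\lc{\Yb^*,\Pc^*}$; similarly $P_m^*A^*\in\lc{\Yb^*,\Pc^*}$. Applying the three steps above verbatim to $A^*$ with the sequence $\Pc^*$ of $P_n^*$ yields
\[
\|A^*+\lc{\Yb^*,\Pc^*}\|=\lim_m\|A^*Q_m^*\|=\lim_m\|Q_m^*A^*\|.
\]
Since $\|A^*Q_m^*\|=\|(Q_mA)^*\|=\|Q_mA\|$ and $\|Q_m^*A^*\|=\|(AQ_m)^*\|=\|AQ_m\|$, this matches the quantities already computed.

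The main subtlety I expect is not in any single estimate — each is short — but in the careful bookkeeping that $(\Pc 1)$ really gives $\|Q_m\|=1$ and the finite-rank-style identity $Q_nP_m=0$ for $n\ge m$, and in checking that the dual sequence $\Pc^*$ inherits enough of $(\Pc 1)$ (in particular $\|Q_m^*\|=\|Q_m\|=1$) to rerun the argument on $\Yb^*$, even though property $(\Pc 3)$ may fail for $\Pc^*$. Property $(\Pc 2)$ does not seem to be used in the argument itself.
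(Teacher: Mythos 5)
Your proposal is correct and follows essentially the same route as the paper's proof: the upper bound comes from using $-AP_m\in\pc{\Yb}$ as a competitor in the infimum, the lower bound from $\|KQ_m\|\to 0$ for an almost-optimal $K$, and the dual identity from $\|A^*Q_m^*\|=\|Q_mA\|$ after rerunning the argument for $A^*$ with $\Pc^*$ (which, as the paper notes, inherits $(\Pc 1)$ and $(\Pc 2)$). The paper merely compresses your two inequalities into a single $2\eps$-sandwich chain, and your added monotonicity observation, while harmless, is not needed since the sandwich already forces convergence.
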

\begin{proof}
Let $\eps > 0$ and choose $K \in \pc{\Yb}$ such that
$\|A+K\| \leq \|A+\pc{\Yb}\| + \eps$ and $m_0\in\Nb$ such that
$\|KQ_m\| \leq \eps$ for all $m \geq m_0$. It follows
\[\|AQ_m\| = \|A-AP_m\| \geq\|A+\pc{\Yb}\| \geq \|A+K\| - \eps
\geq \|(A+K)Q_m\| - \eps \geq \|AQ_m\| - 2\eps\]
for all $m \geq m_0$ and therefore $\|A+\pc{\Yb}\| = \lim_{m\to\infty} \|AQ_m\|$
since $\eps$ was arbitrary. The equality
$\|A+\pc{\Yb}\| = \lim_{m\to\infty} \|Q_mA\|$ is similar. Finally,
$\|A^*Q_m^*\|=\|Q_mA\|$ finishes the proof.
\end{proof}

Now we switch to sequence spaces $\Xb=l^p(\Zb^N,X)$ and band-dominated operators.
Our first main theorem is
\begin{thm}\label{TEssNorm}
Let $A\in\Alpr$. Then
\begin{equation}\label{EEssNorm}
\|A+\pc{\Xb}\|=\max_{A_g\in\opsp(A)}\|A_g\|.
\end{equation}
\end{thm}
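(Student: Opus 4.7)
The proof establishes both inequalities in \eqref{EEssNorm}. The direction $\max\|A_g\|\leq\|A+\pc{\Xb}\|$ is easier: for any $A_g=\plim_n V_{-g_n}AV_{g_n}\in\opsp(A)$ and any $K\in\pc{\Xb}$, the conjugates $V_{-g_n}KV_{g_n}$ have their matrix support shifted out to infinity (approximate $K$ first by operators in $\pcn{\Xb}$), so $V_{-g_n}KV_{g_n}\pto 0$ and hence $V_{-g_n}(A+K)V_{g_n}\pto A_g$. The shifts $V_\alpha$ are isometries, so $\|V_{-g_n}(A+K)V_{g_n}\|=\|A+K\|$, and the lower-semicontinuity bound \eqref{eq:liminf} gives $\|A_g\|\leq \|A+K\|$; the infimum over $K$ yields the claim.

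For the reverse inequality, set $\alpha:=\|A+\pc{\Xb}\|$ and use Proposition~\ref{PEssNorm} to rewrite $\alpha=\lim_{m\to\infty}\|AQ_m\|$. Fix $\eps>0$ and approximate $A$ by a band operator $B$ of band-width $w$ with $\|A-B\|<\eps$. For $m$ large, pick a unit vector $u_m$ with $u_m=Q_mu_m$ and $\|Au_m\|\geq\alpha-\eps$; after replacing $u_m$ by a suitable finite-support truncation $P_n u_m$, assume $u_m$ has finite support. The central localization step partitions $\Zb^N$ into disjoint cubes $\{C_k\}$ of side $L\gg w$, writes $u_m=\sum_ku_m^{(k)}$ with $u_m^{(k)}:=\chi_{C_k}u_m$, and exploits that $Bu_m^{(k)}$ is supported in the $w$-enlargement of $C_k$. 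Grouping the cube indices into the $2^N$ parity sub-lattices along which these enlargements are pairwise disjoint and applying $\ell^p$-pigeonhole on the ratios $\|Bu_m^{(k)}\|/\|u_m^{(k)}\|$ produces a single cube $C_{k_m}$ for which $w_m:=u_m^{(k_m)}$ satisfies $\|Bw_m\|/\|w_m\|\geq\alpha-O(\eps)$. Since $u_m=Q_mu_m$ vanishes on $\{|i|\leq m\}$, any contributing $C_{k_m}$ must meet $\{|i|>m\}$, forcing $|k_m|\to\infty$ as $m\to\infty$.

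Translate to the origin by defining $\hat w_m:=V_{-k_m}w_m/\|w_m\|$, a unit vector supported in the fixed cube $\{|i|<L\}$, so $\hat w_m=P_L\hat w_m$. Isometry of the shifts gives $\|V_{-k_m}AV_{k_m}\hat w_m\|=\|Aw_m\|/\|w_m\|\geq\alpha-O(\eps)$. By richness of $A$, pass to a subsequence along which $V_{-k_m}AV_{k_m}\pto A_g$ for some $A_g\in\opsp(A)$. Since $P_L\in\pc{\Xb}$, the definition of $\Pc$-convergence yields $\|(V_{-k_m}AV_{k_m}-A_g)P_L\|\to 0$, and because $\hat w_m=P_L\hat w_m$ is a unit vector in the range of $P_L$, this forces $\|A_g\hat w_m\|\geq\alpha-O(\eps)-o(1)$, hence $\|A_g\|\geq\alpha-O(\eps)$.

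The main technical obstacle is the localization step: the $\ell^p$-pigeonhole inherently loses a constant factor coming from the overlap of $w$-enlargements of neighbouring cubes. One must choose $L$ large relative to $w$ and absorb the resulting dimensional constant into $\eps$; alternatively, the sharp cut-offs $\chi_{C_k}$ can be replaced by the smoothed approximate projections $F_n$ from Proposition~\ref{PFn}, which nearly commute with $A$ and yield pieces with better-controlled cross-interaction. Finally, to upgrade the ``sup up to $\eps$'' bound to an attained maximum, apply the construction with $\eps_n=1/n$ to produce limit operators $A_{g^{(n)}}$ and witnessing unit vectors in a common ball $P_L\Xb$; the sequential $\Pc$-compactness of $\{V_{-k}AV_k:k\in\Zb^N\}$ afforded by richness, combined with a diagonal extraction over the witnessing shift sequences, produces a single $A_g\in\opsp(A)$ with $\|A_g\|=\alpha$.
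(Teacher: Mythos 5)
Your overall strategy coincides with the paper's: reduce $\|A+\pc{\Xb}\|$ to $\lim_m\|AQ_m\|$ via Proposition~\ref{PEssNorm}, localize the norm of a band approximant onto vectors of bounded support diameter, shift the witness back to the origin, and use richness to extract a limit operator; your ``$\ge$'' direction is correct and is the paper's argument verbatim. The genuine gap is in your central localization step. With a \emph{fixed} disjoint partition of $\Zb^N$ into cubes $C_k$ of side $L$, the parity/pigeonhole argument only yields $\|Bu\|\le\sum_{\sigma}\bigl(\sum_{k\in\sigma}\|Bu^{(k)}\|^p\bigr)^{1/p}\le 2^N\bigl(\sum_k\|Bu^{(k)}\|^p\bigr)^{1/p}$, i.e.\ a \emph{multiplicative} loss of order $2^{N}$ that is independent of $L$ and therefore cannot be ``absorbed into $\eps$''; it would only give some cube with $\|Bu^{(k)}\|/\|u^{(k)}\|\gtrsim 2^{-N}\alpha$, which is useless here. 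If you instead account for the cross-terms additively, they are bounded by $\|B\|\,\|\chi_S u\|$ with $S$ the union of the $w$-shells around the cube boundaries of your fixed grid --- and nothing prevents $u$ from concentrating entirely on $S$, so this error is not $o(\|u\|)$ either. The cure is to average over \emph{all translates} of the window: that is exactly the identity $\sum_{k\in\Zb^N}\|\chi_{k+S}x\|^p=|S|\,\|x\|^p$ combined with the commutator estimate $[P_{n,k},B]=[P_{n,k},B]\Delta_{n,k}$ in Proposition~\ref{PLoc}, which turns the loss into the additive quantity $2(d_n/c_n)^{1/p}\|B\|\,\|x\|$ that does vanish as the window grows. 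Corollary~\ref{CLoc} already packages this uniformly over $\{A\}\cup\opsp(A)$ and over all cut-offs $\chi_F I$; you should invoke it rather than substitute the fixed-partition argument. (Your ``alternative'' via the $F_n$ of Proposition~\ref{PFn} is not developed enough to judge; the $F_n$ are centred at the origin and do not form a partition of unity.)

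The second gap is in the attainment of the maximum. Your witnessing vectors do \emph{not} lie in a common ball $P_L\Xb$: the support diameter $D=D(\eps_n)$ delivered by the localization necessarily grows as $\eps_n\to0$, and $\Pc$-strong convergence only preserves norms on fixed finite windows ($\|CP_m\|=\lim_n\|C_nP_m\|$ for each fixed $m$). Consequently, along a plain diagonal extraction the place where $\|A_{g^{(n)}}\|$ is (almost) attained may drift to infinity, and the $\Pc$-strong limit recovers only $\sup$, not $\max$. The paper closes this by the iterated recentering in the proof of Theorem~\ref{TEssNorm}: each $C_n$ is re-shifted $n$ times so that $\|C_n P_{2D_l}\|\ge\|B_n\|-r_l$ holds \emph{simultaneously for all scales} $l\le n$, with the summable tolerances $\gamma_l=2^{-l}$ controlling the accumulated loss; only then does the $\Pc$-strong limit $C$ satisfy $\|CP_{2D_l}\|\ge N_A-r_l$ for every $l$, hence $\|C\|=N_A$. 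Some device of this kind (or an equivalent ultrafilter/Stone--\v{C}ech argument) is indispensable and is missing from your sketch.
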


Note that if $\Xb$ is a Hilbert space, $C^*$-algebra techniques can be used to deduce Theorem \ref{TEssNorm} directly from Theorem \ref{TBdORichHS} (cf. \cite[Thm 2.2.7]{LimOps}). In the general case we require the following auxiliary notion:

\begin{defn}
The {\sl support} of a sequence $x=(x_n)\in \Xb$ is the set
$\supp x :=\{n\in\Zb^N:x_n\neq 0\}$. The diameter of a subset $M\subset \Zb^N$
is defined as $\diam M := \sup\{|n_i-m_i|:n,m\in M; i=1,\ldots, N\}$. Moreover,
we write $|M|$ for the number of elements of any set $M$. For $D\in\Nb$
we now define
\[\vertiii{A}_D:=\sup\left\{\frac{\|Ax\|}{\|x\|}: x\in\Xb\setminus\{0\},\,
	\diam\supp x\leq D\right\}.\]
\end{defn}

The first step is to show that the operator norm of $A\in\Aln$ can be localized, up to any
desired accuracy, in terms of $\vertiii{A}_D$ (i.e. by looking at sequences $x\in\Xb$
with support of a certain diameter) in the following sense:

\begin{prop}\label{PLoc}
Let $A\in\Aln$ and $\delta>0$. Then there is a $D\in\Nb$ such that
\[(1-\delta)\|A\chi_FI\|\leq\vertiii{A\chi_FI}_D\leq\|A\chi_FI\|
	\quad\text{for all}\quad F\subset\Zb^N.\]
\end{prop}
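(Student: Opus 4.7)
The upper bound $\vertiii{A\chi_F I}_D \le \|A\chi_F I\|$ is immediate from the definition, so the plan is to focus on the lower bound. Write $B := A\chi_F I$, which is again a band operator of the same band-width $w$ as $A$. The strategy is to localize any near-extremal vector of $B$ to a cube of bounded diameter by means of a grid partition whose translation is adapted to the vector. I would treat the main case $p\in[1,\infty)$ together and handle $p\in\{0,\infty\}$ separately.

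First I would fix $\eta\in(0,\delta)$ and pick $x\in\Xb$ with $\|x\|_p=1$ and $\|Bx\|_p\ge(1-\eta)\|B\|$. For a parameter $D'>2w$ (to be chosen) and a shift $t\in\{0,\ldots,D'-1\}^N$, consider the grid partition $\Lambda_i(t) := t+iD'+\{0,\ldots,D'-1\}^N$ of $\Zb^N$ and its $w$-enlargements $\Lambda_i^+(t) := \Lambda_i(t)+\{-w,\ldots,w\}^N$, each of diameter $D'+2w=:D$. Set $y^{(i)} := x\,\chi_{\Lambda_i^+(t)}I$; then $\diam\supp y^{(i)}\le D$.

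Two estimates then drive the argument. First, the band-width condition implies $(By^{(i)})_k = (Bx)_k$ for every $k\in\Lambda_i(t)$, since all columns within band-width of $k$ lie in $\Lambda_i^+(t)$; summing over $i$ and using that $\{\Lambda_i(t)\}_i$ partitions $\Zb^N$ yields $\sum_i\|By^{(i)}\|_p^p\ge\|Bx\|_p^p$. Second, $\sum_i\|y^{(i)}\|_p^p = \sum_j\|x_j\|^p m_t(j)$ with the overlap count $m_t(j) := \#\{i:j\in\Lambda_i^+(t)\}$; a coordinate-wise count (the 1D version uses the bijection $(t,i)\leftrightarrow t+iD'\in\Zb$) shows $\sum_t m_t(j) = (D'+2w)^N$ independently of $j$, so averaging over $t$ produces some $t_0$ with $\sum_i\|y^{(i)}\|_p^p\le(1+2w/D')^N\|x\|_p^p$. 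Applying the elementary inequality $\max_i a_i/b_i\ge(\sum a_i)/(\sum b_i)$ to $a_i=\|By^{(i)}\|_p^p$, $b_i=\|y^{(i)}\|_p^p$ yields some $i^*$ with
\[
\frac{\|By^{(i^*)}\|_p^p}{\|y^{(i^*)}\|_p^p}\ \ge\ \frac{(1-\eta)^p\,\|B\|^p}{(1+2w/D')^N}.
\]
Choosing $\eta$ small and $D'$ large enough that $(1-\eta)/(1+2w/D')^{N/p}\ge 1-\delta$ completes the proof, yielding $D=D'+2w$ depending only on $w,N,p,\delta$ — in particular, not on $F$.

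The principal obstacle is the overlap count: a fixed (unshifted) grid can have overlap multiplicity up to $2^N$, costing a factor $2^{N/p}$ in the ratio and preventing the bound from ever reaching $(1-\delta)\|B\|$. The translation-averaging trick is essential: it reduces the \emph{average} overlap to $(1+2w/D')^N$, which is driven arbitrarily close to $1$ by taking $D'\gg w$. The cases $p\in\{0,\infty\}$, where the $p$-sum structure above is unavailable, I would dispose of separately: the operator norm of the band operator $B$ equals a row-sum that is realized up to $\eta$ by a vector supported on a ball of radius at most $w$, giving $\vertiii{B}_{2w}\ge(1-\eta)\|B\|$ for every $\eta>0$.
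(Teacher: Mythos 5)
Your proof is correct, but the localization mechanism is genuinely different from the one in the paper. The paper works with the \emph{full} overlapping family of cube projections $P_{n,k}=\chi_{k+\{-n,\ldots,n\}^N}I$ over all $k\in\Zb^N$, compares $\|P_{n,k}Bx\|$ with $\|BP_{n,k}x\|$ via the commutator $[P_{n,k},B]$ (which is supported on a boundary annulus of cardinality $d_n\sim n^{N-1}$), and combines Minkowski's inequality with the counting identity $\sum_{k}\|\chi_{k+S}\,x\|^p=|S|\,\|x\|^p$; the error is then absorbed because the surface-to-volume ratio $d_n/c_n$ tends to $0$. You instead take a single \emph{disjoint} grid of cubes, enlarge each cube by the band-width $w$, and regain control of the resulting overlaps by averaging over the $(D')^N$ possible grid offsets, the average multiplicity $(1+2w/D')^N$ playing exactly the role of the paper's $1+O\bigl((d_n/c_n)^{1/p}\bigr)$ correction. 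Both arguments rest on the same geometric fact, and both produce a $D$ depending only on $w$, $N$, $p$ and $\delta$, which is what gives the uniformity in $F$ and the property (ONL) recorded after the proposition. What your version buys is the avoidance of Minkowski's inequality and of the commutator estimate: the mediant inequality $\max_i a_i/b_i\ge\bigl(\sum_i a_i\bigr)/\bigl(\sum_i b_i\bigr)$ does all the work, at the modest price of the offset-averaging count. Your treatment of $p\in\{0,\infty\}$ coincides with the paper's; the phrase ``row-sum'' is loose (the matrix entries are operators on $X$, not scalars), but the truncation of $x$ to the radius-$w$ ball around a coordinate nearly attaining $\|Bx\|_\infty$ is precisely the argument used there.
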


There is a very similar statement in \cite[Prop. 6]{BigQuest} for the lower norm $\nu(A)$
that we will address in Section \ref{sec:esslowernorm}.
Also the proof is very similar. In \cite{BigQuest} there are two different proofs given,
here we restrict ourselves to showing one of the two proofs that we know of (the one
that uses and generalizes a technique from \cite{CW.Heng.ML:UpperBounds}):

\begin{proof}
Clearly $\vertiii{\cdot}_D\leq\|\cdot\|$. So let $A\in\Aln$ and
let $w\in\Nb$ be its band-width, i.e. $\chi_U A\chi_V I=0$
for all $U,V\subset\Zb^N$ with $\dist(U,V):=\inf\{|u-v|_\infty:u\in U, v\in V\}>w$.

For arbitrary $n\in\Nb$ and $k\in\Zb^N$, put $C_n:=\{-n,...,n\}^N$, $C_{n,k}:=k+C_n$, 
$D_n:=C_{n+w}\setminus C_{n-w}$, $D_{n,k}:=k+D_n$, $c_n:=|C_n|=|C_{n,k}|=(2n+1)^N$ 
and $d_n:=|D_n|=|D_{n,k}|=c_{n+w}-c_{n-w}\sim n^{N-1}$.
Abbreviate $\chi_{C_{n,k}}I=:P_{n,k}$ and $\chi_{D_{n,k}}I=:\Delta_{n,k}$.

We start with the case $p\in [1,\infty)$. Given such $p$ and our arbitrary $\delta>0$, 
we choose $n\in\Nb$ large enough that $\frac{d_n}{c_n}<(\frac \delta 4)^p$.
Then $D:=2n+1$ will turn out to satisfy what we claim.

Now fix an arbitrary $F\subset\Zb^N$ and note that also $B:=A\chi_FI$ is a band operator of the same band-width $w$. W.l.o.g.~we may assume that $B \neq 0$.
We note the following facts:
\begin{itemize}
\item[(a)] For all finite sets $S\subset\Zb^N$ and all $x\in\Xb$, it holds
$
\sum_{k\in\Zb^N}\|\chi_{k+S}\,x\|^p = |S|\cdot \|x\|^p.
$
\item[(b)] For the commutator $[P_{n,k},B]:=P_{n,k}B-BP_{n,k}$, one has $[P_{n,k},B]=[P_{n,k},B]\Delta_{n,k}$, so that for all $x\in\Xb$,
$
\|[P_{n,k},B]x\| = \|[P_{n,k},B]\Delta_{n,k}x\| \le \|[P_{n,k},B]\| \|\Delta_{n,k}x\|  \le 2\|B\| \|\Delta_{n,k}x\|
$
and hence
\[
\sum_{k\in\Zb^N} \|[P_{n,k},B]x\|^p\ \le\ \sum_{k\in\Zb^N} 2^p\|B\|^p \|\Delta_{n,k}x\|^p\ \stackrel{(a)}{=}\ 2^p\|B\|^p d_n \|x\|^p.
\]
\end{itemize}

Fixing $x\in\Xb$ such that $(1-\frac \delta 2)\|B\| \|x\| < \|Bx\|$, we conclude as follows, where (M) refers to Minkowski's inequality in $l^p(\Zb^N,\Cb)$:
\begin{align*}
&\left(1-\frac \delta 2\right) \|B\| c_n^{1/p} \|x\|
\ <\  c_n^{1/p} \|Bx\| \ \stackrel{(a)}{=}\ \left(\sum_{k\in\Zb^N} \|P_{n,k}Bx\|^p\right)^{1/p}\\
&\le  \left(\sum_{k\in\Zb^N} \left(\|BP_{n,k}x\|+\|[P_{n,k},B]x\|\right)^p\right)^{1/p} \stackrel{({\rm M})}{\le}\  \left(\sum_{k\in\Zb^N} \|BP_{n,k}x\|^p\right)^{1/p} + \left(\sum_{k\in\Zb^N} \|[P_{n,k},B]x\|^p\right)^{1/p}\\
&\stackrel{(b)}{\le}\ \left(\sum_{k\in\Zb^N} \|BP_{n,k}x\|^p\right)^{1/p}
+\ 2\|B\| d_n^{1/p} \|x\| 
\end{align*}
Subtract $2 (d_n/c_n)^{1/p} \|B\|c_n^{1/p}\|x\|=2\|B\| d_n^{1/p} \|x\|$ from the resulting inequality, 
to get
\[
\left(1-\frac \delta 2 - 2 \left(\frac{d_n}{c_n}\right)^{1/p}\right) \|B\| c_n^{1/p} \|x\|\ <\
\left(\sum_{k\in\Zb^N} \|BP_{n,k}x\|^p\right)^{1/p}.
\]
Taking $p$-th powers, using $2 \left(\frac{d_n}{c_n}\right)^{1/p}< \frac \delta 2$ and $\sum_{k\in\Zb^N} \|P_{n,k}x\|^p=c_n\|x\|^p$, by (a), we get
\[
(1-\delta)^p\|B\|^p \sum_{k\in\Zb^N} \|P_{n,k}x\|^p\
<\ \left(1-\frac \delta 2 - 2 \left(\frac{d_n}{c_n}\right)^{1/p}\right)^p \|B\|^p c_n\|x\|^p\
<\ \sum_{k\in\Zb^N} \|BP_{n,k}x\|^p.
\]
The last inequality shows that there must be some $k\in\Zb^N$ for which
$P_{n,k}x\neq0$ and
\[
(1-\delta)^p\|B\|^p \|P_{n,k}x\|^p < \|BP_{n,k}x\|^p,\textrm{\quad i.e.\quad }
(1-\delta)\|B\| \|P_{n,k}x\| < \|BP_{n,k}x\| \le \vertiii{B}_D\|P_{n,k}x\|
\]
with $D=2n+1$ as fixed above. This finishes the proof for $p\in [1,\infty)$. 
Finally, let $p\in\{0,\infty\}$, put $D:=2w+1$, take any $F\subset\Zb^N$, $B:=A\chi_FI$, 
$\eps>0$ and $x\in \Xb$ with $\|x\|_\infty=1$ and $\|Bx\|_\infty\ge\|B\|-\eps/2$. Then there is a $k\in\Zb^N$ with $\|\chi_{\{k\}}Bx\|_\infty\ge\|Bx\|_\infty-\eps/2$, so that
\begin{align*}
\|B\|-\eps
&\ \leq\ \|Bx\|_\infty-\eps/2
\ \leq\ \|\chi_{\{k\}}Bx\|_\infty
\ =\ \|\chi_{\{k\}}BP_{w,k}x\|_\infty
\ \leq\  \|BP_{w,k}x\|_\infty\\
&\ \leq\ \vertiii{B}_D\|P_{w,k}x\|_\infty
\ \leq\ \vertiii{B}_D\|x\|_\infty
\ =\ \vertiii{B}_D\ \le\ \|B\|
\end{align*}
holds. So in case $p\in\{0,\infty\}$ even equality $\|B\|=\vertiii{B}_D$ follows, where $D=2w+1$.
\end{proof}

A closer look at this proof shows that the size of the support that is required to localize
the norm of $B$ to the desired accuracy only depends on the band-width $w$ of $B$,
so that the result carries over in a uniform way to all band operators with band-width not more than $w$. In short:
\begin{equation*} \label{eq:ONL} \tag{ONL}
\forall w\in\Nb,\, c\in(0,1)\ \exists D\in\Nb: \forall B \textrm{ with band-width}(B)\le w:\ \vertiii{B}_D\ge c\|B\|.
\end{equation*}
This localizability of the operator
norm is no longer a property of a particular operator but rather of the space $\Xb$.
There is recent work by X.~Chen, R.~Tessera, X.~Wang, G.~Yu and H.~Sako
(see \cite{ONL} and references therein) on metric spaces $M$ with a certain
measure such that $\Xb=l^2(M)$ has the operator norm localization property
\eqref{eq:ONL}. Sako proves in \cite{ONL} that in case of a discrete metric
space $M$ with $\sup_{m\in M}|\{n\in M:d(m,n)\le R\}|<\infty$ for all radii
$R>0$  (which clearly holds in our case, $M=\Zb^N$), property \eqref{eq:ONL}
is equivalent to the so-called Property~A that was introduced by G.~Yu and is
connected with amenability. We also want to mention the very recent paper
\cite{SpakWillett} by \v{S}pakula and Willett that generalizes the limit operator
results from $\Zb^N$ to certain discrete metric spaces. Based on the work of
Roe \cite{Roe}, combined with ideas of \cite{BigQuest}, they prove a version
of Theorem \ref{TBdORichHS} under the sole assumption that these metric spaces
have Yu's Property A.

For the current paper we are not interested in extending Proposition \ref{PLoc}
to band operators of a certain band-width but rather to the operator spectrum of an operator $A\in\Alp$:
\begin{cor}\label{CLoc}
Let $A\in\Alp$ and $\delta>0$. Then there is a $D\in\Nb$ such that
\[ \|B\chi_FI\|-\delta\leq\vertiii{B\chi_FI}_D\leq\|B\chi_FI\|
	\quad\text{for all}\quad F\subset\Zb^N\quad\text{and all}\quad B\in\{A\}\cup\opsp(A).\]
\end{cor}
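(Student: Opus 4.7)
The upper bound $\vertiii{B\chi_F I}_D\le\|B\chi_F I\|$ is immediate from the definition of $\vertiii{\cdot}_D$. For the matching lower bound I would reduce both cases $B=A$ and $B\in\opsp(A)$ to a single application of Proposition~\ref{PLoc} via a common band approximation $A_w$ of $A$. The decisive ingredient is the uniform refinement \eqref{eq:ONL} of Proposition~\ref{PLoc}: the localization radius $D$ produced there depends only on the band-width and the accuracy parameter, so the \emph{same} $D$ controls every band operator of band-width at most $w$ at once.

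Concretely, fix $\epsilon>0$ with $\epsilon(\|A\|+2)\le\delta$, choose a band operator $A_w\in\Aln$ of band-width $w$ with $\|A-A_w\|<\epsilon$, and apply \eqref{eq:ONL} to obtain $D\in\Nb$ such that
\[
(1-\epsilon)\|C\chi_F I\|\ \le\ \vertiii{C\chi_F I}_D\ \le\ \|C\chi_F I\|
\quad\text{for all }F\subset\Zb^N\text{ and all band operators }C\text{ of width}\le w.
\]
For $B=A$, since $\vertiii{\cdot}_D$ is a seminorm bounded above by $\|\cdot\|$ and satisfies the triangle inequality, one chains
\[
\vertiii{A\chi_F I}_D\ \ge\ \vertiii{A_w\chi_F I}_D-\|A-A_w\|\ \ge\ (1-\epsilon)\|A_w\chi_F I\|-\epsilon\ \ge\ (1-\epsilon)(\|A\chi_F I\|-\epsilon)-\epsilon,
\]
and using $\|A\chi_F I\|\le\|A\|$ the right side is at least $\|A\chi_F I\|-\epsilon(\|A\|+2)\ge\|A\chi_F I\|-\delta$.

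For $B=A_g\in\opsp(A)$ I would rerun the same estimate with $A$ replaced by $A_g$, $A_w$ replaced by a band operator $B_w$ of width $\le w$ with $\|A_g-B_w\|<\epsilon$, and $\|A\|$ replaced by $\|A_g\|\le\|A\|$ (from Proposition~\ref{prop:limops}). To construct $B_w$, observe that $(V_{-g_n}A_w V_{g_n})$ is a uniformly bounded sequence of band operators of width $\le w$ lying within operator norm $\|A-A_w\|<\epsilon$ of the $\Pc$-convergent sequence $(V_{-g_n}A V_{g_n})$. Extracting a $\Pc$-convergent subsequence and using that the set of band operators of width $\le w$ is $\Pc$-closed (matrix entries outside the band remain zero under $\Pc$-limits) yields a limit $B_w$ of band-width $\le w$. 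Applying \eqref{eq:liminf} to the uniformly $\epsilon$-bounded differences $V_{-g_n}A V_{g_n}-V_{-g_n}A_w V_{g_n}\pto A_g-B_w$ then gives $\|A_g-B_w\|\le\|A-A_w\|<\epsilon$, and the triangle chain concludes as in the case $B=A$.

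The main obstacle is precisely the subsequential extraction in the last paragraph: transferring a band approximation of $A$ into a band approximation of $A_g$ of the \emph{same} band-width and the \emph{same} norm quality. This is straightforward when $\dim X<\infty$ (bounded matrix entries then live in a finite-dimensional space $\Lc(X)$ and a diagonal argument furnishes the subsequence) and, more generally, rests on the standard closure properties of limit operators on band-dominated operators (in the spirit of \cite[Prop.~2.1.7 et seq.]{LimOps}); with that in hand, the corollary is just Proposition~\ref{PLoc} plus two triangle inequalities.
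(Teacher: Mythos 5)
Your upper bound and your treatment of the case $B=A$ are correct and essentially the paper's own argument: approximate $A$ by a band operator, apply Proposition~\ref{PLoc} (or its uniform version \eqref{eq:ONL}) to it, and transfer the estimate back using that $\vertiii{\cdot}_D$ is a seminorm dominated by $\|\cdot\|$. The gap is in the case $B=A_g$. You build the band approximant $B_w$ of $A_g$ as a $\Pc$-limit of a subsequence of $(V_{-g_n}A_wV_{g_n})$, but nothing guarantees such a subsequence exists. The corollary is asserted for arbitrary $A\in\Alp$ on $\Xb=l^p(\Zb^N,X)$ with $X$ possibly infinite-dimensional; there a band operator $A_w$ need not be rich, and lying within operator norm $\eps$ of the $\Pc$-convergent sequence $(V_{-g_n}AV_{g_n})$ does not yield subsequential $\Pc$-convergence: for band operators of width $\le w$, $\Pc$-convergence amounts to norm convergence of the blocks $P_mV_{-g_n}A_wV_{g_n}P_{m+w}$ for each $m$, and bounded sequences in $\Lc(\im P_{m+w},\im P_m)$ need not have norm-convergent subsequences when $\dim X=\infty$. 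The closure properties you invoke (Proposition~\ref{prop:limops}, \cite[Prop.~2.1.7 et seq.]{LimOps}) all \emph{presuppose} that the relevant limit operators exist; they do not furnish existence. The argument can be repaired by choosing the band approximant more carefully: taking $A_w:=\int k_m(\theta)E_\theta AE_{-\theta}\,d\theta$ to be a Fej\'er-type mean (with $E_\theta=\diag(e^{i\theta\cdot j})_{j}$ isometric), one checks that this averaging commutes with shifts and with $\Pc$-limits of bounded sequences, so $(A_w)_g$ exists outright whenever $A_g$ does and $\|A_g-(A_w)_g\|\le\|A-A_w\|$; with that choice your triangle chain closes. But as written, the construction of $B_w$ is not justified.

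The paper sidesteps the problem entirely and its route for $B=A_g$ is worth comparing with yours. It never approximates $A_g$ by a band operator. Instead it exploits the fact that the already-proved inequality $\|A\chi_FI\|-\delta\le\vertiii{A\chi_FI}_D$ holds \emph{uniformly in} $F$, together with the shift invariance of both $\|\cdot\|$ and $\vertiii{\cdot}_D$: writing $V_{-g_n}AV_{g_n}\chi_FP_m$ as a shifted copy of $A\chi_{F'}I$ with $F'=(F\cap\{-m,\dots,m\}^N)+g_n$, one gets $\|V_{-g_n}AV_{g_n}\chi_FP_m\|\le\vertiii{V_{-g_n}AV_{g_n}\chi_FP_m}_D+\delta$ for every $n$, and then passes to the limit $n\to\infty$ using only the norm convergence $\|(A_g-V_{-g_n}AV_{g_n})P_m\|\to0$ (which is part of the definition of $A_g$) on both sides, and finally lets $m\to\infty$. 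This requires no richness of any auxiliary operator and no second application of Proposition~\ref{PLoc}, which is why the paper can get away with the non-uniform Proposition~\ref{PLoc} applied to a single band approximant of $A$, whereas your scheme genuinely needs \eqref{eq:ONL}.
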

\begin{proof}
Fix $\delta > 0$ and take a band operator $\tilde{A}$ such that $\|A-\tilde{A}\| < \delta/3$. Now choose $D$ by applying the previous proposition to $\tilde{A}$ with $\frac{\delta}{3\|\tilde{A}\|}$ instead of $\delta$. Then, for all $F\subset\Zb^N$,
\begin{align*}
\|A\chi_FI\| &\geq \vertiii{A\chi_FI}_D \geq \vertiii{\tilde{A}\chi_FI}_D - \vertiii{(A-\tilde{A})\chi_FI}_D > \left(1-\frac{\delta}{3\|\tilde{A}\|}\right)\|\tilde{A}\chi_FI\| - \frac{\delta}{3}\\
&\geq \|\tilde{A}\chi_FI\| - \frac{2\delta}{3} \geq \|A\chi_FI\| - \|(A-\tilde{A})\chi_FI\| - \frac{2\delta}{3} > \|A\chi_FI\| - \delta.
\end{align*}
Now let $A_g\in\opsp(A)$.
The estimate $\vertiii{A_g\chi_FI}_D\leq\|A_g\chi_FI\|$ is clear. Further, for every
$\eps>0$ there is an $m$ such that $\|A_g\chi_FI\| \leq \|A_g\chi_F P_m\| + \eps$.
For $\|A_g\chi_F P_m\|$ we have the estimate
\begin{align*}
\|A_g\chi_FP_m\|
&\leq \|V_{-g_n}AV_{g_n}\chi_FP_m\| + \|(A_g-V_{-g_n}AV_{g_n})P_m\|\\
&= \|A\chi_{F\cap\{-m,\ldots,m\}^N+g_n}I\| + \|(A_g-V_{-g_n}AV_{g_n})P_m\|\\
&\leq \vertiii{A\chi_{F\cap\{-m,\ldots,m\}^N+g_n}I}_D + \delta + \|(A_g-V_{-g_n}AV_{g_n})P_m\|\\
&= \vertiii{V_{-g_n}AV_{g_n}\chi_FP_m}_D  + \delta + \|(A_g-V_{-g_n}AV_{g_n})P_m\|.
\end{align*}
The last summand goes to zero as $n\to\infty$, whereas the 1st one converges to
$\vertiii{A_g\chi_FP_m}_D$.
By this we obtain
$\|A_g\chi_FI\| - \delta \leq \vertiii{A_g\chi_FP_m}_D + \eps \leq \vertiii{A_g\chi_F I}_D + \eps$
where $\eps>0$ is arbitrary. Thus the assertion follows.
\end{proof}

Now we are in a position to prove Theorem \ref{TEssNorm}.
\begin{proof}[Proof of Theorem \ref{TEssNorm}]
For every $K\in\pc{\Xb}$ and every $A_g\in\opsp(A)$,
$\|A+K\|\ge\|(A+K)_g\|=\|A_g\|$
holds. Taking the infimum on the left and the supremum on the right proves the estimate ``$\ge$''.

Now assume that $\|A+\pc{\Xb}\|>\sup_{A_g\in\opsp(A)}\|A_g\|=:N_A$ holds. Then there is an $\eps>0$ with
$\|A+\pc{\Xb}\|>N_A+\eps$. We conclude that $\|AQ_m\|=\|A-AP_m\|\geq \|A+\pc{\Xb}\|>N_A+\eps$ for every $m\in\Nb$.
From Corollary \ref{CLoc} we get an $n\in\Nb$ such that $\vertiii{AQ_m}_{2n+1} > N_A+\eps/2$ for every $m$. In particular, we get $k_1,k_2,...\in\Zb^N$ such that, in the notation $P_{n,k}=V_kP_nV_{-k}$ of the proof of Proposition \ref{PLoc}, $N_A+\eps/2<\|(AQ_m)P_{n,k_m}\|\le\|AP_{n,k_m}\|$ for every $m$.
Now pass to a subsequence $g=(g_j)$ of the (unbounded) sequence $(k_1,k_2,...)$ for which the limit operator $A_g$ exists.
Then
\[
N_A+\eps/2 < \|A P_{n,g_j}\| = \|V_{-g_j} A V_{g_j}P_n\| \to \|A_gP_n\| \le \|A_g\|\le N_A,\qquad j\to\infty
\]
is a contradiction.

It remains to show that $N_A$ exists as a maximum. The argument is very similar to that in the proof of
\cite[Theorem~8]{BigQuest}, where it is explained in more detail (also see Figure 1 and Remark 9 in \cite{BigQuest}).
We consider the numbers $\gamma_n:=2^{-n}$ and
\[r_l := \sum_{n=l}^\infty \gamma_n = 2^{-l+1}.\]
Then $(r_l)$ is a strictly decreasing sequence of positive numbers which tends to $0$.
From the above corollary we obtain a sequence $(D_l)\subset\Nb$ of even numbers such
that for every $l\in\Nb$
\[D_{l+1} > 2 D_l\quad\text{and}\quad
\vertiii{B\chi_FI}_{D_l} > \|B\chi_FI\| - \gamma_l \quad
\text{for every } B\in\{A\}\cup\opsp(A)\text{ and every } F\subset\Zb^N.\]

Choose a sequence $(B_n)\subset\opsp(A)$ such that
$\|B_n\|\to\sup\{\|A_g\|:A_g\in\opsp(A)\}$ as $n\to\infty$.
For each $n\in\Nb$ we are going to construct a suitably shifted copy $C_n\in\opsp(A)$
of $B_n$ as follows:

We start with an $x_n^0\in\Xb$, $\|x_n^0\|=1$,
$\diam\supp x_n^0\leq D_n$ such that  $\|B_nx_n^0\| \geq \|B_n\| - \gamma_n$. We choose a
shift $j_n^0\in\Zb^N$ which centralizes $y_n^0:=V_{j_n^0}x_n^0$ such that
$y_n^0= P_{D_n/2}y_n^0$, and define the copy
$C_n^0:= V_{j_n^0} B_n V_{-j_n^0}\in\opsp(A)$. Then we have
$\|B_n\| \geq \|C_n^0P_{D_n/2}\| \geq  \|B_n\| - \gamma_n$.

Now, for $k=1,\ldots,n$, we gradually perform a fine tuning by choosing 
$x_n^k\in\im P_{D_{n-(k-1)}/2}$,
$\|x_n^k\|=1$, $\diam\supp x_n^k\leq D_{n-k}$ such that
 $\|C_n^{k-1}P_{D_{n-(k-1)}/2}x_n^k\| \geq \|C_n^{k-1}P_{D_{n-(k-1)}/2}\| - \gamma_{n-k}$,
passing to a centralized $y_n^k:=V_{j_n^k}x_n^k$ via a shift
$j_n^k\in\{-D_{n-(k-1)}/2,\ldots,D_{n-(k-1)}/2\}^N$ and defining
$C_n^k:= V_{j_n^k} C_n^{k-1} V_{-j_n^k}\in\opsp(A)$. For this we observe
$\|C_n^kP_{D_{n-k}/2}\| \geq \|C_n^{k-1}P_{D_{n-(k-1)}/2}\| - \gamma_{n-k}$.
In particular, for $n>l\geq 1$,
the estimates $\|C_n^{n-l}P_{D_l/2}\| \geq \|B_n\| - \sum_{k = l}^n \gamma_k \geq \|B_n\| - r_l$ hold.
Finally, we define $C_n:=C_n^n$ and notice that
$C_n=V_{j_n^n+\ldots+j_n^{n-l+1}} C_n^{n-l} V_{-(j_n^n+\ldots+j_n^{n-l+1})}$,
where $|j_n^n+\ldots+j_n^{n-l+1}| \leq D_l$ by construction, thus
$\|C_nP_{2D_l}\| \geq \|C_n^{n-l}P_{D_l/2}\| \geq \|B_n\| - r_l$.

By this construction we have obtained a sequence $(C_n)\subset \opsp(A)$ 
of limit operators $C_n$ which have adjusted local norms and
such that still $\|C_n\|\to\sup\{\|A_g\|:A_g\in\opsp(A)\}$ as $n\to\infty$ holds.
By \cite[Prop. 3.104]{Marko} we can pass to a subsequence $(C_{h_n})$ of $(C_n)$ with $\Pc$-strong limit
$C\in\opsp(A)$. Then
\[
\|C\| \geq \|CP_{2D_l}\| = \lim_{n\to\infty}\|C_{h_n}P_{2D_l}\| \geq  \lim_{n\to\infty} \|B_{h_n}\| - r_l = N_A - r_l
\]
for every $l$. Since $r_l$ goes to $0$ as $l\to\infty$ the assertion follows.
\end{proof}

\begin{rem}\label{RCounterEx}
In $\Lc(\Xb,\Pc)$ the Equality \eqref{EEssNorm} does not hold in general.
\begin{itemize}
\item Consider $X:= L^p[0,1]$, and the multiplication operators $a_kI\in\lb{X}$ with
			$a_k(x):=\sin(2\pi kx)$. Then the diagonal operator
			$A:=\diag(\ldots,0,0,a_1I,a_2I,a_3I,\ldots)$ on $\Xb$ has
			operator spectrum $\{0\}$, but essential norm $1$.
\item Consider the $n\times n$ matrices
			\[B_n:=\frac{1}{n}\begin{pmatrix}1 & \cdots & 1 \\ \vdots & & \vdots \\ 1 & \cdots & 1\end{pmatrix}\]
			and the block diagonal operator $A:=\diag(\ldots,0,0,B_1,B_2,B_3,\ldots)$
			on $l^p(\Zb,\Cb)$, $1<p<\infty$.
			Then $A$ has operator spectrum $\{0\}$, but essential norm $1$
			(see \cite[Example 14]{BigQuest}).
\end{itemize}
The first example is banded but not rich, whereas the second one is
rich but not band-dominated.
Note that in the extremal cases, $p\in\{0,1,\infty\}$, the latter cannot happen since
rich operators $A\in\pb{\Xb}$ are automatically band-dominated then, by \cite[Theorem 15]{BigQuest}.
\end{rem}

Now we combine Equations \eqref{EEssNorm} and \eqref{ERegOpSpec}:

\begin{cor}\label{CEssNorm}
Let $A\in\Alpr$ be $\Pc$-Fredholm, and $B$ be a $\Pc$-regularizer. Then
\begin{equation}\label{EEssNorm2}
\|(A+\pc{\Xb})^{-1}\|=\|B+\pc{\Xb}\|=\max_{B_h\in\opsp(B)}\|B_h\|
=\max_{A_h\in\opsp(A)}\|A_h^{-1}\|.
\end{equation}
If $A\in\Alpr$ is not $\Pc$-Fredholm, then both, the RHS and the LHS of \eqref{EEssNorm2}
are infinite.
\end{cor}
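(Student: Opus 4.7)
The plan is to reduce everything to statements already available in the excerpt, specifically Theorem \ref{TEssNorm}, Theorem \ref{TLimOps}, Theorem \ref{TBDOInvCl}, Theorem \ref{TRich} and Theorem \ref{TBdORichHS}. The Fredholm case will follow from a short chain of equalities, and the non-Fredholm case will follow from the dichotomy in Theorem \ref{TBdORichHS} together with our convention $\|b^{-1}\|=\infty$ for non-invertible $b$.

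First, assume $A\in\Alpr$ is $\Pc$-Fredholm with $\Pc$-regularizer $B$. The identity $\|(A+\pc{\Xb})^{-1}\|=\|B+\pc{\Xb}\|$ is immediate from the definition of $\Pc$-Fredholmness. To get the middle equality, I would argue that $B$ itself lies in $\Alpr$: by Theorem \ref{TBDOInvCl} any $\Pc$-regularizer of $A$ is band-dominated, and by the second bullet of Theorem \ref{TRich} the regularizer of a rich $\Pc$-Fredholm operator is rich. Hence $B\in\Alpr$, and Theorem \ref{TEssNorm} applied to $B$ yields $\|B+\pc{\Xb}\|=\max_{B_h\in\opsp(B)}\|B_h\|$. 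Finally, Theorem \ref{TLimOps} gives $\opsp(B)=\{A_h^{-1}:A_h\in\opsp(A)\}$ together with uniform boundedness of these inverses, so the maximum over $\opsp(B)$ equals the maximum over $\{\|A_h^{-1}\|:A_h\in\opsp(A)\}$. Stringing the four equalities together proves \eqref{EEssNorm2}.

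Now assume $A\in\Alpr$ is not $\Pc$-Fredholm. Then $A+\pc{\Xb}$ is not invertible in the quotient, so $\|(A+\pc{\Xb})^{-1}\|=\infty$ by our convention. The middle quantities are no longer defined via a regularizer, but the outermost equality still has meaning: Theorem \ref{TBdORichHS} tells us that one of two things must fail for $A\in\Alpr$, namely either some limit operator $A_h$ is not invertible, in which case $\|A_h^{-1}\|=\infty$ and hence the supremum on the right is $\infty$, or all limit operators are invertible but their inverses are not uniformly bounded, in which case the supremum is again $\infty$. Either way the RHS of \eqref{EEssNorm2} equals $\infty$, matching the LHS.

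The only step that needs any care is the verification that $B\in\Alpr$, since Theorem \ref{TEssNorm} is only stated for rich band-dominated operators; once that is in place, the corollary is essentially a bookkeeping exercise combining the earlier theorems. Nothing new has to be proved from scratch.
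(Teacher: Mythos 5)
Your proposal is correct and follows essentially the same route as the paper: band-dominatedness of $B$ via Theorem \ref{TBDOInvCl}, richness via Theorem \ref{TRich}, then Theorem \ref{TEssNorm} applied to $B$ combined with \eqref{ERegOpSpec} from Theorem \ref{TLimOps}, and Theorem \ref{TBdORichHS} for the non-Fredholm case. The only difference is that you spell out the bookkeeping (including both branches of the failure in the non-Fredholm case, one of which is already excluded by the equivalences in Theorem \ref{TBdORichHS}) that the paper leaves implicit.
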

\begin{proof}
The operator $B$ is band-dominated by Theorem \ref{TBDOInvCl} and rich by Theorem
\ref{TRich}. Hence Theorem \ref{TEssNorm} applies and Equation \eqref{EEssNorm}
together with Equation \eqref{ERegOpSpec} from Theorem \ref{TLimOps} provide
\eqref{EEssNorm2}. The last sentence follows from Theorem \ref{TBdORichHS}.
\end{proof}

What comes as a simple corollary here is in fact a cornerstone for large parts of the subsequent results.
Remember Theorem \ref{TBdORichHS} for $A\in\Alpr$. It says that
\[
\|(A+\pc{\Xb})^{-1}\|<\infty\qquad\textrm{if and only if}\qquad
\sup_{A_h\in\opsp(A)}\|A_h^{-1}\|<\infty.
\]
Now Corollary \ref{CEssNorm} goes far beyond: It shows that both quantities are
always equal and that the supremum is actually attained as a maximum.
\medskip

Before we continue to look at Equality \eqref{EEssNorm2} and its
ingredients from different angles, we will prove the following lemma that
will be helpful in several places but is also of interest in its own right:

\begin{lem}\label{LInverses}
Let $\Pc$ be a uniform approximate identity on $\Yb$ and $A\in\pb{\Yb}$.
Then
\begin{enumerate}
	\item[a)] The set $\Yb_0:=\{y\in\Yb:\|Q_ny\|\to 0\text{ as }n\to\infty\}$ is a closed subspace
of $\Yb$. The restriction $A_0:=A|_{\Yb_0}$ of $A$ to $\Yb_0$ belongs to $\lb{\Yb_0}$, $\|A_0\|=\|A\|$, and $\nu(A)=\nu(A_0)$.
	\item[b)] The restriction  $(A^*)_0:=A^*|_{(\Yb^*)_0}$ of $A^*$ to the (analogously defined) subspace $(\Yb^*)_0$ belongs to $\lb{(\Yb^*)_0}$ and $\|(A^*)_0\|=\|A^*\|$.
	\item[c)] If $A$ is  invertible then $A_0$ is invertible with inverse $(A_0)^{-1}=(A^{-1})_0\in\lb{\Yb_0}$ and $\|(A^{-1})_0\|=\|A^{-1}\|$. Further, $(A^*)_0$ is invertible in $\lb{(\Yb^*)_0}$ with inverse $((A^*)_0)^{-1}=((A^*)^{-1})_0=((A^{-1})^*)_0$ and $\|((A^*)^{-1})_0\|=\|(A^*)^{-1}\|=\|A^{-1}\|$.
\end{enumerate}
\end{lem}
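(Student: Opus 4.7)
The proof has three parts, corresponding to (a), (b), (c). My plan is to establish (a) and (b) from scratch, and then obtain (c) by applying both to $A^{-1}$, which lies in $\pb{\Yb}$ by the inverse-closedness statement in Theorem \ref{TPFredh}.

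For (a), closedness of $\Yb_0$ is an immediate triangle-inequality argument using $\|Q_n\|=1$, and $A\Yb_0\subset\Yb_0$ follows by splitting $Q_nAy = Q_nAP_m y + Q_nAQ_m y$: the first term vanishes as $n\to\infty$ for fixed $m$ since $AP_m\in\pc{\Yb}$, and the second is bounded by $\|A\|\|Q_m y\|$, which is small for $m$ large as $y\in\Yb_0$. The easy bounds $\|A_0\|\leq\|A\|$ and $\nu(A_0)\geq\nu(A)$ are then immediate. The main obstacle in part (a) is to prove the two reverse inequalities $\|A\|\leq\|A_0\|$ and $\nu(A)\geq\nu(A_0)$: the naive cutoff $P_m y/\|P_m y\|$ only yields the one-sided estimate $\liminf_m\|AP_m y\|\geq\|Ay\|$ (via $\|P_n Ay\|\leq\|AP_m y\|+\|P_n AQ_m\|\|y\|$ together with $P_nA\in\pc{\Yb}$), which suffices for the norm inequality but points the wrong way for the $\nu$-inequality. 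To break this asymmetry I invoke the $A$-adapted equivalent projection $(F_n)$ from Proposition \ref{PFn}: in addition to $\|F_n\|=1$, the identity $F_n=P_m F_n$ (so that $F_n y\in\Yb_0$), and $\|F_n y\|\to\|y\|$ for every $y$, its decisive extra feature is $\|[A,F_n]\|\to 0$. Hence
\[\bigl|\|AF_n y\|-\|F_n Ay\|\bigr|\leq\|[A,F_n]\|\cdot\|y\|\to 0,\]
and combined with $\|F_n Ay\|\to\|Ay\|$ this yields $\|AF_n y\|\to\|Ay\|$. Setting $z_n:=F_n y/\|F_n y\|$ produces, for every unit $y\in\Yb$, unit vectors $z_n\in\Yb_0$ with $\|Az_n\|\to\|Ay\|$, settling both missing inequalities at once.

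For (b), I first show $A^*(\Yb^*)_0\subset(\Yb^*)_0$: given $\phi\in(\Yb^*)_0$, split $\phi=P_k^*\phi+Q_k^*\phi$ and bound $|\phi(AQ_n x)|$ by $\|Q_k^*\phi\|\|A\|$ (small for $k$ large) plus $\|\phi\|\|P_k AQ_n\|$ (small for $n$ large, with $k$ fixed, since $P_k A\in\pc{\Yb}$). For the inequality $\|(A^*)_0\|\geq\|A^*\|$ the analogous $F_n^*$-trick is unavailable, because $\Pc^*$ need not be a uniform approximate identity. Instead I combine part (a) (which gives $\|A^*\|=\|A\|=\|A_0\|$) with Hahn-Banach and a $P_n^*$-truncation: for $y\in\Yb_0$ with $\|y\|=1$, pick $\psi\in\Yb^*$ of unit norm with $\psi(Ay)=\|Ay\|$, and set $\phi_n:=P_n^*\psi$. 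Then $\phi_n\in(\Yb^*)_0$ (because $Q_m^*P_n^*=(P_n Q_m)^*=0$ for $m\geq n$), $\|\phi_n\|\leq 1$, and $|(A^*\phi_n)(y)|=|\psi(P_n Ay)|\to\|Ay\|$, the last limit using $Ay\in\Yb_0$ from part (a), which forces $P_n Ay\to Ay$ in norm. Taking the supremum over $y\in\Yb_0$ of unit norm yields $\|(A^*)_0\|\geq\|A_0\|=\|A^*\|$.

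Part (c) is then a quick deduction. Inverse-closedness of $\pb{\Yb}$ (Theorem \ref{TPFredh}) supplies $A^{-1}\in\pb{\Yb}$, and part (a) applied to $A^{-1}$ yields $(A^{-1})_0\in\lb{\Yb_0}$ with $\|(A^{-1})_0\|=\|A^{-1}\|$; the identities $A_0(A^{-1})_0=I_{\Yb_0}=(A^{-1})_0 A_0$ are read off directly on $\Yb_0$. The dual assertion follows analogously by applying part (b) to $A^{-1}$ and using $(A^{-1})^*=(A^*)^{-1}$.
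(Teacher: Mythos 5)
Your proof is correct and follows essentially the same route as the paper's: the commutator estimate $\|[A,F_n]\|\to 0$ for the adapted projections $(F_n)$ of Proposition \ref{PFn} is exactly the paper's tool for $\nu(A)\geq\nu(A_0)$, the Hahn--Banach functional truncated by $P_n^*$ mirrors the paper's construction $g=g_0\circ P_k$ for part b), and part c) is likewise reduced to a) and b) via the inverse-closedness of $\pb{\Yb}$ from Theorem \ref{TPFredh}. The only difference is that you prove from scratch several facts the paper delegates to citations (closedness and $A$-invariance of $\Yb_0$, the equality $\|A_0\|=\|A\|$, and the inverse identities in c)), which makes your version more self-contained but not a different argument.
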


\begin{proof}
{\it a)} It is easily checked that $\Yb_0$ is a closed subspace of $\Yb$.
$A_0(\Yb_0)\subset\Yb_0$ is from \cite[Lemma 1.1.20]{LimOps} or
\cite[Proposition 1.18.1]{SeSi3} and the formula on the norm is
\cite[Proposition 1.18.2]{SeSi3}. The inequality $\nu(A)\le\nu(A_0)$ is trivial
and it remains to prove $\nu(A)\geq \nu(A_0)$. We apply the sequence $(F_n)$
given by Proposition \ref{PFn} to obtain
\[\|Ax\|=\|F_n\|\|Ax\|\geq\|F_nAx\|\geq\|AF_nx\|-\|[A,F_n]\|\|x\|
\geq\nu(A_0)\|F_nx\|-\|[A,F_n]\|\|x\|\]
for every $x\in\Yb$ and every $n\in\Nb$. Sending $n\to\infty$ we get
$\|Ax\|\geq\nu(A_0)\|x\|$ for every $x\in\Yb$, and taking the infimum over all
$\|x\|=1$ we finally arrive at $\nu(A)\geq\nu(A_0)$.

{\it b)} The inclusion $(A^*)_0((\Yb^*)_0)\subset(\Yb^*)_0$ follows by the same means.
Here in this dual setting \cite[Proposition 1.18.2]{SeSi3} may not
be applicable anymore since $\Pc^*$ is not necessarily subject to $(\Pc3)$.
Therefore we need another proof for the formula on the norms.

Let $\eps>0$ and choose $y\in\Yb$, $\|y\|=1$
such that $\|A\|\leq \|Ay\|+\eps$. Since $\Pc$ is an approximate identity we
find a $k$ such that  $\|A\|\leq \|P_kAy\|+2\eps$. Now, by Hahn Banach there
is a functional $g_0$ on $\im P_k$, $\|g_0\|=1$, with $\|P_kAy\|=|g_0(P_kAy)|$.
Thus, setting $g:=g_0\circ P_k$ we obtain a functional $g\in\Yb^*$ , $\|g\|=1$,
such that actually $g\in(\Yb^*)_0$ with norm $1$, hence
\[\|(A^*)_0\|\leq\|A^*\|=\|A\|\leq |g(Ay)|+2\eps\leq\|A^*g\|+2\eps\leq\|(A^*)_0\|+2\eps.\]
Since $\eps>0$ is arbitrary this shows $\|(A^*)_0\|=\|A^*\|$.

{\it c)} Let $A$ be invertible. Then $A^*$ is invertible as well where $(A^*)^{-1}=(A^{-1})^*$.
The invertibility of $A_0$ and $(A^*)_0$ as well as the formulas for their inverses
follow from \cite[Corollary 1.9, Corollary 1.19]{SeSi3}.
Since $A^{-1}$ is still in $\pb{\Yb}$ by Theorem \ref{TPFredh} we can apply the already
proved formulas on the norms also to $A^{-1}$.
\end{proof}

Lemma \ref{LInverses} enables us to restrict consideration to elements $x\in\Yb_0$
when approximating $\|A\|$ or $\nu(A)$ by $\|Ax\|$ -- and similarly for
$A^*$, $A^{-1}$ or $(A^*)^{-1}$ in place of $A$. In combination with
$\Pc$-convergence this turns out to be a lot more convenient than having
to work with $x\in\Yb$.
The proof of the following proposition shows what we mean by that:

\begin{prop}\label{prop:lown-limop}
Let $A\in\pb{\Xb}$ and $A_h\in\opsp(A)$. Then $\nu(A_h)\ge\nu(A)$.
\end{prop}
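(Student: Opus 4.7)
The plan is to compare $\|A_h x\|$ with $\nu(A)\|x\|$ on a dense enough supply of vectors $x$, exploiting two facts: the shift operators $V_k$ are isometries on $\Xb$, so each conjugate $V_{-h_n}AV_{h_n}$ has the same lower norm as $A$; and $\Pc$-convergence gives us norm convergence after compression with $P_m$.

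By Lemma~\ref{LInverses}(a) applied to $A_h\in\pb{\Xb}$ (note $A_h\in\pb{\Xb}$ by \eqref{eq:liminf}), we have $\nu(A_h)=\nu((A_h)_0)$, so it suffices to verify $\|A_h x\|\ge\nu(A)\|x\|$ for every $x\in\Xb_0$. Fix such an $x$; since $x\in\Xb_0$, $\|Q_m x\|\to 0$, hence $P_m x\to x$ in norm and therefore $A_hP_m x\to A_h x$.

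Now fix $m$. Because $V_{-h_n}AV_{h_n}\pto A_h$, the equivalent characterization recalled after \eqref{eq:pto} yields
\[
\|(V_{-h_n}AV_{h_n}-A_h)P_m\|\longrightarrow 0\qquad(n\to\infty),
\]
so $V_{-h_n}AV_{h_n}P_m x\to A_h P_m x$ in norm. Since every shift $V_k$ is an isometry on $\Xb=l^p(\Zb^N,X)$, we have $\nu(V_{-h_n}AV_{h_n})=\nu(A)$ for each $n$, and consequently
\[
\|V_{-h_n}AV_{h_n}P_m x\|\ \ge\ \nu(A)\,\|P_m x\|.
\]
Passing to the limit in $n$ gives $\|A_h P_m x\|\ge\nu(A)\|P_m x\|$, and then passing to the limit in $m$ yields $\|A_h x\|\ge\nu(A)\|x\|$, as required.

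The only non-routine step is the passage from $\Pc$-convergence of $V_{-h_n}AV_{h_n}$ to usable norm convergence of $\|A_h x\|$; this is exactly where Lemma~\ref{LInverses}(a) is essential, as it lets us restrict to $x\in\Xb_0$ so that $P_m x\to x$ and the compressions $V_{-h_n}AV_{h_n}P_m$ converge in operator norm.
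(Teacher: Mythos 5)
Your proposal is correct and follows essentially the same route as the paper: both reduce to vectors in $\Xb_0$ via Lemma~\ref{LInverses}~a), truncate with $P_m$ so that the $\Pc$-convergence $V_{-h_n}AV_{h_n}\pto A_h$ upgrades to the norm convergence $\|(V_{-h_n}AV_{h_n}-A_h)P_m\|\to 0$, and then use that the shifts are isometries to transport $\nu(A)$ through the conjugates. The only cosmetic difference is that you prove the clean inequality $\|A_hx\|\ge\nu(A)\|x\|$ for all $x\in\Xb_0$ by a double limit, whereas the paper works with a single near-minimizing unit vector and explicit $\eps$'s.
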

In a sense, this result is a lower counterpart of the norm inequality from Proposition \ref{prop:limops}.
Together they show that $\nu(A)\le\nu(A_h)\le\|A_h\|\le\|A\|$.
\begin{proof}
Let $\eps>0$ and let $h=(h_n)$ be a sequence in $\Zb^N$ with $V_{-h_n}AV_{h_n}\pto A_h$.
By closedness of $\pb{\Xb}$ under $\Pc$-strong convergence (the first part of \eqref{eq:liminf}),
also $A_h\in\pb{\Xb}$.
We apply Lemma~\ref{LInverses}~a) to $A_h$.
There is a $x_0\in\Xb_0$ with $\|x_0\|=1$ such that
$\nu(A_h)=\nu((A_h)_0)>\|A_hx_0\|-\eps$. Now truncate $x_0$ and renormalize.
Since $x_0\in\Xb_0$, one has $\|P_kx_0\|^{-1}P_kx_0\to x_0$ as $k\to\infty$.
So, for sufficiently large $k\in\Nb$, $x:=\|P_kx_0\|^{-1}P_kx_0$ also fulfills
$\nu(A_h)>\|A_hx\|-\eps$, where $\|x\|=1$ and now $x=P_kx$.
Choose $n\in\Nb$ large enough that $\|(A_h-V_{-h_n}AV_{h_n})P_k\|<\eps$
and conclude that
\[
\nu(A_h)\ge\|A_hx\|-\eps=\|A_hP_kx\|-\eps\ge\|V_{-h_n}A\underbrace{V_{h_n}P_kx}_{=:x_n}\|-2\eps
=\|Ax_n\|-2\eps\ge\nu(A)\|x_n\|-2\eps.
\]
But since $\|x_n\|=\|P_kx\|=\|x\|=1$ and $\eps>0$ is arbitrary, we are finished.
\end{proof}

\section{The \texorpdfstring{$\Pc$}{P}-essential pseudospectrum} \label{sec:essps}
With our formula \eqref{EEssNorm2} it is possible to study resolvent norms in $\Alpr/\pc{\Xb}$.
To do this replace $A$ by $A-\lambda I$ in \eqref{EEssNorm2} and recall that
$(A-\lambda I)_h=A_h-\lambda I$. Then \eqref{EEssNorm2} turns into \eqref{eq:resolvents}.
This motivates to study the following kind of pseudospectra:

\begin{defn}
For $A\in\pb{\Xb}$ and $\eps>0$, the {\sl $\Pc$-essential $\eps$-pseudospectrum} is
defined as
\[\speess(A):=\spe(A+\pc{\Xb}):=\{\lambda\in\Cb:\|(A-\lambda I+\pc{\Xb})^{-1}\|> 1/\eps\}.\]
Recall that, in contrast, the $\Pc$-essential spectrum of $A$ is
\begin{eqnarray*}
\spess(A)=\spc(A+\pc{\Xb})&=&\{\lambda\in\Cb:A-\lambda I\text{ is not $\Pc$-Fredholm}\}\\
&=&\{\lambda\in\Cb:\|(A-\lambda I+\pc{\Xb})^{-1}\|=\infty\}.
\end{eqnarray*}
\end{defn}

\begin{rem}
Recall that in case $\dim X<\infty$ every $\Pc$-compact operator is compact, hence
every $\Pc$-Fredholm operator also Fredholm. By Proposition \ref{PFredh} every
Fredholm operator $A\in\pb{\Xb}$ is $\Pc$-Fredholm, thus we can conclude that
for all $A\in\pb{\Xb}$, $\Xb=l^p(\Zb^N,X)$ with $\dim X<\infty$, the $\Pc$-essential
spectrum and the (classical) essential spectrum coincide:
\[\spess(A)=\spc(A+\lc{\Xb})=\{\lambda\in\Cb:A-\lambda I\text{ is not Fredholm}\}.\]
We will address this case again in more detail in Section \ref{SFinDim}.
\end{rem}

Now here is our immediate consequence of \eqref{EEssNorm2}:

\begin{thm} \label{Tspeess}
Let $A\in\Alpr$ and $\eps>0$. Then
\begin{equation}\label{EEssSp}
\speess(A)=\bigcup_{A_h\in\opsp(A)}\spe(A_h).
\end{equation}
\end{thm}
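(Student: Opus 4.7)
The plan is to observe that Theorem \ref{Tspeess} is essentially a cosmetic rephrasing of Corollary \ref{CEssNorm} once one applies it to $A-\lambda I$ instead of $A$ and chases the definitions of the two pseudospectra involved.

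First, I would verify that Corollary \ref{CEssNorm} indeed applies to $A-\lambda I$. Since $A\in\Alpr$ and the identity $I$ is trivially rich and band-dominated, $A-\lambda I\in\Alpr$ for every $\lambda\in\Cb$. Next, using linearity of $\Pc$-limits together with Proposition \ref{prop:limops}, the operator spectrum transforms under translation as
\[
\opsp(A-\lambda I)\ =\ \{A_h-\lambda I\,:\,A_h\in\opsp(A)\},
\]
because for any sequence $h\to\infty$, $V_{-h_n}(A-\lambda I)V_{h_n}=V_{-h_n}AV_{h_n}-\lambda I$, and the $\Pc$-limit of the right-hand side exists iff $A_h$ exists, in which case the limit is $A_h-\lambda I$. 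Plugging $A-\lambda I$ into Corollary \ref{CEssNorm} then gives the resolvent identity
\[
\|(A-\lambda I+\pc{\Xb})^{-1}\|\ =\ \max_{A_h\in\opsp(A)}\|(A_h-\lambda I)^{-1}\|,
\]
where both sides are to be read as $+\infty$ when the corresponding inverse does not exist (recall the convention $\|b^{-1}\|=\infty$ for non-invertible $b$, and note that the last sentence of Corollary \ref{CEssNorm} handles the non-$\Pc$-Fredholm case).

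From here the theorem follows by unwinding the definition of $\spe$. A scalar $\lambda$ lies in $\speess(A)$ iff the left-hand side of the resolvent identity exceeds $1/\eps$, which, because the right-hand side is a (possibly infinite) maximum over $\opsp(A)$, happens iff there exists at least one $A_h\in\opsp(A)$ with $\|(A_h-\lambda I)^{-1}\|>1/\eps$, i.e.\ $\lambda\in\spe(A_h)$. This is precisely membership in $\bigcup_{A_h\in\opsp(A)}\spe(A_h)$.

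There is essentially no obstacle here: all the real work has been done in Corollary \ref{CEssNorm} (and thus in Theorem \ref{TEssNorm} and Theorem \ref{TLimOps}). The only subtlety worth a word of care is consistent treatment of the value $+\infty$, so that the equivalence ``LHS $>1/\eps$ iff some summand $>1/\eps$'' remains valid when $\lambda\in\spess(A)$ and the maximum on the right is attained as $+\infty$ by some $A_h$ whose $A_h-\lambda I$ is not invertible.
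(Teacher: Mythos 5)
Your proposal is correct and follows exactly the paper's own route: the paper likewise derives \eqref{eq:resolvents} by substituting $A-\lambda I$ into \eqref{EEssNorm2} (using $(A-\lambda I)_h=A_h-\lambda I$) and then unwinds the definitions, with the $+\infty$ case handled via \eqref{eq:specess}. Nothing is missing.
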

\begin{proof}
Equation \eqref{eq:resolvents} clearly implies \eqref{EEssSp}
since $\max_{A_h\in\opsp(A)}\|(A_h-\lambda I)^{-1}\|>1/\eps$ if and only if
$\lambda\in\cup_{A_h\in\opsp(A)}\spe(A_h)$. The particular case where
$\max_{A_h}\|(A_h-\lambda I)^{-1}\|=\infty$ corresponds to
$\lambda\in\cup_{A_h}\spc(A_h)$, see \eqref{eq:specess}.
\end{proof}

With \eqref{EEssSp} we have arrived at an $\eps$-version \eqref{eq:specesseps}
of \eqref{eq:specess}, the second part of (v) in the introduction.
It is known that it may be easier to compute pseudospectra of limit operators
than their spectra. So, numerically, computing $\speess(A)$ via \eqref{EEssSp}
aka \eqref{eq:specesseps}
is in general simpler than computing $\spess(A)$ via \eqref{eq:specess}.
In the end, one is probably interested in $\spess(A)$. The good news is that
this can be approximated by $\speess(A)$ as $\eps\to 0$.
It is a standard result that the $\eps$-pseudospectra converge to the spectrum
as $\eps\to 0$. For the reader's convenience, we state and prove the result
here for our concrete setting of $\Pc$-essential (pseudo)spectra:

\begin{prop} \label{prop:Hausdorff}
For every $A\in\pb{\Xb}$, the sets $\overline{\speess(A)}$ converge\footnote{We
consider the closure of $\speess(A)$ since the Hausdorff metric is defined for
compact sets only.} to $\spess(A)$ w.r.t.~the Hausdorff metric as $\eps\to 0$.
\end{prop}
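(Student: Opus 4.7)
The plan is to split the Hausdorff distance into its two symmetric parts and handle them separately. The inclusion $\spess(A)\subset\speess(A)\subset\overline{\speess(A)}$ is immediate from the definitions, so $\sup_{\mu\in\spess(A)} d(\mu,\overline{\speess(A)})=0$ for every $\eps>0$, and the nontrivial half is to bound $\sup_{\lambda\in\overline{\speess(A)}}d(\lambda,\spess(A))$. Before doing so, I would verify that everything lives in compact subsets of $\Cb$: the spectrum $\spess(A)=\spc(A+\pc{\Xb})$ is compact as the spectrum of an element in the Banach algebra $\pb{\Xb}/\pc{\Xb}$, and the Neumann series shows that $\|(A-\lambda I+\pc{\Xb})^{-1}\|\leq (|\lambda|-\|A+\pc{\Xb}\|)^{-1}$ for $|\lambda|>\|A+\pc{\Xb}\|$, which forces $\speess(A)\subset\{|\lambda|\leq\|A+\pc{\Xb}\|+\eps\}$ and hence $\overline{\speess(A)}$ compact.

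The key tool is the resolvent-norm function $f\colon\Cb\to[0,\infty]$ defined by $f(\lambda):=\|(A-\lambda I+\pc{\Xb})^{-1}\|$, so that $\spess(A)=\{f=\infty\}$ and $\speess(A)=\{f>1/\eps\}$. Two standard properties of $f$ drive the proof: first, by continuity of inversion in a Banach algebra, $f$ is continuous on the open set $\Cb\setminus\spess(A)$; second, the Neumann estimate above yields $f(\lambda)\to 0$ as $|\lambda|\to\infty$, so $f$ extends continuously to the one-point compactification of $\Cb\setminus\spess(A)$ by the value $0$ at $\infty$.

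Fixing $\delta>0$, I would then consider the closed set $K_\delta:=\{\lambda\in\Cb:d(\lambda,\spess(A))\geq\delta\}$, which is disjoint from $\spess(A)$ and on which $f$ is therefore finite and continuous. Because $f$ decays to $0$ at infinity, the restriction of $f$ to $K_\delta$ attains a finite maximum $M_\delta:=\sup_{\lambda\in K_\delta}f(\lambda)<\infty$. Consequently, whenever $\eps<1/M_\delta$, no $\lambda\in K_\delta$ can satisfy $f(\lambda)>1/\eps$, i.e. $\speess(A)\cap K_\delta=\varnothing$, and since $K_\delta$ is closed also $\overline{\speess(A)}\subset\{\lambda:d(\lambda,\spess(A))<\delta\}$. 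Thus $\sup_{\lambda\in\overline{\speess(A)}}d(\lambda,\spess(A))\leq\delta$ for all sufficiently small $\eps$, and letting $\delta\to 0$ gives Hausdorff convergence.

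The main obstacle I expect is simply verifying the two properties of $f$ in the quotient algebra cleanly; both are standard but rely on invertibility being an open condition and on the Neumann series converging with respect to the quotient norm. An edge case worth a sentence is $\spess(A)=\varnothing$: then $K_\delta=\Cb$ and the same boundedness argument yields $\speess(A)=\varnothing$ for all $\eps<1/M_\delta$, which is the correct Hausdorff limit under the usual convention for the empty set.
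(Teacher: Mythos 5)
Your proof is correct and is in essence the paper's own argument: one inclusion is immediate from the definitions, the pseudospectra are uniformly bounded by a Neumann series estimate, and the resolvent norm of the coset is continuous (hence locally bounded) off $\spess(A)$, so for small $\eps$ the set $\speess(A)$ cannot meet $K_\delta$ --- the paper merely packages this as a sequential compactness/contradiction argument instead of your direct supremum bound $M_\delta$. One cosmetic point: closedness of $K_\delta$ only yields $\overline{\speess(A)}\subset\{\lambda: d(\lambda,\spess(A))\le\delta\}$ rather than the strict inequality you state in passing, but since the bound you actually use in the next line is $\le\delta$, nothing is lost.
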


\begin{proof}
Clearly, $\spess(A)\subset\speess(A)\subset\overline{\speess(A)}\subset\spc_{\delta,\ess}(A)$
for all $0<\eps<\delta$.
On the other hand, assume that there is a sequence $(\lambda_n)$ of points
$\lambda_n\in \spc_{1/n,\ess}(A)$ which stay bounded away from the $\Pc$-essential
spectrum. By a simple Neumann series argument $(\lambda_n)$ is bounded, hence it
has a convergent subsequence. Without loss of generality let already $(\lambda_n)$
converge to $\lambda$. Since the norms $\|(A-\lambda_n I+\pc{\Xb})^{-1}\|> n$
tend to infinity, we find that $A-\lambda I+\pc{\Xb}$ cannot be invertible in
$\pb{\Xb}/\pc{\Xb}$, that is $\lambda\in\spess(A)$, a contradiction.
\end{proof}

From Theorem \ref{Tspeess} and Proposition \ref{prop:Hausdorff} we get
the following corollary:

\begin{cor}\label{CEssSp}
Let $A\in\Alpr$. Then
\[
\spess(A)
=\lim_{\eps\to0}\overline{\bigcup_{A_h\in\opsp(A)}\spe(A_h)}
=\bigcap_{\eps>0}\bigcup_{A_h\in\opsp(A)}\spe(A_h).
\]
\end{cor}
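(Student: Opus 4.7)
The plan is essentially to combine the two preceding results, Theorem \ref{Tspeess} and Proposition \ref{prop:Hausdorff}, with one elementary observation about how $\spess(A)$ is recovered from $\speess(A)$ as $\eps \to 0$.

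First, I would rewrite the union in the statement using Theorem \ref{Tspeess}, which, for $A \in \Alpr$, gives $\bigcup_{A_h \in \opsp(A)} \spe(A_h) = \speess(A)$ for every $\eps > 0$. After this substitution, the asserted equalities become
\[
\spess(A) \;=\; \lim_{\eps \to 0} \overline{\speess(A)} \;=\; \bigcap_{\eps > 0} \speess(A).
\]
The first of these is exactly the conclusion of Proposition \ref{prop:Hausdorff}, so nothing further is needed there.

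For the second equality I would argue directly from the definitions. The sets $\speess(A)$ are nested: if $\eps_1 < \eps_2$, then the condition $\|(A - \lambda I + \pc{\Xb})^{-1}\| > 1/\eps_1$ is stronger than $\|(A - \lambda I + \pc{\Xb})^{-1}\| > 1/\eps_2$, so $\speess[\eps_1](A) \subseteq \speess[\eps_2](A)$. Now $\lambda \in \bigcap_{\eps > 0} \speess(A)$ iff $\|(A - \lambda I + \pc{\Xb})^{-1}\| > 1/\eps$ for every $\eps > 0$, which is equivalent to $\|(A - \lambda I + \pc{\Xb})^{-1}\| = \infty$, i.e. to $A - \lambda I$ failing to be $\Pc$-Fredholm. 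By the definition of $\spess(A)$, this is exactly $\lambda \in \spess(A)$. Hence $\spess(A) = \bigcap_{\eps > 0} \speess(A)$, as required.

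There is no real obstacle here; the only thing worth checking carefully is the interplay between taking the closure in the middle term and dropping it in the right-hand term. This is harmless, because $\spess(A) \subseteq \speess(A) \subseteq \overline{\speess(A)}$ and the above chain of equalities pins down all three intersections (with and without closure) to the same set $\spess(A)$.
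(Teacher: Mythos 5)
Your proof is correct and follows essentially the same route as the paper, which simply cites Theorem \ref{Tspeess} and Proposition \ref{prop:Hausdorff} without spelling out the details. Your added observation that $\bigcap_{\eps>0}\speess(A)=\spess(A)$ (since membership in every $\eps$-pseudospectrum forces the resolvent norm of the coset to be infinite) is exactly the elementary step the paper leaves implicit, and your handling of the closure in the middle term is fine.
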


\begin{rem}
{\bf a)} Note that Corollary \ref{CEssSp}, although derived via our new Equations
\eqref{EEssNorm2} and \eqref{EEssSp}, in fact says nothing more than Theorem \ref{TBdORichHS}
and Equation \eqref{eq:specess}.

{\bf b)} Several authors define pseudospectra with ``$\geq 1/\eps$'' instead of
``$> 1/\eps$'', which leads to compact pseudospectra, but sometimes causes additional
difficulties. (For example, the analogue of Proposition \ref{prop:speps_perturb} below
is no longer true in arbitrary Banach space $Y$ if ``$> 1/\eps$'' is replaced by ``$\ge 1/\eps$''
in the definition of $\spe(A)$ and if the union below is taken over all $\|K\|\le\eps$ instead of
all $\|K\|<\eps$, cf. \cite{SharPsSp}.)
Anyway, our preceding results hold for both definitions.

{\bf c)} Similar observations are to be expected for $(N,\eps)$-pseudospectra as well.
\end{rem}

Another well-known and very useful characterization of pseudospectra of operators $A$
is given as the union of spectra of small perturbations of $A$.

\begin{prop}\label{prop:speps_perturb}(Cf. \cite[Section 7.1]{BandToep})
Let $Y$ be a Banach space, $A\in\lb{Y}$ and $\eps>0$. Then
\[\spe(A)=\bigcup_{\|K\|<\eps}\spc(A+K)=\bigcup_{\|K\|<\eps,\,\rk K\le1}\spc(A+K).\]
\end{prop}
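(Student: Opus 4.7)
The plan is to prove the two inclusions making up a cyclic chain
\[
\bigcup_{\|K\|<\eps,\,\rk K\le 1}\spc(A+K)\ \subseteq\ \bigcup_{\|K\|<\eps}\spc(A+K)\ \subseteq\ \spe(A)\ \subseteq\ \bigcup_{\|K\|<\eps,\,\rk K\le 1}\spc(A+K).
\]
The first inclusion is trivial by restricting the union. For the second, I would take $\lambda\in\spc(A+K)$ with $\|K\|<\eps$ and argue as follows: if $\lambda\in\spc(A)$, there is nothing to show; otherwise $A-\lambda I$ is invertible and a Neumann series argument gives that $(A-\lambda I)+K=(A-\lambda I)(I+(A-\lambda I)^{-1}K)$ is invertible as soon as $\|(A-\lambda I)^{-1}K\|<1$. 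Since $(A+K-\lambda I)$ is not invertible we must have $\|(A-\lambda I)^{-1}\|\,\|K\|\ge 1$, whence $\|(A-\lambda I)^{-1}\|>1/\eps$ and $\lambda\in\spe(A)$.

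The main work is the third inclusion, which is where Hahn--Banach enters. Take $\lambda\in\spe(A)$. If $A-\lambda I$ is not invertible we pick $K=0$; otherwise set $B:=(A-\lambda I)^{-1}$, so $\|B\|>1/\eps$. I would choose $y\in Y$ with $\|y\|=1$ and $\|By\|>1/\eps$, and then apply Hahn--Banach to obtain $g_0\in Y^{*}$ with $\|g_0\|=1$ and $g_0(By)=-\|By\|$. Rescaling, set $g:=g_0/\|By\|$, so that $g(By)=-1$ and $\|g\|\,\|y\|=1/\|By\|<\eps$. Define the rank-one operator $K\in\lb{Y}$ by $Kx:=g(x)\,y$; its operator norm equals $\|g\|\,\|y\|<\eps$. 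A direct computation shows $(I+BK)(By)=By+g(By)By=0$, so $I+BK$ has nontrivial kernel and hence $(A-\lambda I)+K=(A-\lambda I)(I+BK)$ fails to be invertible. Therefore $\lambda\in\spc(A+K)$ with $\|K\|<\eps$ and $\rk K\le 1$, closing the chain.

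The only delicate step is the explicit construction of the rank-one perturbation, and this is really a standard use of Hahn--Banach to ``realize'' the norm of $B$ against a unit vector. No substantive obstacle is expected, but one should be careful with signs and with the rescaling so that simultaneously $g(By)=-1$ and $\|g\|\,\|y\|<\eps$ hold, using the strict inequality $\|B\|>1/\eps$ rather than $\ge 1/\eps$.
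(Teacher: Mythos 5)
Your proof is correct. The paper itself only cites this result (to B\"ottcher--Grudsky) rather than proving it, but its proof of the refined Proposition \ref{PPerturb} uses essentially the same core construction you give: a Neumann-series argument for the inclusion $\bigcup_{\|K\|<\eps}\spc(A+K)\subseteq\spe(A)$, and for the converse a rank-one perturbation built via Hahn--Banach so that $A+K-\lambda I$ acquires a nontrivial kernel; the only cosmetic difference is that the paper picks a unit vector $x$ nearly realizing the lower norm of $B=A-\lambda I$ (i.e.\ $\|Bx\|<\eps$) and sets $Ku:=-\varphi(u)Bx$ with $\varphi(x)=1$, whereas you equivalently pick $y$ with $\|(A-\lambda I)^{-1}y\|>1/\eps$ and kill the vector $(A-\lambda I)^{-1}y$.
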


In the following Proposition we improve this result in case of $A\in\pb{\Xb}$.

\begin{prop}\label{PPerturb}
Let $\Cc\subset\pb{\Xb}$ be an algebra containing all rank-$1$-operators with only
finitely many non-zero entries in the respective matrix representation, let $A\in\Cc$ 	
and let $\eps>0$. Then
\[
\spe(A)=\bigcup_{\|K\|<\eps}\spc(A+K)=\bigcup_{\substack{\|K\|<\eps,\\K\in\Cc}}\spc(A+K)
=\bigcup_{\substack{\|K\|<\eps,\\K\in\pc{\Xb}}}\spc(A+K)
=\bigcup_{\substack{\|K\|<\eps,\\K\in\pc{\Xb}\cap\Cc,\\ \rk K\le1}}\spc(A+K).
\]
\end{prop}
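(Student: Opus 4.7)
The plan is to prove the chain of equalities by observing that the smallest of the four unions is contained in each of the other three (by restriction), and each of those is contained in $\spe(A)$ by the classical Proposition~\ref{prop:speps_perturb}. It thus suffices to establish the reverse inclusion
\[
\spe(A)\ \subset\ \bigcup_{\substack{\|K\|<\eps,\,K\in\pc{\Xb}\cap\Cc,\\ \rk K\le 1}}\spc(A+K).
\]

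Fix $\lambda\in\spe(A)$. If $A-\lambda I$ is not invertible, take $K=0$; otherwise $B:=A-\lambda I$ is invertible with $\|B^{-1}\|>1/\eps$, and Theorem~\ref{TPFredh} yields $B^{-1}\in\pb{\Xb}$. The aim is to construct a rank-$1$ operator $K=u\otimes f$ (i.e.\ $Kz=f(z)u$) with $u\in\Xb$ and $f\in\Xb^*$ both of finite support, $\|u\|\|f\|<\eps$, and $f(B^{-1}u)=-1$. The factorisation
\[
A+u\otimes f-\lambda I\ =\ B\,\bigl(I+(B^{-1}u)\otimes f\bigr),
\]
together with the elementary identity that $v\in\ker(I+v\otimes f)$ whenever $f(v)=-1$, then places $B^{-1}u\ne 0$ in the kernel of $A+K-\lambda I$, so $\lambda\in\spc(A+K)$. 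Moreover, the matrix of $u\otimes f$ is the rank-one tensor of the finitely supported sequences $u$ and $f$, so it has only finitely many non-zero entries, placing $K$ in $\Cc$ by the hypothesis on $\Cc$ and in $\pcn{\Xb}\subset\pc{\Xb}$ by definition.

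For the construction of $u$ and $f$ I would invoke Lemma~\ref{LInverses}(c) with $\Xb_0:=\{y\in\Xb:\|Q_ny\|\to 0\}$, which says that $\|B^{-1}\|$ equals the norm of $B^{-1}|_{\Xb_0}$ and that $B^{-1}(\Xb_0)\subset\Xb_0$. First, pick $u_0\in\Xb_0$ with $\|u_0\|=1$ and $\|B^{-1}u_0\|>1/\eps$. Since $P_mu_0\to u_0$ in norm, the renormalised truncation $u:=P_mu_0/\|P_mu_0\|$ is, for all sufficiently large $m$, finitely supported, of norm one, and still satisfies $\|B^{-1}u\|>1/\eps$. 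Let $w:=B^{-1}u\in\Xb_0$ and choose $m'$ so large that $\|P_{m'}w\|>1/\eps$. By Hahn--Banach, there is a functional $f_0$ on $\im P_{m'}$ with $f_0(P_{m'}w)=-1$ and $\|f_0\|=1/\|P_{m'}w\|$; then $f:=f_0\circ P_{m'}$ satisfies $f(w)=f_0(P_{m'}w)=-1$, $\|f\|\le 1/\|P_{m'}w\|<\eps$, and, since $f$ factors through $P_{m'}$, it is supported in $\{-m',\ldots,m'\}^N$ in the sense needed for the matrix of $u\otimes f$ to be finitely supported.

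The main technical obstacle is enforcing finite support of the matrix of $K$ without losing the singularity of $A+K-\lambda I$ at $\lambda$. A direct translation of the proof of Proposition~\ref{prop:speps_perturb} would set $K=-(A-\lambda I)y\otimes f$ for an approximate eigenvector $y$; even if $y$ is truncated to have finite support, the vector $(A-\lambda I)y$ generally has infinite support, which could place $K$ outside $\Cc$. The remedy is to swap the two factors and put $w=B^{-1}u$ (rather than $u$) in the kernel of $A+K-\lambda I$, which allows $u$ to be chosen finitely supported via Lemma~\ref{LInverses}(c); the condition $f(w)=-1$ then survives the passage from $f_0$ on $\im P_{m'}$ to $f=f_0\circ P_{m'}$ on all of $\Xb$ because $f_0$ is prescribed to evaluate to $-1$ on $P_{m'}w$ already.
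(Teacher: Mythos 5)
Your proof is correct, and it takes a genuinely different route from the paper's. The paper also reduces everything to showing $\spe(A)\subset S_5$ (the smallest union), but its construction goes the other way around: it picks an approximate kernel vector, i.e.\ $x=P_kx$ with $\|x\|=1$ and $\|Bx\|<\eps$ (available because $\nu(B_0)=\|B^{-1}\|^{-1}<\eps$ by Lemma~\ref{LInverses}), a functional $\varphi=\varphi\circ P_k$ with $\varphi(x)=\|\varphi\|=1$, and sets $K:=-\varphi(\cdot)\,Bx$, so that $(B+K)x=0$. The obstacle you flag --- that $Bx$ may have infinite support, so $K$ is not a finite matrix --- is real but harmless in the paper's argument: the proposition only requires $K\in\pc{\Xb}\cap\Cc$ of rank $\le 1$, and this follows from the factorization $K=B\tilde K$ with $\tilde K:=-\varphi(\cdot)x=P_k\tilde K P_k$ of finite support, since $\Cc$ is an algebra containing $A$ and $\tilde K$, and $\pc{\Xb}$ absorbs products with $\pb{\Xb}$. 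Your construction instead works on the domain side: you choose $u=P_mu$ with $\|B^{-1}u\|>1/\eps$ (using $\|(B^{-1})_0\|=\|B^{-1}\|$ from Lemma~\ref{LInverses}(c)) and a finitely supported functional $f$ with $f(B^{-1}u)=-1$, $\|f\|<\eps$, so that $(B+u\otimes f)(B^{-1}u)=u-u=0$. All steps check out, and your version buys a slightly stronger conclusion: the perturbation $K=u\otimes f$ itself lies in $\pcn{\Xb}$, i.e.\ it is a genuinely finite matrix, rather than merely an element of $\pc{\Xb}\cap\Cc$.
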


\begin{proof}
Abbreviate the sets in this claim from left to right by $S_1,...,S_5$.
$S_1=S_2$ holds by the previous proposition, $S_2\supset S_3\supset S_5$ and
$S_2\supset S_4\supset S_5$ are obvious. Thus, it remains to prove $S_5\supset S_1$.

So let $\lambda\in S_1$. Since the case $\lambda\in\spc(A)$ is clear,
let $B:=A-\lambda I$ be invertible with $\|B^{-1}\|>1/\eps$.
By Lemma \ref{LInverses}, also $B_0:=B|_{\Xb_0}$ is invertible and
$\|(B_0)^{-1}\|=\|B^{-1}\|>1/\eps$, so that
there exists an $x_0\in\Xb_0$, $\|x_0\|=1$, with
$\|Bx_0\|=\|B_0x_0\|< \eps$. As in the proof of Proposition \ref{prop:lown-limop},
take $k$ sufficiently large that also
$x:=\|P_kx_0\|^{-1}P_kx_0$ fulfills $\|Bx\|<\eps$, where $\|x\|=1$ and $P_kx=x$.
By the Hahn-Banach Theorem there exists a functional $\varphi$ with
$\|\varphi\|=\varphi(x)=1$ and $\varphi\circ P_k=\varphi$. Now, we define
$\tilde{K}u:=-\varphi(u)x$ and $Ku:=-\varphi(u)Bx$ for every $u\in\Xb$.
Then $\tilde{K}$, $K$ have rank $1$ and $\|K\|\leq\|\varphi\|\|Bx\|<\eps$.
Moreover, both $\tilde{K}=P_k\tilde{K}P_k$ and $K=B\tilde{K}$ belong to
$\pc{\Xb}\cap\Cc$. Finally, with $(B+K)x=Bx-\varphi(x)Bx=0$, we summarize:
$\lambda\in\spc(A+K),\, \|K\|<\eps,\,K\in\pc{\Xb}\cap\Cc,\,\rk K=1.$
\end{proof}

Also for the $\Pc$-essential pseudospectra for classes of rich band-dominated
operators we can obtain a characterization via perturbations.

\begin{thm}
Let $\Cc$ be one of the algebras of all rich band operators or all rich band-dominated
operators\footnote{Actually, one can consider many more subalgebras $\Cc$ of $\Alpr$
as long as one can define operators of the form \eqref{ET} there. Another example is
the set of all rich operators in the Wiener algebra, see e.g. \cite{RochFredTh} or
\cite[\S3.7.3]{HabilMarko}.}
on $\Xb$ and let $A\in\Cc$. For $\eps>0$
\[\speess(A)=\bigcup_{\substack{\|T\|<\eps,\\T\in\pb{\Xb}}}\spess(A+T)
=\bigcup_{\substack{\|T\|<\eps,\\T\in\Cc}}\spess(A+T).\]
\end{thm}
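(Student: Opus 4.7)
The plan is to prove both equalities via two complementary ingredients. First, for any $T\in\pb{\Xb}$ with $\|T\|<\eps$, a Neumann-series argument in the Banach algebra $\pb{\Xb}/\pc{\Xb}$ gives $\spess(A+T)\subset\speess(A)$, supplying the $\supset$ direction of both identities. Second, a direct construction produces, for each $\lambda\in\speess(A)$, a perturbation $T\in\Cc$ with $\|T\|<\eps$ and $\lambda\in\spess(A+T)$, which gives the $\subset$ direction of the second identity; the $\subset$ direction of the first then follows because $\Cc\subset\pb{\Xb}$.

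The Neumann step is routine: $\lambda\notin\speess(A)$ means $\|(A-\lambda I+\pc{\Xb})^{-1}\|\le 1/\eps$ in the quotient, and $\|T+\pc{\Xb}\|\le\|T\|<\eps$ forces $(A+T-\lambda I)+\pc{\Xb}$ to remain invertible, i.e.\ $\lambda\notin\spess(A+T)$.

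The construction is the main work. Given $\lambda\in\speess(A)$, Theorem \ref{Tspeess} produces $A_h\in\opsp(A)$ with $\lambda\in\spe(A_h)$; Proposition \ref{PPerturb} applied to $A_h\in\Alpr$ then yields a rank-one $K\in\pc{\Xb}$ with $\|K\|<\eps$ and $\lambda\in\spc(A_h+K)$. I thin $h=(h_n)$ so that $|h_n-h_{n'}|\to\infty$ whenever $n\ne n'$ and $\min(n,n')\to\infty$, select a cutoff sequence $m_n\to\infty$ making the translated blocks $h_n+\{-m_n,\ldots,m_n\}^N$ pairwise disjoint, and set
\[T:=\sum_n V_{h_n}(P_{m_n}KP_{m_n})V_{-h_n}.\]
Block disjointness immediately gives $\|T\|\le\|K\|<\eps$; the explicit rank-one structure of $K$ produced by Proposition \ref{PPerturb} (a product $B\tilde K$ with $B:=A_h-\lambda I$ and $\tilde K$ of finite matrix support) yields a uniform band-truncation estimate $\|T-T^{(w)}\|\to 0$ as $w\to\infty$, forcing $T\in\Alp$; a dichotomy on sequences $(k_m)\to\infty$ (they either stay bounded away from all $h_n$ or come within bounded distance of some subsequence of $(h_n)$) shows $T$ is rich; and a local computation $P_MV_{-h_n}TV_{h_n}\to P_MK$, with the right-multiplication analogue, identifies $T_h=K$. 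If $A$ is itself a rich band operator, preservation of band-width under $\Pc$-limits makes $A_h$ a band operator, the $K$ from Proposition \ref{PPerturb} already has finite matrix support, the $m_n$ may be kept bounded, and $T\in\Aln$. So $T\in\Cc$ in either case, $(A+T)_h=A_h+K$, and Equation \eqref{eq:specess} finally gives $\lambda\in\spc((A+T)_h)\subset\spess(A+T)$.

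The main obstacle is the simultaneous coordination in the construction of $T$: the thinning rate of $h$, the growth rate of the cutoffs $m_n$, and the specific rank-one structure of $K$ from Proposition \ref{PPerturb} must all interact so that the block supports stay pairwise disjoint (controlling the operator norm and driving the $\Pc$-strong convergence computations) while $T$ remains band-dominated and $T_h$ is exactly $K$. Verifying the uniform band-truncation estimate for $K$ -- which is where the explicit form $K=B\tilde K$ with $\tilde K$ finitely supported pays off -- is the one technical point that cannot be avoided.
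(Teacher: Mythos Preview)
Your proof is correct and follows essentially the same route as the paper: the Neumann-series containment is identical, and your construction of $T$ as a block-diagonal sum of shifted truncations of the rank-one $K$ from Proposition~\ref{PPerturb} is precisely the paper's formula~\eqref{ET}, only with a general cutoff sequence $(m_n)$ in place of the paper's choice $m_n=n$. The paper is terser---it simply asserts $T\in\Cc$ and $T_g=K$---whereas you spell out the band-truncation estimate, the richness dichotomy, and the computation of $T_h$; these elaborations are all sound, and your treatment of the band-operator case (observing that $K$ then has finite matrix support so bounded cutoffs suffice) is a harmless variant of the paper's observation that $K\in\pc{\Xb}\cap\Cc$ directly.
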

\begin{proof}
For each $L\in\pb{\Xb}$, abbreviate the coset $L+\pc{\Xb}\in\pb{\Xb}/\pc{\Xb}$ by $L^\circ$.
Now let $A\in\Cc$ and $\lambda\not\in\speess(A)$. With $B:=A-\lambda I$,
the coset $B^\circ$ is invertible and $\|(B^\circ)^{-1}\|\leq 1/\eps$.
For arbitrary $T\in\pb{\Xb}$ with $\|T\|<\eps$, one has
$\|(B^\circ)^{-1}T^\circ\|<1$, so that $I^\circ+(B^\circ)^{-1}T^\circ$ is invertible.
Thus, $(B+T)^\circ=B^\circ(I^\circ+(B^\circ)^{-1}T^\circ)$ is invertible, whence
$\lambda\not\in\spess(A+T)$.
Together with Theorem \ref{Tspeess} we conclude the following inclusions:
\begin{align*}
\bigcup_{\substack{\|T\|<\eps,\\T\in\Cc}}\spess(A+T) & \subset
\bigcup_{\substack{\|T\|<\eps,\\T\in\pb{\Xb}}}\spess(A+T)\subset
\speess(A)=\bigcup_{A_h\in\opsp(A)}\spe(A_h).
\end{align*}
It remains to show that the right-most set is contained in the left-most.
So let $A_h\in\opsp(A)$ and $\lambda\in\spe(A_h)$. By Proposition \ref{PPerturb},
$\lambda\in\spc(A_h+K)$ for some $K\in\pc{\Xb}\cap\Cc$ with $\|K\|<\eps$.
Now choose a subsequence $g$ of $h$ such that
all cubes $g_n+\{-n,...,n\}^N$ are pairwise disjoint,
and define
\begin{equation}\label{ET}
T:=\sum_{n\in\Nb}V_{g_n}P_nKP_nV_{-g_n}.
\end{equation}
$T$ is a well-defined block-diagonal operator\footnote{By our assumption on $g$, the blocks
$V_{g_n}P_nKP_nV_{-g_n}$ do not overlap.} belonging to $\Cc$ with $\|T\|\leq\|K\|<\eps$
and $T_g=K$. Since
\[
(A-\lambda I+T)_g=A_g-\lambda I +T_g=A_h-\lambda I + K,
\]
we find that $\lambda\in\spc(A_h+K)=\spc((A+T)_g)$, whence $\lambda\in\spess(A+T)$ by
\eqref{eq:specess}.
\end{proof}

\begin{rem}
The above proof that the pseudospectrum is a superset of the union of spectra of
perturbations works in every Banach algebra. In $C^*$-algebras also the converse
is true, although it may fail in the general case. For more details see e.g.
\cite[Page 121]{Standard}.
\end{rem}

\section{The \texorpdfstring{$\Pc$}{P}-essential lower norm}  \label{sec:esslowernorm}
Let $A\in\pb{\Xb}$ be $\Pc$-Fredholm. By Theorem \ref{TLimOps} and
the last point of Lemma \ref{lem:bddbelow}, we can rewrite the right-hand
side of \eqref{EEssNorm2} in terms of lower norms of the limit operators:
\begin{equation}\label{ENormInvLowerN}
\max_{A_h\in\opsp(A)}\|A_h^{-1}\|
	=\left(\min_{A_h\in\opsp(A)}\nu(A_h)\right)^{-1}.
\end{equation}
Our aim for this section is to present alternative valuable characterizations 
of the essential norm $\|(A+\pc{\Xb})^{-1}\|$ on the left-hand side of \eqref{EEssNorm2} 
in terms of lower norms of (perturbations and restrictions of) the operator $A$
directly, which do not count on limit operators.

\subsection{1st approach: Lower norms of asymptotic compressions}
We start again with the abstract setting of a Banach space $\Yb$ with a uniform 
approximate projection $\Pc=(P_n)$, and we make the following simple observation:

\begin{lem}\label{LMuLim}
For $A\in\lb{\Yb}$,
$\displaystyle\lim_{m\to\infty}\nu(A|_{\im Q_m})=\sup_{m\in\Nb}\nu(A|_{\im Q_m})$,
where $A|_{\im Q_m}:\im Q_m\to\Yb$.
\end{lem}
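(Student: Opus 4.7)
The plan is to observe that the sequence $m\mapsto\nu(A|_{\im Q_m})$ is monotonically non-decreasing, whence it converges and its limit coincides with its supremum. Since each term is bounded above by $\|A\|$, there is no issue with infinite values.

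First I would extract the nesting of the subspaces $\im Q_m$ from axiom $(\Pc1)$. The relation $P_n=P_{n+1}P_n$ gives
\[
Q_{n+1}Q_n = (I-P_{n+1})(I-P_n) = I-P_n-P_{n+1}+P_{n+1}P_n = I-P_{n+1} = Q_{n+1},
\]
so every $y\in\im Q_{n+1}$ satisfies $y=Q_{n+1}y=Q_{n+1}Q_n y$, in particular $y\in\im Q_n$ (apply $Q_n$ once more and use $Q_n^2=Q_n$ to confirm $Q_n y=y$). Hence $\im Q_{n+1}\subset \im Q_n$ for every $n$, and by iteration $\im Q_m$ is a decreasing chain of closed subspaces of $\Yb$.

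Next I would use that the lower norm is an infimum over the unit sphere of the relevant subspace: if $U\subset V$ are subsets of $\Yb$ containing nonzero elements, then
\[
\inf_{x\in V,\,\|x\|=1}\|Ax\|\ \le\ \inf_{x\in U,\,\|x\|=1}\|Ax\|.
\]
Applied to $U=\im Q_{m+1}\subset V=\im Q_m$, this yields $\nu(A|_{\im Q_m})\le\nu(A|_{\im Q_{m+1}})$, so the sequence is non-decreasing. A non-decreasing real sequence bounded above (here by $\|A\|<\infty$) converges to its supremum, giving the claimed equality.

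I do not anticipate any serious obstacle; the only thing to be a little careful about is the degenerate case in which $\im Q_m=\{0\}$ for some $m$, where $\nu(A|_{\im Q_m})$ is vacuously $+\infty$. In our setting the $P_n$ are proper projections of norm one on an infinite-dimensional space, so $\im Q_m\neq\{0\}$; otherwise one could simply declare the sup and the limit both equal to $+\infty$ and the statement would still hold. This should be a short proof consisting essentially of the monotonicity observation.
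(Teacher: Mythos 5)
Your proof is correct and follows the same route as the paper: monotonicity of $\nu(A|_{\im Q_m})$ from the nesting $\im Q_{m+1}\subset\im Q_m$ (which you rightly trace back to $(\Pc1)$), together with the bound $\nu(A|_{\im Q_m})\le\|A\|$, gives convergence to the supremum. The extra verification of the inclusion of ranges and the remark on the degenerate case are fine but not needed beyond what the paper records.
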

\begin{proof}
The sequence of compressions is bounded by $\nu(A|_{\im Q_m})\le\|A\|$.
Convergence to the supremum follows from the monotonicity
$\nu(A|_{\im Q_{m+1}})\geq \nu(A|_{\im Q_m})$ since
$\im Q_{m+1}\subset\im Q_m$.
\end{proof}

\begin{defn}
For $A\in\lb{\Yb}$ set
\[
\tilde{\mu}(A):= \lim_{m\to\infty}\nu(A|_{\im Q_m}),\quad
\mu(A):=\min\{\tilde{\mu}(A),\,\tilde{\mu}(A^*)\}.
\]
\end{defn}

In Section \ref{SHilbert} we will see that in the case of appropriate Hilbert spaces $\Yb$
this $\mu(A)$ serves as a characterization for the essential norm $\|(A+\pc{\Yb})^{-1}\|$
for every operator $A\in\pb{\Yb}$ (cf. Theorem \ref{TPLowNormHilb}).
However, beyond the comfortable Hilbert space case we are still able to prove this
observation for all rich band-dominated operators on all $\Xb$.

\begin{thm}\label{TPLowNorm}
Let $A\in\Alpr$. Then\footnote{We again use the notation $\|b^{-1}\|^{-1}=0$ for 
non-invertible elements $b$.}
\begin{equation}\label{EPLowNorm}
\|(A+\pc{\Xb})^{-1}\|^{-1}=\mu(A).
\end{equation}
\end{thm}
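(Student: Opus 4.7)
The plan is to show that both sides of (\ref{EPLowNorm}) coincide with the common quantity
\[
\min\!\Bigl\{\inf_{A_h \in \opsp(A)} \nu(A_h),\; \inf_{A_h \in \opsp(A)} \nu(A_h^*)\Bigr\}.
\]
Starting from Corollary \ref{CEssNorm} and the last clause of Lemma \ref{lem:bddbelow}, the left-hand side of (\ref{EPLowNorm}) equals
\[
\|(A+\pc{\Xb})^{-1}\|^{-1} \;=\; \min_{A_h \in \opsp(A)} \|A_h^{-1}\|^{-1} \;=\; \min_{A_h \in \opsp(A)} \min\{\nu(A_h),\, \nu(A_h^*)\},
\]
which, after interchanging the two minima, matches the displayed quantity (with the convention $\infty^{-1}=0$ handling non-invertible limit operators painlessly). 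Hence the proof reduces to the two intrinsic identities $\tilde\mu(A) = \inf_{A_h} \nu(A_h)$ and $\tilde\mu(A^*) = \inf_{A_h} \nu(A_h^*)$.

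For the easy direction $\tilde\mu(A) \le \nu(A_h)$, I would fix $A_h \in \opsp(A)$ and $\eps > 0$ and use Lemma \ref{LInverses}(a) to pick $x \in \Xb_0$, $\|x\|=1$, with $\|A_h x\| < \nu(A_h) + \eps$. A truncation $x_k := \|P_k x\|^{-1} P_k x$ still has unit norm and satisfies $\|A_h x_k\| < \nu(A_h) + 2\eps$ for large $k$. Since $V_{-h_n} A V_{h_n} \pto A_h$ and $x_k = P_k x_k$, for $|h_n|_\infty > m + k$ one has $V_{h_n} x_k \in \im Q_m$ and
\[
\nu(A|_{\im Q_m}) \;\le\; \|A V_{h_n} x_k\| \;=\; \|V_{-h_n} A V_{h_n} x_k\| \;\longrightarrow\; \|A_h x_k\| \;<\; \nu(A_h) + 2\eps
\]
as $n\to\infty$. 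Sending $m\to\infty$ and then $\eps\to 0$ yields $\tilde\mu(A) \le \nu(A_h)$, and hence $\tilde\mu(A) \le \inf_{A_h} \nu(A_h)$.

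The harder inequality $\inf_{A_h} \nu(A_h) \le \tilde\mu(A)$ is the heart of the proof. The plan is, for fixed $\eps > 0$, to find a unit vector $z_m \in \im Q_m$ whose support has diameter bounded by a universal $D = D(\eps)$ and for which $\|A z_m\| \le \tilde\mu(A) + O(\eps)$. This is precisely the content of the lower-norm analog of Proposition \ref{PLoc}, supplied by \cite[Proposition 6]{BigQuest} (announced in the remark following Proposition \ref{PLoc}). Writing $z_m = V_{k_m} w_m$ with $w_m = P_D w_m$, the condition $z_m \in \im Q_m$ forces $|k_m| \to \infty$, so by richness of $A$ a subsequence of $(V_{-k_m} A V_{k_m})$ $\Pc$-converges to some $A_h \in \opsp(A)$. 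Because $w_m$ sits inside $\im P_D$, this $\Pc$-convergence upgrades to the norm estimate
\[
\|(A_h - V_{-k_m} A V_{k_m}) w_m\| \;\le\; \|(A_h - V_{-k_m} A V_{k_m}) P_D\| \cdot \|w_m\| \;\longrightarrow\; 0,
\]
from which $\nu(A_h) \le \|A_h w_m\| = \|A z_m\| + o(1) \le \tilde\mu(A) + O(\eps)$. Letting $\eps \to 0$ delivers $\inf_{A_h} \nu(A_h) \le \tilde\mu(A)$, which combined with the previous paragraph gives the first intrinsic identity.

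The adjoint identity $\tilde\mu(A^*) = \inf_{A_h} \nu(A_h^*)$ is obtained by the same pattern applied to $A^*$: for $p \in (1,\infty)$, $A^*$ is a rich band-dominated operator on $l^q(\Zb^N,X^*)$, $\Pc^*$ is again a uniform approximate identity, and Proposition \ref{prop:limops} identifies $\opsp(A^*) = \{A_h^* : A_h \in \opsp(A)\}$, so the above arguments transfer verbatim. For $p \in \{0,1,\infty\}$ property $(\Pc 3)$ is lost on the dual, but Lemma \ref{LInverses}(b) provides the substitute $\|(A^*)_0\| = \|A^*\|$, and the argument can be run after restricting to $(\Xb^*)_0$, where $\Pc^*$ does behave as an approximate identity. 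The single most delicate step, both for $A$ and for $A^*$, is the lower-norm localization from \cite[Proposition 6]{BigQuest}: without uniform control on the support diameters of the near-minimizers, the $\Pc$-convergence $V_{-k_m} A V_{k_m} \pto A_h$ would not upgrade to norm convergence on $w_m$, which is exactly what transfers the smallness of $\|A z_m\|$ into the bound $\nu(A_h) \le \tilde\mu(A) + O(\eps)$.
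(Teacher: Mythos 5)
Your overall architecture is sound and genuinely different from the paper's in one half: the paper obtains the inequality $\|(A+\pc{\Xb})^{-1}\|^{-1}\leq\mu(A)$ abstractly, by a regularizer computation (Lemma \ref{LPLowNorm1}) valid for any uniform approximate projection, whereas you derive it from the lower-norm localization of \cite{BigQuest} applied to near-minimizers of $\nu(A|_{\im Q_m})$ with controlled support. Your primal identity $\tilde\mu(A)=\inf_h\nu(A_h)$ is correct as argued (and is in fact slightly finer than what the paper's squeeze yields), and your ``easy'' direction reproduces, in sharper form, the second case of the paper's Lemma \ref{LPLowNorm2}.

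The gap is in the dual identity $\tilde\mu(A^*)=\inf_h\nu(A_h^*)$, and it is not cosmetic: both directions of your argument require truncatable near-minimizers of $\nu(A_h^*)$ in $(\Xb^*)_0$, i.e. the equality $\nu(A_h^*)=\nu((A_h^*)_0)$. For $p<\infty$ this is fine because $\Xb^*$ is again a sequence space on which $\Pc^*$ satisfies $(\Pc 3)$, so Lemma \ref{LInverses}~a) applies. For $p=\infty$ it is exactly what fails: Lemma \ref{LInverses}~b), which you invoke as a substitute, transfers only the \emph{operator norm} to $(A^*)_0$, not the \emph{lower norm}, and in general one only has the useless inequality $\nu((A_h^*)_0)\geq\nu(A_h^*)$. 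Concretely, your scheme needs (i) $\nu(A_h^*)\geq\tilde\mu(A^*)$ for \emph{every} $A_h$ (including non-invertible ones with $\nu(A_h)>0=\nu(A_h^*)$, where this is the only way to force $\mu(A)=0$), and (ii) $\inf_h\nu(A_h^*)\leq\tilde\mu(A^*)$, which would moreover require a lower-norm localization on $(l^\infty(\Zb^N,X))^*$, a space not of the form $l^q(\Zb^N,X^*)$. The paper avoids both problems: it compares $\tilde\mu(A^*)$ with $\|A_h^{-1}\|^{-1}$ only for \emph{invertible} $A_h$, where Lemma \ref{LInverses}~c) makes $(A_h^*)_0$ invertible with $\nu((A_h^*)_0)=\|A_h^{-1}\|^{-1}=\nu(A_h^*)$, and it disposes of non-invertible limit operators by the separate $\Pc$-deficiency argument of \cite[Theorem 11]{SeFre} (first case of Lemma \ref{LPLowNorm2}). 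To repair your proof you would either need to establish $\nu(A_h^*)=\nu((A_h^*)_0)$ for $p=\infty$ (not available) or fall back on the paper's Lemma \ref{LPLowNorm1} for the inequality $\|(A+\pc{\Xb})^{-1}\|^{-1}\leq\mu(A)$ and restrict your dual comparison to invertible limit operators.
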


Before we start with the proof, we want to make the following remark. An equivalent way of saying that $\nu(A)=0$ is that there exists a so-called Weyl sequence of $A$, that is a sequence $(x_n)$ of elements $x_n \in \Yb$ with $\|x_n\|=1$ for all $n \in \Nb$, such that $\|Ax_n\|\to 0$ as $n \to \infty$. So $A$ is invertible iff neither $A$ nor $A^*$ has a Weyl sequence (cf. Lemma \ref{lem:bddbelow}). Moreover, $A$ is not even Fredholm if it has a weak Weyl sequence, where the latter refers to a Weyl sequence $(x_n)$ that weakly converges to zero (see e.g. \cite[Lemma 4.3.15]{Davies}). Similarly, we call a Weyl sequence $(x_n)$ a $\Pc$-Weyl sequence\footnote{In \cite{HiSi} a continuous analogue of this concept is mentioned and denoted as Zhislin sequence.} if additionally (instead of weak convergence) $\|P_mx_n\| \to 0$ as $n \to \infty$ for every fixed $m \in \Nb$. Then we have the following:

\newpage 

\begin{lem}
Let $A \in \Lc(\Yb)$. Then $\tilde{\mu}(A)=0$ iff $A$ has a $\Pc$-Weyl sequence.
\end{lem}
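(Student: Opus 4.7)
The plan is to prove the two implications separately, exploiting the monotonicity of $\nu(A|_{\im Q_m})$ in $m$ that underlies Lemma~\ref{LMuLim}. Since $\im Q_{m+1}\subset\im Q_m$, the sequence $\nu(A|_{\im Q_m})$ is non-decreasing, and as $\tilde\mu(A)$ is its limit/supremum, the condition $\tilde\mu(A)=0$ is equivalent to $\nu(A|_{\im Q_m})=0$ for every $m\in\Nb$, i.e.\ for every $m$ and every $\eps>0$ there exists $x\in\im Q_m$ with $\|x\|=1$ and $\|Ax\|<\eps$. I will use this reformulation throughout.

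For the direction ($\tilde\mu(A)=0\Rightarrow$ $\Pc$-Weyl sequence exists): for each $n\in\Nb$ the assumption yields an element $x_n\in\im Q_n$ with $\|x_n\|=1$ and $\|Ax_n\|<1/n$. Clearly $\|Ax_n\|\to 0$. For the crucial property $\|P_mx_n\|\to 0$ as $n\to\infty$ (fixed $m$), note that $x_n\in\im Q_n$ means $P_nx_n=0$, and the nesting property from $(\Pc 1)$ gives $P_m=P_mP_n$ for all $n\geq m$. Hence $P_mx_n=P_mP_nx_n=0$ for all $n\geq m$, so the convergence is trivial (indeed eventually zero).

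For the converse ($\Pc$-Weyl sequence $\Rightarrow \tilde\mu(A)=0$): let $(x_n)$ be a $\Pc$-Weyl sequence. Fix $m\in\Nb$ and set $y_n:=Q_mx_n$. Then $y_n\in\im Q_m$ (since $Q_m$ is a projection, $Q_my_n=Q_m^2x_n=y_n$), and
\[
\|y_n\|=\|x_n-P_mx_n\|\ \longrightarrow\ 1,\qquad \|Ay_n\|\le\|Ax_n\|+\|A\|\cdot\|P_mx_n\|\ \longrightarrow\ 0,
\]
using $\|x_n\|=1$, $\|P_mx_n\|\to 0$, and $\|Ax_n\|\to 0$. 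In particular $y_n\neq 0$ for all sufficiently large $n$, and
\[
\nu(A|_{\im Q_m})\ \le\ \frac{\|Ay_n\|}{\|y_n\|}\ \longrightarrow\ 0,
\]
so $\nu(A|_{\im Q_m})=0$. Since this holds for every $m$, we conclude $\tilde\mu(A)=0$.

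I do not expect any real obstacle here: both implications are short once the monotonicity observation from Lemma~\ref{LMuLim} and the nesting $P_m=P_mP_n$ for $n\geq m$ from $(\Pc 1)$ are in hand. The only point to be mildly careful about is verifying $y_n\in\im Q_m$ (using idempotence of $Q_m$) and normalizing $y_n$ only once it is known to be nonzero.
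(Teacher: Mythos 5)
Your proof is correct and follows essentially the same route as the paper's: the forward direction takes $x_n\in\im Q_n$ with $\|Ax_n\|\to0$ and observes these form a $\Pc$-Weyl sequence, and the converse truncates the Weyl sequence by $Q_m$ and estimates the quotient $\|AQ_mx_n\|/\|Q_mx_n\|$. The only cosmetic difference is that you fix $m$ and let $n\to\infty$ to get $\nu(A|_{\im Q_m})=0$ exactly for each $m$, whereas the paper makes a diagonal choice of $n$ for each $m$ and lets $m\to\infty$; both are fine.
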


\begin{proof}
If $\tilde{\mu}(A) = 0$, then there exists a sequence $(x_n)$ of elements $x_n \in \Yb$ with $\|x_n\| = 1$ such that $x_n \in \im Q_n$ and $Ax_n \to 0$ as $n \to \infty$. This obviously defines a $\Pc$-Weyl sequence.

Conversely let $(x_n)$ be a $\Pc$-Weyl sequence of $A$. Then for every $m \in \Nb$ there exists $n \in \Nb$ such that $\|P_mx_n\| < \frac{1}{m}$ and $\|Ax_n\| < \frac{1}{m}$. This implies
\[\frac{\|AQ_mx_n\|}{\|Q_mx_n\|} = \frac{\|Ax_n - AP_mx_n\|}{\|x_n-P_mx_n\|} < \frac{\frac{1}{m} + \|A\|\frac{1}{m}}{1 - \frac{1}{m}} = \frac{1 + \|A\|}{m-1}.\]
Hence $\nu(A|_{\im Q_m}) \to 0$ as $m \to \infty$.
\end{proof}

Thus $\mu(A)=0$ iff $A$ or $A^*$ has a $\Pc$-Weyl sequence. Consequently, Theorem \ref{TPLowNorm} and further theorems relating $\mu(A)=0$ to non-$\Pc$-Fredholmness of $A$ characterize the latter in terms of $\Pc$-Weyl sequences. So this is a further instance that generalizes from Fredholmness to $\Pc$-Fredholmness.

The proof of Theorem \ref{TPLowNorm} is a simple consequence of the following lemmas:

\begin{lem}\label{LPLowNorm1}
Let $\Pc$ be a uniform approximate projection on the Banach space $\Yb$ and $A\in\pb{\Yb}$. Then $\|(A+\pc{\Yb})^{-1}\|^{-1}\leq\min\{\tilde{\mu}(A),\tilde{\mu}(A^*)\}.$
If $A$ is $\Pc$-Fredholm, then it even holds that $\|(A+\pc{\Yb})^{-1}\|^{-1}=\tilde{\mu}(A)=\tilde{\mu}(A^*)=\mu(A)$.
\end{lem}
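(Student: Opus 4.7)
The plan is to prove the inequality $\|(A+\pc{\Yb})^{-1}\|^{-1}\leq\tilde{\mu}(A)$ for every $A\in\pb{\Yb}$, then upgrade it to equality in the $\Pc$-Fredholm case by a matching reverse estimate; the dual statement with $\tilde{\mu}(A^*)$ is handled by mimicking each step on $(\Yb^*,\Pc^*)$.

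For the inequality, we may assume $A$ is $\Pc$-Fredholm (otherwise the left-hand side vanishes). Let $B\in\pb{\Yb}$ be a $\Pc$-regularizer (Theorem \ref{TPFredh}) and write $BA=I+K_1$ with $K_1\in\pc{\Yb}$. For $x\in\im Q_m$ with $\|x\|=1$, the decomposition $x=BAx-K_1x$ gives
\[1\leq\|B\|\,\|Ax\|+\|K_1Q_m\|,\]
hence $\nu(A|_{\im Q_m})\geq(1-\|K_1Q_m\|)/\|B\|$. Letting $m\to\infty$ yields $\tilde{\mu}(A)\geq 1/\|B\|$, and since every representative of the inverse coset $(A+\pc{\Yb})^{-1}$ is a $\Pc$-regularizer, infimizing over $B$ produces $\tilde{\mu}(A)\geq 1/\|(A+\pc{\Yb})^{-1}\|$. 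The same argument on $(A^*,\Pc^*)$, using that $B^*$ is a $\Pc^*$-regularizer of $A^*$ and that $\|B^*+\lc{\Yb^*,\Pc^*}\|=\|B+\pc{\Yb}\|$ by Proposition \ref{PEssNorm}, yields the matching bound with $\tilde{\mu}(A^*)$.

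Now assume $A$ is $\Pc$-Fredholm with $C_*:=\|(A+\pc{\Yb})^{-1}\|<\infty$. I want $\tilde{\mu}(A)\leq 1/C_*$. Fix $k\in\Nb$, pick a $\Pc$-regularizer $B$ with $AB=I+K_2$, $K_2\in\pc{\Yb}$, and recall from Proposition \ref{PEssNorm} that $\|BQ_m\|\to\|B+\pc{\Yb}\|=C_*$. Using $\|BQ_m\|=\|B|_{\im Q_m}\|$ (valid since $\|Q_m\|=1$), choose $y_m\in\im Q_m$ with $\|y_m\|=1$ and $\|By_m\|\geq\|BQ_m\|-1/m$, and set $x_m:=Q_kBy_m\in\im Q_k$. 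The identity
\[AQ_kB\;=\;AB-AP_kB\;=\;I+K_2-AP_kB,\]
together with $\Pc$-compactness of $P_k$, $P_kB$ and $AP_kB$ (consequences of $P_k\in\pc{\Yb}$ and $A,B\in\pb{\Yb}$), yields
\[\|Ax_m\|\leq 1+\|K_2Q_m\|+\|AP_kBQ_m\|\longrightarrow 1,\qquad \|x_m\|\geq\|By_m\|-\|P_kBQ_m\|\longrightarrow C_*,\]
so $\nu(A|_{\im Q_k})\leq\|Ax_m\|/\|x_m\|$ has $\limsup\leq 1/C_*$. Since $k$ was arbitrary, $\tilde{\mu}(A)\leq 1/C_*$. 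Running the same two-scale construction on the adjoint side, with $\psi_m\in\im Q_m^*$ realizing $\|B^*\psi_m\|$ close to $\|Q_mB\|\to C_*$ and $\varphi_m:=Q_k^*B^*\psi_m$, proves $\tilde{\mu}(A^*)\leq 1/C_*$ via the dual identity $A^*Q_k^*B^*=(BQ_kA)^*=I+K_1^*-(BP_kA)^*$.

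The hard step is the reverse estimate: one has to manufacture, inside each prescribed tail $\im Q_k$, a test vector whose ratio $\|Ax\|/\|x\|$ is essentially $1/C_*$. The trick is the two-scale choice $x=Q_kBy$ with $y$ supported much further out ($m\gg k$): $\Pc$-compactness of $P_kB$ and $AP_kB$ then renders the truncation by $Q_k$ asymptotically invisible, so that $Q_kBy\approx By$ while simultaneously $AQ_kBy\approx y$, converting the coset-level information $\|BQ_m\|\to C_*$ into the desired local estimate on $\im Q_k$.
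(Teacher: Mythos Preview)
Your proof is correct and reaches the same conclusion, but the route differs from the paper's in both halves. For the inequality $\|(A+\pc{\Yb})^{-1}\|^{-1}\le\tilde\mu(A)$, the paper perturbs $Q_mB_0AQ_m$ via a Neumann series to obtain an \emph{exact} left inverse $B_1$ of $A|_{\im Q_m}$, so that $\nu(A|_{\im Q_m})^{-1}=\|B_1|_{\im AQ_m}\|$ can be compared directly with $\|Q_mB_0\|$; you instead extract the bound straight from $BA=I+K_1$, getting $\nu(A|_{\im Q_m})\ge(1-\|K_1Q_m\|)/\|B\|$ and then optimizing over the coset. For the reverse inequality, the paper invokes a $\Pc$-regularizer $C$ of $AQ_m$ and bounds $\|B_1Q_k\|$ through $\|B_1|_{\im AQ_m}\|\cdot\|AQ_mCQ_k\|$, whereas you build explicit test vectors $x_m=Q_kBy_m$ with $y_m\in\im Q_m$, $m\gg k$, and read off $\|Ax_m\|/\|x_m\|\to 1/C_*$ from the identity $AQ_kB=I+K_2-AP_kB$. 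Your argument is more elementary (no Neumann inversion, no auxiliary regularizer for $AQ_m$) and makes the mechanism---$\Pc$-compactness of $P_kB$ and $AP_kB$ rendering the truncation invisible---very transparent; the paper's version yields a bit more structure, namely an explicit one-sided inverse on each tail, which it then reuses. The dual parts are handled the same way in both proofs, via Proposition~\ref{PEssNorm}.
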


\begin{proof}
There is nothing to prove if $A$ is not $\Pc$-Fredholm, since the LHS equals zero in this case.
If $A$ is $\Pc$-Fredholm let $\eps>0$ be arbitrary and choose $B_0\in (A+\pc{\Yb})^{-1}$.
Since $B_0A-I=:K\in\pc{\Yb}$ we get for all sufficiently large $m$ that
$\|Q_mB_0AQ_m-Q_m\|=\|Q_mKQ_m\|$ is small enough that
$Q_mB_0AQ_m=Q_m+Q_mKQ_m$ is invertible in $\lb{\im Q_m}$ with
\[
\underbrace{Q_m(Q_mB_0AQ_m)^{-1}Q_mB_0}_{\displaystyle \quad \quad
\quad \quad \quad =:B_1\in\lb{\Yb}}AQ_m=Q_m
\quad\text{and}\quad \|Q_mB_0-B_1\|<\eps,
\]
and that $\|Q_mB_0\|\le \|(A+\pc{\Yb})^{-1}\|+\eps$, taking Proposition \ref{PEssNorm} into account.
By this and Lemma \ref{lem:bddbelow} we get that $\nu(A|_{\im Q_m})>0$, hence the compression
$A|_{\im Q_m}:\im Q_m \to\im AQ_m$ is invertible and the compression
$B_1|_{\im AQ_m}:\im AQ_m \to\im Q_m$ is its (unique) inverse.
We conclude that for sufficiently large $m$
\[
\nu(A|_{\im Q_m})^{-1}=\|B_1|_{\im AQ_m}\|\leq \|B_1\|\leq
	\|Q_mB_0\|+\|B_1-Q_mB_0\|\leq \|(A+\pc{\Yb})^{-1}\|+2\eps.
\]

On the other hand, $AQ_m$ is $\Pc$-Fredholm and thus has a $\Pc$-regularizer $C$.
So $\|(AQ_mC-I)Q_k\|<\delta:=\eps/(2\|B_1\|)$ if $k$ is large enough.
Moreover, from $B_1AQ_m=Q_m$ and $Q_m\cong I$ modulo $\pc{\Yb}$ we get that $B_1$
and hence also $B_1Q_k$ is inverse to $A$ modulo $\pc{\Yb}$. Consequently,
\begin{align*}
\|(A+\pc{\Yb})^{-1}\|&=\|B_1Q_k+\pc{\Yb}\|\le\|B_1Q_k\|\le\|B_1AQ_mCQ_k\|+\|B_1\|\|(AQ_mC-I)Q_k\|\\
&<\|B_1|_{\im AQ_m}\|\|AQ_mCQ_k\|+\|B_1\|\delta < \|B_1|_{\im AQ_m}\|(\|Q_k\|+\delta)+\|B_1\|\delta\\
&\le\|B_1|_{\im AQ_m}\|+2\|B_1\|\delta=\|B_1|_{\im AQ_m}\|+\eps=\nu(A|_{\im Q_m})^{-1}+\eps.
\end{align*}
Since $\eps>0$ is arbitrary, we arrive at
$\tilde{\mu}(A)=\|(A+\pc{\Yb})^{-1}\|^{-1}$, by Lemma \ref{LMuLim}.
By the same observation for $A^*\in\lb{\Yb^*,\Pc^*}$ we find
$\tilde{\mu}(A^*)=\|(A^*+\lc{\Yb^*,\Pc^*})^{-1}\|^{-1}=\|(A+\pc{\Yb})^{-1}\|^{-1},$
where Proposition \ref{PEssNorm} justifies the latter equality.
\end{proof}

\begin{lem}\label{LPLowNorm2}
Let $A\in\pb{\Xb}$. Then $\mu(A)\leq\inf\{\|A_h^{-1}\|^{-1}:A_h\in\opsp(A)\}.$
For any invertible $A_h\in\opsp(A)$ we even have both
$\tilde{\mu}(A),\tilde{\mu}(A^*)\leq\|A_h^{-1}\|^{-1}.$
\end{lem}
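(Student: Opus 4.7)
The plan is to prove the stronger second assertion first — namely $\tilde{\mu}(A), \tilde{\mu}(A^*) \leq \|A_h^{-1}\|^{-1}$ for invertible $A_h \in \opsp(A)$ — and then deduce the first assertion by a short case analysis on whether $A_h$ is invertible.

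Fix an invertible limit operator $A_h = \plim V_{-h_n} A V_{h_n}$. The key idea is to transport a near-infimum-attaining input for $A_h$ into $\im Q_m$ via the shift $V_{h_n}$, thereby upper-bounding $\nu(A|_{\im Q_m})$ by $\nu(A_h)$. Concretely, given $\eps > 0$, use Lemma \ref{LInverses}~a) together with density of finite-support vectors in $\Xb_0$ to pick $x \in \Xb$ with $\supp x$ finite, $\|x\|=1$, and $\|A_h x\| < \nu(A_h)+\eps$. Since $x=P_kx$ for some $k$, $\Pc$-strong convergence gives $\|(V_{-h_n}AV_{h_n}-A_h)P_k\|\to 0$, and because shifts are isometries on $\Xb=l^p(\Zb^N,X)$,
\[
\|AV_{h_n}x\| = \|V_{-h_n}AV_{h_n}x\| \xrightarrow{n\to\infty} \|A_hx\|.
\]
For any fixed $m$ and all sufficiently large $n$, $\supp V_{h_n}x$ is disjoint from $\{-m,\ldots,m\}^N$, so $V_{h_n}x \in \im Q_m$ with $\|V_{h_n}x\|=1$. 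Hence $\nu(A|_{\im Q_m}) \leq \|AV_{h_n}x\|$, and letting first $n\to\infty$ and then $m\to\infty$ yields $\tilde{\mu}(A) \leq \|A_hx\| < \nu(A_h)+\eps$. Since $\eps$ is arbitrary and $\nu(A_h)=\|A_h^{-1}\|^{-1}$ by invertibility (last clause of Lemma \ref{lem:bddbelow}), $\tilde{\mu}(A) \leq \|A_h^{-1}\|^{-1}$.

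For $\tilde{\mu}(A^*)$: when $p<\infty$, Proposition \ref{prop:limops} supplies $(A^*)_h = A_h^*$ as a limit operator of $A^*$, and invertibility of $A_h$ gives $\nu(A_h^*)=\|A_h^{-1}\|^{-1}$; applying the previous paragraph verbatim to the pair $(A^*,\Pc^*)$ then yields $\tilde{\mu}(A^*)\leq\|A_h^{-1}\|^{-1}$. In the remaining cases, where Proposition \ref{prop:limops} does not directly furnish $(A^*)_h$, one runs the dual version of the transport argument: a near-infimum functional $\psi\in(\Xb^*)_0$ with $\psi = P_k^*\psi$ is shifted by $V_{h_n}^*$ into $\im Q_m^*$, and the approximate-projection properties $(\Pc 1),(\Pc 2)$ that $\Pc^*$ still enjoys suffice to carry the estimate through.

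The first assertion then follows by a direct case split on a fixed $A_h\in\opsp(A)$. If $A_h$ is invertible, the second assertion already gives $\mu(A)\leq\|A_h^{-1}\|^{-1}$. Otherwise $\|A_h^{-1}\|^{-1}=0$, and by Lemma \ref{lem:bddbelow} at least one of $\nu(A_h),\nu(A_h^*)$ vanishes; but the argument in paragraph two never truly used invertibility (only that $\nu(A_h)$ is finite), so it proves $\tilde{\mu}(A)\leq\nu(A_h)$ and, dually, $\tilde{\mu}(A^*)\leq\nu(A_h^*)$ unconditionally. Consequently $\mu(A)=\min\{\tilde{\mu}(A),\tilde{\mu}(A^*)\} = 0$, as required. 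The main obstacle is the adjoint half for $p\in\{0,\infty\}$, where $\Pc^*$ may fail $(\Pc 3)$ and one must replace Proposition \ref{prop:limops} by a direct functional-theoretic transport argument.
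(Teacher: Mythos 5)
Your treatment of the second assertion (invertible $A_h$) is essentially the paper's own argument: localize a near-minimizer for $\nu((A_h)_0)$, resp.\ $\nu((A_h^*)_0)$, to finite support via Lemma \ref{LInverses}, transport it by the isometric shifts into $\im Q_m$, and use $\|(V_{-h_n}AV_{h_n}-A_h)P_k\|\to 0$; identifying these lower norms with $\|A_h^{-1}\|^{-1}$ via Lemma \ref{lem:bddbelow} and Lemma \ref{LInverses}~c) is also how the paper closes that case.

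The gap is in the non-invertible case, specifically in the claim that the transport argument gives $\tilde{\mu}(A^*)\leq\nu(A_h^*)$ \emph{unconditionally}. The transport needs a near-minimizing element that is approximately finitely supported, i.e.\ it needs $\nu(A_h^*)=\nu((A_h^*)_0)$. Lemma \ref{LInverses} supplies this only in part a), which requires $(\Pc3)$ for the approximate projection on the space in question --- and $\Pc^*$ fails $(\Pc3)$ on $\Xb^*$ when $p=\infty$ --- or in part c), which requires $A_h$ to be invertible. In the genuinely problematic configuration ($p=\infty$, $A_h$ bounded below but not onto, so $\nu(A_h)>0$ while $\nu(A_h^*)=0$), a norm-one functional annihilating the proper closed subspace $\im A_h$ may be of Banach-limit type, with $P_k^*\psi=0$ for all $k$ rather than $P_k^*\psi\to\psi$; then there is nothing localized to shift, and your argument yields no bound on $\tilde{\mu}(A^*)$ at all, so you cannot conclude $\mu(A)=0$. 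This is precisely why the paper splits off a separate first case and invokes \cite[Theorem 11]{SeFre}: for non-invertible $A_g$ that theorem produces a $\Pc$-compact operator $T$ of norm $1$ with $\|A_gT\|<\eps$ or $\|TA_g\|<\eps$, and the $\Pc$-compactness of $T$ (i.e.\ $T\approx P_kTP_k$) is exactly the localization your functional lacks; conjugating $T$ by the shifts then gives $\nu(A|_{\im Q_m})<3\eps$ or $\nu(A^*|_{\im Q_m^*})<3\eps$, hence $\mu(A)=0$. Without this (or an equivalent substitute) your proof of the first assertion is incomplete for $p=\infty$. (A minor further remark: the obstruction is not really present for $p=0$, where $\Xb^*\cong l^1$ does satisfy $(\Pc3)$; the only truly delicate exponent is $p=\infty$.)
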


\begin{proof}
Let $A_g\in\opsp(A)$.

\textit{1st case:} $A_g$ is not invertible. For every $\eps>0$ there is
a $\Pc$-compact operator $T$ of the norm $1$ and such that $\|A_gT\|<\eps$
or $\|TA_g\|<\eps$ (cf. \cite[Theorem 11]{SeFre}). Let $m \in \Nb$. It follows from
$(Q_m)_g=I$ that
\[\|V_{-g_n}AQ_mV_{g_n}T\|<2\eps\quad\text{or}\quad
  \|TV_{-g_n}Q_mAV_{g_n}\|<2\eps\quad\text{for all sufficiently large $n$.}\]
Setting $T_n:=V_{g_n}TV_{-g_n}$ we have $\|AQ_mT_n\|<2\eps$ or
$\|T_nQ_mA\|<2\eps$. Since $\|Q_mT_n\|$ and $\|T_nQ_m\|$ tend to $1$ as
$n\to\infty$ we conclude
\[\frac{\|AQ_mT_n\|}{\|Q_mT_n\|}<3\eps\quad\text{or}\quad
  \frac{\|T_nQ_mA\|}{\|T_nQ_m\|}<3\eps\quad\text{for large $n$}.\]
This yields $\nu(A|_{\im Q_m})<3\eps$ or $\nu(A^*|_{\im Q_m^*})<3\eps$, and
since $\eps$ and $m$ are arbitrary, we conclude $\mu(A)=0$.

\textit{2nd case:} $A_g$ is invertible.
Now we proceed similarly to the proof of Proposition \ref{prop:lown-limop}.
By Lemma \ref{LInverses} the compression
$(A_g)_0$ is invertible, $((A_g)_0)^{-1}=(A_g^{-1})_0$ and
$\|(A_g^{-1})_0\|=\|A_g^{-1}\|$.
Let $\eps>0$. Then there exists an $x_0\in\Xb_0$, $\|x_0\|=1$, with
$\|A_gx_0\|=\|(A_g)_0x_0\|< \nu((A_g)_0)+\eps = \|((A_g)_0)^{-1}\|^{-1}+\eps=
\|A_g^{-1}\|^{-1}+\eps$. For sufficiently large $k$ also $x:=\|P_kx_0\|^{-1}P_kx_0$
fulfills $\|A_gx\|<\|A_g^{-1}\|^{-1}+\eps$, where $\|x\|=1$ and $P_kx=x$. For sufficiently large $n$, $\|(V_{-g_n}AQ_mV_{g_n}-A_g)P_k\|\leq\eps$ holds
and we find
\begin{align*}
\|AQ_mV_{g_n}x\|&=\|V_{-g_n}AQ_mV_{g_n}P_kx\|
\leq \|A_gP_kx\|+\eps = \|A_gx\|+\eps \leq \|A_g^{-1}\|^{-1}+2\eps,
\end{align*}
whence $\nu(A|_{\im Q_m})\leq\|A_g^{-1}\|^{-1}+2\eps$ holds for every $m$. Since
$\eps>0$ is arbitrary, $\mu(A)\leq\tilde{\mu}(A)\leq \|A_g^{-1}\|^{-1}$.

In the dual setting we proceed in exactly the same way to get
$\tilde{\mu}(A^*)\leq \|(A_g^*)^{-1}\|^{-1}=\|(A_g)^{-1}\|^{-1}$ by considering
the compressions $(A_g^*)_0$.
\end{proof}

Thus, we have for all $A\in\pb{\Xb}$ that
\begin{equation} \label{eq:squeeze}
\|(A+\pc{\Xb})^{-1}\|^{-1}\leq\mu(A)\leq\inf\{\|A_h^{-1}\|^{-1}:A_h\in\opsp(A)\}.
\end{equation}
For rich band-dominated operators the left-hand side and the right-hand side
coincide by Corollary \ref{CEssNorm}, hence Theorem \ref{TPLowNorm} follows.

\subsection{2nd approach: Lower norms of \texorpdfstring{$\Pc$}{P}-compact perturbations}

For $A\in\pb{\Yb}$ we define the $\Pc$-essential lower norm of $A$ by
\[\nuess(A)=\sup\{\nu(A+K):K\in\pc{\Yb}\}\]
and we want to study the relations between $\nuess(A)$ and $\tilde{\mu}(A)$.
\newpage 

\begin{prop} \label{PNuess}
Let $\Pc$ be a uniform approximate projection on $\Yb$ and $A\in\pb{\Yb}$.
Then $\nuess(A)\leq\tilde{\mu}(A)$ and $\nuess(A^*)\leq\tilde{\mu}(A^*)$.
If $\nu(A)>0$, then $\nuess(A)=\tilde{\mu}(A)$.
If $\nu(A^*)>0$, then $\nuess(A^*)=\tilde{\mu}(A^*)$.
\end{prop}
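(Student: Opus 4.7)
The plan is to split the proof into the two upper bounds (which need only $\Pc$-compactness of $K$) and the two equalities (which need the hypothesis $\nu(A)>0$ or $\nu(A^*)>0$).

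For the upper bounds, I fix $K \in \pc{\Yb}$ and $m \in \Nb$ and combine monotonicity of the lower norm under restriction to a closed subspace with its sub-additivity $|\nu(T+S) - \nu(T)| \le \|S\|$ to get
\[
\nu(A+K) \ \le\ \nu\bigl((A+K)\big|_{\im Q_m}\bigr) \ \le\ \nu(A|_{\im Q_m}) + \|K Q_m\|.
\]
Letting $m \to \infty$ makes $\|KQ_m\|$ vanish by $\Pc$-compactness of $K$ and sends $\nu(A|_{\im Q_m})$ to $\tilde\mu(A)$; taking the supremum over $K$ proves $\nuess(A) \le \tilde\mu(A)$. The same computation on $(\Yb^*, \Pc^*)$ (which still satisfies $(\Pc 1)$ and $(\Pc 2)$) yields $\nuess(A^*) \le \tilde\mu(A^*)$.

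For the equality $\nuess(A) = \tilde\mu(A)$ under $\nu(A) > 0$, given $\eps > 0$ I would construct a ``head-swap'' perturbation $K \in \pc{\Yb}$ with $\nu(A+K) > \tilde\mu(A) - \eps$ as follows. Use Proposition \ref{PFn} to pass to an equivalent approximate projection $(F_n)$ with $\|[A, F_n]\| \to 0$. Since $F_n = F_n P_{m(n)}$, one has $F_n \in \pc{\Yb}$ and $\im Q_{m(n)} \subset \ker F_n$; the latter gives $\nu(A|_{\ker F_n}) \ge \nu(A|_{\im Q_{m(n)}})$, hence $\nu(A|_{\ker F_n}) \to \tilde\mu(A)$ as $n \to \infty$. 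Fix $n$ so large that $\|[A, F_n]\| < \eps/2$ and $\nu(A|_{\ker F_n}) > \tilde\mu(A) - \eps/2$, and set
\[
K \ :=\ \lambda F_n - A F_n \ \in\ \pc{\Yb}, \qquad \text{giving} \qquad A + K \ =\ A(I - F_n) + \lambda F_n,
\]
with $\lambda$ still to be chosen large. For a unit vector $x$ write $x = u + v$ with $u := F_n x$, $v := (I - F_n)x$; then $(A+K)x = \lambda u + Av$, and testing against $F_n$ and against $I - F_n$ would produce two lower bounds on $\|(A+K)x\|$ dominated by $\lambda\|u\|$ and by $\nu(A|_{\ker F_n})\|v\|$ respectively, up to error terms of order $\|[A,F_n]\|$ and $\|A\|\cdot\|F_n - F_n^2\|$. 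A convex combination of these two estimates, with weight of order $1/\lambda$ and $\lambda$ taken sufficiently large depending on $\eps$, $\|A\|$, and $\nu(A)$, then yields $\nu(A+K) \ge \tilde\mu(A) - \eps$. Letting $\eps \to 0$ completes the proof, and $\nuess(A^*) = \tilde\mu(A^*)$ is handled identically on $(\Yb^*, \Pc^*)$.

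\textbf{Main obstacle.} The key difficulty is that $F_n$ supplied by Proposition \ref{PFn} is only an \emph{approximate} projection, not an idempotent, so the overlap $F_n - F_n^2 = F_n(I - F_n)$ need not be small in norm and the natural head-tail decomposition picks up a nonvanishing cross-term $A(F_n - F_n^2)x$ in the estimates above. Absorbing this correction is precisely where the hypothesis $\nu(A) > 0$ enters: bounded-belowness of $A$ allows the bound $\|A(F_n - F_n^2)x\| \ge \nu(A)\,\|(F_n - F_n^2)x\|$, which can be fed back into the tail side of the estimate. The hypothesis is sharp, as the backward shift on $l^2(\Nb)$ has $\tilde\mu = 1$ yet $\nuess = 0$, because every compact perturbation of an operator of positive Fredholm index has nontrivial kernel.
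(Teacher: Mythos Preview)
Your upper-bound argument is correct and essentially identical to the paper's.

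For the equality $\nuess(A)=\tilde\mu(A)$ under $\nu(A)>0$, however, the sketch has a real gap. With your choice $K=\lambda F_n-AF_n$, the cross-term you flag is governed by $w:=(F_n-F_n^2)x$, and since the $F_n$ of Proposition~\ref{PFn} are convex combinations of the $P_{r_k^n}$, the operator $F_n-F_n^2$ can have norm as large as $1/4$; it does \emph{not} get small as $n\to\infty$. The inequality you invoke, $\|Aw\|\ge\nu(A)\|w\|$, points the wrong way: in both of your ``head'' and ``tail'' estimates the term $\|Aw\|$ (respectively $\lambda\|w\|$) must be \emph{subtracted}, so a lower bound on it only worsens matters. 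Moreover, even the tail vector $(I-F_n)^2x$ fails to lie in $\ker F_n$ or in any $\im Q_m$, so the passage from $\|A(I-F_n)^2x\|$ to $\nu(A|_{\ker F_n})\|(I-F_n)x\|$ is not justified. I do not see how a convex combination of the two estimates absorbs a cross-term of fixed order.

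The paper avoids this by working contrapositively and perturbing on the \emph{left} with an actual projection: for every $k\in\Nb$ and $\alpha>0$ one has $Q_kA+\alpha P_kA\in A+\pc{\Yb}$, so if $\nuess(A)<c$ there is a unit vector $x_{k,\alpha}$ with $\|(Q_k+\alpha P_k)Ax_{k,\alpha}\|<c$, whence by $(\Pc1)$ both $\|Q_kAx_{k,\alpha}\|<c$ and $\alpha\|P_kAx_{k,\alpha}\|<c$. Here the $F_m$ enter only to sandwich between $P_k$ and $P_n$: from $\|P_kAx\|<c/\alpha$ one gets $\|F_mAx\|<c/\alpha$ (choosing $F_m=F_mP_k$), then $\|AF_mx\|<c/\alpha+\|[A,F_m]\|$, and \emph{this} is where $\nu(A)>0$ is used, giving $\|F_mx\|<(c/\alpha+\|[A,F_m]\|)/\nu(A)$ and hence $\|P_nx\|$ small. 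So $Q_nx$ is nearly unit and $\|AQ_nx\|\le\|Q_kAx\|+\|P_kAx\|+\|A\|\|P_nx\|$ bounds $\nu(A|_{\im Q_n})$ close to $c$.

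Your direct strategy can be rescued, but only by switching to the same left perturbation $K=(\lambda-1)P_kA$: then $P_k,Q_k$ are genuine complementary projections, the analogue of your two tests gives $\|(A+K)x\|\ge\max\{\lambda\|P_kAx\|,\|Q_kAx\|\}$ cleanly, and the $F_m$-sandwich plus $\nu(A)>0$ transfers ``$\|P_kAx\|$ small'' to ``$\|P_nx\|$ small'' exactly as above. That is essentially the paper's proof read forwards rather than by contradiction.
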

\begin{proof}
For $\eps>0$ let $K\in\pc{\Yb}$ be s.t. $\nu(A+K)\geq\nuess(A)-\eps$
and $m$ s.t. $\|KQ_m\|\leq\eps$. Then
\[\tilde{\mu}(A)\geq\nu(A|_{\im Q_m})\geq\nu((A+K)|_{\im Q_m})-\|KQ_m\|\geq\nu(A+K)-\eps
\geq\nuess(A)-2\eps.\]
Since $\eps$ was arbitrary, the estimate $\nuess(A)\leq\tilde{\mu}(A)$ is proved.

Now, let $A$ be bounded below and assume that there are constants $c,d$ such that
$\nuess(A)<c<d<\tilde{\mu}(A)$. By the
definition $\nu(Q_kA+\alpha P_kA)\leq\nuess(A)$ for all $k\in\Nb$ and all scalars $\alpha$.
In particular, for every $k\in\Nb$ and $\alpha>0$ there exists $\|x_{k,\alpha}\|=1$
such that $\|(Q_kA+\alpha P_kA)x_{k,\alpha}\|<c$. This further implies
$\|Q_kAx_{k,\alpha}\|<c$ and $\alpha \|P_kAx_{k,\alpha}\|<c$ by $(\Pc1)$.
Now, choose $\eps>0$ such that
$c+\eps+2\eps\|A\|/\nu(A)< d(1-2\eps/\nu(A))$,
and $\alpha>1$ such that $c/\alpha<\eps$.

Fix $n\in\Nb$, take the sequence $(F_n)$ from Proposition \ref{PFn}, and choose
$m\in\Nb$ such that $P_nF_m=P_n$ and $\|[F_m,A]\|<\eps$. Then choose
$k\in\Nb$ such that $F_mP_k=F_m$. From $\alpha\|P_kAx_{k,\alpha}\|<c$ we get
$\|F_mP_kAx_{k,\alpha}\|<c/\alpha$ and we conclude that
$\|AF_mx_{k,\alpha}\|\leq c/\alpha + \|[F_m,A]\|<2\eps$, thus
$\|P_nx_{k,\alpha}\|\leq\|F_mx_{k,\alpha}\|<2\eps/\nu(A)$ and
$\|Q_nx_{k,\alpha}\|\geq 1-2\eps/\nu(A)$. Now
\[\nu(A|_{\im Q_n})\leq\frac{\|AQ_nx_{k,\alpha}\|}{\|Q_nx_{k,\alpha}\|}
\leq\frac{\|Q_kAx_{k,\alpha}\|+\|P_kAx_{k,\alpha}\|+\|A\|\|P_nx_{k,\alpha}\|}{\|Q_nx_{k,\alpha}\|}
<\frac{c+c/\alpha+2\eps\|A\|/\nu(A)}{1-2\eps/\nu(A)}<d\]
and since $n\in\Nb$ is arbitrary it follows
$\tilde{\mu}(A)=\lim_n\nu(A|_{\im Q_n})\leq d < \tilde{\mu}(A)$,
a contradiction.

Finally, applying the already proved assertions to $A^*\in\lb{\Yb^*,\Pc^*}$ finishes
the proof.
\end{proof}
Since $\nuess$ and $\tilde{\mu}$ are invariant under $\Pc$-compact perturbations
it actually holds
\begin{cor}\label{CNuessMu}
Let $\Pc$ be a uniform approximate projection on $\Yb$ and $A\in\pb{\Yb}$.
If $A+\pc{\Yb}$ contains an operator being bounded below, then
$\nuess(B)=\tilde{\mu}(B)>0$ for all $B\in A+\pc{\Yb}$.
If $A^*+\lc{\Yb^*,\Pc^*}$ contains an operator being bounded below, then
$\nuess(B)=\tilde{\mu}(B)>0$ for all $B\in A^*+\lc{\Yb^*,\Pc^*}$.
\end{cor}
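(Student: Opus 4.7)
The corollary is essentially a packaging result built on Proposition~\ref{PNuess}, so the plan is to establish that both $\nuess$ and $\tilde\mu$ are invariant under $\Pc$-compact perturbations, and then transport the equality in Proposition~\ref{PNuess} across the whole coset. No genuinely new estimates are needed; the work is bookkeeping of the two quantities along $A+\pc{\Yb}$.

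First I would check invariance of $\nuess$: this is essentially by definition. For any $K_0\in\pc{\Yb}$, the map $K\mapsto K+K_0$ is a bijection of $\pc{\Yb}$ onto itself, so
\[
\nuess(A+K_0)=\sup\{\nu(A+K_0+K):K\in\pc{\Yb}\}=\sup\{\nu(A+K'):K'\in\pc{\Yb}\}=\nuess(A).
\]
Next I would check invariance of $\tilde\mu$. For $K\in\pc{\Yb}$ and any $m$,
\[
|\nu((A+K)|_{\im Q_m})-\nu(A|_{\im Q_m})|\le\|KQ_m\|,
\]
and $\|KQ_m\|\to 0$ by $\Pc$-compactness of $K$; combined with Lemma~\ref{LMuLim} this forces $\tilde\mu(A+K)=\tilde\mu(A)$.

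Now assume $A+\pc{\Yb}$ contains an $A_0$ with $\nu(A_0)>0$. By Proposition~\ref{PNuess} applied to $A_0$, we get $\nuess(A_0)=\tilde\mu(A_0)$, and this common value is positive because $\nuess(A_0)\ge\nu(A_0)>0$. For any $B\in A+\pc{\Yb}=A_0+\pc{\Yb}$, invariance of both $\nuess$ and $\tilde\mu$ under $\Pc$-compact perturbations yields
\[
\nuess(B)=\nuess(A_0)=\tilde\mu(A_0)=\tilde\mu(B)>0,
\]
as claimed. The second half of the statement is obtained by repeating the argument verbatim with $A^*$, $\Pc^*=(P_n^*)$ on $\Yb^*$ and the ideal $\lc{\Yb^*,\Pc^*}$ in place of $A$, $\Pc$ and $\pc{\Yb}$; the invariance arguments above did not use $(\Pc 3)$, so they remain valid on the dual side where Proposition~\ref{PNuess} already handles $A^*$ directly.

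The only mild subtlety (and the step I would write out most carefully) is the second invariance: one must be sure that the $\Pc$-compactness of $K$ is used, not merely compactness, so that $\|KQ_m\|\to0$ genuinely holds in the $\Pc$-setting and one can take the limit $m\to\infty$ inside the estimate above. Beyond that, the corollary really is a one-line consequence once invariance is in hand.
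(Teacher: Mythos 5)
Your proposal is correct and follows exactly the route the paper intends: the paper derives Corollary~\ref{CNuessMu} from Proposition~\ref{PNuess} with the single remark that $\nuess$ and $\tilde{\mu}$ are invariant under $\Pc$-compact perturbations, and your argument simply writes out those two invariances (the bijection $K\mapsto K+K_0$ of $\pc{\Yb}$ for $\nuess$, and the estimate $|\nu((A+K)|_{\im Q_m})-\nu(A|_{\im Q_m})|\le\|KQ_m\|\to0$ for $\tilde{\mu}$) before transporting the equality along the coset. The handling of the dual side is also right, since the relevant part of Proposition~\ref{PNuess} only needs $(\Pc1)$--$(\Pc2)$ and hence applies on $(\Yb^*,\Pc^*)$ to an arbitrary bounded-below element of the coset, not just to $A^*$ itself.
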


\begin{cor} \label{CNuess}
Let $\Pc$ be a uniform approximate projection on $\Yb$ and $A\in\pb{\Yb}$.
Then we have either $\nuess(A)=0$ or $\nuess(A)=\tilde{\mu}(A)>0$.
Furthermore we have either $\nuess(A^*)=0$ or $\nuess(A^*)=\tilde{\mu}(A^*)>0$.
\end{cor}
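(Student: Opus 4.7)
The plan is to reduce Corollary \ref{CNuess} directly to Proposition \ref{PNuess} and Corollary \ref{CNuessMu}, both of which have just been established. The only thing to rule out is the intermediate case where $\nuess(A)>0$ but $\nuess(A)\neq\tilde{\mu}(A)$.

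First I would note that the inequality $\nuess(A)\le\tilde\mu(A)$ (together with its dual counterpart) is already provided by Proposition \ref{PNuess}, so one direction of the dichotomy is free. It therefore suffices to show: if $\nuess(A)>0$, then in fact $\nuess(A)=\tilde\mu(A)>0$.

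Assume $\nuess(A)>0$. By the definition of $\nuess$ as a supremum, there exists $K\in\pc{\Yb}$ with $\nu(A+K)>0$, i.e.\ the coset $A+\pc{\Yb}$ contains an operator that is bounded below. Corollary \ref{CNuessMu} then applies and yields $\nuess(B)=\tilde\mu(B)>0$ for every $B$ in this coset. Since $A$ itself lies in $A+\pc{\Yb}$, we conclude $\nuess(A)=\tilde\mu(A)>0$, which is exactly the second alternative.

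For the adjoint statement I would run the same argument on $A^*\in\pb{\Yb^*,\Pc^*}$: assuming $\nuess(A^*)>0$ produces a $\Pc^*$-compact $L$ with $\nu(A^*+L)>0$, so $A^*+\lc{\Yb^*,\Pc^*}$ contains a bounded-below operator, and the second half of Corollary \ref{CNuessMu} forces $\nuess(A^*)=\tilde\mu(A^*)>0$. There is no real obstacle here — the substance of the result is in the preceding Proposition \ref{PNuess} and Corollary \ref{CNuessMu}; the present corollary is essentially a bookkeeping observation that turns the ``$\nu(A)>0$'' hypothesis into the weaker ``$\nuess(A)>0$'' hypothesis by exploiting the $\Pc$-compact perturbation invariance of $\nuess$ that is already built into the previous corollary.
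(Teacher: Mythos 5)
Your proof is correct and is exactly the intended (implicit) argument of the paper: the corollary is a direct consequence of Proposition \ref{PNuess} and Corollary \ref{CNuessMu}, obtained by observing that $\nuess(A)>0$ forces the coset $A+\pc{\Yb}$ (resp.\ $A^*+\lc{\Yb^*,\Pc^*}$) to contain a bounded-below operator. No gaps; nothing further to add.
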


\begin{rem}
Fredholm operators with positive index on $l^2(\Zb,\Cb)$ show that
$0=\nuess(A)<\tilde{\mu}(A)$ can happen. Similarly, negative index yields
$0=\nuess(A^*)<\tilde{\mu}(A^*)$.
\end{rem}

\begin{cor}\label{CFredLX}
Let $A\in\pb{\Xb}$ and $A+\pc{\Xb}$ contain a Fredholm operator. Then
\[\max\{\nuess(A),\nuess(A^*)\}=\mu(A)=\|(A+\pc{\Xb})^{-1}\|^{-1}>0.\]
If this Fredholm operator has index $0$, then additionally $\nuess(A) = \nuess(A^*)$.
\end{cor}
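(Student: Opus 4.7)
The plan is to sandwich $\max\{\nuess(A),\nuess(A^*)\}$ between $\mu(A)$ from above and $\mu(A)$ from below. Write $A = F - K$ with $F$ Fredholm and $K\in\pc{\Xb}$. By Proposition~\ref{PFredh}, $F$ (and hence $A$) is $\Pc$-Fredholm, so Lemma~\ref{LPLowNorm1} yields
\[\mu(A) = \tilde\mu(A) = \tilde\mu(A^*) = \|(A+\pc{\Xb})^{-1}\|^{-1}>0.\]
Proposition~\ref{PNuess} gives the upper bound $\max\{\nuess(A),\nuess(A^*)\}\le\mu(A)$; the dichotomy in Corollary~\ref{CNuess} then reduces the lower bound to showing that at least one of $\nuess(A),\nuess(A^*)$ is positive. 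By Corollary~\ref{CNuessMu}, it suffices to exhibit a bounded-below representative in the coset $A+\pc{\Xb}$ or in $A^*+\lc{\Xb^*,\Pc^*}$. Since $F$ has closed range, $F$ is bounded below iff injective, while $F^*$ is bounded below iff $F$ is surjective.

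If $\ind F = 0$, Proposition~\ref{PFredh} writes $F = C + K_1$ with $C$ invertible and $K_1\in\pc{\Xb}$. Then $C\in A+\pc{\Xb}$ is bounded below and $C^*\in A^*+\lc{\Xb^*,\Pc^*}$ is bounded below (using $K_1^*\in\lc{\Xb^*,\Pc^*}$), so Corollary~\ref{CNuessMu} immediately yields $\nuess(A) = \nuess(A^*) = \mu(A)$, settling the index-zero addendum as well.

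For general index, the plan is to perturb $F$ within the coset by a finite-rank $\Pc$-compact $K'$, making $F+K'$ injective (if $\ind F\le 0$) or surjective (if $\ind F\ge 0$). Using the generalized inverse $B\in\pb{\Xb}$ from Proposition~\ref{PFredh} together with the $\Pc$-Fredholm structure, one obtains finite-rank projections $P_K := I-BF$ onto $\ker F$ and $P_C := I-FB$ onto a complement of $\im F$, both lying in $\pc{\Xb}$. The crucial consequence is $\im P_C\subseteq\Xb_0$, since $\|(I-P_n)P_C\|\to 0$. With $e_1,\dots,e_k$ a basis of $\ker F$, biorthogonal functionals $\phi_i$, $\psi_i := \phi_i\circ P_K\in\Xb^*$, and $f_1,\dots,f_k\in\im P_C$ linearly independent (possible exactly when $\ind F\le 0$), set
\[K' := \sum_{i=1}^k \psi_i\otimes f_i.\]
Biorthogonality makes $F+K'$ injective, while $\Pc$-compactness of $K'$ follows from the estimates $\|\psi_i\circ(I-P_n)\|\le\|\phi_i\|\,\|P_K(I-P_n)\|\to 0$ and $\|(I-P_n)f_i\|\to 0$. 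The surjective case is entirely dual, yielding a $K'$ for which $(F+K')^*$ is bounded below in $A^*+\lc{\Xb^*,\Pc^*}$.

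The main obstacle is justifying that the generalized inverse of Proposition~\ref{PFredh} can be taken simultaneously as a $\Pc$-regularizer, so that $P_K$ and $P_C$ actually lie in $\pc{\Xb}$; once this is in hand, the $\Pc$-compactness of $K'$ and the injectivity (or surjectivity) of $F+K'$ follow by routine decay estimates and Fredholm bookkeeping.
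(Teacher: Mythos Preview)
Your argument is correct and shares the skeleton of the paper's proof (Lemma~\ref{LPLowNorm1} for $\mu(A)=\|(A+\pc{\Xb})^{-1}\|^{-1}$, Proposition~\ref{PNuess} for the upper bound, Corollary~\ref{CNuessMu} for the lower bound via a bounded-below coset representative). The divergence is in how that representative is produced. The paper does \emph{not} build a finite-rank perturbation by hand; instead it picks a one-sided invertible band operator $S$ with banded one-sided inverse $S^l$ and $\ind S=-\ind A$, applies the index-zero part of Proposition~\ref{PFredh} to $SA$ to write $SA=C+K$ with $C$ invertible and $K\in\pc{\Xb}$, and then observes that $S^lC=A-S^lK\in A+\pc{\Xb}$ is right invertible (left invertible in the other index sign). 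This is shorter because it piggybacks on the index-zero statement already proved and avoids any explicit $\Pc$-compactness bookkeeping for a handmade $K'$. Your direct construction, by contrast, does not rely on having shift operators available and would transplant to more general $(\Yb,\Pc)$ where no such $S$ exists.

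Regarding the ``obstacle'' you flag: it is not a real gap. The fact that for a Fredholm $F\in\pb{\Xb}$ the projections onto $\ker F$ and parallel to $\im F$ can be taken in $\pc{\Xb}$ is precisely what the paper invokes later (see the proof of Theorem~\ref{TNuessOplusY}, citing \cite[Corollary~1.9]{SeSi3}); equivalently, the generalized inverse of Proposition~\ref{PFredh} is indeed a $\Pc$-regularizer. Once you cite that, your $P_K,P_C\in\pc{\Xb}$, and the rest of your argument goes through exactly as written.
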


\begin{proof}
W.l.o.g. let $A$ be Fredholm.
Firstly, we recall that $A$ is automatically $\Pc$-Fredholm by Proposition \ref{PFredh}), and that Lemma \ref{LPLowNorm1} applies.
With the help of shifts and projections one easily constructs a one-sided invertible band operator $S$ with banded one-sided inverse and $\ind S=-\ind A$ (cf. e.g. \cite[Lemma 24]{SeFre}). We consider the case $\ind A>0$. Then $SA$ is Fredholm of index zero and with Proposition \ref{PFredh} we find $C\in\pb{\Xb}$ invertible and $K\in\pc{\Xb}$ such that $SA=C+K$, hence $S^lC=A-S^lK\in A+\pc{\Xb}$ is right invertible with the right inverse $C^{-1}S\in\pb{\Xb}$. In the case $\ind A<0$ we proceed similarly, and in the case $\ind A=0$, we simply choose $S=I$ and get $C\in A+\pc{\Xb}$ invertible. Thus Corollary \ref{CNuessMu} applies to $A$ and we get either $\nuess(A) = \tilde{\mu}(A)$ or $\nuess(A^*) = \tilde{\mu}(A^*)$ (both if $\ind A = 0$). Since $\nuess(A) \leq \tilde{\mu}(A)$ and $\nuess(A^*) \leq \tilde{\mu}(A^*)$ by Proposition \ref{PNuess} and $\tilde{\mu}(A) = \tilde{\mu}(A^*) = \|(A+\pc{\Xb})^{-1}\|^{-1}$ by Lemma \ref{LPLowNorm1}, we conclude
\[\max\{\nuess(A),\nuess(A^*)\}=\mu(A)=\|(A+\pc{\Xb})^{-1}\|^{-1}>0.\]
The second assertion follows immediately from the considerations above.
\end{proof}

\begin{rem}
Starting with a non-invertible operator $B\in\lb{X}$ that is bounded below, one can define the diagonal operator $A:=\diag(\ldots,B,B,B,\ldots)$ on $\Xb$ which is bounded below, but not Fredholm or $\Pc$-Fredholm. Hence, $\nuess(A)$ is positive, but $\mu(A)=0$. Thus some kind of Fredholm condition is necessary. This is why we have
$\speess(A) = \left\{\lambda\in\Cb : \mu(A-\lambda I)<\eps\right\}$ but cannot just write
\[\speess(A) \stackrel{!}{=} \left\{\lambda\in\Cb : \max\left\{\nuess(A-\lambda I),\nuess((A-\lambda I)^*)\right\}<\eps\right\}.\]
However, we will find solutions to this problem in the next sections. Also note that this can not happen if $A \in \Ac(l^p(\Zb,X))$, $\dim X < \infty$ (see Proposition \ref{NuessFinDimBDO} below).
\end{rem}

\subsection{3rd approach: Symmetrization of the problem}
In the two previous approaches we used to look at characteristics of both $A$ and $A^*$
in order to get a complete (symmetric) picture. Now we turn the table in a sense,
firstly symmetrize the operator and secondly determine its essential lower norm.

Given a Banach space $\Yb$ with a uniform approximate projection $\Pc$, we write
$\Yb\oplus\Yb^*$ for the Banach space of all pairs $(x,f)\in\Yb\times\Yb^*$, equipped
with the norm $\|(x,f)\|:=\max\{\|x\|,\|f\|\}$. For $A\in\lb{\Yb}$, $B\in\lb{\Yb^*}$,
write $A\oplus B$ for the operator $(x,f)\mapsto(Ax,Bf)$ in $\lb{\Yb\oplus\Yb^*}$.
The following properties of $A \oplus B$ are easy to check:
\begin{align*}
	& & \left\|A \oplus B\right\| & =  \max\left\{\left\|A\right\|,\left\|B\right\|\right\}, &
	\nu(A \oplus B) & =  \min\left\{\nu(A),\nu(B)\right\}. & &
\end{align*}
To get a similar equality for the essential norm, we have to work a bit more.
Note that $\Pc\oplus\Pc^*=(P_n\oplus P_n^*)_n$ is again a uniform approximate
projection on $\Yb\oplus\Yb^*$.

\begin{prop} \label{Poplusessnorm}
Let $A \oplus B \in \Lc(\Yb\oplus\Yb^*,\Pc\oplus\Pc^*)$. Then
\begin{equation}\label{EOplus}
\left\|A \oplus B + \Kc(\Yb\oplus\Yb^*,\Pc\oplus\Pc^*)\right\|
= \max\left\{\left\|A + \Kc(\Yb,\Pc)\right\|,\left\|B + \Kc(\Yb^*,\Pc^*)\right\|\right\}.
\end{equation}
\end{prop}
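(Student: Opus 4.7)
The plan is to apply Proposition \ref{PEssNorm} three times and then exploit the compatibility of the direct-sum structure with the approximate projection $\Pc\oplus\Pc^*$. First I would observe that the assumption $A\oplus B\in\Lc(\Yb\oplus\Yb^*,\Pc\oplus\Pc^*)$ forces $A\in\pb{\Yb}$ and $B\in\lb{\Yb^*,\Pc^*}$: composing $A\oplus B$ with a block-diagonal $\Pc\oplus\Pc^*$-compact operator $K_1\oplus K_2$ yields $(AK_1)\oplus(BK_2)$, and the diagonal-block condition in the definition of $\Pc\oplus\Pc^*$-compactness on $\Yb\oplus\Yb^*$ translates exactly into $AK_1$ being $\Pc$-compact and $BK_2$ being $\Pc^*$-compact (and similarly on the other side). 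Hence Proposition \ref{PEssNorm} can be invoked for each of $A$, $B$, and $A\oplus B$.

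Next I would identify the complementary projection. On $\Yb\oplus\Yb^*$ the natural approximate identity is $\Pc\oplus\Pc^*=(P_n\oplus P_n^*)$, so the $Q$-complement is simply $(I_\Yb\oplus I_{\Yb^*})-(P_n\oplus P_n^*)=Q_n\oplus Q_n^*$. A direct computation in block form then gives
\[
(A\oplus B)(Q_n\oplus Q_n^*)\ =\ (AQ_n)\oplus(BQ_n^*),
\]
and the definition of the direct-sum norm $\|(x,f)\|=\max\{\|x\|,\|f\|\}$ immediately yields
\[
\bigl\|(A\oplus B)(Q_n\oplus Q_n^*)\bigr\|\ =\ \max\bigl\{\|AQ_n\|,\,\|BQ_n^*\|\bigr\}.
\]

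Now I would pass to the limit $n\to\infty$. By Proposition \ref{PEssNorm},
\[
\|AQ_n\|\to\|A+\Kc(\Yb,\Pc)\|,\qquad \|BQ_n^*\|\to\|B+\Kc(\Yb^*,\Pc^*)\|,
\]
and
\[
\max\{\|AQ_n\|,\|BQ_n^*\|\}\to\|A\oplus B+\Kc(\Yb\oplus\Yb^*,\Pc\oplus\Pc^*)\|.
\]
Since $\max$ is $1$-Lipschitz in its arguments (in the sup norm), $\max\{a_n,b_n\}\to\max\{a,b\}$ whenever $a_n\to a$ and $b_n\to b$, which combines these three statements into the claimed equality \eqref{EOplus}.

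There is no genuine obstacle here; the only thing to watch is that the decomposition in the first step really does yield $A\in\pb{\Yb}$ and $B\in\lb{\Yb^*,\Pc^*}$ so that Proposition \ref{PEssNorm} legitimately applies on each factor, and that the $Q$-complement of $P_n\oplus P_n^*$ splits as $Q_n\oplus Q_n^*$ (which follows immediately from linearity of $I-\,\cdot\,$ on the direct sum). Everything else is a one-line continuity argument for $\max$.
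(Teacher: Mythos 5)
Your argument is correct, and it takes a genuinely different route from the paper's. The paper proves the two inequalities separately: for ``$\le$'' it restricts the infimum to block-diagonal perturbations $L\oplus M$ with $L\in\Kc(\Yb,\Pc)$, $M\in\Kc(\Yb^*,\Pc^*)$, and for ``$\ge$'' it compresses an arbitrary $K\in\Kc(\Yb\oplus\Yb^*,\Pc\oplus\Pc^*)$ by the norm-one corner projections $P_{(1)},P_{(2)}$ and observes that $P_{(i)}KP_{(i)}$ lands in the corresponding small ideal. You instead reduce everything to the characterization $\|A+\pc{\Yb}\|=\lim_m\|AQ_m\|$ of Proposition \ref{PEssNorm}, applied to $A$, $B$ and $A\oplus B$, together with the identities $I-(P_n\oplus P_n^*)=Q_n\oplus Q_n^*$ and $\|(AQ_n)\oplus(BQ_n^*)\|=\max\{\|AQ_n\|,\|BQ_n^*\|\}$ and the continuity of $\max$. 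Your preliminary step is the one point that needs care, and you handle it correctly: Proposition \ref{PEssNorm} requires $A\in\Lc(\Yb,\Pc)$ and $B\in\Lc(\Yb^*,\Pc^*)$, which you extract from the hypothesis by testing $A\oplus B$ against block-diagonal $\Pc\oplus\Pc^*$-compact operators such as $K\oplus 0$ (and you should note, as the paper does, that $\Pc^*$ satisfies $(\Pc1)$--$(\Pc2)$ on $\Yb^*$, so the proposition is applicable there even though $(\Pc3)$ may fail). What each approach buys: yours is shorter and mechanical once Proposition \ref{PEssNorm} is in hand, since a single limit formula replaces both inequalities; the paper's is more self-contained at this point, working directly from the definition of the quotient norm without first having to verify that the components $A$ and $B$ individually lie in the algebras where the limit formula is valid.
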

\begin{proof}
By the definition, the left hand side
$\left\|A \oplus B + \Kc(\Yb\oplus\Yb^*,\Pc\oplus\Pc^*)\right\|$ of \eqref{EOplus} is
\[
\inf\limits_{K\in\Kc(\Yb\oplus\Yb^*,\Pc\oplus\Pc^*)} \left\|A \oplus B + K\right\|
\leq \inf\limits_{\substack{L\in\Kc(\Yb,\Pc) \\ M\in\Kc(\Yb^*,\Pc^*)}} \left\|A \oplus B + L \oplus M\right\|
\]
where the latter equals
\begin{align*}
\inf\limits_{\substack{L\in\Kc(\Yb,\Pc) \\ M\in\Kc(\Yb^*,\Pc^*)}} \left\|(A+L) \oplus (B+M)\right\|
&= \inf\limits_{\substack{L\in\Kc(\Yb,\Pc) \\ M\in\Kc(\Yb^*,\Pc^*)}} \max\left\{\left\|A+L\right\|,\left\|B+M\right\|\right\}\\
&= \max\left\{\inf\limits_{L\in\Kc(\Yb,\Pc)} \left\|A+L\right\|,\inf\limits_{M\in\Kc(\Yb^*,\Pc^*)} \left\|B+M\right\|\right\}
\end{align*}
which is the right hand side of \eqref{EOplus}, hence proves one direction.

Let $P_{(1)}:\Yb\oplus\Yb^* \to \Yb\oplus\{0\}, (x,f) \mapsto (x,0)$ and
$P_{(2)}:\Yb\oplus\Yb^* \to \{0\}\oplus\Yb^*, (x,f) \mapsto (0,f)$ be the
canonical projections.
Then $\|P_{(1)}\|=\|P_{(2)}\|=1$ and for all $K\in\Kc(\Yb\oplus\Yb^*,\Pc\oplus\Pc^*)$
we have
\begin{align*}
\left\|A \oplus B + K\right\| &\geq \max\left\{\left\|P_{(1)}(A \oplus B + K)P_{(1)}\right\|,\left\|P_{(2)}(A \oplus B + K)P_{(2)}\right\|\right\}\\
&= \max\left\{\left\|A\oplus0 + P_{(1)} K P_{(1)}\right\|,\left\|0\oplus B + P_{(2)} K P_{(2)}\right\|\right\}\\
&\geq \max\left\{\left\|A + \Kc(\Yb,\Pc)\right\|,\left\|B + \Kc(\Yb^*,\Pc^*)\right\|\right\}.
\end{align*}
Taking the infimum over all $K$, we get the reversed inequality.
\end{proof}

\begin{rem}
Note that the naive guess $\nuess(A \oplus B) = \min\left\{\nuess(A),\nuess(B)\right\}$ is wrong in general.
For example, let $\Xb=l^2(\Zb^N,\Cb)$, in which case $\pc{\Xb}=\lc{\Xb}$, and let $A$ be Fredholm on $\Xb$
with index $1$. Then $A \oplus A^*$ has index $0$ and therefore there exists a compact operator $K$
on $\Xb\oplus\Xb$ such that $(A \oplus A^*) + K$ is invertible and in particular bounded below.
Therefore $\nuess(A \oplus A^*)\ge\nu((A \oplus A^*) + K)>0$. However, $A+L$ has index $1$ and
therefore a nontrivial kernel for all $L\in\lc{\Xb}$, so that $\nuess(A)=0$.
\end{rem}

\begin{cor}\label{CNuessOplusMu}
Let $A\in\pb{\Yb}$. \\ Then $\tilde{\mu}(A\oplus A^*)=\mu(A)$ and
either $\nuess(A\oplus A^*)=\mu(A)>0$ or $\nuess(A\oplus A^*)=0$.
\end{cor}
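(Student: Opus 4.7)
The plan is to reduce both assertions to applying Corollary \ref{CNuess} (and the lower-norm identity preceding Proposition \ref{Poplusessnorm}) to the symmetrized operator $A\oplus A^*$, viewed as an element of $\Lc(\Yb\oplus\Yb^*,\Pc\oplus\Pc^*)$. That $\Pc\oplus\Pc^*$ is a uniform approximate projection was already noted before Proposition \ref{Poplusessnorm}, and that $A\oplus A^*$ really lies in $\Lc(\Yb\oplus\Yb^*,\Pc\oplus\Pc^*)$ is a routine bookkeeping step using the block-diagonal shape of both $A\oplus A^*$ and $P_n\oplus P_n^*$, together with $A\in\pb{\Yb}$ and the corresponding property of $A^*$ on $(\Yb^*,\Pc^*)$.

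First I would compute $\tilde{\mu}(A\oplus A^*)$. Since $I-(P_n\oplus P_n^*)=Q_n\oplus Q_n^*$, the image of this projection splits as $\im Q_n\oplus\im Q_n^*$ and the restriction of $A\oplus A^*$ to it is exactly $A|_{\im Q_n}\oplus A^*|_{\im Q_n^*}$. Plugging this into the identity $\nu(B_1\oplus B_2)=\min\{\nu(B_1),\nu(B_2)\}$ from the preamble of Proposition \ref{Poplusessnorm} gives
\[
\nu\bigl((A\oplus A^*)|_{\im(Q_n\oplus Q_n^*)}\bigr)=\min\bigl\{\nu(A|_{\im Q_n}),\,\nu(A^*|_{\im Q_n^*})\bigr\}.
\]
By Lemma \ref{LMuLim} both sequences on the right are monotonically nondecreasing, with limits $\tilde\mu(A)$ and $\tilde\mu(A^*)$ respectively; hence their pointwise minimum is also monotonically nondecreasing and converges to $\min\{\tilde\mu(A),\tilde\mu(A^*)\}=\mu(A)$. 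This yields $\tilde\mu(A\oplus A^*)=\mu(A)$, which is the first claim.

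The second claim then drops out immediately by invoking Corollary \ref{CNuess} in the algebra $\Lc(\Yb\oplus\Yb^*,\Pc\oplus\Pc^*)$: it asserts the dichotomy $\nuess(A\oplus A^*)=0$ or $\nuess(A\oplus A^*)=\tilde\mu(A\oplus A^*)>0$, and substituting the identity just established gives exactly the stated alternative.

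I do not foresee a substantive obstacle; the argument is essentially a direct combination of the product formula for $\nu$, the monotonicity in Lemma \ref{LMuLim}, and the dichotomy in Corollary \ref{CNuess}. The only mildly delicate point is the verification that $A\oplus A^*$ preserves $(\Pc\oplus\Pc^*)$-compactness under left and right multiplication, which reduces via the block decomposition to the corresponding preservation properties of $A$ on $(\Yb,\Pc)$ and $A^*$ on $(\Yb^*,\Pc^*)$.
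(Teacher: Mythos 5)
Your argument is correct and coincides with the paper's own proof: the same computation of $\tilde{\mu}(A\oplus A^*)$ via the restriction to $\im(Q_m\oplus Q_m^*)$ and the identity $\nu(B_1\oplus B_2)=\min\{\nu(B_1),\nu(B_2)\}$, followed by an application of Corollary \ref{CNuess} to $A\oplus A^*$. Your explicit justification of the interchange of limit and minimum via monotonicity is a detail the paper leaves implicit, but the route is identical.
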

\begin{proof}
We have
\[\tilde{\mu}(A\oplus A^*)
=\lim_{m\to\infty}\nu(A\oplus A^*|_{\im (Q_m\oplus Q_m^*)})
=\lim_{m\to\infty}\min\{\nu(A|_{\im Q_m}),\nu(A^*|_{\im Q_m^*})\} = \mu(A).\]
Corollary \ref{CNuess} applied to $A\oplus A^*$ yields the claim on $\nuess(A\oplus A^*)$.
\end{proof}

Now, we end up with the third characterization of $\|(A+\pc{\Yb})^{-1}\|^{-1}$:
\begin{thm}\label{TNuessOplusY}
Let $A\in\pb{\Yb}$ be $\Pc$-Fredholm and $A+\pc{\Yb}$ contain a Fredholm operator.
Then
\[\nuess(A\oplus A^*)=\tilde{\mu}(A\oplus A^*)=\mu(A) = \|(A+\pc{\Xb})^{-1}\|^{-1}.\]
\end{thm}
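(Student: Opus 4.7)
The plan is to dispense with the first and last equalities, which are essentially free, and focus on the middle assertion $\nuess(A \oplus A^*) = \tilde{\mu}(A \oplus A^*)$. Specifically, $\Pc$-Fredholmness of $A$ combined with Lemma \ref{LPLowNorm1} yields $\mu(A) = \|(A+\pc{\Yb})^{-1}\|^{-1}$, and Corollary \ref{CNuessOplusMu} provides $\tilde{\mu}(A \oplus A^*) = \mu(A)$.

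For the middle equality, Corollary \ref{CNuessOplusMu} also supplies the dichotomy
\[
\nuess(A \oplus A^*) \in \{0,\, \mu(A)\},
\]
where $\mu(A) > 0$ since $A$ is $\Pc$-Fredholm (again by Lemma \ref{LPLowNorm1}). It therefore suffices to exclude the case $\nuess(A \oplus A^*) = 0$. To do so, I will apply Corollary \ref{CNuessMu} to $A \oplus A^*$ in the product space equipped with the uniform approximate projection $\Pc \oplus \Pc^*$: it is enough to produce a bounded-below operator inside the coset $(A \oplus A^*) + \Kc(\Yb \oplus \Yb^*, \Pc \oplus \Pc^*)$.

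The construction exploits the built-in index-zero symmetrization of $F \oplus F^*$. Let $F = A + L$ with $L \in \pc{\Yb}$ be the Fredholm representative granted by the hypothesis. Passing to adjoints, $F^* = A^* + L^*$ is Fredholm on $\Yb^*$ with $\ind F^* = -\ind F$ and $L^* \in \Kc(\Yb^*, \Pc^*)$, so $L \oplus L^* \in \Kc(\Yb \oplus \Yb^*, \Pc \oplus \Pc^*)$. Hence $F \oplus F^*$ lies in the coset of $A \oplus A^*$ and is Fredholm of index zero on $\Yb \oplus \Yb^*$. Proposition \ref{PFredh} then yields a decomposition $F \oplus F^* = C + M$, where $C$ is invertible in $\Lc(\Yb \oplus \Yb^*, \Pc \oplus \Pc^*)$ and $M$ is a finite-rank element of $\Kc(\Yb \oplus \Yb^*, \Pc \oplus \Pc^*)$. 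The operator $C$ is bounded below (being invertible) and still lies in the coset, so Corollary \ref{CNuessMu} completes the argument.

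The main obstacle is the legitimacy of invoking Proposition \ref{PFredh} on $\Yb \oplus \Yb^*$: that result is phrased for sequence spaces $\Xb = l^p(\Zb^N, X)$, whereas the present theorem is stated in the abstract $\Yb$ framework. When $\Yb$ is itself a (reflexive) sequence space, $\Yb \oplus \Yb^*$ carries a natural sequence-space structure together with $\Pc \oplus \Pc^*$, and the proposition applies directly; in the general case one has to trace through the proof of Proposition \ref{PFredh} and verify that it depends only on the uniform approximate projection data, which then transfers verbatim to $\Pc \oplus \Pc^*$ on $\Yb \oplus \Yb^*$.
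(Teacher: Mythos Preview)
Your high-level strategy coincides with the paper's: reduce to finding a bounded-below (in fact invertible) element in the coset $(A\oplus A^*)+\Kc(\Yb\oplus\Yb^*,\Pc\oplus\Pc^*)$ and then invoke Corollary~\ref{CNuessMu} together with Corollary~\ref{CNuessOplusMu} and Lemma~\ref{LPLowNorm1}. The difference lies entirely in how that invertible representative is produced, and this is where your argument has a real gap.

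You appeal to Proposition~\ref{PFredh} on $\Yb\oplus\Yb^*$, but that result is stated (and proved in \cite{SeFre}) only for the sequence spaces $\Xb=l^p(\Zb^N,X)$. Your suggested workaround --- that for reflexive $\Xb$ the sum $\Xb\oplus\Xb^*$ ``carries a natural sequence-space structure'' --- does not actually hold: with the max norm, $l^p\oplus l^q$ is not of the form $l^r(\Zb^M,Z)$ unless $p=q$, so Proposition~\ref{PFredh} is not directly available even there. Claiming that the proof of Proposition~\ref{PFredh} transfers to the abstract $(\Yb,\Pc)$ setting is a nontrivial assertion that you have not carried out, and which the paper deliberately avoids.

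The paper instead builds the invertible perturbation by hand. Assuming w.l.o.g.\ that $A$ itself is Fredholm, it invokes \cite[Corollary~1.9]{SeSi3} --- a result valid on general $(\Yb,\Pc)$ --- to obtain $\Pc$-compact projections $P$ onto $\ker A$ and $P'$ parallel to $\im A$. Then $R:=P\oplus (P')^*$ and $R':=P'\oplus P^*$ are $\Pc\oplus\Pc^*$-compact projections onto the kernel and parallel to the range of $A\oplus A^*$, of the \emph{same} finite rank. Picking any isomorphism $C:\im R\to\im R'$, the operator $A\oplus A^*+R'CR$ is invertible, and $R'CR\in\Kc(\Yb\oplus\Yb^*,\Pc\oplus\Pc^*)$ since $\|R(Q_m\oplus Q_m^*)\|,\|(Q_m\oplus Q_m^*)R'\|\to 0$. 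This sidesteps Proposition~\ref{PFredh} entirely and works in the stated generality.
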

Notice that in the case $\Yb = \Xb$ these equalities can be complemented (cf.
Corollary \ref{CFredLX}) by $\max\{\nuess(A),\nuess(A^*)\}=\mu(A)$.

\begin{proof}
W.l.o.g. $A$ is already Fredholm. By \cite[Corollary 1.9]{SeSi3} we have $\Pc$-compact
projections $P,P'$ onto $\ker A$
and parallel to the range of $A$, resp. Then $(P')^*, P^*$ are $\Pc^*$-compact
projections onto $\ker A^*$ and parallel to the range of $A^*$, resp., hence
$R:=P\oplus(P')^*$ is a projection onto the kernel of $A\oplus A^*$ whereas
$R':=P'\oplus P^*$ is a projection parallel to its range. Since both projections
are of the same finite rank there exists an isomorphism $C:\im R\to\im R'$.
Then $A\oplus A^*+R'CR=(I-R')(A\oplus A^*)(I-R)+R'CR$ is invertible, where
$R'CR\in\lc{\Yb\oplus\Yb^*, \Pc\oplus\Pc^*}$ since
\[\lim_{m\to\infty}\|R(Q_m\oplus Q_m^*)\|=\lim_{m\to\infty}\|(Q_m\oplus Q_m^*)R'\|=0.\]
Corollaries \ref{CNuessMu} and \ref{CNuessOplusMu} yield $\nuess(A\oplus A^*) = \tilde{\mu}(A\oplus A^*) = \mu(A)$.
\end{proof}

\subsection{The Hilbert space case}\label{SHilbert}
On a Hilbert space $\Yb$ we consider a sequence of nested orthogonal projections $\Pc=(P_n)_{n\in\Nb}$, i.e. $P_n =P_n^*=P_n^2=P_nP_{n+1}=P_{n+1}P_n$ for all $n\in\Nb$. With this condition $\Pc$ satisfies $(\Pc 1)$ and $(\Pc 2)$. If additionally $(\Pc 3)$ is satisfied, we call $\Pc$ a \emph{H}ermitian \emph{app}roximate \emph{i}dentity (in short: \emph{happi}) and the pairing $(\Yb,\Pc)$ a happi space. In this more particular case of $\Yb$ being a Hilbert space and under this natural assumption on $\Pc$, we will find that now our above results already apply to \emph{all} operators $A\in\pb{\Yb}$.

\begin{lem}\label{TMPI}
An operator $A\in\pb{\Yb}$ is Moore-Penrose invertible (as an element of the $C^*$-algebra
$\lb{\Yb}$) if and only if $\im A$ is closed. In that case $A^+\in\pb{\Yb}$.
\end{lem}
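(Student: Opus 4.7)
The plan is to prove the equivalence by the classical Hilbert-space argument and then to exploit the fact that, in the happi setting, $\pb{\Yb}$ is a $C^*$-subalgebra of $\lb{\Yb}$, so that continuous functional calculus can be applied inside $\pb{\Yb}$ to the self-adjoint operator $A^*A$.

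For the equivalence of Moore--Penrose invertibility with closedness of $\im A$, I would recall the standard Hilbert-space facts: if $A^+ \in \lb{\Yb}$ satisfies the four Penrose equations, then $AA^+$ is an orthogonal projection (because $(AA^+)^*=AA^+$ and $(AA^+)^2 = AA^+AA^+ = AA^+$), and $AA^+A = A$ forces $\im(AA^+) = \im A$, so $\im A$ is the range of a bounded projection, hence closed. Conversely, if $\im A$ is closed then $A$ restricts to a topological isomorphism $(\ker A)^{\perp} \to \im A$, and extending its inverse by $0$ on $(\im A)^{\perp}$ yields $A^+ \in \lb{\Yb}$.

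For the second assertion, I would first verify that $\pb{\Yb}$ is closed under taking adjoints in the happi case: since every $P_n$ is self-adjoint, $K \in \pc{\Yb}$ iff $K^* \in \pc{\Yb}$ (apply $\|\cdot\|=\|(\cdot)^*\|$ in \eqref{eq:pcomp}), and then $A^* \in \pb{\Yb}$ iff $A \in \pb{\Yb}$ by dualising the defining condition. Hence $\pb{\Yb}$ is a unital $C^*$-subalgebra of $\lb{\Yb}$, and in particular inverse-closed and closed under continuous functional calculus of its self-adjoint elements. Now set $T := A^*A \in \pb{\Yb}$; this is positive self-adjoint with $\spc(T) \subset [0, \|A\|^2]$. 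A standard Hilbert-space argument (using that $\|Ax\|^2 = \langle Tx,x\rangle$ together with the polar decomposition) shows $\im A$ is closed iff $\im T$ is closed. For a positive self-adjoint $T$, closedness of $\im T$ forces either $0 \notin \spc(T)$ or $0$ to be isolated in $\spc(T)$ (otherwise the spectral projections $E_T([0,1/n))$ would produce approximate null sequences in $(\ker T)^{\perp}$, contradicting boundedness below of $T$ on that subspace). Therefore the function $f$ defined by $f(0):=0$ and $f(t):=1/t$ on $\spc(T)\setminus\{0\}$ is continuous on $\spc(T)$, so $f(T) \in C^*(T,I) \subset \pb{\Yb}$. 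By the spectral theorem $f(T)$ is precisely the Moore--Penrose inverse $T^+$, and the identity $A^+ = T^+ A^* = (A^*A)^+ A^*$ (easily verified from the four Penrose equations using that $T^+$ commutes with $T$ and with its projections onto $\ker T$, $(\ker T)^{\perp}$) then exhibits $A^+$ as a product of two elements of $\pb{\Yb}$.

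The one step that deserves care is the verification that $\im A$ closed is equivalent to $0$ being isolated in $\spc(A^*A)$ (or absent from it), since this is what makes the functional-calculus argument go through and promotes a merely pseudo-invertible $T$ into an element whose generalised inverse is continuous in $T$; everything else is then routine book-keeping inside the $C^*$-subalgebra $\pb{\Yb}$.
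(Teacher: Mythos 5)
Your argument is correct. The paper's own proof consists of two citations --- \cite[Theorem 2.4]{Standard} for the equivalence of Moore--Penrose invertibility with closedness of $\im A$, and \cite[Corollary 2.18]{Standard} for $A^+\in\pb{\Yb}$ after noting, exactly as you do, that $\pb{\Yb}$ is a $C^*$-subalgebra of $\lb{\Yb}$ --- and your continuous functional calculus applied to $A^*A$ (with $0$ isolated in or absent from $\spc(A^*A)$ when the range is closed) together with the identity $A^+=(A^*A)^+A^*$ is precisely the standard proof of those cited facts, so you have in effect supplied a self-contained version of the same approach.
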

\begin{proof}
The first part is \cite[Theorem 2.4]{Standard}. For the second just
notice that $\pb{\Yb}$ is a $C^*$-subalgebra of $\lb{\Yb}$ and apply
\cite[Corollary 2.18]{Standard}.
\end{proof}

\begin{thm}\label{TPLowNormHilb}
Let $A\in\pb{\Yb}$ on a happi space $(\Yb,\Pc)$. Then
\begin{equation}
\mu(A)=\|(A+\pc{\Yb})^{-1}\|^{-1}.
\end{equation}
Moreover, if $\mu(A) > 0$, then $\tilde{\mu}(A) = \tilde{\mu}(A^*)$.
\end{thm}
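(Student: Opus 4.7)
The plan is to combine the already available Lemma \ref{LPLowNorm1} with the fact that in the happi setting $\pb{\Yb}/\pc{\Yb}$ is a $C^*$-algebra. Lemma \ref{LPLowNorm1} gives $\|(A+\pc{\Yb})^{-1}\|^{-1}\le\mu(A)$ in full generality and, whenever $A$ is $\Pc$-Fredholm, delivers both claimed equalities, including the ``moreover'' part $\tilde{\mu}(A)=\tilde{\mu}(A^*)=\mu(A)$. Hence the only remaining task is to show $\mu(A)=0$ whenever $A$ is \emph{not} $\Pc$-Fredholm; the ``moreover'' then follows as a contrapositive, because the first part of the theorem will yield that $\mu(A)>0$ forces $\|(A+\pc{\Yb})^{-1}\|<\infty$, i.e.\ $A$ is $\Pc$-Fredholm.

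The first step is to observe that orthogonality $P_n=P_n^*$ makes $\pc{\Yb}$ a closed $*$-ideal and $\pb{\Yb}$ a $C^*$-subalgebra of $\lb{\Yb}$, so that $\pb{\Yb}/\pc{\Yb}$ is a $C^*$-algebra. Non-invertibility of $a:=A+\pc{\Yb}$ is equivalent, by standard $C^*$-theory, to non-invertibility of $a^*a$ or of $aa^*$; after possibly replacing $A$ by $A^*$ (which only swaps $\tilde{\mu}(A)$ and $\tilde{\mu}(A^*)$ in the definition of $\mu$), I may assume $a^*a=A^*A+\pc{\Yb}$ is non-invertible. Since it is a positive self-adjoint element with $0$ in its spectrum, continuous functional calculus supplies, for each $\eps>0$, an element $b$ in the quotient with $\|b\|=1$ and $\|(a^*a)b\|\le\eps^2$ (take $b=f(a^*a)$ for a continuous $f\colon[0,\infty)\to[0,1]$ with $f(0)=1$ and $\operatorname{supp}f\subset[0,\eps^2]$).

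The central step, and what I expect to be the main obstacle, is to promote this abstract $b$ into a genuine unit vector sitting in $\im Q_M$ on which $A$ is small. The key gadget is to lift $b$ to $B\in\pb{\Yb}$ and replace it by the cosetwise equivalent $Q_MBQ_M$; this is legitimate because $P_M\in\pc{\Yb}$ (immediate from $P_MQ_n=Q_nP_M=0$ for $n\ge M$), and it crucially forces the range of $B$ to lie inside $\im Q_M$. Proposition \ref{PEssNorm} then converts the quotient-level bounds $\|B+\pc{\Yb}\|=1$ and $\|A^*AB+\pc{\Yb}\|\le\eps^2$ into concrete norm bounds $\|BQ_{M_1}\|\to 1$ and $\|A^*ABQ_{M_1}\|\to\|A^*AB+\pc{\Yb}\|\le\eps^2$ as $M_1\to\infty$. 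For a sufficiently large $M_1$ and a suitable unit vector $y$, the vector $w:=BQ_{M_1}y\in\im Q_M$ then has norm close to $1$ and satisfies $\|A^*Aw\|\le\eps^2+o(1)$; normalising $w$ and invoking the Hilbert-space identity $\|Aw\|^2=\langle A^*Aw,w\rangle\le\|A^*Aw\|\,\|w\|$ yields a unit vector in $\im Q_M$ of $A$-image $O(\eps)$. Since $M$ was arbitrary and $\eps>0$ arbitrary, $\tilde{\mu}(A)=0$, whence $\mu(A)=0$, as required. The entire difficulty is concentrated in the $B\mapsto Q_MBQ_M$ substitution combined with Proposition \ref{PEssNorm}; once those are used to reach $\im Q_M$, the rest is routine $C^*$-functional calculus and $\eps$-bookkeeping.
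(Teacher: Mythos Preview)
Your argument is correct, but it takes a genuinely different route from the paper's proof. Both proofs reduce, via Lemma \ref{LPLowNorm1}, to the implication ``$\mu(A)>0\Rightarrow A$ is $\Pc$-Fredholm''; the paper proves this implication directly, while you prove the contrapositive.

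The paper observes that $\mu(A)>0$ forces $\nu(A|_{\im Q_m})>0$ for large $m$, so $AQ_m$ has closed range and hence, by Lemma \ref{TMPI}, is Moore--Penrose invertible with $(AQ_m)^+\in\pb{\Yb}$. From $(AQ_m)^+AQ_m=Q_m$ one reads off a left inverse for $A+\pc{\Yb}$; the same argument applied to $A^*$ yields a right inverse, so $A$ is $\Pc$-Fredholm. This is constructive: it hands you the $\Pc$-regularizer explicitly.

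Your approach instead exploits the $C^*$-structure of the \emph{quotient} $\pb{\Yb}/\pc{\Yb}$: functional calculus on the non-invertible positive element $a^*a$ produces an approximate annihilator $b$, which you then lift to an operator with range in $\im Q_M$ via the substitution $B\mapsto Q_MBQ_M$ and convert to vector estimates through Proposition \ref{PEssNorm}. The payoff is that you never touch Moore--Penrose inverses or Lemma \ref{TMPI}; the cost is that the argument is less explicit and requires the extra bookkeeping of choosing $M_1$ and $y$ after $M$. Both proofs ultimately rest on the same Hilbert-space feature (a $C^*$-structure somewhere), just deployed at different levels: the paper uses it in $\lb{\Yb}$, you use it in the quotient.
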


\begin{proof}
By Lemma \ref{LPLowNorm1}, $\mu(A)\geq\|(A+\pc{\Yb})^{-1}\|^{-1}$, and
the first assertion obviously holds if $\mu(A)=0$.

So, let $\mu(A)>0$. Then $\nu(A|_{\im Q_m})>0$ for a sufficiently large $m$, hence
$\im(AQ_m)$ is closed (by Lemma \ref{lem:bddbelow}), and we get from Lemma
\ref{TMPI} that $AQ_m$ is Moore-Penrose
invertible with $(AQ_m)^+\in\pb{\Yb}$. Moreover, the compression
$(AQ_m)^+:\im AQ_m \to\im Q_m$ is the inverse of $AQ_m:\im Q_m\to\im AQ_m$,
i.e. $(AQ_m)^+AQ_m=Q_m$ which yields that $(AQ_m)^++\pc{\Yb}$ is a left inverse
for the coset $AQ_m+\pc{\Yb}=A+\pc{\Yb}$.
By the same means we get that $(A^*Q_m)^++\pc{\Yb}$ is a left inverse
for $A^*+\pc{\Yb}$, thus $A+\pc{\Yb}$ is also right invertible. This proves that $A$
is $\Pc$-Fredholm. Applying Lemma \ref{LPLowNorm1}, we get
\[\mu(A)=\tilde{\mu}(A)=\tilde{\mu}(A^*)=\|(A+\pc{\Yb})^{-1}\|^{-1}.\]
\end{proof}

\begin{prop}\label{onesidedHilb}
Let $A\in\Lc(\Yb,\Pc)$ be $\Pc$-Fredholm on a happi space $(\Yb,\Pc)$. Then there is a $K\in\Kc(\Yb,\Pc)$ such that $A+K$ has a one-sided inverse in $\Lc(\Yb,\Pc)$.
\end{prop}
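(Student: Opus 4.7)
The plan is to reduce to the Moore--Penrose machinery of Lemma~\ref{TMPI} by engineering a $\Pc$-compact perturbation $K$ so that $A+K$ is either bounded below or surjective. First I would follow the argument already used in the proof of Theorem~\ref{TPLowNormHilb}: since $A$ is $\Pc$-Fredholm, for some sufficiently large $m$ the compression $AQ_m$ has closed range, its Moore--Penrose inverse $(AQ_m)^+$ belongs to $\pb{\Yb}$, and $(AQ_m)^+AQ_m=Q_m$. Because we are in the $C^*$-algebra quotient $\pb{\Yb}/\pc{\Yb}$, where a one-sided inverse of an invertible element is automatically two-sided, the coset of $(AQ_m)^+$ is also the right inverse of $A+\pc{\Yb}$; expanding $A(AQ_m)^+=AQ_m(AQ_m)^+ + AP_m(AQ_m)^+$ modulo $\pc{\Yb}$ then yields $I-Q'_m\in\pc{\Yb}$, where $Q'_m:=AQ_m(AQ_m)^+$ is the orthogonal projection onto the closed subspace $\im AQ_m$.

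Next I would exploit the two orthogonal Hilbert subspaces $\im P_m$ and $\ker Q'_m=(\im AQ_m)^\perp$ of $\Yb$ by picking a partial isometry $V\colon\im P_m\to\ker Q'_m$, extended by zero on $\im Q_m$: either an isometry (when $\dim\im P_m\le\dim\ker Q'_m$) or a coisometry onto $\ker Q'_m$ (otherwise). Because $\im V\subseteq\ker Q'_m=\im(I-Q'_m)$ and $I-Q'_m\in\pc{\Yb}$, the estimate $\|(I-P_n)V\|\le\|(I-P_n)(I-Q'_m)\|\cdot\|V\|\to 0$ combined with $V(I-P_n)=0$ for $n\ge m$ gives $V\in\pc{\Yb}$, so $K:=V-AP_m$ lies in $\pc{\Yb}$ with $A+K=AQ_m+V$.

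In the isometry case, the orthogonal decomposition $\Yb=\im Q'_m\oplus\ker Q'_m$ shows that $(A+K)x=AQ_mx+Vx$ has orthogonal summands, so Pythagoras combined with the boundedness-below of $AQ_m|_{\im Q_m}$ and $V|_{\im P_m}$ yields $\|(A+K)x\|^2\ge c_1\|Q_mx\|^2+\|P_mx\|^2$, making $A+K$ bounded below. In the coisometry case, splitting an arbitrary $y\in\Yb$ as $y_1+y_2$ along $\im Q'_m\oplus\ker Q'_m$ and solving $AQ_mx_1=y_1$ for $x_1\in\im Q_m$ and $Vx_2=y_2$ for $x_2\in\im P_m$ produces $x$ with $(A+K)x=y$, so $A+K$ is surjective. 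In both cases $\im(A+K)$ is closed, and Lemma~\ref{TMPI} yields $(A+K)^+\in\pb{\Yb}$, which is the desired left (resp.\ right) inverse.

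The main obstacle is the identification $I-Q'_m\in\pc{\Yb}$, which is the precise way in which the abstract invertibility of $A+\pc{\Yb}$ in the $C^*$-quotient gets encoded as a concrete $\Pc$-compactness statement about a geometric projection; without it, the partial isometry $V$ is only bounded and not $\Pc$-compact, and the whole construction collapses. After that, the case split on dimensions of $\im P_m$ versus $\ker Q'_m$ is a standard Hilbert-space observation.
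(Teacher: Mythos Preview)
Your proof is correct and follows essentially the same route as the paper's: both construct a partial isometry between a ``kernel-like'' subspace and the orthogonal complement of the range, verify it is $\Pc$-compact, and invoke the Moore--Penrose inverse via Lemma~\ref{TMPI}. The only cosmetic difference is that the paper first reduces (by the same $AQ_m$ trick) to the case of closed range and then maps $\ker A$ into $(\im A)^\perp$, establishing $\Pc$-compactness through the projections $R_1,R_2$ onto those subspaces, whereas you keep $AQ_m$ explicit and map $\im P_m$ into $(\im AQ_m)^\perp$, deriving $\Pc$-compactness from $I-Q'_m\in\pc{\Yb}$ instead.
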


\begin{proof}
By Theorem \ref{TPLowNormHilb} we have that $\mu(A) > 0$ and therefore $\im(AQ_m)$ is closed for $m$ large enough. In order to simplify notations, we may assume that $\im(A)$ is closed. Let $A_0: \ker(A)^\perp \to \im(A)$ be defined by $A_0x = Ax$ for all $x\in\ker(A)^\perp$. Then $A_0$ is invertible by Banach's isomorphism theorem. Now choose orthonormal bases $\left\{\beta_i\right\}_{i\in I}$ and $\left\{\gamma_j\right\}_{j\in J}$ of $\ker(A)$ and $\im(A)^\perp$ respectively. Depending on the cardinalities $\left|I\right|$ and $\left|J\right|$ there is an injection $\iota:I \to J$ or $\iota:J \to I$ (if $\left|I\right| = \left|J\right|$, there is even a bijection). Let us assume that $\left|I\right| \leq \left|J\right|$. Then $\iota$ induces an isometry $\Phi: \ker(A) \to \im(A)^\perp$ by $\Phi(\beta_i) = \gamma_{\iota(i)}$ for all $i\in I$. Let $R_1$ and $R_2$ be orthogonal projections onto $\ker(A)$ and $\im(A)^\perp$ respectively. Then $A + R_2\Phi R_1 = A_0 \oplus \Phi$ is left invertible. More precisely, the Moore-Penrose inverse of $A + R_2\Phi R_1$, which is contained in $\Lc(\Yb,\Pc)$ by Lemma \ref{TMPI}, is a left inverse in this case. It remains to show that $R_2\Phi R_1$ is $\Pc$-compact. Similarly as in the proof of Theorem \ref{TPLowNormHilb} we have that $R_1$ and $R_2$ are $\Pc$-compact since $A$ is $\Pc$-Fredholm. This implies
\[
\left\|R_2\Phi R_1 Q_m\right\| \leq \left\|R_1 Q_m\right\| \to 0
\quad\text{and}\quad
\left\|Q_m R_2\Phi R_1\right\| \leq \left\|Q_m R_2\right\| \to 0
\]
as $m\to\infty$.
\end{proof}

Combining Proposition \ref{PNuess}, Theorem \ref{TPLowNormHilb} and Proposition \ref{onesidedHilb}, this immediately yields

\begin{cor}\label{CnuessHilb}
Let $A\in\Lc(\Yb,\Pc)$ be $\Pc$-Fredholm on a happi space $(\Yb,\Pc)$. Then
\begin{equation}\label{EHilbNuess}
\max\{\nuess(A),\nuess(A^*)\}=\mu(A)=\|(A+\pc{\Yb})^{-1}\|^{-1}.
\end{equation}
More precisely:
\begin{itemize}
	\item If $A+\pc{\Xb}$ contains a left invertible operator, then
	\[\nuess(A) = \mu(A) = \|(A+\pc{\Xb})^{-1}\|^{-1}.\]
	Otherwise, $\nuess(A) = 0$.
	\item If $A+\pc{\Xb}$ contains a right invertible operator, then
	\[\nuess(A^*) = \mu(A) = \|(A+\pc{\Xb})^{-1}\|^{-1}.\]
	Otherwise, $\nuess(A^*) = 0$.
\end{itemize}
\end{cor}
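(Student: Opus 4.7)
The plan is to assemble the statement from the three results listed immediately before the corollary---Theorem \ref{TPLowNormHilb}, Proposition \ref{onesidedHilb}, and Proposition \ref{PNuess}---with a small amount of care about invariance of $\nuess$ and $\tilde\mu$ under $\Pc$-compact perturbations, and about the ambient algebra in which the one-sided inverses live.

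First I would record the easy half. The equality $\mu(A)=\|(A+\pc{\Yb})^{-1}\|^{-1}$ is Theorem \ref{TPLowNormHilb}; since $A$ is $\Pc$-Fredholm the right-hand side is finite, hence $\mu(A)>0$ and by the same theorem $\tilde\mu(A)=\tilde\mu(A^*)=\mu(A)$. Combining this with Proposition \ref{PNuess} already yields the upper bounds $\nuess(A)\le\tilde\mu(A)=\mu(A)$ and $\nuess(A^*)\le\tilde\mu(A^*)=\mu(A)$.

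For the matching lower bound on $\max\{\nuess(A),\nuess(A^*)\}$, I would apply Proposition \ref{onesidedHilb} to pick $K\in\pc{\Yb}$ such that $A+K$ is one-sidedly invertible in $\Lc(\Yb,\Pc)$. If $A+K$ is left invertible, then it is bounded below, so $\nu(A+K)>0$. Both $\nuess$ and $\tilde\mu$ are invariant under $\pc{\Yb}$-perturbations (absorb $K$ into the supremum defining $\nuess$, and note that $\|KQ_m\|\to 0$ gives $\tilde\mu(A+K)=\tilde\mu(A)$), so Proposition \ref{PNuess} applied to $A+K$ gives
\[
\nuess(A)\;=\;\nuess(A+K)\;=\;\tilde\mu(A+K)\;=\;\tilde\mu(A)\;=\;\mu(A).
\]
If instead $A+K$ is right invertible, then $(A+K)^*$ is left invertible and bounded below on the happi space $(\Yb^*,\Pc^*)$, and the same chain of identities with $A^*$ in place of $A$ yields $\nuess(A^*)=\mu(A)$. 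Either way, \eqref{EHilbNuess} holds.

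For the finer dichotomy, the chain above shows at once that if $A+\pc{\Yb}$ contains any left invertible operator $A+K_0$, the same reasoning produces $\nuess(A)=\mu(A)$. Conversely, suppose $\nuess(A)>0$ and choose $K'\in\pc{\Yb}$ with $\nu(A+K')>0$; on a Hilbert space bounded-below means injective with closed range, and every closed subspace is orthogonally (hence topologically) complemented, so Lemma \ref{lem:bddbelow} provides a left inverse of $A+K'$. The only point that actually needs checking is that such a left inverse exists in $\pb{\Yb}$ rather than only in $\Lc(\Yb)$---this is exactly what Lemma \ref{TMPI} supplies: since $\im(A+K')$ is closed, the Moore--Penrose inverse $(A+K')^+$ belongs to $\pb{\Yb}$ and is a left inverse of $A+K'$. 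Passing to adjoints handles the analogous statement for $\nuess(A^*)$ and right invertibility. I expect that confirming the left inverse can be located inside $\Lc(\Yb,\Pc)$ is the only step that is not purely formal---and Lemma \ref{TMPI} is precisely tailored to eliminate it.
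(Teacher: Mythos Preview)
Your proof is correct and follows essentially the same approach as the paper, which simply states that the corollary is obtained by combining Proposition~\ref{PNuess}, Theorem~\ref{TPLowNormHilb} and Proposition~\ref{onesidedHilb}. You have filled in the details of that combination carefully, including the invariance of $\nuess$ and $\tilde\mu$ under $\pc{\Yb}$-perturbations (which is Corollary~\ref{CNuessMu} in the paper) and the use of Lemma~\ref{TMPI} to locate the Moore--Penrose left inverse inside $\pb{\Yb}$ for the converse direction of the dichotomy.
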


\pagebreak[3]

\begin{cor}\label{CnuessOplusHilb}
Let $A\in\Lc(\Yb,\Pc)$ on a happi space $(\Yb,\Pc)$. Then
\begin{equation}\label{EHilbNuessOplus}
\nuess(A \oplus A^*)=\mu(A)=\|(A+\pc{\Yb})^{-1}\|^{-1}.
\end{equation}
\end{cor}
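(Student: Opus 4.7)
My plan is as follows.

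The second equality, $\mu(A)=\|(A+\pc{\Yb})^{-1}\|^{-1}$, is immediate by applying Theorem \ref{TPLowNormHilb} to $A$ on the happi space $\Yb$. For the first equality, Corollary \ref{CNuessOplusMu} already confines $\nuess(A\oplus A^*)$ to the two-element set $\{0,\mu(A)\}$, so the task reduces to excluding the zero alternative whenever $\mu(A)>0$. If $\mu(A)=0$ the dichotomy immediately gives $\nuess(A\oplus A^*)=0=\mu(A)$; henceforth I assume $\mu(A)>0$.

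Under this assumption Theorem \ref{TPLowNormHilb} forces $A$ to be $\Pc$-Fredholm, and I would follow the template of the proof of Theorem \ref{TNuessOplusY}, with Proposition \ref{onesidedHilb} playing the role of classical Fredholmness. First I would apply Proposition \ref{onesidedHilb} to pick $K_0\in\pc{\Yb}$ such that $A+K_0$ has a one-sided inverse $L\in\pb{\Yb}$; after swapping the roles of $A$ and $A^*$ if necessary, I may assume $L(A+K_0)=I$. The key preparatory step is to verify that $P':=I-(A+K_0)L$ is $\Pc$-compact. If $B$ is any $\Pc$-regularizer of $A$ (and hence of $A+K_0$, since $K_0\in\pc{\Yb}$), then $L=L(A+K_0)B$ combined with $(A+K_0)B\equiv I$ modulo $\pc{\Yb}$ yields $L-B\in\pc{\Yb}$, and consequently $(A+K_0)L\equiv(A+K_0)B\equiv I$ modulo $\pc{\Yb}$. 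Thus $P'$ is a $\Pc$-compact idempotent onto a complement of $\im(A+K_0)$, and its Banach adjoint $(P')^*\in\pc{\Yb^*,\Pc^*}$ is a projection onto $\ker(A^*+K_0^*)$.

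With these projections in hand, I would imitate Theorem \ref{TNuessOplusY}: set $R:=0\oplus(P')^*$ and $R':=P'\oplus 0$, which are $\Pc\oplus\Pc^*$-compact projections onto $\ker((A+K_0)\oplus(A^*+K_0^*))$ and parallel to its range, respectively. Because $\Yb$ is Hilbert, both $\im R$ and $\im R'$ are Hilbert spaces of matching dimension (the closed range theorem identifies $\coker(A+K_0)\cong\ker(A^*+K_0^*)^*$), so a bounded linear isomorphism $C\colon\im R\to\im R'$ exists. The same block-triangular computation as in the proof of Theorem \ref{TNuessOplusY} then shows that $A\oplus A^*+K_0\oplus K_0^*+R'CR$ is invertible, with correction $K_0\oplus K_0^*+R'CR$ lying in $\pc{\Yb\oplus\Yb^*,\Pc\oplus\Pc^*}$. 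This gives $\nuess(A\oplus A^*)>0$, and Corollary \ref{CNuessOplusMu} then upgrades this to $\nuess(A\oplus A^*)=\mu(A)$.

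The main obstacle I anticipate is precisely the $\Pc$-compactness of $P'$: unlike the classical Fredholm setting of Theorem \ref{TNuessOplusY}, where finite-dimensionality of the kernel and cokernel makes $\Pc$-compactness automatic, here it requires the explicit comparison of the one-sided inverse $L$ delivered by Proposition \ref{onesidedHilb} against a genuine two-sided $\Pc$-regularizer $B$. Everything downstream, including the $\Pc\oplus\Pc^*$-compactness of $R'CR$ and the block-triangular invertibility check, is then a direct transcription of the Fredholm index-zero argument used in Theorem \ref{TNuessOplusY}.
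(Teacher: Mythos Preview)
Your proposal is correct, and the minor wording ``$L=L(A+K_0)B$'' is clear on a second read: evaluating $L(A+K_0)B$ two ways gives $B$ on the one hand and $L$ modulo $\pc{\Yb}$ on the other, so $L-B\in\pc{\Yb}$ and hence $P'=I-(A+K_0)L\in\pc{\Yb}$ as you claim.

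However, your route differs from the paper's. Once $\mu(A)>0$ forces $A$ to be $\Pc$-Fredholm, the paper does \emph{not} rebuild the argument of Theorem~\ref{TNuessOplusY} from scratch. Instead it observes that, via the Riesz identification $\Yb^*\cong\Yb$, the pair $(\Yb\oplus\Yb,\Pc\oplus\Pc)$ is again a happi space, and $A\oplus A^*$ is $\Pc\oplus\Pc$-Fredholm there; so Corollary~\ref{CnuessHilb} applies directly to $A\oplus A^*$. Since $(A\oplus A^*)^*=A^*\oplus A$ and $\nuess(A\oplus A^*)=\nuess(A^*\oplus A)$, the $\max$ in \eqref{EHilbNuess} collapses to $\nuess(A\oplus A^*)$, and the observation $\mu(A\oplus A^*)=\tilde\mu(A\oplus A^*)=\mu(A)$ from Corollary~\ref{CNuessOplusMu} finishes the job in one line. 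Conversely, the paper handles the case $\nuess(A\oplus A^*)>0$ by deducing $\Pc\oplus\Pc$-Fredholmness of $A\oplus A^*$ (via Corollary~\ref{CNuess} and Theorem~\ref{TPLowNormHilb}) and then restricting a regularizer to the first component; your argument instead handles $\mu(A)=0$ directly via $\nuess(A\oplus A^*)\le\tilde\mu(A\oplus A^*)=\mu(A)$. Your approach is more explicit and stays within the $\Yb\oplus\Yb^*$ framework with the max norm as set up earlier in the section, avoiding the (harmless but tacit) passage to an equivalent Hilbert norm on the direct sum; the paper's approach is considerably shorter because it recycles Corollary~\ref{CnuessHilb} wholesale rather than reproving its content.
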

\begin{proof}
In case of $A$ being $\Pc$-Fredholm apply Corollary \ref{CnuessHilb} to $A \oplus A^*$ and take the observations $\tilde{\mu}(A \oplus A^*) = \tilde{\mu}(A^* \oplus A) = \mu(A \oplus A^*) = \mu(A)$ and $\nuess(A \oplus A^*)=\nuess(A^* \oplus A)$ into account. Then \eqref{EHilbNuess} gives \eqref{EHilbNuessOplus}.

Thus it remains to consider $\nuess(A \oplus A^*)>0$ and to show that $A$ is $\Pc$-Fredholm in  the case. Combining Corollary \ref{CNuess} and Theorem \ref{TPLowNormHilb}, we get that $A \oplus A^*$ is $\Pc$-Fredholm (w.r.t.~$\Pc\oplus\Pc$ in $\Yb\oplus\Yb$). Restricting a $\Pc$-regularizer for $A \oplus A^*$ to the first component yields a $\Pc$-regularizer for $A$ and thus $A$ is $\Pc$-Fredholm.
\end{proof}

\subparagraph{4th approach: Composition of $A$ and $A^*$}
In our three previous approaches to characterize $\|(A+\pc{\Yb})^{-1}\|$, we always combined information
on $A$ and $A^*$. Similar to the formula $\|A^{-1}\|^{-1}=\min(\nu(A),\nu(A^*))$ from
Lemma \ref{lem:bddbelow}, one always needs to look at both of them. What we did so far is
to consider the following ideas:
\begin{itemize}
\itemsep-1mm
\item Take the numbers $\tilde\mu(A)$ and $\tilde\mu(A^*)$ and look at their minimum $\mu(A)$.
\item Take the numbers $\nuess(A)$ and $\nuess(A^*)$ and look at their maximum.
\item Combine both operators to $A\oplus A^*$ and look at the number $\nuess(A\oplus A^*)$.
\end{itemize}
These expressions are found, most notably, in Theorems \ref{TPLowNorm},
\ref{TPLowNormHilb}; in Corollary \ref{CFredLX} and Corollary \ref{CnuessHilb};
as well as in Theorem \ref{TNuessOplusY} and Corollary \ref{CnuessOplusHilb}.
A different approach to the same goal,
to find $\|(A+\pc{\Yb})^{-1}\|$, could be to couple the operators $A$ and $A^*$ to
a new operator via composition.

\begin{cor}
Let $A\in\Lc(\Yb,\Pc)$ on a happi space $(\Yb,\Pc)$. Then
\[\min\left\{\sqrt{\nuess(AA^*)},\sqrt{\nuess(A^*A)}\right\}=\mu(A)=\|(A+\pc{\Yb})^{-1}\|^{-1}.\]
\end{cor}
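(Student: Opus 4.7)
The middle equality is exactly Theorem \ref{TPLowNormHilb}, so the plan is to prove only the left-hand equality $\min\{\sqrt{\nuess(AA^*)},\sqrt{\nuess(A^*A)}\}=\|(A+\pc{\Yb})^{-1}\|^{-1}$.

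The central observation is that on a happi space the assumption $P_n^*=P_n$ makes $\pc{\Yb}$ a $*$-ideal in the $*$-subalgebra $\pb{\Yb}$ of $\lb{\Yb}$: adjointness closure of $\pc{\Yb}$ is immediate from \eqref{eq:pcomp}, and $*$-closure of $\pb{\Yb}$ follows by taking adjoints of $AK,KA\in\pc{\Yb}$ while letting $K^*$ range over $\pc{\Yb}$. Hence $B:=\pb{\Yb}/\pc{\Yb}$ inherits a $C^*$-algebra structure, so writing $a:=A+\pc{\Yb}$ one has $a^*a=A^*A+\pc{\Yb}$ and $aa^*=AA^*+\pc{\Yb}$, together with the standard $C^*$-identity $\|a^{-1}\|^2=\|(a^*a)^{-1}\|=\|(aa^*)^{-1}\|$ whenever $a$ is invertible, and the dichotomy that $a^*a$ (resp.\ $aa^*$) is invertible in $B$ iff $a$ is left (resp.\ right) invertible.

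In the Fredholm case, $a$ is invertible, so both $a^*a$ and $aa^*$ are invertible, i.e.\ $A^*A$ and $AA^*$ are $\Pc$-Fredholm, and the $C^*$-identity gives
\[\|(A^*A+\pc{\Yb})^{-1}\|^{-1}=\mu(A)^2=\|(AA^*+\pc{\Yb})^{-1}\|^{-1}.\]
Since $a^*a$ is invertible in $B$ it is in particular left invertible, so Corollary \ref{CnuessHilb} applies to the (self-adjoint) operator $A^*A$ and yields $\nuess(A^*A)=\|(A^*A+\pc{\Yb})^{-1}\|^{-1}=\mu(A)^2$; the analogous argument gives $\nuess(AA^*)=\mu(A)^2$. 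Taking square roots and the minimum delivers $\mu(A)$.

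In the non-Fredholm case $\mu(A)=0$, and the $C^*$-dichotomy forces at least one of $a^*a$, $aa^*$ to be non-invertible in $B$; say $a^*a$, so $A^*A$ is not $\Pc$-Fredholm. As $A^*A$ is self-adjoint we have $\mu(A^*A)=\tilde\mu(A^*A)$, hence by Theorem \ref{TPLowNormHilb} and Proposition \ref{PNuess}
\[0\le\nuess(A^*A)\le\tilde\mu(A^*A)=\mu(A^*A)=\|(A^*A+\pc{\Yb})^{-1}\|^{-1}=0,\]
so $\sqrt{\nuess(A^*A)}=0$ and the minimum vanishes, matching $\mu(A)=0$.

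There is no real obstacle beyond carefully verifying the $C^*$-algebra structure on $B$ and the collapsing of the two cases of Corollary \ref{CnuessHilb} (left-invertible vs.\ right-invertible coset) into one via self-adjointness of $A^*A$ and $AA^*$. Everything else is direct bookkeeping inside the quotient $C^*$-algebra and appeal to the already established Theorem \ref{TPLowNormHilb} and Corollary \ref{CnuessHilb}.
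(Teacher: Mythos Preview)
Your proof is correct and follows essentially the same approach as the paper: both split into the $\Pc$-Fredholm and non-$\Pc$-Fredholm cases, exploit the $C^*$-algebra structure of $\pb{\Yb}/\pc{\Yb}$ for the identity $\|(A+\pc{\Yb})^{-1}\|^{-2}=\|(A^*A+\pc{\Yb})^{-1}\|^{-1}=\|(AA^*+\pc{\Yb})^{-1}\|^{-1}$, and then invoke Corollary~\ref{CnuessHilb} on the self-adjoint operators $A^*A$ and $AA^*$. The only cosmetic difference is that the paper handles the non-Fredholm direction via the contrapositive (if the minimum is positive then both $AA^*$ and $A^*A$ are $\Pc$-Fredholm, hence $a$ is two-sided invertible), whereas you argue it directly; both routes rest on the same self-adjointness observation that collapses $\mu$ and $\tilde\mu$ for $A^*A$ and $AA^*$.
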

\begin{proof}
If $A$ is $\Pc$-Fredholm then, since $\pb{\Yb}/\pc{\Yb}$ is a $C^*$-algebra,
\[\|(AA^*+\pc{\Yb})^{-1}\|^{-1} = \|(A+\pc{\Yb})^{-1}\|^{-2} = \|(A^*A+\pc{\Yb})^{-1}\|^{-1}\]
and the assertion follows from Corollary \ref{CnuessHilb} applied to the self-adjoint operators $AA^*$ and $A^*A$. So it remains to show that $\min\left\{\sqrt{\nuess(AA^*)},\sqrt{\nuess(A^*A)}\right\} > 0$ implies that $A$ is $\Pc$-Fredholm. Combining  Proposition \ref{PNuess} and Theorem \ref{TPLowNormHilb}, we get that $AA^*$ and $A^*A$ are both $\Pc$-Fredholm.  Consequently, $A+\pc{\Yb}$ has both right and left inverses. Thus $A$ is also $\Pc$-Fredholm.
\end{proof}

\subsection{The case of finite-dimensional entries}\label{SFinDim}

Let us now consider the case $\Xb=l^p(\Zb^N,X)$, with $\dim X<\infty$, which we already addressed in Remark \ref{RFinDim} and Proposition \ref{PFredh}. Then $\pc{\Xb}\subset\lc{\Xb}$ holds (since every $\Pc$-compact operator $K$ is the norm limit of the sequence of finite rank operators $KP_n$), hence every $\Pc$-Fredholm operator is Fredholm.
Actually, the $\Pc$-Fredholm property coincides with Fredholmness by Proposition \ref{PFredh}, and we even have

\begin{prop}\label{PEssNormFinDim}
Let $\dim X<\infty$ and $A\in\Lc(\Xb,\Pc)$. Then
\begin{align*}
\|A+\lc{\Xb}\|=\|A+\pc{\Xb}\|&=\|A^*+\lc{\Xb^*}\|=\|A^*+\lc{\Xb^*,\Pc^*}\|,\\
\|(A+\lc{\Xb})^{-1}\|^{-1}=\|(A+\pc{\Xb})^{-1}\|^{-1}&=\|(A^*+\lc{\Xb^*})^{-1}\|^{-1}=\|(A^*+\lc{\Xb^*,\Pc^*})^{-1}\|^{-1}.
\end{align*}
\end{prop}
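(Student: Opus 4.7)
The strategy is to assemble the four claimed quantities into a short chain of inequalities and then collapse it to equalities using Proposition \ref{PEssNorm} together with the structural observation that $\pc{\Xb}\subset\lc{\Xb}$ when $\dim X<\infty$. First I would record the easy half. The inclusion $\pc{\Xb}\subset\lc{\Xb}$ (explained in the paragraph preceding the proposition, using that each $\Pc$-compact operator is the norm limit of the finite-rank operators $KP_n$) gives $\|A+\lc{\Xb}\|\le\|A+\pc{\Xb}\|$, and the analogous inclusion on the dual side gives $\|A^*+\lc{\Xb^*}\|\le\|A^*+\lc{\Xb^*,\Pc^*}\|$. Adjoint duality $\|A+K\|=\|A^*+K^*\|$ combined with Schauder's theorem yields $\|A^*+\lc{\Xb^*}\|\le\|A+\lc{\Xb}\|$. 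Proposition \ref{PEssNorm} supplies the two identifications $\|A+\pc{\Xb}\|=\|A^*+\lc{\Xb^*,\Pc^*}\|=\lim_m\|AQ_m\|=\lim_m\|Q_mA\|$. Chained together, only one non-trivial inequality remains: $\|A+\pc{\Xb}\|\le\|A+\lc{\Xb}\|$, and all four claimed norms of the first line will collapse to equality.

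To establish this remaining inequality, I would fix an arbitrary $K\in\lc{\Xb}$ and estimate
\[
\|Q_mA\|\ \le\ \|Q_m(A+K)\|+\|Q_mK\|\ \le\ \|A+K\|+\|Q_mK\|.
\]
The plan is then to send $m\to\infty$ and take the infimum over $K$, provided $\|Q_mK\|\to0$ for every compact $K$. For $p\in\{0\}\cup[1,\infty)$ this holds: in these cases one verifies $\Xb_0=\Xb$, so $Q_m\to0$ strongly on $\Xb$, and since $K(B_\Xb)$ is totally bounded the pointwise convergence is promoted to uniform convergence, giving $\|Q_mK\|\to0$. Combined with $\lim_m\|Q_mA\|=\|A+\pc{\Xb}\|$ from Proposition \ref{PEssNorm}, this closes the loop.

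The lower-norm line falls out of the upper one with little extra work. By Proposition \ref{PFredh} (applied in the finite-dimensional-fibre setting), Fredholmness and $\Pc$-Fredholmness of $A\in\pb{\Xb}$ coincide, so the two cosets $A+\lc{\Xb}$ and $A+\pc{\Xb}$ are simultaneously invertible or not, and the claim is trivial in the non-invertible case. In the invertible case, choose by Theorem \ref{TPFredh} a $\Pc$-regularizer $B\in\pb{\Xb}$; since $\pc{\Xb}\subset\lc{\Xb}$, $B$ is automatically a Fredholm regularizer, so $B+\pc{\Xb}=(A+\pc{\Xb})^{-1}$ and $B+\lc{\Xb}=(A+\lc{\Xb})^{-1}$. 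Applying the first-line equality to $B$ then gives $\|(A+\pc{\Xb})^{-1}\|=\|B+\pc{\Xb}\|=\|B+\lc{\Xb}\|=\|(A+\lc{\Xb})^{-1}\|$. The dual identities follow by the same pattern, invoking Proposition \ref{PEssNorm} again to cross to the $\Xb^*$-side.

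The hard part will be the extremal case $p=\infty$, where $\Xb_0=c_0(\Zb^N,X)\subsetneq\Xb$ and $Q_m$ does not tend to $0$ strongly on $\Xb$; consequently $\|Q_mK\|$ need not vanish for a general compact $K$ (the Banach-limit example $K=L\otimes v$ with $v\notin\Xb_0$ kills the naive argument). Here my plan is to exploit the assumption $A\in\pb{\Xb}$ via Lemma \ref{LInverses}, which forces $A(\Xb_0)\subset\Xb_0$ and $\|A|_{\Xb_0}\|=\|A\|$, together with the bidual identification $\Xb=\Xb_0^{**}$ valid for $p=\infty$, to reduce the $p=\infty$ claim to the already-handled $p=0$ case on $c_0(\Zb^N,X)$ by restriction/bidualization; the adjoint-side identity $\|A^*+\lc{\Xb^*,\Pc^*}\|=\|A^*+\lc{\Xb^*}\|$ can be handled symmetrically on the dual.
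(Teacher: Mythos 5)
Your overall architecture matches the paper's: Proposition \ref{PEssNorm} plus the ideal inclusion $\pc{\Xb}\subset\lc{\Xb}$, the key inequality $\|A+\pc{\Xb}\|\le\|A+\lc{\Xb}\|$ via $\|Q_mK\|\to0$ for compact $K$, and the reduction of the second line to the first by a $\Pc$-regularizer. But there are two genuine gaps. First, your chain of inequalities does not close for the fourth quantity. You have $\|A^*+\lc{\Xb^*}\|\le\|A+\lc{\Xb}\|$ (Schauder), $\|A^*+\lc{\Xb^*}\|\le\|A^*+\lc{\Xb^*,\Pc^*}\|$, and, after your main inequality, $\|A+\lc{\Xb}\|=\|A+\pc{\Xb}\|=\|A^*+\lc{\Xb^*,\Pc^*}\|$; all of this only bounds $\|A^*+\lc{\Xb^*}\|$ from \emph{above}. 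To conclude $\|A^*+\lc{\Xb^*}\|=\|A^*+\lc{\Xb^*,\Pc^*}\|$ you need the reverse inequality $\|A+\lc{\Xb}\|\le\|A^*+\lc{\Xb^*}\|$, which is exactly the direction that fails in general Banach spaces (the paper cites Axler--Jewell--Shields for this). For $p<\infty$ it is recovered by running the whole first-line argument again for $A^*$ on $\Xb^*=l^{p'}(\Zb^N,X^*)$ (the paper's ``circuit''); for $p=\infty$ the dual is not a sequence space of the covered type, and the paper needs a separate argument using the predual $l^1(\Zb^N,X^*)$ and the norm-one projection $E^*:\Xb^{**}\to\Xb$ to show $\|A^{**}+\lc{\Xb^{**}}\|\ge\|A+\lc{\Xb}\|$. ``Handled symmetrically on the dual'' glosses over precisely the hard point.

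Second, your $p=\infty$ treatment of the main inequality is not a proof. You correctly observe that $\|Q_mK\|\not\to0$ there and that Lemma \ref{LInverses} (giving $\|AQ_m|_{\Xb_0}\|=\|AQ_m\|$) is the right tool, but ``reduce to the $p=0$ case by restriction/bidualization'' founders on the obstruction you yourself name: a compact $K$ on $l^\infty$ need not map $\Xb_0$ into $\Xb_0$, so $K|_{\Xb_0}$ is not an operator on $l^0$ and no direct reduction to the $p=0$ case is available. The paper instead keeps $K|_{\Xb_0}$ as an operator $\Xb_0\to\Xb$ and shows $\|KQ_m|_{\Xb_0}\|\to0$ by passing to the adjoint: $(Q_m|_{\Xb_0})^*=Q_m|_{\ell^1(\Zb^N,X^*)}$ converges strongly to $0$ and $(K|_{\Xb_0})^*$ is compact, so the product tends to $0$ in norm; the chain $\|AQ_m\|\ge\|(A+K)Q_m|_{\Xb_0}\|-\eps\ge\|AQ_m|_{\Xb_0}\|-\|KQ_m|_{\Xb_0}\|-\eps$ then closes using Lemma \ref{LInverses}. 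You would need to supply this (or an equivalent) argument explicitly for both $p=\infty$ steps.
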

Note that the essential and $\Pc$-essential norm obviously do not coincide if
$\dim X =\infty$, just consider the operators $P_1$ and $I-P_1$.
\begin{proof}
Since ($A$ Fredholm $\Rightarrow$ $A$ $\Pc$-Fredholm (by Proposition \ref{PFredh})
$\Rightarrow$ $A^*$ $\Pc^*$-Fredholm $\Rightarrow$ $A^*$ Fredholm (by
$\lc{\Xb^*}\supset\lc{\Xb^*,\Pc^*}$) $\Rightarrow$ $A$ Fredholm),
all terms in the second line are simultaneously zero or non-zero. If they are
non-zero, then Proposition \ref{PFredh} provides a generalized inverse $B\in \pb{\Xb}$
for $A$, and the second asserted line follows from the first one applied to $B$.

For the first line we recall Proposition \ref{PEssNorm} which shows that the
$\Pc$-essential norm is invariant under passing to the adjoint $A^*$.
In the cases $p \in \{0\}\cup(1,\infty)$, where $\pc{\Xb}=\lc{\Xb}$ holds,
$\|A+\lc{\Xb}\|=\|A+\pc{\Xb}\|$ is obvious as well, and we next prove
this equality for the cases $p = 1$ and $p = \infty$:

$p=1$: Let $\eps>0$ and choose $K\in\Kc(\Xb)$ such that
$\|A+K\|\leq\|A+\Kc(\Xb)\|+\eps$ and $m_0 \in \Nb$ such that
$\|Q_mK\| \leq \eps$ for all $m \geq m_0$, which is possible because
$Q_m$ converges strongly to $0$ as $m \to \infty$ and $K$ is compact.
Now we can proceed as in Proposition \ref{PEssNorm}:
\[\|Q_mA\|=\|A-P_mA\| \geq \|A+\Kc(\Xb)\| \geq \|A+K\|-\eps
	\geq \|Q_m(A+K)\|-\eps \geq \|Q_mA\|-2\eps\]
for all $m \geq m_0$ and therefore
$\|A+\Kc(\Xb)\| = \lim_{m \to \infty} \|Q_mA\| = \|A+\Kc(\Xb,\Pc)\|$.

$p=\infty$: Let $\eps>0$ and choose $K\in\Kc(\Xb)$ such that
$\|A+K\|\leq\|A+\Kc(\Xb)\|+\eps$ and $m_0 \in \Nb$ such that
$\|KQ_m|_{\Xb_0}\| \leq \eps$ for all $m \geq m_0$, which is possible
because $(Q_m|_{\Xb_0})^* = Q_m|_{\ell^1(\Zb^N,X^*)}$ converges strongly
to $0$ as $m \to \infty$ and $(K|_{\Xb_0})^*$ is compact.
Now we can proceed as before, using Lemma \ref{LInverses}~a):
\begin{align*}
\|AQ_m\|&=\|A-AP_m\| \geq \|A+\Kc(\Xb)\| \geq \|A+K\|-\eps \geq \|(A+K)Q_m\|-\eps\\
&\geq \|(A+K)Q_m|_{\Xb_0}\|-\eps \geq \|AQ_m|_{\Xb_0}\|-\|KQ_m|_{\Xb_0}\|-\eps \geq \|AQ_m\|-2\eps
\end{align*}
for all $m \geq m_0$ and therefore
$\|A+\Kc(\Xb)\| = \lim_{m \to \infty} \|AQ_m\| = \|A+\Kc(\Xb,\Pc)\|$.
\newpage 

Up to now we have $\|A+\lc{\Xb}\|=\|A+\pc{\Xb}\|=\|A^*+\lc{\Xb^*,\Pc^*}\|$
for all $p$, and hence the complete first line for all $p<\infty$ by taking
a circuit using the natural and well known dualities:
$\|A+\lc{\Xb}\|=\|A+\pc{\Xb}\|=\|A^*+\lc{\Xb^*,\Pc^*}\|=\|A^*+\lc{\Xb^*}\|.$
It remains to prove that in the case $p=\infty$ the essential norm of $A$
coincides with the essential norm of $A^*$. Actually, such a claim this is not
true in general Banach spaces, as was shown in \cite{Axler}. Anyway, for our
particular case $\Xb=l^\infty(\Zb^N,X)$ with $\dim X <\infty$ one can utilize
a further observation from \cite{Axler}: $\Xb$ is the dual of another Banach space,
namely $\Yb=l^1(\Zb^N,X^*)$, and the adjoint of the canonical embedding
$E:\Yb\to\Yb^{**}$ is an operator
$E^*:\Xb^{**}\to\Xb$ onto $\Xb$ of the norm $1$. For every $K\in\lc{\Xb^{**}}$
(with $J$ denoting the canonical embedding $J:\Xb\to\Xb^{**}$)
\[\|A^{**}+K\|\geq\|(A^{**}+K)J\|\geq\|E^*(A^{**}+K)J\|=\|A+E^*KJ\|\geq\|A+\lc{\Xb}\|.\]
Taking the infimum over all $K$ it follows
$\|A^{**}+\lc{\Xb^{**}}\|\geq\|A+\lc{\Xb}\|$.
Since the adjoint of any compact operator is compact the desired equality follows by
\[\|A+\lc{\Xb}\|\geq\|A^*+\lc{\Xb^*}\|\geq\|A^{**}+\lc{\Xb^{**}}\|\geq\|A+\lc{\Xb}\|.\]
\end{proof}

In analogy to $\nuess(A)$ we denote the classical (w.r.t.~$\lc{\Xb}$) essential
lower norm by $\nuessc(A)$:
\[\nuessc(A):=\sup\{\nu(A+K):K\in\lc{\Xb}\}\]
and we obtain the following improvement and completion of the results in the previous
sections:

\begin{thm}\label{TEssLowerNormFinDim}
Let $\dim X<\infty$ and $A\in\Lc(\Xb,\Pc)$. Then
\begin{equation}\label{E1}
\nuessc(A\oplus A^*)=\nuess(A\oplus A^*)
=\mu(A)=\|(A+\pc{\Xb})^{-1}\|^{-1}=\|(A+\lc{\Xb})^{-1}\|^{-1}.
\end{equation}
Moreover, if $A$ is Fredholm of index zero, then
\[\nuessc(A)=\nuessc(A^*)=\nuess(A)=\nuess(A^*)=\mu(A)=\tilde{\mu}(A)=\tilde{\mu}(A^*) > 0.\]
Conversely, if $\nuessc(A) > 0$ and $\nuessc(A^*) > 0$, then $A$ is Fredholm of index zero.
\end{thm}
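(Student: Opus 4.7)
The proof naturally splits according to whether $A$ is Fredholm or not, bridged by the fact — specific to the finite-dimensional-entries setting — that $A$ is Fredholm iff $A$ is $\Pc$-Fredholm (Proposition \ref{PFredh} combined with $\pc{\Xb}\subset\lc{\Xb}$), so the two essential norms of the inverse in (1) vanish simultaneously. The first key lemma I would prove is that $\mu(A)>0$ iff $A$ is Fredholm. The ``$\Leftarrow$'' direction is Lemma \ref{LPLowNorm1}. The ``$\Rightarrow$'' direction uses $\dim X<\infty$: if $\tilde\mu(A)>0$, then $A$ is bounded below on some $\im Q_m$ whose complement $\im P_m$ is finite-dimensional, which forces $\ker A$ to inject into $\im P_m$ and $\im A=A(\im P_m)+A(\im Q_m)$ to be closed — i.e., $A$ is upper semi-Fredholm; dually, $\tilde\mu(A^*)>0$ makes $A^*$ upper semi-Fredholm and hence $A$ lower semi-Fredholm, and together these yield Fredholmness.

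Given this dichotomy, the non-Fredholm case of (1) is immediate: $\mu(A)=0$ gives $\nuess(A\oplus A^*)=\tilde\mu(A\oplus A^*)=0$ by Corollaries \ref{CNuess} and \ref{CNuessOplusMu}; and $\nuessc(A\oplus A^*)>0$ would force $A\oplus A^*$ upper semi-Fredholm, which requires both $A$ and $A^*$ upper semi-Fredholm, i.e., $A$ Fredholm — a contradiction. For the Fredholm case, four equalities of (1) glue together from Lemma \ref{LPLowNorm1}, Proposition \ref{PEssNormFinDim}, Theorem \ref{TNuessOplusY}, and the trivial $\nuess\le\nuessc$ coming from $\pc{\Xb\oplus\Xb^*,\Pc\oplus\Pc^*}\subset\lc{\Xb\oplus\Xb^*}$. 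The remaining nontrivial step — the main obstacle — is the inequality $\nuessc(A\oplus A^*)\le\mu(A)$. Since $\ind(A\oplus A^*)=0$, for any compact $K$ either $A\oplus A^*+K$ has nontrivial kernel (so $\nu=0$) or it is invertible, in which case $(A\oplus A^*+K)^{-1}$ represents $[A\oplus A^*]^{-1}$ in the Calkin algebra, yielding $\nu(A\oplus A^*+K)\le\|[A\oplus A^*]^{-1}\|^{-1}$. A direct-sum norm computation (mirroring Proposition \ref{Poplusessnorm} but with classical compacts) applied to a parametrix $B$ of $A$, together with Proposition \ref{PEssNormFinDim}, identifies $\|[A\oplus A^*]^{-1}\|^{-1}$ with $\|(A+\lc{\Xb})^{-1}\|^{-1}=\mu(A)$; taking the supremum over $K$ closes the chain.

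Parts (2) and (3) are then short. For (2): index zero turns the $\nuess$--side of Corollary \ref{CFredLX} into $\nuess(A)=\nuess(A^*)=\mu(A)$; Lemma \ref{LPLowNorm1} supplies $\tilde\mu(A)=\tilde\mu(A^*)=\mu(A)>0$; and the same Calkin-algebra argument, now applied to $A$ itself (using $\ind A=0$ to ensure that $\nu(A+K)>0$ forces $A+K$ invertible), gives $\nuessc(A)\le\|(A+\lc{\Xb})^{-1}\|^{-1}=\mu(A)$, which combined with $\nuessc\ge\nuess=\mu(A)$ forces equality — and symmetrically for $A^*$. For (3): $\nuessc(A)>0$ provides a compact $K$ making $A+K$ bounded below, so $A$ is upper semi-Fredholm and $\ind A=\ind(A+K)=-\dim\coker(A+K)\le 0$; symmetrically $\nuessc(A^*)>0$ makes $A$ lower semi-Fredholm with $\ind A\ge 0$, hence $A$ is Fredholm of index zero.
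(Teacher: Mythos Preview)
Your proof is correct and follows essentially the same route as the paper's: both hinge on the index-zero Calkin argument (bounded below plus $\ind=0$ forces invertibility, so $\nu(B+K)=\|(B+K)^{-1}\|^{-1}\le\|(B+\lc{\Yb})^{-1}\|^{-1}$), applied to $A\oplus A^*$ for \eqref{E1} and to $A$ itself for the second assertion, together with the direct-sum essential-norm computation and Proposition~\ref{PEssNormFinDim}; the non-Fredholm case and the converse are handled identically via semi-Fredholm reasoning. The only organizational difference is that you isolate ``$\mu(A)>0\Leftrightarrow A$ Fredholm'' as a preliminary lemma, whereas the paper establishes the index-zero identity $\nuessc(B)=\|(B+\lc{\Yb})^{-1}\|^{-1}$ first and then splits into cases---but the content is the same.
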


\begin{proof}
Let $B$ be a Fredholm operator of index zero on a Banach space $\Yb$.
Then,
\begin{align*}
\nuessc(B)&=\sup\{\nu(B+K):K\in\lc{\Yb}, B+K \text{ bounded below}\} &\notag\\
&=\sup\{\nu(B+K):K\in\lc{\Yb}, B+K \text{ invertible}\} &\text{($\ind(B+K)=0)$}\\
&=\sup\{\|(B+K)^{-1}\|^{-1}:K\in\lc{\Yb}, B+K \text{ invertible}\} &\text{(Lemma \ref{lem:bddbelow})}\notag\\
&=(\inf\{\|(B+K)^{-1}\|:K\in\lc{\Yb}, B+K \text{ invertible}\})^{-1}&\notag\\
&=\|(B+\lc{\Yb})^{-1}\|^{-1}.\notag&
\end{align*}
Hence, if $A$ is Fredholm of index zero, then
$\nuessc(A^*)=\nuessc(A)=\|(A+\lc{\Xb})^{-1}\|^{-1}$. Together with the obvious
estimates $\nuessc(A)\geq\nuess(A)$ and $\nuessc(A^*)\geq\nuess(A^*)$,
Corollary \ref{CFredLX} and Proposition \ref{PEssNormFinDim} yield
$\nuessc(A)=\nuessc(A^*)=\nuess(A)=\nuess(A^*)=\mu(A)=\|(A+\lc{\Xb})^{-1}\|^{-1}.$
Corollary \ref{CNuess} completes this equality: $\nuess(A)=\tilde{\mu}(A)$, $\nuess(A^*)=\tilde{\mu}(A^*)$.

Whenever $A$ is a (general) Fredholm operator then the operator $A\oplus A^*$ is
Fredholm of index zero, and by the above it follows
that $\nuessc(A\oplus A^*)=\|(A\oplus A^*+\lc{\Xb\oplus\Xb^*})^{-1}\|^{-1}$.
If $B$ is a regularizer for $A$ then the latter is
$\|B\oplus B^*+\lc{\Xb\oplus\Xb^*}\|^{-1}$. This equals $\|B+\lc{\Xb}\|^{-1}$
which is proved in the same way as Proposition \ref{Poplusessnorm}, taking
Proposition \ref{PEssNormFinDim} into account. Thus
$\nuessc(A\oplus A^*)=\|(A+\lc{\Xb})^{-1}\|^{-1}$. Since
\begin{align*}
\nuessc(A\oplus A^*)\geq\nuess(A\oplus A^*)=\mu(A)\geq \|(A+\pc{\Xb})^{-1}\|^{-1}=\|(A+\lc{\Xb})^{-1}\|^{-1},\label{EFinDimEst}
\end{align*}
(cf. Theorem \ref{TNuessOplusY}, Lemma \ref{LPLowNorm1}, Proposition \ref{PEssNormFinDim})
the claim \eqref{E1} easily follows.

If $A$ is not Fredholm then $A$ is not normally solvable or $A$ has infinite
dimensional kernel or $A^*$ has infinite dimensional kernel. In the first and second
case this remains true for all $A|_{\im Q_m}$, resp., hence all $\nu(A|_{\im Q_m})$
equal zero. In the latter case all $\nu(A^*|_{\im Q^*_m})$
must be zero, and we conclude that $\mu(A)=0$.
Moreover, $A\oplus A^*$ is not normally solvable or has infinite dimensional kernel,
hence $0=\nuessc(A\oplus A^*)\geq\nuess(A\oplus A^*)$.
Thus \eqref{E1} also holds in this case.

If $\nuessc(A) > 0$, then there exists a $K\in\lc{\Xb}$ such that $\nu(A+K) > 0$. This implies that $A+K$ is injective and normally solvable by Lemma \ref{lem:bddbelow}. This implies that $A$ is normally solvable and has finite-dimensional kernel. Similarly, if $\nuessc(A^*) > 0$, then $A^*$ is normally solvable and has finite-dimensional kernel. In particular, $A$ is Fredholm if both $\nuessc(A) > 0$ and $\nuessc(A^*) > 0$ hold. Moreover, $\nu(A+K) > 0$ implies $\ind(A) = \ind(A+K) \leq 0$ whereas $\nu(A^*+L) > 0$ implies $-\ind(A) = \ind(A^*) = \ind(A^*+L) \leq 0$. Thus the index of $A$ has to be zero. This proves the last part.
\end{proof}

Actually, there is an even more abstract version of Theorem \ref{TEssLowerNormFinDim}
within the $\Pc$-framework:
\begin{thm}\label{TEssLowerNormFinDimGen}
Let $\Yb$ be a Banach space with a uniform approximate identity $\Pc=(P_n)$
consisting of finite rank projections $P_n$. Then for every $A\in\pb{\Yb}$ which
has the $\Pc$-dichotomy
\begin{equation}\label{E2}
\nuess(A\oplus A^*)=\mu(A)=\|(A+\pc{\Yb})^{-1}\|^{-1}.
\end{equation}
\end{thm}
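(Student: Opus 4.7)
The plan is to parallel the structure of Theorem \ref{TEssLowerNormFinDim} in the abstract setting, ultimately reducing to Theorem \ref{TNuessOplusY}. Two free inequalities hold in every situation:
\[
\|(A+\pc{\Yb})^{-1}\|^{-1}\ \leq\ \mu(A),\qquad \nuess(A\oplus A^*)\ \leq\ \tilde{\mu}(A\oplus A^*)\ =\ \mu(A),
\]
by Lemma \ref{LPLowNorm1}, Proposition \ref{PNuess}, and Corollary \ref{CNuessOplusMu}. Consequently, if $\mu(A)=0$ then all three quantities in \eqref{E2} vanish and there is nothing to show. The entire problem is thus to prove the reverse inequalities when $\mu(A)>0$.

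The finite-rank hypothesis on $(P_n)$ enters in two decisive ways. First, every $K\in\pc{\Yb}$ is compact, since $\|K-KP_n\|=\|KQ_n\|\to 0$ and each $KP_n$ has range in the finite-dimensional $\im P_n$; in particular $\pc{\Yb}\subset\lc{\Yb}$. Second, whenever $\mu(A)>0$, I would show that $A$ is automatically Fredholm: for $m$ large enough that $\nu(A|_{\im Q_m})>0$, any $x\in\ker A$ satisfies $AQ_mx=-AP_mx$, hence $\nu(A|_{\im Q_m})\,\|Q_mx\|\leq \|A\|\,\|P_mx\|$, so $x\mapsto P_mx$ embeds $\ker A$ as a closed subspace of the finite-dimensional $\im P_m$. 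The dual argument (using that $P_m^*$ is also of finite rank and that $\tilde{\mu}(A^*)>0$) gives $\dim\ker A^*<\infty$, and $\im A=\im(AP_m)+\im(AQ_m)$ is closed as a sum of a finite-dimensional space and a closed one.

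The central task, and the main obstacle, is to upgrade $A$ from Fredholm to $\Pc$-Fredholm, since Lemma \ref{LPLowNorm1} and Theorem \ref{TNuessOplusY} both require the $\Pc$-version. My plan here is to localize a generalized inverse: for large $m$ the restriction $A\colon\im Q_m\to\im(AQ_m)$ is an isomorphism, and $\im(AQ_m)$ has finite codimension in $\Yb$, which by the Fredholmness of $A$ and by $A\in\pb{\Yb}$ can be arranged to sit inside $\im P_{m'}$ for some $m'\geq m$. Extending the inverse by a bounded projector onto $\im(AQ_m)$ produces an operator $\tilde{B}_m\in\lb{\Yb}$ with $\tilde{B}_m AQ_m=Q_m$, so that $\tilde{B}_m A=I-(I-\tilde{B}_m A)P_m$; the remainder is finite rank and factors through $\im P_m$, hence lies in $\pcn{\Yb}\subset\pc{\Yb}$. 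A symmetric construction on the right, or applying the whole argument to $A\oplus A^*$ (which is of Fredholm index zero, where the construction of a complement is cleaner), yields a two-sided $\Pc$-regularizer.

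Once $A$ is known to be $\Pc$-Fredholm, Lemma \ref{LPLowNorm1} sharpens the trivial bound to $\mu(A)=\|(A+\pc{\Yb})^{-1}\|^{-1}$. Moreover, since $\pc{\Yb}\subset\lc{\Yb}$, the coset $A+\pc{\Yb}$ contains the Fredholm operator $A$ itself, so the hypotheses of Theorem \ref{TNuessOplusY} are met and deliver $\nuess(A\oplus A^*)=\mu(A)$, completing \eqref{E2}.
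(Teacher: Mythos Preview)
Your reduction to the case $\mu(A)>0$ and the argument that $\mu(A)>0$ forces $A$ to be \emph{Fredholm} (using that each $P_m$ has finite rank) are both correct and neat. The gap is in the step from Fredholm to $\Pc$-Fredholm.

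Concretely, your left regularizer $\tilde B_m$ gives $\tilde B_mA-I=-(I-\tilde B_mA)P_m$. You claim this remainder lies in $\pcn{\Yb}$ because it ``factors through $\im P_m$''. But membership in $\pcn{\Yb}$ requires that \emph{both} $KQ_n$ and $Q_nK$ vanish for large $n$. The first does, since $P_mQ_n=0$ for $n\ge m$. The second does not: for $n\ge m$ one has $Q_n(I-\tilde B_mA)P_m=-Q_n\tilde B_mAP_m$, and $\tilde B_mAP_m$ is a finite-rank operator whose range sits in $\im Q_m$, not in any $\im P_l$. For a general $y\in\Yb$ there is no reason that $\|Q_ny\|\to 0$ (this is exactly the distinction between $\Yb$ and the subspace $\Yb_0$ of Lemma~\ref{LInverses}), so the remainder need not be $\Pc$-compact. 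The phrase ``$\im(AQ_m)$ \ldots\ can be arranged to sit inside $\im P_{m'}$'' is also off: $\im(AQ_m)$ has finite \emph{codimension}, so it certainly cannot sit in a finite-dimensional $\im P_{m'}$; and even a complement need not be chosen inside some $\im P_{m'}$ without further input.

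This is precisely why the statement carries the hypothesis that $A$ has the $\Pc$-dichotomy, which you never invoke. The paper does not try to prove ``$\mu(A)>0\Rightarrow A$ is $\Pc$-Fredholm''; it instead splits on $\Pc$-Fredholmness. If $A$ \emph{is} $\Pc$-Fredholm, then (since $\pc{\Yb}\subset\lc{\Yb}$) $A\oplus A^*$ is Fredholm of index $0$, and the index-zero computation from the proof of Theorem~\ref{TEssLowerNormFinDim} together with Propositions~\ref{Poplusessnorm} and~\ref{PEssNorm} gives $\nuess(A\oplus A^*)=\|(A+\pc{\Yb})^{-1}\|^{-1}$; Theorem~\ref{TNuessOplusY} then supplies the $\mu(A)$ equality. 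If $A$ is \emph{not} $\Pc$-Fredholm, the dichotomy hypothesis says $A$ is $\Pc$-deficient, hence $A\oplus A^*$ is $\Pc\oplus\Pc^*$-deficient from both sides, which forces $\nuess(A\oplus A^*)=\mu(A)=0$. Your outline would only be repaired by either supplying a correct proof that, under the stated hypotheses, Fredholm implies $\Pc$-Fredholm for $A\in\pb{\Yb}$ (this is not Proposition~\ref{PFredh}, which is specific to $\Xb$), or by using the $\Pc$-dichotomy as the paper does.
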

\begin{proof}
Let $A$ be $\Pc$-Fredholm. Then $A\oplus A^*$ is $\Pc\oplus\Pc^*$-Fredholm,
Fredholm of index $0$, and
\begin{align*}
\nuess(A\oplus A^*)&=\sup\{\nu(A\oplus A^*+K):K\in\lc{\Yb\oplus \Yb^*,\Pc\oplus\Pc^*}, A\oplus A^*+K \text{ bounded below}\} \\
&= \ldots=\|(A\oplus A^*+\lc{\Yb\oplus\Yb^*,\Pc\oplus\Pc^*})^{-1}\|^{-1}
\end{align*}
as above. It follows that
$\nuess(A\oplus A^*)=\|(A\oplus A^*+\lc{\Yb\oplus\Yb^*,\Pc\oplus\Pc^*})^{-1}\|^{-1}=\|(A+\pc{\Yb})^{-1}\|^{-1}$
also by Proposition \ref{Poplusessnorm} and Proposition \ref{PEssNorm},
taking Theorem \ref{TPFredh} for a regularizer $B$ of $A$ into account.
Theorem \ref{TNuessOplusY} completes \eqref{E2}.

If $A$ is not $\Pc$-Fredholm then $A$ is $\Pc$-deficient. Thus
$A\oplus A^*$ is $\Pc\oplus\Pc^*$-deficient from both sides,
and it easily follows $\nuess(A\oplus A^*)=\mu(A)=0$.
Thus \eqref{E2} also holds in this case.
\end{proof}

In the particular case $N=1$, we also have the following Proposition for band-dominated operators:

\begin{prop} \label{NuessFinDimBDO}
Let $\dim X<\infty$ and $A\in\Ac(l^p(\Zb,X))$. Then
\[\max\{\nuess(A),\nuess(A^*)\}=\mu(A)=\|(A+\pc{\Xb})^{-1}\|^{-1}=\|(A+\lc{\Xb})^{-1}\|^{-1}.\]
\end{prop}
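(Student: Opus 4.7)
The plan is to split by the Fredholm status of $A$, using that $\Pc$-Fredholmness and classical Fredholmness coincide for $A\in\Alp$ when $\dim X<\infty$ (by Remark \ref{RFinDim} combined with Proposition \ref{PFredh}), and that every $A\in\Ac(l^p(\Zb,X))$ is automatically rich by \cite[Corollary 2.1.17]{LimOps}, so both Theorem \ref{TBdORichHS} and Theorem \ref{TPLowNorm} are at our disposal. Proposition \ref{PEssNormFinDim} already supplies $\|(A+\pc{\Xb})^{-1}\|^{-1}=\|(A+\lc{\Xb})^{-1}\|^{-1}$, so what really needs to be shown is $\max\{\nuess(A),\nuess(A^*)\}=\mu(A)=\|(A+\pc{\Xb})^{-1}\|^{-1}$.

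If $A$ is Fredholm, then the coset $A+\pc{\Xb}$ contains the Fredholm operator $A$ itself, and Corollary \ref{CFredLX} delivers the desired equalities at once. If $A$ is not Fredholm, Theorem \ref{TPLowNorm} yields $\mu(A)=\|(A+\pc{\Xb})^{-1}\|^{-1}=0$, so the task reduces to proving $\nuess(A)=\nuess(A^*)=0$; by symmetry, applied to $A^*\in\Ac(l^q(\Zb,X^*))$ which is again a rich non-Fredholm band-dominated operator with $\dim X^*<\infty$, it suffices to prove $\nuess(A)=0$.

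For this I would argue by contradiction. Suppose $\nuess(A)>0$ and choose $K\in\pc{\Xb}$ with $B:=A+K$ bounded below. A short computation shows $K_h=0$ whenever the $\Pc$-limit exists: the identity $P_m V_{-h_n}K V_{h_n}=V_{-h_n}\chi_{h_n+\{-m,\ldots,m\}^N}K$ together with $|h_n|\to\infty$ and $\|(I-P_n)K\|\to 0$ gives $\|P_m V_{-h_n}KV_{h_n}\|\to 0$ for every fixed $m$, and analogously on the other side. Hence $B_h=A_h$ for every $A_h\in\opsp(A)$, and Proposition \ref{prop:lown-limop} applied to $B$ yields $\nu(A_h)=\nu(B_h)\geq\nu(B)>0$ uniformly in $h$. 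Equivalently, since $A$ differs from the bounded-below operator $B$ by the compact operator $K$, $A$ is upper semi-Fredholm.

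The main obstacle is to upgrade this one-sided information to genuine Fredholmness of $A$, which would contradict the case assumption. This relies on the structural fact, specific to rich band-dominated operators on $l^p(\Zb,X)$ with $\dim X<\infty$, that upper semi-Fredholmness coincides with Fredholmness. One can extract this by a further limit-operator argument: each $A_h$ is itself a rich band-dominated operator with $\opsp(A_h)\subset\opsp(A)$ (by concatenating the defining sequence for $A_h$ with one for an iterated limit and applying a diagonal selection), so every limit operator of $A_h$ is likewise bounded below by $\nu(B)$; combined with the fact that $\dim X<\infty$ forbids an infinite Fredholm index among elements of $\opsp(A)$, this forces each $A_h$ to be invertible, and Theorem \ref{TBdORichHS} then gives that $A$ is Fredholm. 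Alternatively, this step can be cited directly from the Rabinovich--Roch--Silbermann Fredholm theory \cite{LimOps,RochFredTh}.
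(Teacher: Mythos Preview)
Your overall strategy matches the paper's: split by Fredholmness, invoke Corollary \ref{CFredLX} in the Fredholm case, and in the non-Fredholm case reduce everything to the structural fact that for $A\in\Ac(l^p(\Zb,X))$ with $\dim X<\infty$, semi-Fredholmness already forces Fredholmness. The paper simply cites this fact (from \cite{Se:SemiFred}, Theorem 4.3) and then observes that if $A$ is not Fredholm then neither is any $B\in A+\lc{\Xb}$, so no such $B$ can be bounded below, giving $\nuess(A)\le\nuessc(A)=0$ and likewise for $A^*$.

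Where your write-up diverges is in trying to \emph{reprove} this structural fact via limit operators, and that sketch has a genuine gap. From $\nu(A_h)\ge\nu(B)>0$ for all $A_h\in\opsp(A)$ you correctly get that every limit operator (and every iterated limit operator) is bounded below, but the step ``combined with the fact that $\dim X<\infty$ forbids an infinite Fredholm index among elements of $\opsp(A)$, this forces each $A_h$ to be invertible'' is neither justified nor obvious: knowing that all limit operators of $A_h$ are bounded below does not, by itself, yield invertibility of those limit operators (you would still need surjectivity, i.e.\ information about the adjoints), so Theorem \ref{TBdORichHS} cannot be applied as stated. In effect you are assuming the very implication ``upper semi-Fredholm $\Rightarrow$ Fredholm'' that you are trying to establish. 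Your fallback of citing the literature is the right move; the precise reference the paper uses is \cite{Se:SemiFred} rather than \cite{LimOps,RochFredTh}. Once that is in place, your argument and the paper's coincide.
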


\begin{proof}
The equality of $\mu(A)$, $\|(A+\pc{\Xb})^{-1}\|^{-1}$ and $\|(A+\lc{\Xb})^{-1}\|^{-1}$ holds by Theorem \ref{TEssLowerNormFinDim}.
If $A$ is Fredholm, then Corollary \ref{CFredLX} implies the remaining equality. If $A$ is not Fredholm, then $A$ is not even semi-Fredholm by \cite[Theorem 4.3]{Se:SemiFred}, i.e. either $A$ is not normally solvable or both $A$ and $A^*$ have an infinite-dimensional kernel. This also remains true for all $B \in A+\lc{\Xb}$. It follows $\nuess(A) \leq \nuessc(A) = 0$ and $\nuess(A^*) \leq \nuessc(A^*) = 0$ by the definition of $\nuessc$ and Lemma \ref{lem:bddbelow}, hence $\max\{\nuess(A),\nuess(A^*)\} = 0$.
\end{proof}

\subparagraph{5th approach: Approximation numbers and singular values}
In this more comfortable situation $\dim X<\infty$ we can study further characterizations
of $\mu(A)$.
\begin{defn} (cf. \cite{Zeman,SeDis,SeSi3})
For an operator $A\in\lb{\Xb}$ we define the $m$th lower Bernstein numbers
and (one-sided) approximation numbers by
\begin{align*}
	B_m(A) & := \sup\{\nu(A|_V): \dim \Xb/V < m\},\\
	s_m^r(A)& := \inf \{\|A-F\|: F\in\lb{\Xb}, \dim \ker F \geq m\},\\
	s_m^l(A)& := \inf \{\|A-F\|: F\in\lb{\Xb}, \dim \coker F  \geq m\}.
\end{align*}
Moreover we introduce the limits
\begin{align*}
	B(A) & := \lim_{m\to\infty}\min\{B_m(A), B_m(A^*)\},\\
	S(A) & := \lim_{m\to\infty}\min\{s_m^r(A), s_m^l(A)\},
\end{align*}
whose existence is proved by monotonicity as in Lemma \ref{LMuLim}.
\end{defn}

\begin{thm}\label{TAppNr}
Let $\dim X<\infty$ and $A\in\pb{\Xb}$. Then
\begin{equation}
\mu(A)=B(A)=S(A).
\end{equation}
\end{thm}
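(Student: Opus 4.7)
The plan hinges on the characterization $\mu(A)=\|(A+\lc{\Xb})^{-1}\|^{-1}$ from Theorem~\ref{TEssLowerNormFinDim}, which is available because $\dim X<\infty$. The goal is to show $\lim_m B_m(A)=\tilde{\mu}(A)$ and its $A^*$-analogue, whence $B(A)=\mu(A)$ follows by taking minima; then $S(A)=\mu(A)$ will flow from a standard duality between Bernstein and approximation-type $s$-numbers. For the easy direction $B(A)\ge\mu(A)$, take $V=\im Q_n$: since $\dim X<\infty$, this has finite codimension $(2n+1)^N\dim X$ in $\Xb$, so for all sufficiently large $m$ we have $B_m(A)\ge\nu(A|_{\im Q_n})$; the monotonicity of $B_m$ in $m$ together with Lemma~\ref{LMuLim} yields $\lim_m B_m(A)\ge\tilde{\mu}(A)$, and dually $\lim_m B_m(A^*)\ge\tilde{\mu}(A^*)$, so $B(A)\ge\mu(A)$.

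For the reverse $B(A)\le\mu(A)$ I would split into cases. If $A$ is not Fredholm then $\mu(A)=0$, and one of $\dim\ker A$, $\dim\ker A^*$ is infinite, or $\im A$ is not closed; in each case, every finite-codim $V$ either meets the relevant kernel (forcing $\nu(A|_V)=0$) or has $\im(A|_V)$ non-closed (otherwise $\im A$, being the sum of the closed $\im(A|_V)$ and a finite-dim complement, would itself be closed), so Lemma~\ref{lem:bddbelow} forces $\nu(A|_V)=0$; hence $\min\{B_m(A),B_m(A^*)\}=0$ for every $m$ and $B(A)=0=\mu(A)$. If $A$ is Fredholm, reduce to index $0$ via a one-sided invertible banded shift, as in the proof of Corollary~\ref{CFredLX}, and use Proposition~\ref{PFredh} to write $A=C+K$ with $C\in\pb{\Xb}$ invertible and $K\in\lc{\Xb}$. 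For any $V$ of codim $\le m-1$ defined by functionals $\phi_1,\ldots,\phi_{m-1}\in\Xb^*$, approximate each $\phi_j$ by a finitely supported $\tilde\phi_j$; the resulting $\tilde V$ contains $\im Q_n$ for $n$ large, so $\nu(A|_{\tilde V})\le\nu(A|_{\im Q_n})\le\tilde{\mu}(A)$, and a biorthogonal/Hahn-Banach perturbation estimate transfers this to $\nu(A|_V)\le\tilde{\mu}(A)+o(1)$ as the approximation is refined, giving $\lim_m B_m(A)\le\tilde{\mu}(A)$.

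For $\mu(A)=S(A)$ I would invoke the classical duality between the one-sided approximation numbers and the Bernstein numbers (cf.\ Pietsch's $s$-number theory and \cite{SeDis,SeSi3}): if $F\in\lb{\Xb}$ has $\dim\ker F\ge m$, then a Hahn-Banach argument on $F^*$ produces a subspace $W\subset\Xb^*$ of codimension $\ge m$ on which $\|A-F\|\ge\nu(A^*|_W)$, which on taking infima gives $s_m^r(A)\ge B_m(A^*)$; symmetrically $s_m^l(A)\ge B_m(A)$. The converse inequalities come from extending extremisers of the Bernstein problem to finite-rank $F$ with $\|A-F\|$ close to the corresponding $B_m$. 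Taking minima and limits then gives $S(A)=B(A)=\mu(A)$.

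The main technical obstacle is the transfer from $\nu(A|_{\tilde V})$ to $\nu(A|_V)$ in the Fredholm Banach case: for $p<\infty$ the finitely supported functionals are norm-dense in $\Xb^*$, but for $p\in\{1,\infty\}$ this density fails and one must instead work on the subspace $\Xb_0$ via Lemma~\ref{LInverses}, using the identities $\nu(A)=\nu(A|_{\Xb_0})$ and $\|A^*\|=\|(A^*)|_{(\Xb^*)_0}\|$ to reduce to the separable setting where the duality is well behaved. Once this hurdle is cleared, the equalities $\mu(A)=B(A)=S(A)$ assemble cleanly from Theorem~\ref{TEssLowerNormFinDim}, the $s$-number duality, and the case analysis above.
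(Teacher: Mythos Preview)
Your proposal has two genuine gaps, and both concern the hard direction of the argument.

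\textbf{Gap 1: the transfer $\nu(A|_{\tilde V})\to\nu(A|_V)$.} Your plan for $B(A)\le\mu(A)$ in the Fredholm case is to approximate the defining functionals $\phi_j$ of $V$ by finitely supported $\tilde\phi_j$, so that $\tilde V\supset\im Q_n$ and hence $\nu(A|_{\tilde V})\le\tilde\mu(A)$, and then ``transfer'' this to $V$. But the stability of $\nu(A|_V)$ under perturbation of the defining functionals is not a free fact: you need a uniformly bounded biorthogonal system to the $\phi_j$, and the bound on the correction term is $\|A\|$ times the biorthogonal constants times $\max_j\|\phi_j-\tilde\phi_j\|$. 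Even if this is carried out for $p\in(1,\infty)$, your own caveat for $p\in\{1,\infty\}$ is not resolved by Lemma~\ref{LInverses}: that lemma gives $\nu(A)=\nu(A_0)$, not $B_m(A)=B_m(A_0)$, and the subspaces $V$ competing in $B_m(A)$ need not be of the form $V_0\oplus(\Xb\setminus\Xb_0)$. The index-zero reduction and the decomposition $A=C+K$ you introduce are never actually used.

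\textbf{Gap 2: the ``classical duality'' $S(A)=B(A)$.} In general Banach spaces the Bernstein numbers and the one-sided approximation numbers do \emph{not} coincide termwise, so there is no off-the-shelf duality delivering $s_m^r(A)=B_m(A^*)$ or its reverse. The inequality $B(A)\le S(A)$ is indeed in \cite{SeSi3}, but your sketch of the converse (``extending extremisers of the Bernstein problem to finite-rank $F$'') is exactly the hard construction that is missing, and your intermediate claim about a subspace $W\subset\Xb^*$ ``of codimension $\ge m$'' does not match the definition of $B_m$, which requires codimension $<m$.

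The paper avoids both problems by a single direct construction. It never tries to show $B(A)\le\mu(A)$ separately; instead it establishes the chain $\mu(A)\le B(A)\le S(A)$ by the easy estimates you also use, and then closes it by proving $\tilde\mu(A)\ge S(A)$ directly: assuming $\tilde\mu(A)<d<e<S(A)$, one picks unit vectors $y_n\in\im Q_n$ with $\|Ay_n\|<d$, localizes them via Proposition~\ref{PFn} to almost-disjointly supported $z_{n_i}$, and assembles a rank-$m$ projection $S_m=\sum R_i$ with $\|AS_m\|\le e$. Since $A(I-S_m)$ has kernel of dimension $\ge m$, this forces $s_m^r(A)\le e<S(A)$ for every $m$, a contradiction. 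The equality $\tilde\mu(A)=\mu(A)$ in the Fredholm case comes from Lemma~\ref{LPLowNorm1}. This argument needs no functional approximation, no biorthogonal perturbation, no $s$-number duality, and works uniformly for all $p$.
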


\begin{proof}
By definition $\nu(A|_{\im Q_m})\leq B_{\dim\ker Q_m+1}(A)$, and $\mu(A)\leq B(A)$ easily
follows. Furthermore $B(A)\leq S(A)$ holds by \cite[Proposition 2.9]{SeSi3} or
\cite[Proposition 1.36]{SeDis}.
Also, it is easily seen that all these numbers are zero if $A$ is not Fredholm.

Let $A$ be Fredholm.
Assume that there are constants $d,e$ such that $\tilde{\mu}(A)<d<e<S(A)$.
Then $\nu(A|_{\im Q_n})<d$ for all
 $n\in\Nb$. This means that there exist $y_n\in\im Q_n$ such that $\|Ay_n\|<d\|y_n\|$,
respectively. Recalling the sequence $(F_n)$ from Proposition \ref{PFn}
we further conclude that $\|AF_ly_n\|<d\|F_ly_n\|$ for sufficiently
large $l$, since $\|[A,F_l]\|$ tends to zero and $\|F_ly_n\|$ tends to $\|y_n\|$ as
$l\to\infty$. Fix such an $l$ (which depends on $n$) such that also
$F_lP_n=P_nF_l=P_n$ holds, and define $z_n:=\|F_ly_n\|^{-1}F_ly_n$,
respectively. Then $z_n\in\im Q_n$ is still true since $F_ly_n=F_lQ_ny_n=Q_nF_ly_n$.

Next, we fix $m\in\Nb$ and choose numbers $n_1,\ldots,n_m$ as follows:
Set $n_1:=1$. Given $n_i$ choose $l_i$ such that $P_{l_i}F_{n_i}=F_{n_i}P_{l_i}=F_{n_i}$.
Then $z_{n_i}$ is in the range of $P_{l_i}Q_{n_i}$.
Furthermore choose $k_i>l_i$ such that $\|Q_{k_{i}}AP_{l_i}\|<2^{-i-1}(e-d)$ and
$n_{i+1}>k_i$ such that $\|P_{k_i}AQ_{n_{i+1}}\|<2^{-i-2}(e-d)$.
For every $i$ let $R_i$ be a projection of norm $1$ onto $\spn\{z_{n_i}\}$ and such
that $R_i=R_iP_{l_i}Q_{n_i}$, respectively, and define $S_m:=\sum_{i=1}^mR_i$.
Then $S_m$ is a projection of rank $m$, $R_i=R_iS_m$ for all $i=1,\ldots,m$,
and $\|S_m\|=1$. Moreover,
\begin{align*}
\|AS_mx\|&=\left\|\sum_{i=1}^m AR_ix\right\|
=\left\|\sum_{i=1}^m P_{k_i}Q_{k_{i-1}}AR_ix
+\sum_{i=1}^m P_{k_{i-1}}AR_ix+\sum_{i=1}^m Q_{k_i}AR_ix\right\|\\
&\leq\left\|\sum_{i=1}^m P_{k_i}Q_{k_{i-1}}AR_ix\right\|
		+\left\|\sum_{i=1}^m P_{k_{i-1}}AQ_{n_i}R_ix\right\|
		+\left\|\sum_{i=1}^m Q_{k_i}AP_{l_i}R_ix\right\|\\
&\leq\left\|\sum_{i=1}^m P_{k_i}Q_{k_{i-1}}AR_ix\right\|
		+\sum_{i=1}^m2^{-i-1}(e-d)\|x\|+\sum_{i=1}^m2^{-i-1}(e-d)\|x\|.
\end{align*}
For the first term we have
\begin{align*}
\left\|\sum_{i=1}^m P_{k_i}Q_{k_{i-1}}AR_ix\right\|^p
=\sum_{i=1}^m \|P_{k_i}Q_{k_{i-1}}AR_ix\|^p
\leq d^p\sum_{i=1}^m \|R_ix\|^p =d^p\|S_mx\|^p \leq d^p\|x\|^p
\end{align*}
in the cases $p\in[1,\infty)$, and similarly for $p\in\{0,\infty\}$.
Thus $\|AS_mx\|\leq e\|x\|$ for all $x$, and hence
\[s_m^r(A) =\inf\{\|A-F\|:\dim\ker F \geq m\}\leq\|A-A(I-S_m)\|=\|AS_m\|\leq e<S(A).\]
Sending $m\to\infty$ we arrive at a contradiction. Thus $\tilde{\mu}(A)\geq S(A)$.

Since $A$ is $\Pc$-Fredholm by Proposition \ref{PFredh}, we can apply Lemma \ref{LPLowNorm1} to obtain
\[S(A) \geq B(A) \geq \mu(A) = \tilde{\mu}(A) \geq S(A).\]
\end{proof}

\begin{rem}
In the case $\Xb$ being a Hilbert space we even have
\[\mu(A)=
\Sigma(A):=\lim_{m\to\infty}\min\{\sigma_m(A),\sigma_m(A^*)\},\]
where $\sigma_m(A)$ denotes the $m$-th singular value of $A$ (see
\cite[Corollary 2.12]{SeSi3}).
\end{rem}

\subsection{On the characterization of essential (pseudo)spectra}

We have seen that in the following cases there are several characterizations of
the essential lower norm
\begin{itemize}
\item Band-dominated operators on all sequence spaces $\Xb$
\item $\pb{\Xb}$-operators in the case $\dim X<\infty$
\item $\pb{\Yb}$-operators on happi spaces $(\Yb,\Pc)$,
\end{itemize}
namely $\mu(A)$, $\nuess(A\oplus A^*)$ in all these cases and additionally the
essential lower norms of $AA^*$ and $A^*A$ in the happi case. The case
$\dim X<\infty$ offers the largest collection of characterizations, including
also $B(A)$ and $S(A)$, and the classical (non-$\Pc$) essential lower norm.

Each of them permits to give an equivalent definition of $\Pc$-essential spectra
and pseudospectra:

\begin{thm}
\begin{enumerate}
	\item[a)] Let $A \in \Alpr$. Then
	\begin{align*}
	\spess(A) &= \{\lambda\in\Cb:\|(A-\lambda I+\pc{\Xb})^{-1}\|^{-1}=0\} = \{\lambda\in\Cb:\mu(A-\lambda I)=0\},\\
	\speess(A) &= \{\lambda\in\Cb:\|(A-\lambda I+\pc{\Xb})^{-1}\|^{-1}<\eps\} = \{\lambda\in\Cb:\mu(A-\lambda
 I)<\eps\}\\
 	&= \spess(A) \cup \{\lambda\in\Cb:\max\{\nuess(A-\lambda I),\nuess((A-\lambda I)^*)\}<\eps\}.
	\end{align*}
	If even $\Xb=l^p(\Zb,X)$ with $\dim X < \infty$, then
	\[\speess(A) = \{\lambda\in\Cb:\max\{\nuess(A-\lambda I),\nuess((A-\lambda I)^*)\}<\eps\}.\]
	\item[b)] Let $(\Yb,\Pc)$ be a happi space and $A \in \Lc(\Yb,\Pc)$. Then
	\begin{align*}
	\spess(A) &= \{\lambda\in\Cb:\mu(A-\lambda I)=0\}\\
	&= \{\lambda\in\Cb:\nuess((A-\lambda I)\oplus(A-\lambda I)^*)=0\}\\
	&= \{\lambda\in\Cb:\min\{\sqrt{\nuess((A-\lambda I)(A-\lambda I)^*)},\sqrt{\nuess((A-\lambda I)^*(A-\lambda I))}\}=0\},\\
	\speess(A) &= \{\lambda\in\Cb:\mu(A-\lambda I)<\eps\}\\
	&= \{\lambda\in\Cb:\nuess((A-\lambda I)\oplus(A-\lambda I)^*)<\eps\}\\
	&= \{\lambda\in\Cb:\min\{\sqrt{\nuess((A-\lambda I)(A-\lambda I)^*)},\sqrt{\nuess((A-\lambda I)^*(A-\lambda I))}\}<\eps\}\\
	&= \spess(A) \cup \{\lambda\in\Cb:\max\{\nuess(A-\lambda I),\nuess((A-\lambda I)^*)\}<\eps\}.
	\end{align*}
	\item[c)] Let $A \in \Lc(\Xb,\Pc)$ and $\dim X < \infty$. Then
	\begin{align*}
	\spess(A) &= \{\lambda\in\Cb:\|(A-\lambda I+\lc{\Xb})^{-1}\|^{-1}=0\}\\
	&= \{\lambda\in\Cb:\mu(A-\lambda I)=0\}\\	
	&= \{\lambda\in\Cb:\nuess((A-\lambda I)\oplus(A-\lambda I)^*)=0\}\\
	&= \{\lambda\in\Cb:B(A-\lambda I)=0\}\\
	&= \{\lambda\in\Cb:S(A-\lambda I)=0\},\\
	\speess(A) &= \{\lambda\in\Cb:\|(A-\lambda I+\lc{\Xb})^{-1}\|^{-1}<\eps\}\\
	&= \{\lambda\in\Cb:\mu(A-\lambda I)<\eps\}\\
	&= \{\lambda\in\Cb:\nuess((A-\lambda I)\oplus(A-\lambda I)^*)<\eps\}\\
	&= \{\lambda\in\Cb:B(A-\lambda I)<\eps\}\\
	&= \{\lambda\in\Cb:S(A-\lambda I)<\eps\}\\
 	&= \spess(A) \cup \{\lambda\in\Cb:\max\{\nuess(A-\lambda I),\nuess((A-\lambda I)^*)\}<\eps\}.
	\end{align*}
	\item[d)] If the conditions in b) and c) are both fulfilled, we additionally have
	\begin{align*}
	\spess(A) &= \{\lambda\in\Cb:\Sigma(A-\lambda I)=0\},\\
	\speess(A) &= \{\lambda\in\Cb:\Sigma(A-\lambda I)<\eps\},\\
	\end{align*}
\end{enumerate}
\end{thm}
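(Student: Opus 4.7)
The proof is essentially a bookkeeping exercise: each equality in a)--d) follows by rewriting the defining equation $\|(A-\lambda I + \pc{\Xb})^{-1}\| = 1/\delta$ (with $\delta = 0$ for the spectrum, $\delta > \eps$ for the pseudospectrum) and then invoking one of the characterizations of $\|(A-\lambda I + \pc{\Xb})^{-1}\|^{-1}$ that have been established earlier. The strategy is therefore to verify that, once one replaces $A$ by $A-\lambda I$ and notes that $A-\lambda I$ inherits the structural hypothesis on $A$ (rich band-dominated; operator in $\pb{\Yb}$ on a happi space; $\pb{\Xb}$-operator with $\dim X<\infty$), the relevant identity of the form $\|(\,\cdot\,+\pc{\,\cdot\,})^{-1}\|^{-1} = Q(\,\cdot\,)$ from earlier sections applies directly.

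For part a), the first line is immediate from the definition of $\spess$ together with Theorem \ref{TPLowNorm}; the $\mu$-characterization of $\speess(A)$ is likewise Theorem \ref{TPLowNorm}. To split $\speess(A)$ into $\spess(A)$ together with the set where $\max\{\nuess(A-\lambda I),\nuess((A-\lambda I)^*)\}<\eps$ one uses Corollary \ref{CFredLX}: for $\lambda\notin\spess(A)$ the coset $(A-\lambda I)+\pc{\Xb}$ contains a Fredholm operator, so Corollary \ref{CFredLX} gives $\max\{\nuess,\nuess^*\}=\mu(A-\lambda I)$. The strengthened statement in the $l^p(\Zb,X)$ case, $\dim X<\infty$, which drops the additional union with $\spess(A)$, is Proposition \ref{NuessFinDimBDO}.

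Part b) proceeds in exactly the same way, citing Theorem \ref{TPLowNormHilb} for the $\mu$-equality, Corollary \ref{CnuessOplusHilb} for the $\nuess(\cdot \oplus \cdot^*)$-equality, the final Corollary of Section~\ref{sec:esslowernorm} (the 4th approach) for the $\sqrt{\nuess(\cdot\cdot^*)}, \sqrt{\nuess(\cdot^*\cdot)}$-equality, and Corollary \ref{CnuessHilb} for the decomposition of $\speess(A)$ into $\spess(A)$ plus a $\nuess$-sublevel set. For part c), the $\|(A-\lambda I+\lc{\Xb})^{-1}\|^{-1}$-equality is Proposition \ref{PEssNormFinDim}, the $\mu$- and $\nuess(\cdot\oplus\cdot^*)$-equalities are Theorem \ref{TEssLowerNormFinDim}, the $B$- and $S$-equalities are Theorem \ref{TAppNr}, and the final decomposition is again Theorem \ref{TEssLowerNormFinDim} combined with Corollary \ref{CFredLX}. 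Part d) follows from the Remark after Theorem \ref{TAppNr}.

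The only step that requires any thought beyond citation is the decomposition of the pseudospectrum in the form $\speess(A) = \spess(A) \cup \{\lambda : \max\{\nuess(A-\lambda I), \nuess((A-\lambda I)^*)\} < \eps\}$: here one must be careful because $\nuess$ is in general only an upper bound for $\mu$, with equality only when $A-\lambda I + \pc{\Xb}$ contains an operator bounded below (respectively when its adjoint does). Outside $\spess(A)$ this holds by Corollary \ref{CFredLX} (or its Hilbert/finite-dimensional analogues), while inside $\spess(A)$ the union is harmless. This is the only place where I expect more than routine substitution to be needed; once it is handled, each item in a)--d) is a one-line consequence of a previously proved identity with $A$ replaced by $A-\lambda I$.
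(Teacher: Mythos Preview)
Your proposal is essentially correct: the paper itself gives no proof of this theorem, treating it as a summary that falls out of the results of Section~5 once one replaces $A$ by $A-\lambda I$. Your identification of the relevant ingredients (Theorem~\ref{TPLowNorm}, Corollary~\ref{CFredLX}, Proposition~\ref{NuessFinDimBDO} for a); Theorem~\ref{TPLowNormHilb}, Corollaries~\ref{CnuessHilb} and~\ref{CnuessOplusHilb} for b); Proposition~\ref{PEssNormFinDim}, Theorems~\ref{TEssLowerNormFinDim} and~\ref{TAppNr} for c); the remark after Theorem~\ref{TAppNr} for d)) is accurate and matches the intended reading.

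One point deserves care. In part~a) you write that ``for $\lambda\notin\spess(A)$ the coset $(A-\lambda I)+\pc{\Xb}$ contains a Fredholm operator'' and then invoke Corollary~\ref{CFredLX}. This inference is only automatic when $\dim X<\infty$ (via Remark~\ref{RFinDim} and Proposition~\ref{PFredh}); for general $A\in\Alpr$, $\Pc$-Fredholmness does not by itself furnish a classical Fredholm operator in the coset, so Corollary~\ref{CFredLX} is not directly applicable. The inclusion $\speess(A)\subset\spess(A)\cup\{\max\{\nuess,\nuess^*\}<\eps\}$ is unproblematic (use Lemma~\ref{LPLowNorm1} to get $\tilde\mu=\tilde\mu^*=\mu$ for $\Pc$-Fredholm $A-\lambda I$, then Proposition~\ref{PNuess}), but for the reverse inclusion you need to rule out the case $\nuess(A-\lambda I)=\nuess((A-\lambda I)^*)=0$ with $A-\lambda I$ $\Pc$-Fredholm, and Corollary~\ref{CNuess} alone does not do this. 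Your argument is clean when $\dim X<\infty$ or in the happi setting (where Proposition~\ref{onesidedHilb} supplies the needed one-sided invertible); for the general $\Alpr$ case in a), the step needs a bit more.
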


\section{On finite sections}\label{sec:FSM}
In this section we apply our results, in particular Corollary \ref{CEssNorm}, in the context of asymptotic inversion of an operator.
\subparagraph{Stability}
Let $A\in\pb{\Xb}$. For the approximate solution of an equation $Ax=b$ or, likewise,
for the approximation of the pseudospectrum $\spe(A)$, one is looking for approximations
of the inverse of $A$ (or of $A-\lambda I$, respectively) by operators that can be stored and
worked with on a computer.
\newpage 

Assuming invertibility of $A$, a natural idea is to take a sequence of operators $A_1,A_2,...$
in $\pb{\Xb}$ with $A_n\pto A$ as $n\to\infty$, and to hope that, for all sufficiently large $n$,
also $A_n$ is invertible and $A_n^{-1}\pto A^{-1}$.
It turns out (see e.g. \cite[Theorem 6.1.3]{LimOps}, \cite[Corollary 1.77]{Marko},
\cite[Propositions 1.22, 1.29 and Corollary 1.28]{SeDis}) that this hope will be fulfilled
if and only if the sequence $(A_n)$ is {\sl stable}, meaning that there is an $n_0$ such
that all $A_n$ with $n\ge n_0$ are invertible and $\sup_{n\ge n_0}\|A_n^{-1}\|<\infty$.
In short:
\begin{equation}\label{eq:stab}
(A_n) \ \textrm{is stable}\qquad :\iff\qquad \limsup_{n\to\infty}\|A_n^{-1}\|<\infty
\end{equation}
After a positive answer to this qualitative question about stability, one will ask about
quantities:\\[2mm]
\begin{tabular}{lll}
&(Q1) & How large is the $\limsup$ in \eqref{eq:stab}?\\
&(Q2) & Is it possibly a limit?\\
&(Q3) & What is the asymptotics of the condition numbers $\kappa(A_n)=\|A_n\|\|A_n^{-1}\|$?\\
&(Q4) & What is the asymptotic behaviour of the pseudospectra $\spe(A_n)$?
\end{tabular}\\[2mm]
There are different approaches \cite{RochFredTh,RochOnFS,LiFS,SeDis,SeSi4} to deal with these questions. We will discuss one of them and we will focus on questions (Q1) and (Q3). The discussion of (Q2) and (Q4)  is postponed to a further paper, \cite{HagLiSei:FSM}, as it would overstretch both length and scope of the current paper. Moreover, we will restrict ourselves here to studying sequences $(A_n)$ of so-called finite sections (see below) of an $A\in\pb{\Xb}$ as opposed to \cite{HagLiSei:FSM}, where we look at more general elements of an algebra of such sequences.

\subparagraph{The stacked operator}
The idea is to identify the whole sequence $(A_n)$ with one single operator, denoted by $\oplus A_n$, that acts componentwise on a direct sum of infinitely many copies of $\Xb$.
To make this precise, first extend the sequence $(A_n)_{n\in\Nb}$ to the index set $\Zb$,
for example by $(A_n)_{n\in\Zb}:=(\cdots,cI,cI,A_1,A_2,\cdots)$ with some $c\ne 0$, and then,
recalling that $\Xb=l^p(\Zb^N,X)$, put
\[
\Xb' := \oplus^{l^p}_{n\in\Zb}\Xb := l^p(\Zb,\Xb) \cong l^p(\Zb^{N+1},X).
\]
Now each bounded sequence
$(A_n)_{n\in\Zb}\subset\lb{\Xb}$ acts as a diagonal operator on
$\Xb'=l^p(\Zb,\Xb)$. We denote this operator by $\oplus A_n:\Xb'\to\Xb'$ and refer to it
as the {\sl stacked operator} of the sequence $(A_n)$. Then (see \cite[Section 2.4.1]{Marko} or
\cite[Section 6.1.3]{HabilMarko})
\begin{equation} \label{eq:normsup}
\|\oplus A_n\| = \sup_n\|A_n\|.
\end{equation}
In order to avoid confusion we denote the approximate identity on $\Xb'=l^p(\Zb^{N+1},X)$
by $\Pc'=(P_n')$, where $P_n'=\chi_{\{-n,...,n\}^{N+1}}I$ and
$P_n=\chi_{\{-n,...,n\}^{N}}I$.

\subparagraph{Finite sections}
Given $A\in\pb{\Xb}$, a natural construction for the approximating sequence $(A_n)$ is to
look at the so-called {\sl finite sections}
\begin{equation} \label{eq:An}
A_n:=P_nAP_n,\qquad n\in\Nb,
\end{equation}
of $A$. Here $A_n$ is understood as operator $\im P_n\to\im P_n$ and is hence represented by a finite matrix. For completeness, put $P_n:=0$
for $n\in\Zb\setminus\Nb$, so that the same formula \eqref{eq:An} gives $A_n=0$ then.
From $P_n\pto I$ it follows that $A_n\pto A$ as $n\to\infty$, where
we freely identify $A_n$ with its extension by zero to the whole space $\Xb$. However, when writing $A_n^{-1}$, we clearly mean the inverse
(or its extension by zero to $\Xb$) of $A_n:\im P_n\to\im P_n$.
For the study of stability of a sequence it is more convenient to have all invertibility problems on the same space. To this end we fix a $c>0$ and look at the extensions
\begin{equation} \label{eq:Anc}
A_{n,c}:=P_n AP_n + cQ_n,\qquad n\in\Zb,
\end{equation}
of $A_n$, by $c$ times the identity, to $\Xb$. Clearly, also $A_{n,c}\pto A$ as $n\to\infty$. Now $P_n=0$ implies $A_{n,c}=cI$ for $n\in\Zb\setminus\Nb$.
Note that $A_n$ is invertible on $\im P_n$ if and only if $A_{n,c}$ is invertible on $\Xb$, and that
\begin{eqnarray} \label{eq:normAnc}
A_{n,c}^{-1}=A_n^{-1}+c^{-1}Q_n,
\quad\textrm{so that}\quad
\|A_{n,c}^{-1}\|=\max(\|A_n^{-1}\|,c^{-1}),
\end{eqnarray}
whence both sequences, $(A_n)$ and $(A_{n,c})$, are stable\footnote{We also call a bi-infinite operator sequence $(A_n)_{n\in\Zb}$ stable if it is subject to \eqref{eq:stab} (with $\infty$ referring to $+\infty$), i.e.~if its semi-infinite part $(A_n)_{n\in\Nb}$ is stable. The other part, $(A_n)_{n\in\Zb\setminus\Nb}$, as we defined it, is uncritical anyway.} at the same time. Note that the choice $c\ge\|A\|\ge\|A_n\|$ implies $c^{-1}\le\|A_n\|^{-1}\le\|A_n^{-1}\|$, so that $\|A_{n,c}^{-1}\|=\|A_n^{-1}\|$, by \eqref{eq:normAnc}.

\begin{thm} \label{thm:59}
(\cite[Theorem 6.1.6, Lemma 6.1.7]{LimOps} or \cite[Proposition 2.22, Theorem 2.28]{Marko})\\
Let $A\in\pb{\Xb}$, $c>0$ and $(A_n), (A_{n,c})\subset\pb{\Xb}$ be the sequences defined in \eqref{eq:An} and \eqref{eq:Anc}. Then
\begin{itemize}
\itemsep-1mm
\item The stacked operators $\oplus A_n$ and $\oplus A_{n,c}$ are in $\lb{\Xb',\Pc'}$.
\item If $A$ is rich then the stacked operators $\oplus A_n$ and $\oplus A_{n,c}$ are rich.
\item If $A$ is band-dominated then the stacked operators $\oplus A_n$ and $\oplus A_{n,c}$ are band-dominated.
\item $(A_n)$ is stable if and only if the stacked operator $\oplus A_{n,c}$ is $\Pc'$-Fredholm.
\end{itemize}
\end{thm}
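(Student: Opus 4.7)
The proof exploits the block-diagonal structure of $\oplus A_n$ and $\oplus A_{n,c}$ on $\Xb'=l^p(\Zb^{N+1},X)\cong l^p(\Zb,\Xb)$. Let $\iota_n:\Xb\to\Xb'$ denote the canonical inclusion onto the $n$-th copy of $\Xb$ and $\pi_n:\Xb'\to\Xb$ the projection, so $E_n:=\iota_n\pi_n$ is the projection onto that copy. The $\Pc'$-projection decomposes as $P_m'=\sum_{|k|\le m}\iota_k P_m\pi_k$, so in particular $E_k\le Q_m'$ whenever $|k|>m$. For the first bullet, $(\oplus A_n)K\in\pcn{\Xb'}$ whenever $K\in\pcn{\Xb'}$, because the block-diagonal action, combined with the fact that each $A_n=P_nAP_n$ has finite matrix support, preserves the finite matrix window of $K$; closing under norm limits and using $\|\oplus A_n\|=\sup_n\|A_n\|\le\|A\|$ from \eqref{eq:normsup} yields $\oplus A_n\in\pb{\Xb'}$, and similarly for $\oplus A_{n,c}=\oplus A_n+\oplus(cQ_n)$ after observing that $\oplus(cQ_n)$ is multiplication by a bounded function on $\Zb^{N+1}$. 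For the third bullet, if $A$ is band with width $w$ then so is each $A_n$, and $\oplus A_n$ is band on $\Xb'$ (width $0$ in the $(N+1)$-st direction, $\le w$ otherwise); for band-dominated $A$, the estimate $\|\oplus(P_n(A-B)P_n)\|\le\|A-B\|$ transfers approximation by band operators $B$ to $\Xb'$, and since $\oplus(cQ_n)$ is itself a multiplication operator, the same holds for $\oplus A_{n,c}$.

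For richness, given a sequence $h^{(j)}=(\tilde h^{(j)},k_j)\in\Zb^{N+1}$ with $|h^{(j)}|\to\infty$, I would split cases by the behaviour of $(k_j)$. If $(k_j)$ admits a bounded subsequence, extract a constant subsequence $k_j=k$, so $|\tilde h^{(j)}|\to\infty$; then on each copy $m$ the shifted block $V_{-\tilde h^{(j)}}A_{m+k,c}V_{\tilde h^{(j)}}$ $\Pc$-converges to $cI$ (the finite-support part vanishes $\Pc$-strongly and $V_{-\tilde h^{(j)}}(cQ_{m+k})V_{\tilde h^{(j)}}\pto cI$). If $k_j\to-\infty$ then $A_{m+k_j,c}=cI$ eventually on every copy, giving the same limit. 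Only $k_j\to+\infty$ exercises richness of $A$: there $A_{m+k_j,c}\pto A$ on every copy, and a diagonal subsequence argument combined with richness of $A$ produces a $\Pc$-strong limit on each copy (a shift of $A$ if $\tilde h^{(j)}$ stays bounded, else a limit operator $A_{\tilde g}\in\opsp(A)$) that is independent of $m$. In every case the per-copy limits are uniform in $m$, so a $\Pc'$-limit of the shifted stacked operator exists, and both $\oplus A_n$ and $\oplus A_{n,c}$ are rich.

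The crux is the fourth bullet. For $(\Rightarrow)$, stability of $(A_{n,c})$ yields $n_0$ and $C<\infty$ with $A_{n,c}$ invertible and $\|A_{n,c}^{-1}\|\le C$ for $n\ge n_0$; I would set $\tilde A_{n,c}:=A_{n,c}$ for $n\ge n_0$ and $\tilde A_{n,c}:=cI$ otherwise. Then $\oplus\tilde A_{n,c}$ is invertible on $\Xb'$ with inverse $\oplus\tilde A_{n,c}^{-1}$ of norm $\max(C,c^{-1})$, while $\oplus A_{n,c}-\oplus\tilde A_{n,c}$ is supported on the finitely many copies $1\le n<n_0$, each summand being the finite-matrix operator $P_n(A-cI)P_n\in\pcn{\Xb'}$; hence the difference lies in $\pc{\Xb'}$, making $\oplus A_{n,c}$ a $\Pc'$-compact perturbation of an invertible operator and therefore $\Pc'$-Fredholm. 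For $(\Leftarrow)$, let $B$ be a $\Pc'$-regularizer with $B(\oplus A_{n,c})=I+K_1$ and $(\oplus A_{n,c})B=I+K_2$ where $K_i\in\pc{\Xb'}$. Using $E_k\le Q_m'$ for $|k|>m$ one gets $\|E_nK_i\|,\|K_iE_n\|\to 0$ as $|n|\to\infty$. Setting $B_n:=\pi_nB\iota_n\in\lb{\Xb}$, the block-diagonal identities $(\oplus A_{m,c})\iota_n=\iota_nA_{n,c}$ and $\pi_n(\oplus A_{m,c})=A_{n,c}\pi_n$ yield $B_nA_{n,c}=I_\Xb+\pi_nK_1\iota_n\to I_\Xb$ and $A_{n,c}B_n=I_\Xb+\pi_nK_2\iota_n\to I_\Xb$ in norm as $|n|\to\infty$. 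A Neumann series argument then produces two-sided inverses of $A_{n,c}$ for large $n$ with norm bounded by $\|B\|(1-\eps)^{-1}$, establishing stability.

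The main obstacle is the $(\Leftarrow)$ direction in the fourth bullet: extracting asymptotic invertibility of the individual $A_{n,c}$ from a single global $\Pc'$-regularizer. This hinges on the fact that $\Pc'$-compact operators $K$ satisfy $\|E_nK\|,\|KE_n\|\to 0$ as $|n|\to\infty$, which is a consequence of the specific block-diagonal shape of the $P_m'$. A secondary technical point is the diagonal subsequence argument in the richness bullet for the case $k_j\to+\infty$ with $\tilde h^{(j)}$ simultaneously unbounded, where one must combine the convergence $A_{m+k_j,c}\pto A$ on each copy with the passage to a limit operator $A_{\tilde g}$ of $A$ in a way that is uniform in the copy index $m$.
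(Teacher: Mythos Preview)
The paper does not prove this theorem at all; it is stated with a bare citation to \cite[Theorem 6.1.6, Lemma 6.1.7]{LimOps} and \cite[Proposition 2.22, Theorem 2.28]{Marko}, so there is nothing in the paper to compare your argument against. Your first, third and fourth bullets are essentially the standard arguments and are correct; in particular, your treatment of the equivalence $(A_n)$ stable $\iff$ $\oplus A_{n,c}$ is $\Pc'$-Fredholm via block-wise truncation of a $\Pc'$-regularizer is exactly the right mechanism, and the key inequality $\|E_nK\|\le\|Q_m'K\|$ for $|n|>m$ is precisely what makes it work.

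There is, however, a genuine error in your richness argument. In the case $k_j\to+\infty$ with $|\tilde h^{(j)}|\to\infty$ you claim that the per-copy limit is ``a limit operator $A_{\tilde g}\in\opsp(A)$ that is independent of $m$''. This is false in general. Take $N=1$ and $h^{(j)}=(j,j)$: on copy $m$ one has $V_{-j}P_{m+j}V_j=\chi_{\{-m-2j,\ldots,m\}}I$, which $\Pc$-converges to the half-line projection $\chi_{(-\infty,m]}I$, not to $I$. The resulting per-copy limit is therefore the \emph{truncated} object $\chi_{(-\infty,m]}A_{\tilde g}\chi_{(-\infty,m]}+c\chi_{(m,\infty)}I$, which manifestly depends on $m$. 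These are precisely the operators in item (d) of the paper's list following Corollary~\ref{cor:FSMstable}. The two-step reasoning ``first $A_{m+k_j,c}\pto A$, then $V_{-\tilde h^{(j)}}AV_{\tilde h^{(j)}}\pto A_{\tilde g}$'' is illegitimate because $\Pc$-limits do not commute with the $j$-dependent conjugation by shifts. The fix is to pass to a further subsequence along which, for each coordinate $i\in\{1,\ldots,N\}$, the difference $k_j-|\tilde h^{(j)}_i|$ tends to $+\infty$, $-\infty$, or a finite limit, and to treat these subcases separately; richness still follows, but the collection of limit operators $\oplus B_n$ is strictly larger than what your sketch produces.
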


Combining Theorems \ref{thm:59} and \ref{TBdORichHS}, we get:

\begin{cor} \label{cor:FSMstable}
Let $A$ be a rich band-dominated operator, $c>0$ and $(A_n), (A_{n,c})$ as defined in \eqref{eq:An} and \eqref{eq:Anc}. Then:
$(A_n)$ is stable if and only if all limit operators of $\oplus A_{n,c}$ are invertible.
\end{cor}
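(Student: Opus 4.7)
The plan is to combine the last bullet of Theorem~\ref{thm:59} with Theorem~\ref{TBdORichHS}, applied to the stacked operator $\oplus A_{n,c}$ on the space $\Xb'=l^p(\Zb^{N+1},X)$ with approximate identity $\Pc'$. Everything needed is already in place; the corollary is essentially a one-line composition of two previously proved results.

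More precisely, I would first invoke Theorem~\ref{thm:59} to record that, under the hypothesis that $A\in\Alpr$, the stacked operator $\oplus A_{n,c}$ lies in $\lb{\Xb',\Pc'}$ and is itself rich and band-dominated; that is, $\oplus A_{n,c}\in\Ac_{\$}(\Xb')$ with respect to the natural $\Pc'$-structure on $\Xb'$. The same theorem also states the equivalence
\[
(A_n)\text{ is stable}\quad\Longleftrightarrow\quad \oplus A_{n,c}\text{ is }\Pc'\text{-Fredholm}.
\]

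Next, since $\oplus A_{n,c}$ is a rich band-dominated operator, Theorem~\ref{TBdORichHS} applies on $(\Xb',\Pc')$ and yields
\[
\oplus A_{n,c}\text{ is }\Pc'\text{-Fredholm}\quad\Longleftrightarrow\quad\text{every limit operator of }\oplus A_{n,c}\text{ is invertible},
\]
with the additional bonus that the uniform boundedness of inverses is automatic and need not be verified. Chaining the two equivalences gives exactly the claim of the corollary.

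The only thing that might be worth emphasizing is that the limit operators are to be taken in the $\Pc'$-sense on $\Xb'=l^p(\Zb^{N+1},X)$, i.e.~along sequences $h\subset\Zb^{N+1}$ tending to infinity, not in the original $\Pc$-sense on $\Xb$; but since $A$ is rich and band-dominated, richness of $\oplus A_{n,c}$ on $\Xb'$ is already supplied by Theorem~\ref{thm:59}, so there is no obstacle here. Consequently the argument is a direct citation-style proof and no genuine difficulty arises.
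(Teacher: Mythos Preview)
Your proposal is correct and matches the paper's approach exactly: the paper simply states ``Combining Theorems~\ref{thm:59} and~\ref{TBdORichHS}, we get'' before the corollary, and your write-up spells out precisely this combination. Your remark about taking limit operators in the $\Pc'$-sense on $\Xb'$ is a helpful clarification but adds nothing beyond what the paper intends.
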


So we are led to studying the limit operators of $\oplus A_{n,c}$. It is easy to see that each of them is again a stacked operator, say $\oplus B_n$. A detailed analysis of $\oplus A_{n,c}$ and its limit operators (see e.g. \cite{RochOnFS,LiFS,HabilMarko}) shows that the operators $B_n$ to be considered here are:
\begin{itemize}
\itemsep-1mm
\item[(a)] the operator $A$ itself,
\item[(b)] $c$ times the identity operator on $\Xb$,
\item[(c)] all limit operators of $A$,
\item[(d)] certain truncated limit operators of $A$, extended to $\Xb$ by $c$ times the identity, and
\item[(e)] shifts of all the operators above.
\end{itemize}

The invertibility of all limit operators of $\oplus A_{n,c}$ reduces to the invertibility\footnote{\label{foot:13}The uniform boundedness of all inverses $B_n^{-1}$ follows automatically from their existence, as can be seen by a slight modification of our $\oplus A_{n,c}$ construction: Assemble $A_1, A_2,...$ into one diagonal operator $D:=\sum_{k\in \Nb}V_{g_{k}}A_{k}V_{-g_{k}}+c(I-\sum_{k\in \Nb}V_{g_{k}}P_{k}V_{-g_{k}})$,
acting on $\Xb$ (not $\Xb'$), where the $g_{k}$ are chosen such that the sets $g_{k}+\{-k,...,k\}^{N}$ are pairwise disjoint, as in \eqref{ET} above. Then (see e.g. \cite{RochOnFS}) the set of all operators $B_{n}$ in the limit operators $\oplus B_{n}$ of  $\oplus A_{n,c}$ coincides with the set of all limit operators of $D$, so that, by Theorem \ref{TBdORichHS}, the inverses of all $B_n$ are uniformly bounded as soon as they all exist.} of all $B_n$ under consideration, which is of course handy since it brings us back to the $\Xb\to\Xb$ setting of the original operator $A$.
In terms of invertibility of all members $B_n$, there is a lot of redundancy in the list (a)--(e) since $cI$ is invertible, shifts do not change invertibility, and invertibility (even $\Pc$-Fredholmness) of $A$ implies that of all its limit operators. So it remains to look at points (a) and (d).
Without going into the details of (d), we will denote this remaining set $\{$(a),(d)$\}$ of operators by $S(A,c)$; in \cite{RochOnFS,LiFS,HabilMarko} it is called the {\sl stability spectrum of $A$}.
From Corollary \ref{cor:FSMstable} and the discussion above one gets that
\begin{equation} \label{eq:stab2}
(A_n)\ \textrm{is stable}\quad\iff\quad(A_{n,c})\ \textrm{is stable}\quad\iff\quad
\textrm{all elements of $S(A,c)$ are invertible.}
\end{equation}
\begin{ex} \label{ex:blockdiag}
Let $\Xb=l^{2}(\Zb^{1},\Cb)$, $\mu\in[0,1)$  and consider the operator $A$ induced by the block
diagonal matrix
\[
A\ =\ \diag(\cdots,B,B,1,B,B,\cdots),\quad\textrm{where\ }
B\ =\ \left(\begin{array}{cc}\mu&1\\1&\mu\end{array}\right)
\]
and the single $1$ entry is at position $(0,0)$ of $A$. Then $A$ is
invertible with
\[
A^{-1}=\diag(\cdots,B^{-1},B^{-1},1,B^{-1},B^{-1},\cdots),\quad\textrm{where\ }
B^{-1} =\ \frac{1}{\mu^2-1}\left(\begin{array}{cc}\mu&-1\\-1&\mu\end{array}\right)
\]
and its finite sections correspond to the finite $(2n+1) \times (2n+1)$ matrices
\[
A_{n}\ =\ \left\{
\begin{array}{cl}
\diag\left(B,\cdots,B,1,B,\cdots,B\right) & \textrm{if $n\ge 2$ is even},\\
\diag\left(\mu,B,\cdots,B,1,B,\cdots,B, \mu\right) & \textrm{if $n\ge 3$ is odd.}
\end{array}\right.
\]
If $\mu=0$ then the $A_{n}$ with odd $n$ are singular so that the sequence $(A_{n})$ is not stable.
If $\mu\in (0,1)$ then all $A_{n}$ are invertible, with
\[
\|A_{n}^{-1}\|\ =\ \left\{
\begin{array}{ll}
\|B^{-1}\|=(1-\mu)^{-1} & \textrm{if $n\ge 2$ is even,}\\
\max\left(\|B^{-1}\|,\mu^{-1}\right)=\max\left((1-\mu)^{-1},\mu^{-1}\right)=(\min(1-\mu,\mu))^{-1} & \textrm{if $n\ge 3$ is odd.}
\end{array}\right.
\]
So for $\mu\in (0,1)$ the sequence $(A_{n})$ is stable, where the limsup in \eqref{eq:stab} equals
$(\min(1-\mu,\mu))^{-1}$. This limsup is a limit if and only if $(\min(1-\mu,\mu))^{-1}=(1-\mu)^{-1}$, i.e. if $\mu\in [\frac12,1)$.

Fix $c\ge\|A\|=\|B\|$, e.g. $c:=2$. Then $A_{n,c}=\diag(\cdots,c,c,A_n,c,c,\cdots)$ and the stability spectrum $S(A,c)$ consists in this example of five operators. They
are $A$,
\[
\begin{array}{ll}
C=\diag\left(\cdots,c,c,c,B,B,B, \cdots \right) &
D=\diag\left(\cdots, B,B,B,c,c,c,\cdots \right),\\[1mm]
E=\diag\left(\cdots,c,c,c,\mu,B,B, \cdots \right),&
F=\diag\left(\cdots,B,B,\mu,c,c,c,\cdots \right).
\end{array}
\]
In case $\mu=0$ only $A, C$ and $D$ are invertible. In case $\mu\in (0,1)$, all five operators are invertible, where $\|A^{-1}\|=\|B^{-1}\|=\|C^{-1}\|=\|D^{-1}\|=(1-\mu)^{-1}$ and
$\|E^{-1}\|=\|F^{-1}\|=(\min(1-\mu,\mu))^{-1}$.
\end{ex}

This example suggests that the set $S(A,c)$ not only determines the stability of $(A_{n})$ via the invertibility of all members of $S(A,c)$, by \eqref{eq:stab2}, but also the answer to question (Q1) via the norms of those inverses. It also shows that the answer to question (Q2) is usually negative. Questions (Q3) and (Q4) are fairly straightforward once (Q1) and (Q2) are settled. As we said, in this paper we restrict ourselves to (Q1) and (Q3).
So let us turn back to the general setting.

\subparagraph{On question (Q1): What is $\limsup\|A_n^{-1}\|$?}
We start by noting that the elements of $S(A,c)$ are not just those operators from the list (a)--(e) whose invertibility implies that of all other operators on that list -- but they also have the largest inverses among (a)--(e), provided that $c$ is large enough.

\begin{prop} \label{prop:S(A)norms}
Let $A\in\Alpr$, $c\ge\|A\|$ and $(A_{n,c})$ as in \eqref{eq:Anc}. Then
\[
\max_{L\in\opsp(\oplus A_{n,c})}\|L^{-1}\| = \max_{S\in \{{\rm (a)-(e)}\}}\|S^{-1}\|  = \max_{S\in S(A,c)}\|S^{-1}\|.
\]
\end{prop}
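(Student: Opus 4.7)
The plan is to replace the max over $\opsp(\oplus A_{n,c})$ in two stages: first by a max over the components of its limit operators, which will produce the full list (a)--(e); then by collapsing that list down to $S(A,c)=\{\text{(a),(d)}\}$, using the hypothesis $c\ge\|A\|$ to absorb the other three items.

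For the first stage, I would recall that every $L\in\opsp(\oplus A_{n,c})$ is itself a stacked operator $L=\oplus B_n$, and that by footnote~\ref{foot:13} the pool of admissible components $B_n$ coincides with $\opsp(D)$ for the single diagonal operator $D$ on $\Xb$ introduced there. This pool is precisely the collection (a)--(e). Using the elementary identity $\|(\oplus B_n)^{-1}\|=\sup_n\|B_n^{-1}\|$ for stacked diagonal operators (with the convention $\|B^{-1}\|=\infty$ for non-invertible $B$), the nested supremum first over components and then over $L$ collapses to a single max over (a)--(e), delivering the first claimed equality.

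The second stage is to trim (a)--(e) down to $S(A,c)=\{\text{(a),(d)}\}$. Shifts $V_\alpha$ are invertible isometries, so (e) contributes the same norms as the items it shifts and can be dropped. For (b), the hypothesis $c\ge\|A\|$ together with the general inequality $1\le\|A\|\cdot\|A^{-1}\|$ gives $\|(cI)^{-1}\|=c^{-1}\le\|A\|^{-1}\le\|A^{-1}\|$, so (b) is dominated by (a). For (c), whenever $A$ is invertible (so $A^{-1}\in\Alp$ by Theorem~\ref{TBDOInvCl}), Theorem~\ref{TLimOps} gives $A_h^{-1}=(A^{-1})_h$ for every $A_h\in\opsp(A)$, and Proposition~\ref{prop:limops} then yields $\|A_h^{-1}\|=\|(A^{-1})_h\|\le\|A^{-1}\|$, so (c) is also dominated by (a).

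The non-invertible case is handled uniformly: if any $S\in S(A,c)$ fails to be invertible, then by \eqref{eq:stab2} the sequence $(A_n)$ is not stable, so by Theorem~\ref{thm:59} the stacked operator $\oplus A_{n,c}$ is not $\Pc'$-Fredholm, and then Theorem~\ref{TBdORichHS} (valid since $\oplus A_{n,c}$ is rich and band-dominated, again by Theorem~\ref{thm:59}) forces some $L\in\opsp(\oplus A_{n,c})$ to be non-invertible, so both maxima equal $+\infty$. That the outer extrema are attained as maxima comes from Corollary~\ref{CEssNorm} applied to the rich band-dominated operator $\oplus A_{n,c}$. The main technical obstacle I anticipate is the bookkeeping in the first stage, namely matching each admissible component $B_n$ with the precise description (a)--(e) of $\opsp(D)$; for this I would lean on the block-diagonal analysis in \cite{RochOnFS,LiFS,HabilMarko} rather than reprove it here.
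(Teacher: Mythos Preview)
Your proposal is correct and follows essentially the same approach as the paper: decompose each $L\in\opsp(\oplus A_{n,c})$ as a stacked operator $\oplus B_n$, identify the admissible components with the list (a)--(e) via footnote~\ref{foot:13}, and then eliminate (b), (c), (e) by exactly the three comparisons you give. The paper is terser on the non-invertible case (which you spell out) and cites \cite[Theorem~8]{BigQuest} rather than Corollary~\ref{CEssNorm} for attainment of the middle maximum, but the substance is identical.
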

\begin{proof}
From Theorem \ref{TEssNorm} we know that the LHS indeed exists as a maximum. As in the discussion following Corollary \ref{cor:FSMstable}, we note that each $L\in\opsp(\oplus A_{n,c})$ is of the form $L=\oplus B_n$, so that $L^{-1}=\oplus B_n^{-1}$ and the maximum of all $\|L^{-1}\|$ is the supremum of all $\|B_n^{-1}\|$. As in footnote \ref{foot:13}, using Theorem 8 of \cite{BigQuest}, one can see that also this supremum is attained as a maximum.
So the LHS equals the maximum of $\|S^{-1}\|$ with $S$ from the list (a)--(e).
It remains to show that this maximum is attained in items (a) or (d).

(a) vs.~(b): From $\|A\|\le c$ we get that $\|A^{-1}\|\ge\|A\|^{-1}\ge c^{-1}=\|(cI)^{-1}\|$.

(a) vs.~(c): Let $A$ be invertible and $A_h\in\opsp(A)$. Then $A_h$ is invertible and $(A_h)^{-1}=(A^{-1})_h$, by Proposition \ref{prop:limops}. By the same proposition, $\|A^{-1}\|\ge\|(A^{-1})_h\|=\|(A_h)^{-1}\|$.

(e): Clearly, taking shifts does not change the norm of the inverse.
\end{proof}

The maximum of $\|S^{-1}\|$ can be attained by (a), $S=A$, or by (d), a truncated limit operator of $A$. See Example \ref{ex:blockdiag} with $\mu\in(0,\frac12)$ for the latter, and replace $A$ by $A=\diag(\cdots,B,B,\frac{\mu}{2},B,B,\cdots)$, again with $\mu\in(0,\frac12)$, for the former.
Next we rewrite $\limsup\|B_n\|$ as the $\Pc'$-essential norm of the stacked operator $\oplus B_n$:
\begin{lem} \label{lem:limsup}
Consider a bounded sequence $(C_n)_{n\in\Zb}$ with $C_n:\im P_n\to\im P_n$ for $n\in\Nb$ and $C_n=0$ for $n\in\Zb\setminus\Nb$.
Now let $0\le d\le\inf_{n\in\Nb}\|C_n\|$ and $B_n:=C_n+dQ_n$.
Then $\oplus B_n\in\lb{\Xb',\Pc'}$ and
\[
\|\oplus B_n+\lc{\Xb',\Pc'}\| = \limsup_{n\to\infty}\|B_n\|  = \limsup_{n\to\infty}\|C_n\|.
\]
\end{lem}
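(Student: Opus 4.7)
The plan is to combine the block-diagonal structure of $\oplus B_n$ with the explicit form $B_n=C_n+dQ_n$ and then apply Proposition \ref{PEssNorm}. Under the identification $\Xb'\cong l^p(\Zb,\Xb)$ the projection $P'_m$ acts on block $n$ as $P_m$ when $|n|\le m$ and annihilates blocks with $|n|>m$; correspondingly $Q'_m$ acts as $Q_m$ on blocks with $|n|\le m$ and as the identity on those with $|n|>m$. The key algebraic observation is the identity $B_nQ_m=dQ_m$ whenever $m\ge n$: for $n\in\Nb$ it follows from $C_n=P_nC_nP_n$ (whence $C_nQ_m=0$) together with $Q_nQ_m=Q_m$, and for $n\in\Zb\setminus\Nb$ it is trivial since $B_n=dI$.

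To show $\oplus B_n\in\lb{\Xb',\Pc'}$ (the main obstacle), I would first establish the cancellation
\[
Q'_m(\oplus B_n)P'_l=0=P'_l(\oplus B_n)Q'_m\qquad\text{for all }m\ge l.
\]
Indeed $P'_l$ kills the blocks with $|n|>l$, while for $|n|\le l\le m$ the blockwise entry is $Q_mB_nP_l$, which vanishes because $Q_mP_l=0$ together with $C_n=P_nC_nP_n$. Given any $K\in\lc{\Xb',\Pc'}$, the decomposition $K=P'_lK+Q'_lK$ then yields $\|Q'_m(\oplus B_n)K\|\le\|\oplus B_n\|\cdot\|Q'_lK\|$ for all $m\ge l$; letting first $m$ and then $l$ tend to infinity forces $\|Q'_m(\oplus B_n)K\|\to 0$. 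Combined with the immediate estimate $\|(\oplus B_n)KQ'_m\|\le\|\oplus B_n\|\cdot\|KQ'_m\|\to 0$, this gives $(\oplus B_n)K\in\lc{\Xb',\Pc'}$; the claim for $K(\oplus B_n)$ follows symmetrically via $K=KP'_l+KQ'_l$ together with the dual cancellation $P'_l(\oplus B_n)Q'_m=0$.

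For the norm identity, block-diagonality of $\oplus B_n$ gives
\[
\|(\oplus B_n)Q'_m\|=\max\Bigl(\sup_{|n|\le m}\|B_nQ_m\|,\ \sup_{|n|>m}\|B_n\|\Bigr).
\]
By $B_nQ_m=dQ_m$ the first supremum is at most $d$. For the second, the complementary projections $P_n,Q_n$ decompose $\Xb$ so that $B_n$ acts as $C_n$ on $\im P_n$ and as $dI$ on $\im Q_n$, giving $\|B_n\|=\max(\|C_n\|,d)=\|C_n\|$ for $n\in\Nb$ (by the hypothesis $d\le\inf_{n\in\Nb}\|C_n\|$) and $\|B_n\|=d$ otherwise. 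Hence $\|(\oplus B_n)Q'_m\|=\sup_{n>m,\,n\in\Nb}\|C_n\|$, and Proposition \ref{PEssNorm} yields
\[
\|\oplus B_n+\lc{\Xb',\Pc'}\|=\lim_{m\to\infty}\sup_{n>m}\|C_n\|=\limsup_{n\to\infty}\|C_n\|.
\]
The remaining equality $\limsup_n\|B_n\|=\limsup_n\|C_n\|$ is immediate from $\|B_n\|=\|C_n\|$ for all $n\in\Nb$. The only genuine subtlety is the $\Pc'$-boundedness step: a naive block estimate is insufficient since $\|(\oplus B_n)Q'_m\|$ does not tend to zero, and the exact cancellation $Q'_m(\oplus B_n)P'_l=0$ for $m\ge l$ is the device that carries the argument.
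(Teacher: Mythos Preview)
Your argument is correct and follows essentially the same route as the paper: both hinge on the cancellation $Q'_m(\oplus B_n)P'_l=0=P'_l(\oplus B_n)Q'_m$ for $m\ge l$ to obtain $\oplus B_n\in\lb{\Xb',\Pc'}$, and then on Proposition~\ref{PEssNorm} together with the blockwise norm formula \eqref{eq:normsup} and $\|B_n\|=\max(\|C_n\|,d)=\|C_n\|$ for $n\in\Nb$. The only difference is cosmetic: where the paper invokes \cite[Prop.~1.1.8]{LimOps} to pass from the cancellation to membership in $\lb{\Xb',\Pc'}$, you spell out that implication by hand via the decomposition $K=P'_lK+Q'_lK$; and you compute $\|(\oplus B_n)Q'_m\|$ rather than $\|Q'_m(\oplus B_n)\|$, which Proposition~\ref{PEssNorm} permits equally well.
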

\begin{proof}
By the construction of $B_n$, we have $Q_k'(\oplus B_n)P_m'=0=P_m'(\oplus B_n)Q_k'$ for all $m\in\Nb$ and $k\ge m$, so that $\oplus B_n\in\lb{\Xb',\Pc'}$, by \cite[Prop. 1.1.8]{LimOps}.
Using $\|B_n\|=\max(\|C_n\|,d)=\|C_n\|$ for all $n\in\Nb$, we derive the equality
\[
\|\oplus B_n+\lc{\Xb',\Pc'}\| = \lim_{m\to\infty}\|Q_{m}'(\oplus B_n)\| = \lim_{m\to\infty}(\sup_{n>m}\|B_{n}\|)= \limsup_{n\to\infty}\|B_n\|
= \limsup_{n\to\infty}\|C_n\|\]
from Proposition \ref{PEssNorm} and \eqref{eq:normsup}.
\end{proof}

Now we are ready to answer question (Q1):

\begin{thm} \label{thm:Q1}
Let $A\in\Alpr$, $c\ge\|A\|$ and let $(A_n)$ from \eqref{eq:An} be stable. Then
\[
\limsup_{n\to\infty}\|A_n^{-1}\| = \max_{S\in S(A,c)}\|S^{-1}\|.
\]
\end{thm}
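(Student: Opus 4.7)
The plan is to realise both sides of the claimed identity as the essential norm (respectively, norm of the inverse coset) of the stacked operator $\oplus A_{n,c}$, and then to compute this essential norm in two different ways.

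First, I would observe that by Theorem \ref{thm:59} the stacked operator $\oplus A_{n,c}$ belongs to $\Alpr$ (on $\Xb'$ with respect to $\Pc'$), and by \eqref{eq:stab2} stability of $(A_n)$ is equivalent to its $\Pc'$-Fredholmness. Corollary \ref{CEssNorm} therefore applies, giving
\[
\|(\oplus A_{n,c} + \pc{\Xb'})^{-1}\|\ =\ \max_{L \in \opsp(\oplus A_{n,c})}\|L^{-1}\|\ =\ \max_{S \in S(A,c)}\|S^{-1}\|,
\]
where the second equality is Proposition \ref{prop:S(A)norms}. This already matches the right-hand side of the claim, so the remaining work is to identify $\|(\oplus A_{n,c} + \pc{\Xb'})^{-1}\|$ with $\limsup \|A_n^{-1}\|$.

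For the second identification I would build an explicit representative of the inverse coset. By stability pick $n_0$ so that $A_{n,c}$ is invertible for all $n\ge n_0$, and replace the finitely many blocks with $1\le n<n_0$ by $cI$, obtaining a modified diagonal operator $\oplus \tilde A_{n,c}$. The difference $\oplus A_{n,c}-\oplus\tilde A_{n,c}$ is supported in finitely many blocks, and in each such block it equals $P_n(A-cI)P_n$, which sits in $\pcn{\Xb}$; checking with $P'_M$ for $M\ge n_0$ shows the whole stacked difference lies in $\pcn{\Xb'}\subset\pc{\Xb'}$, so the two operators define the same coset. Now $\oplus\tilde A_{n,c}$ is blockwise invertible with inverse $\oplus\tilde B_n$, where $\tilde B_n=A_{n,c}^{-1}=A_n^{-1}+c^{-1}Q_n$ for $n\ge n_0$ and $\tilde B_n=c^{-1}I$ otherwise.

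Finally I would apply Lemma \ref{lem:limsup} to $\oplus\tilde B_n$, taking $d:=c^{-1}$ and $C_n:=A_n^{-1}$ for $n\ge n_0$, $C_n:=c^{-1}P_n$ for $1\le n<n_0$, $C_n:=0$ for $n\in\Zb\setminus\Nb$. The hypothesis $d\le\inf_{n\in\Nb}\|C_n\|$ holds because $\|A_n^{-1}\|\ge\|A_n\|^{-1}\ge\|A\|^{-1}\ge c^{-1}$ for $n\ge n_0$, and the lemma gives
\[
\|(\oplus A_{n,c}+\pc{\Xb'})^{-1}\|\ =\ \|\oplus\tilde B_n+\pc{\Xb'}\|\ =\ \limsup_{n\to\infty}\|\tilde B_n\|\ =\ \limsup_{n\to\infty}\|A_n^{-1}\|,
\]
since $\|\tilde B_n\|=\max(\|A_n^{-1}\|,c^{-1})=\|A_n^{-1}\|$ for $n\ge n_0$. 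Combining the two displays yields the theorem.

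The main obstacle I foresee is step two: one must genuinely exhibit a representative of the inverse coset whose $\Pc'$-essential norm can be read off. The naive choice $B_n=A_{n,c}^{-1}$ for $n\ge n_0$ and $B_n=0$ otherwise fails because $I-(\oplus B_n)(\oplus A_{n,c})$ contains $I$-blocks which are not $\Pc$-compact; filling the "bad" blocks with $cI$ (so that the modification is genuinely $\Pc'$-compact and the explicit inverse $\oplus\tilde B_n$ exists) is the key trick that makes both Lemma \ref{lem:limsup} and the coset identification simultaneously available.
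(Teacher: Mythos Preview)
Your proof is correct and follows essentially the same route as the paper: both identify $\limsup\|A_n^{-1}\|$ with the $\Pc'$-essential norm of the inverse coset of $\oplus A_{n,c}$ via Lemma \ref{lem:limsup}, and then evaluate that norm through Corollary \ref{CEssNorm} and Proposition \ref{prop:S(A)norms}. The only cosmetic difference is that the paper phrases the construction as ``$\oplus B_n$ (with $B_n=A_{n,c}^{-1}$ for $n\ge n_0$ and $B_n=c^{-1}I$ otherwise) is a $\Pc'$-regularizer for $\oplus A_{n,c}$'', whereas you first perturb $\oplus A_{n,c}$ by a $\Pc'$-compact operator and then invert exactly --- but your $\oplus\tilde B_n$ is precisely the paper's $\oplus B_n$, so the two arguments coincide.
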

\begin{proof}
Fix $n_0\in\Nb$ so that all $A_n$ and $A_{n,c}$ with $n\ge n_0$ are invertible. Then $\oplus B_n$ with $B_n=A_{n,c}^{-1}$ for $n\ge n_0$ and $B_n=c^{-1}I$ for $n<n_0$ is a $\Pc'$-regularizer for $\oplus A_{n,c}$.
From $c\ge\|A\|$ we get
\begin{align*}
\limsup_{n\to\infty}\|A_n^{-1}\|
&\ =\ \limsup_{n\to\infty}\|A_{n,c}^{-1}\|
&\text{(by \eqref{eq:normAnc})}\\
&\ =\ \limsup_{n\to\infty}\|B_n\| \ =\ \|\oplus B_n+\lc{\Xb',\Pc'}\| &\text{(by Lemma \ref{lem:limsup})}\\
&\ =\ \|\left[\oplus A_{n,c}+\lc{\Xb',\Pc'}\right]^{-1}\|
\ =\ \max_{L\in\opsp(\oplus A_{n,c})}\|L^{-1}\|
&\text{(by Corollary \ref{CEssNorm})}\\
&\ =\ \max_{S\in S(A,c)}\|S^{-1}\|
&\text{(by Proposition \ref{prop:S(A)norms})},
\end{align*}
which finishes the proof.
\end{proof}

\subparagraph{On question (Q3): The asymptotics of the condition numbers}
From $A_n\pto A$ together with \eqref{eq:liminf} and $\|A_n\|=\|P_nAP_n\|\le\|A\|$ we get
$
\|A\|\le\liminf\|A_n\|\le\limsup\|A_n\|\le\|A\|,
$
so that $\lim\|A_n\|$ exists and equals $\|A\|$. So the asymptotics of the condition numbers $\kappa(A_n)=\|A_n\|\|A_n^{-1}\|$ is essentially governed by the asymptotics of $\|A_n^{-1}\|$:

\begin{cor} \label{cor:Q3}
Let $A\in\Alpr$, $c\ge\|A\|$ and let $(A_n)$ from \eqref{eq:An} be stable. Then
\[
\limsup_{n\to\infty}\,\kappa(A_n) = \|A\|\cdot\max_{S\in S(A,c)}\|S^{-1}\|.
\]
\end{cor}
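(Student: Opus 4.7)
The plan is to reduce this corollary to Theorem \ref{thm:Q1} together with the observation on $\|A_n\|$ that is made in the two lines immediately preceding the corollary. First, I would recall from that discussion that $\|A\|\le\liminf_n\|A_n\|\le\limsup_n\|A_n\|\le\|A\|$, since $A_n\pto A$ implies $\|A\|\le\liminf\|A_n\|$ by \eqref{eq:liminf}, while $\|A_n\|=\|P_nAP_n\|\le\|A\|$ is immediate from $\|P_n\|=1$. Hence $\lim_n\|A_n\|=\|A\|$.

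Second, Theorem \ref{thm:Q1} gives $\limsup_n\|A_n^{-1}\|=\max_{S\in S(A,c)}\|S^{-1}\|$, a nonnegative (and, by stability, finite) number. Combining these with the elementary fact that for a nonnegative sequence $(b_n)$ and a convergent nonnegative sequence $a_n\to a\in[0,\infty)$ one has $\limsup_n(a_nb_n)=a\cdot\limsup_n b_n$, applied with $a_n:=\|A_n\|$ and $b_n:=\|A_n^{-1}\|$, yields
\[
\limsup_{n\to\infty}\kappa(A_n)=\limsup_{n\to\infty}\bigl(\|A_n\|\,\|A_n^{-1}\|\bigr)=\|A\|\cdot\max_{S\in S(A,c)}\|S^{-1}\|,
\]
which is the claim.

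There is essentially no obstacle here; the only mild subtlety is the degenerate case $\|A\|=0$. In that case $A=0$ forces $A_n=P_nAP_n=0$, which is not invertible on $\im P_n$ as soon as $\im P_n\ne\{0\}$, contradicting the stability hypothesis; so under the standing assumption we may take $\|A\|>0$, ensuring that the product rule for the $\limsup$ above is applied to strictly positive $a_n$ for large $n$ and is therefore unambiguous.
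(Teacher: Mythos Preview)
Your argument is correct and follows exactly the route sketched in the paper: use $\lim_n\|A_n\|=\|A\|$ from the paragraph preceding the corollary together with Theorem~\ref{thm:Q1}, then multiply. Your extra care with the $\limsup$ product rule and the degenerate case $\|A\|=0$ is fine but not strictly needed, since stability already forces $A\ne 0$ and eventual boundedness of $\|A_n^{-1}\|$.
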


If $\limsup\|A_n^{-1}\|$ is a limit then also $\limsup\kappa(A_n)$ is a limit, but whether or when this happens is the subject of our question (Q2), which is addressed in \cite{HagLiSei:FSM}.
\medskip

We want to mention that versions of both results, Theorem \ref{thm:Q1} and Corollary \ref{cor:Q3}, are already contained in the literature: In the Hilbert space case they follow directly from \eqref{eq:stab2} by a $C^*$-algebra argument (as in footnote \ref{foot:C*alg}). \cite{MaSaSe} gives such results while even exceeding the setting of band-dominated operators considerably.
The general case  $\Xb=l^p(\Zb^N,X)$ is studied in \cite{SeSi4} and in Section 3.2 of \cite{SeDis}. While the results of \cite{SeDis,SeSi4} even apply to sequences $(A_n)$ in an algebra of finite section sequences, they put stronger constraints on the operator $A$ (the higher the dimension $N$, the stronger are the constraints). Our current approach shows how to avoid these constraints on $A$, and our separate paper \cite{HagLiSei:FSM} combines the benefits of the two approaches.



\newpage 
\noindent
\underline{\bf Authors:}\\[2mm]
Raffael Hagger \& Marko Lindner\\
Hamburg University of Technology (TUHH), Institute of Mathematics, 21073 Hamburg, Germany\\
\href{mailto:Raffael.Hagger@tuhh.de}{\tt Raffael.Hagger@tuhh.de},\quad
\href{mailto:Marko.Lindner@tuhh.de}{\tt Marko.Lindner@tuhh.de}
~\\[3mm]
Markus Seidel\\
University of Applied Sciences Zwickau, Dr.-Friedrichs-Ring 2a, 08056 Zwickau, Germany\\
\href{mailto:Markus.Seidel@fh-zwickau.de}{\tt Markus.Seidel@fh-zwickau.de}

\end{document}